\documentclass[11pt]{article}
\usepackage{graphicx,epsfig}
\usepackage{fancyhdr,fancybox,float}
\usepackage{appendix}
\usepackage{titlesec}
\usepackage{etoolbox, chngcntr}
\usepackage{indentfirst}
\usepackage{verbatim}
\usepackage[sort&compress, numbers]{natbib}
\usepackage{geometry}
\usepackage{extarrows,chemarrow,xypic}
\usepackage[small]{caption2}
\usepackage{microtype}
\usepackage{amssymb,dsfont,amsthm}
\usepackage{hyperref}
\usepackage[dvipsnames, svgnames, x11names]{xcolor}
\DisableLigatures[f]{encoding = *, family = *}
\AtBeginEnvironment{appendices}{
	\titleformat{\section}{\bfseries\large}{\appendixname~\thesection:}{0.5em}{}
	\titleformat{\subsection}{\bfseries\large}{\thesubsection}{0.5em}{}
}

%\oddsidemargin 0in
%\evensidemargin 0in
%\topmargin -0.6in
%\textwidth 6.5in
%\textheight 9in

%%%%%%%%%% Page styles %%%%%%%%%%

%\renewcommand{\rmdefault}{roman}
%\renewcommand{\captionlabeldelim}{.}
\newcommand{\paperfont}{\fontsize{10pt}{1.2\baselineskip}\selectfont}
\geometry{top=1in,bottom=1in,left=1in,right=1in}
\parindent 4ex

\makeatletter
\begin{document}
\allowdisplaybreaks

\theoremstyle{definition}
\makeatletter
\thm@headfont{\bf}
\makeatother
\newtheorem{theorem}{Theorem}[section]
\newtheorem{proposition}[theorem]{Proposition}
\newtheorem{lemma}[theorem]{Lemma}
\newtheorem{corollary}[theorem]{Corollary}
\newtheorem{claim}[theorem]{Claim}
\newtheorem{definition}[theorem]{Definition}
\newtheorem{assumption}{Assumption}
\newtheorem{condition}{Condition}
\newtheorem{remark}[theorem]{Remark}
\newtheorem{example}[theorem]{Example}
\numberwithin{equation}{section}

\newcommand{\Pnum}{\mathbb{P}}
\newcommand{\Enum}{\mathbb{E}}
\newcommand{\Rnum}{\mathbb{R}}
\newcommand{\ms}[1]{{\mathscr #1}}
\newcommand{\id}{{1 \mskip -5mu {\rm I}}}
\newcommand{\sce}{\mathop{\rm sc^-\!}\nolimits}
\newcommand{\supp}{\mathop{\rm supp}\nolimits}
\renewcommand{\div}{\mathop{\rm div}\nolimits}
\newcommand{\Ent}{\mathop{\rm Ent}\nolimits}
\newcommand{\asinh}{\mathop{\rm arcsinh}\nolimits}

\def\bP{{\bf P} }
\def\cL{{\cal L} }
\def\C{{\cal C} }
%%%%%%%%%% Title and authors %%%%%%%%%%
\title{\textbf{Large deviations for the empirical measure and empirical flow of Markov renewal processes with a countable state space}}
\author{Chen Jia$^{1}$,\;\;\;Da-quan Jiang$^{2,3}$,\;\;\;Bingjie Wu$^{2}$ \\
\footnotesize $^1$ Applied and Computational Mathematics Division, Beijing Computational Science Research Center, Beijing, 100193, China \\
\footnotesize $^2$ LMAM, School of Mathematical Sciences, Peking University, Beijing 100871, China. \\
\footnotesize $^3$ Center for Statistical Science, Peking University, Beijing 100871, China. \\
\footnotesize $^*$ E-mails: chenjia@csrc.ac.cn (C. Jia); jiangdq@math.pku.edu.cn (D.-Q. Jiang); wubingjie@pku.edu.cn (B. Wu)
}
\date{}
\maketitle
%\tableofcontents
\thispagestyle{empty}

%%%%%%%%%% Page Styles %%%%%%%%%%%
\paperfont
%\captionsetup{font=footnotesize}

%%%%%%%%%% Abstract %%%%%%%%%%
{\abstract
Here we propose the Donsker-Varadhan-type compactness conditions and prove the joint large deviation principle for the empirical measure and empirical flow of Markov renewal processes (semi-Markov processes) with a countable state space, generalizing the relevant results for continuous-time Markov chains with a countable state space obtained in [Ann. Inst. H. Poincar\'{e} Probab. Statist. 51, 867-900 (2015)] and [Stoch. Proc. Appl. 125, 2786-2819 (2015)], as well as the relevant results for Markov renewal processes with a finite state space obtained in [Adv. Appl. Probab. 48, 648-671 (2016)]. In particular, our results hold when the flow space is endowed with either the bounded weak* topology or the strong $L^1$ topology.  Even for continuous-time Markov chains, our compactness conditions are weaker than the ones proposed in previous papers. Furthermore, under some stronger conditions, we obtain the explicit expression of the marginal rate function of the empirical flow.}

\medskip

\noindent\textbf{AMS 2010 Mathematics Subject Classification:} 60J25; 60G55; 60J80.

\medskip

\noindent\textbf{Keywords}: semi-Markov process; Markov chain; large deviation principle; strong topology; bounded weak* topology

\section{Introduction}\label{introduzione}
Semi-Markov processes, which can be viewed as a direct extension of discrete-time and continuous-time Markov chains, are one of the most important classes of non-Markov processes. They have attracted considerable attention in recent years and have found wide applications in physics, chemistry, biology, finance, and engineering \cite{janssen2006applied, asmussen2008applied, limnios2012semi}. The embedded chain of a semi-Markov process is a discrete-time Markov chain, while the waiting times may not be exponentially distributed. Such non-exponential waiting time distributions have been found in many scientific problems such as molecular motors \cite{kolomeisky2000extended, faggionato2017fluctuation}, enzyme kinetics \cite{qian2006generalized, wang2007detailed}, gene networks \cite{pedraza2008effects, jia2022analytical}, and cell cycle dynmics \cite{chao2019evidence, jia2021frequency}. The representation of a semi-Markov process in terms of its embedding chain and waiting times is also called a Markov renewal process.

The mathematical theory of large deviations was initiated by Cram\'er \cite{cramer1938nouveau}, and was later developed by many mathematicians and physicists. In the pioneering work \cite{donsker1975asymptotic1, donsker1975asymptotic2, donsker1976asymptotic, donsker1983asymptotic}, Donsker and Varadhan have established the large deviation principle (LDP) for the empirical measure and for the empirical process associated with a large class of discrete-time and continuous-time Markov processes. The large deviations for the sample mean, empirical measure, and empirical processes are usually said to be at level 1, level 2, and level 3, respectively \cite{dembo1998large}. For discrete-time and continuous-time Markov chains, the number of jumps along each oriented edge of the transition graph per unit time is called the empirical flow. The large deviations for the empirical flow are often said to be at level 2.5 \cite{kesidis1993relative}, since it is between level 2 and level 3. For a discrete-time Markov chain $(X_n)_{n\geq 0}$, the large deviations for the empirical flow can be obtained directly from those for the empirical measure since the binary process $(X_n,X_{n+1})_{n\geq 0}$ is also a discrete-time Markov chain. However, things become much more complicated in the continuous-time case. For continuous-time Markov chains, Fortelle \cite{de2001large} proved a weak joint LDP for the empirical measure and empirical flow and obtained the corresponding rate function. Subsequently, Bertini et al. \cite{bertini2015large, bertini2015flows} proposed some Donsker-Varadhan-type compactness conditions and proved the full joint LDP for the empirical measure and empirical flow.

Since semi-Markov processes are direct generalizations of discrete-time and continuous-time Markov chains, a natural question is whether the large deviations at different levels can be extended to semi-Markov processes. Along this line, Mariani and Zambotti \cite{mariani2016large} proved the joint LDP for the empirical measure and empirical flow of semi-Markov processes with a \emph{finite} state space and expressed the corresponding rate function in terms of relative entropy. The large deviations for the empirical flow of Markov and semi-Markov processes have also been applied in statistical mechanics to study the fluctuation relations of thermodynamical systems far from equilibrium \cite{andrieux2007fluctuation, andrieux2008fluctuation, bertini2015flows, jia2016cycle, chen2016large, ge2017cycle, faggionato2017large, budhiraja2021large, ge2021martingale, jiang2022large}. Up till now, it is still unclear whether these finite state space results can be generalized to semi-Markov processes with a \emph{countable} state space. The aim of the present paper is to fill in this gap.

Here we investigate the joint large deviations for the empirical measure and empirical flow of semi-Markov processes with a countable state space. When the state space has an infinite number of states, the choice of the topology of the flow space, i.e. the value space of all empirical flows, will become very important. Following \cite{bertini2015large, bertini2015flows}, we consider two types of topology of the flow space: the bounded weak* topology and the strong topology. Specifically, we propose two different Donsker-Varadhan-type compactness conditions for the two types of topology, and prove the corresponding LDP for the empirical measure and empirical flow. This is the first main contribution of this paper. In the special case of continuous-time Markov chains, our compactness condition reduces to the one proposed in \cite{bertini2015large} when the flow space is endowed with the bounded weak* topology. However, when the flow space is endowed with the strong topology, our compactness condition is even weaker than the one proposed in \cite{bertini2015flows}.

In \cite{mariani2016large}, Mariani and Zambotti also provided the explicit expression of the marginal rate function for the empirical measure, but they have not given the explicit expression of the marginal rate function for the empirical flow. Here we propose two strong compactness conditions, one for the embedded chain and one for the waiting time distributions, and find that the marginal rate function for the empirical flow can be written down explicitly under these compactness conditions. The relationship between these compactness conditions and the geometric ergodicity of the embedded chain is also clarified. This is
the second main contribution of this paper.

The present paper is organized as follows. In Section \ref{definizioni}, we recall the definitions of Markov renewal processes and semi-Markov processes, as well as the definitions of the empirical measure and empirical flow. In Section \ref{sec:main results}, we propose two different compactness conditions when the flow space is endowed with the bounded weak* topology and the strong topology. Moreover, we also state the main results including the joint and marginal LDPs for the empirical measure and empirical flow. The proof of the LDP for the two types of topology will be given in Sections \ref{section:bwt topology} and \ref{section:s topology}, respectively. In Section \ref{section:contraction principle}, we derive the marginal rate function for the empirical flow.

\section{Preliminaries}\label{definizioni}

\subsection{Markov renewal processes and semi-Markov processes}\label{sec:def and ass}
Let $V$ be a countable set endowed with the discrete topology and the associated Borel $\sigma$-algebra is the collection of all subsets of $V$. The set $[0,\infty]$ is equipped with the topology that is compatible with the natural topology on $[0,\infty)$ so that $(s,\infty]$ is open for any $s\geq 0$. For any Polish space $\mathcal{X}$, let $\mathcal{P}(\mathcal{X})$ denote the collection of Borel probability measures on $\mathcal{X}$. For any $\mu \in \mathcal{P}(\mathcal{X})$ and $f\in L^1(\mu)$, let $\langle\mu,f\rangle$ or $\mu(f)$ denote the integral of $f$ with respect to $\mu$. The set $\mathcal{P}(\mathcal{X})$ is equipped with the topology of weak convergence and the associated Borel $\sigma$-algebra.

We first recall the definition of Markov renewal processes, also called ($J$-$X$)-processes \cite{limnios2012semi}. Here, we adopt the definition in \cite{mariani2016large} and \cite[Chapter VII.4]{asmussen2008applied}.

\begin{definition}\label{def:markov renewal process}
The process $(X,\tau) = \{(X_k)_{k\ge 0},(\tau_{k})_{k\ge 1}\}$ defined on a probability space $(\Omega,\mathcal {F},\mathbb{P})$ is called a \emph{Markov renewal process} if
\begin{itemize}
\item[(a)] $X = (X_k)_{k\geq 0}$ is a discrete-time time-homogeneous Markov chain with countable state space $V$ and transition probability matrix $P = (p_{xy})_{x,y\in V}$.
\item[(b)] $\tau = (\tau_{k})_{k\geq 1}$ is a sequence of positive and finite random variables such that conditioned on $(X_k)_{k\ge 0}$, the random variables $(\tau_{k})_{k\ge 1}$ are independent and have distribution
\begin{equation*}
\mathbb{P}\left(\tau_{i+1}\in\cdot\,|\,(X_k)_{k\geq 0}\right) = \psi_{X_{i},X_{i+1}}(\cdot),
\end{equation*}
where $\psi_{xy}\in \mathcal{P}(0,\infty)$ for any $x,y\in V$. The matrix $\Psi=(\psi_{xy})_{x,y\in V}$ is called the \emph{waiting time matrix}. Note that it is a matrix of probability measures. The pair $(P,\Psi)$ is called the \emph{transition kernel}.
\end{itemize}
\end{definition}

We next recall the definition of semi-Markov processes.

\begin{definition}
For any $t \ge 0$ and $n \ge 1$, let
\begin{equation}\label{def}
S_n = \sum_{i=1}^n \tau_i,\;\;\;
N_t = \sum_{n=1}^\infty 1_{(S_n\leq t)} = \inf\left\{n\geq 0: S_{n+1}>t \right\},
\end{equation}
where $\inf \emptyset := \infty$ and $1_A$ is the indicator function of the set $A$. The process $\xi=(\xi_t)_{t\ge 0}$ with $\xi_t:=X_{N_t}$ is called the \emph{semi-Markov process} associated with the Markov renewal process $(X,\tau)$. Clearly, $\xi$ is a jump process whose trajectories are right-continuous on the state space $V$.
\end{definition}

Clearly, $S_n$ represents the $n$th jump time of the semi-Markov process $\xi$, $N_t$ represents the number of jumps of $\xi$ up to time $t$, and $X = (X_k)_{k\ge 0}$ is the embedded chain of $\xi$. It is easy to see that $N_t=n$ if and only if  $S_n\le t <S_{n+1}.$ In particular, if all the waiting times are equal to $1$, i.e. $\psi_{xy} = \delta_1$ for any $x,y\in V$ with $\delta_1$ being the point mass at $1$, then $\xi$ reduces to a discrete-time Markov chain. If all the waiting times are exponentially distributed, i.e. $\psi_{xy}({\rm d}t) = q_x e^{-q_xt}{\rm d}t$ for any $x,y\in V$, then $\xi$ reduces to a continuous-time Markov chain. In the following, we do not distinguish the Markov renewal process $(X,\tau)$ and the associated semi-Markov process $\xi$ since they are totally equivalent.

The transition diagram of the embedded chain $X$ is a directed graph $(V,E)$, where the edge set
\begin{equation*}
E = \left\{(x,y) \in V\times V  \,:\, p_{xy}>0\right\}
\end{equation*}
is composed of all directed edges with positive transition probabilities. Throughout this paper, we impose the following basic assumptions on the Markov renewal process $(X,\tau)$.

\begin{assumption}\label{ass:irreducibility}
The embedded chain $X$ is irreducible.
\end{assumption}

\begin{assumption}\label{ass:recurrence}
The embedded chain $X$ is recurrent.
\end{assumption}

\begin{assumption}\label{ass:dti}
The waiting time distribution only depends on the current state, i.e. $\psi_{xy}=\psi_{x}$ for any $x,y\in V$.
\end{assumption}

\begin{assumption}\label{ass:locally finite}
For any $x\in V$, the number of incoming edges into $x$ of the graph $(V,E)$ and the number of outgoing edges from $x$ are both finite.
\end{assumption}

Assumptions \ref{ass:irreducibility} and \ref{ass:recurrence} are standard in the literature \cite{asmussen2008applied}. Assumption \ref{ass:dti} is also common in previous papers \cite{mariani2016large} which guarantees that the large deviation rate function of the empirical measure and empirical flow has the form of relative entropy. Note that if Assumption \ref{ass:dti} does not hold for $(X,\tau)$, then the process $(Y,\tau) = \{(Y_k)_{k\ge 0},(\tau_{k})_{k\ge 1}\}$ with $Y_k:=(X_k,X_{k+1})$ is also a Markov renewal process which satisfies Assumption \ref{ass:dti} since
\begin{equation*}
\mathbb{P}(\tau_{k+1}\in\cdot\;|\;(X_k)_{k\ge 0}) = \psi_{X_{k},X_{k+1}}(\cdot)=\psi_{Y_k}(\cdot).
\end{equation*}
Moreover, we emphasize that Assumption \ref{ass:locally finite} is not needed if $\xi$ is a continuous-time Markov chain \cite{bertini2015large, bertini2015flows}. Here we need this assumption in order to prove Lemma \ref{lemma:IhHdelta lower semicontinuous} below. In \cite{bertini2015large, bertini2015flows}, the counterpart of this lemma is proved by using the classical level 3 large deviation results of Donsker and Varadhan \cite{donsker1983asymptotic} and the contraction principle. However, since there is no level 3 large deviation results for Markov renewal processes, we need to impose the above assumption to overcome some technical difficulties.

Recall that the semi-Markov process $\xi$ is called \emph{non-explosive} if the explosion time $S_{\infty} := \lim_{n\to\infty} S_n$ satisfies
\begin{equation*}
\Pnum_x(S_{\infty} = \infty) = 1
\end{equation*}
for all $x\in V$, where $\Pnum_x(\cdot) = \Pnum(\cdot|X_0 = x)$. In fact, Assumptions \ref{ass:irreducibility} and \ref{ass:recurrence} ensure that $\xi$ is non-explosive. The proof is similar to the one given in \cite{norris1998markov} for continuous-time Markov chains and thus is omitted.

\subsection{Empirical measure and empirical flow}\label{sec:emef}
Next we introduce the definitions of the empirical measure and empirical flow for Markov renewal processes \cite{mariani2016large}. For any $t>0$, the empirical measure $\mu_t :\Omega \to \mathcal P(V \times (0,\infty])$ of $(X,\tau)$ is defined by
\begin{equation}\label{def:emp mes}
\mu_t = \frac{1}{t}\int_0^{t}\delta_{(X_{N_s},\tau_{N_s+1})}{\rm d}s,
\end{equation}
where $\delta.$ denotes the Dirac delta measure. In other words, for any $x\in V$ and $A\subset(0,\infty]$, we have
\begin{equation*}
\mu_t(\{x\},A) = \frac{1}{t}\int_0^{t}1_{(X_{N_s}=i,\tau_{N_s+1}\in A)}{\rm d}s.
\end{equation*}
Then $\mu_t$ is a random probability measure such that for any Borel measurable function $f$ on $V \times (0,\infty]$,
\begin{equation*}
\begin{split}
\langle\mu_t,f\rangle =\frac{1}{t}\int_0^{t} f(X_{N_s},\tau_{N_s+1}) {\rm d} s=
\sum_{k=1}^{N_t}\frac{\tau_{k}}{t} \,f(X_{k-1},\tau_k)+\frac{t-S_{N_t}}{t}f(X_{N_t},\tau_{N_t+1}).
\end{split}
\end{equation*}
Moreover, for any $t>0$, the empirical flow $Q_t:\Omega\to [0,\infty]^E$ of $(X,\tau)$ is defined by
\begin{equation}\label{def:empflow}
Q_t = \frac{1}{t} \sum_{k=1}^{N_t+1}\delta_{(X_{k-1},X_{k})}.
\end{equation}
In other words, for any $x,y\in V$, we have
\begin{equation*}
Q_t(x,y) = \frac{1}{t} \sum_{k=1}^{N_t+1}1_{(X_{k-1}=x,X_{k}=y)}.
\end{equation*}
Intuitively, $Q_t(x,y)$ represents the number of times that $\xi$ transitions from $x$ to $y$ per unit time. For any $n\ge 0$, let
\begin{equation*}
\mathcal{F}_n=\sigma\left((X_{k},\tau_{k})_{0\le k\le n}\right),\quad\mathcal{F}_{\infty}=\sigma\left(\bigcup_{n=1}^{\infty}\mathcal{F}_n\right).
\end{equation*}
be a filtration. Then $N_t+1$ is an $\{\mathcal{F}_n\}$-stopping time. It is easy to see that $\langle\mu_t,f\rangle$ and $Q_t(x,y)$ are $\mathcal{F}_{N_t+1}$-measurable random variables.

\begin{remark}\label{empiricalmeasure}
For the semi-Markov process $\xi$, a more natural definition of the empirical measure $\pi_t:\Omega\to\mathcal{P}(V)$ is given by
\begin{equation}\label{def:empirical measure pi_t}
\pi_t(x)=\frac{1}{t}\int_0^t1_{(\xi_s=x)}{\rm d}s=\mu_t(x,(0,\infty]),\quad x\in V.
\end{equation}
Comparing \eqref{def:emp mes} and \eqref{def:empirical measure pi_t}, we can see that $\pi_t$ only focuses on the spatial variable and $\mu_t$ focus on both the spatial and temporal variables. The reason why we use $\mu_t$ rather than $\pi_t$ in the study of the joint LDP is that only by using $\mu_t$ can we obtain a concise expression of the rate function. It is easy to verify that $(X_k,\tau_{k+1})_{k\ge 0}$ is a Markov process and hence $(X_{N_t},\tau_{N_t+1})_{t\ge 0}$ is also a semi-Markov process. In fact, the empirical measure $\mu_t$ for the process $\xi_t = X_{N_t}$ is exactly the empirical measure $\pi_t$ for the process $(X_{N_t},\tau_{N_t+1})_{t\ge 0}$.
\end{remark}

Let $L^1(E)$ denote the set of absolutely summable functions on $E$ and let $\|\cdot\|$ denote the associated $L^1$-norm. The set of nonnegative elements of $L^1(E)$ is denoted by $L^1_+(E)$, which is called the \emph{flow space}. An element in $L^1_+(E)$ is called a \emph{flow}. Since $(X,\tau)$ is non-explosive, it is easy to see that $Q_t\in L^1_+(E)$ for any $t>0$. For any flow $Q\in L^1_+(E)$, let the exit-current $Q^+\colon V\to \mathbb R$ and entrance-current $Q^-\colon V\to \mathbb R$ be defined by
\begin{equation}
\label{exit entrance current}
Q^+ (x)= \sum _{y: \, (x,y)\in E} Q(x,y),\;\;\;Q^-(x)= \sum_{y:\, (y,x)\in E} Q(y,x).
\end{equation}
Intuitively, the exit-current and entrance-current at $x$ are the flows exiting from $x$ and entering into $x$, respectively. In particular, if $Q^+(x)=Q^-(x)$ for any $x\in V$, then $Q$ is called a \emph{divergence-free flow} and we define
\begin{equation}\label{def:current J}
Q_x = Q^+(x) = Q^-(x),\qquad x\in V.
\end{equation}
Note that both currents map $L^1_+(E)$ into $L^1_+(V)$. Here $L^1_+(V)$ is defined in the same way as $L^1_+(E)$ and let $\|\cdot\|$ denote the associated $L^1$-norm.

In this paper, we will consider two types of topology on $L^1(E)$: the strong topology generated by the $L^1$-norm and the bounded weak* topology, which is defined as follows \cite{bertini2015large}. Let $C_0(E)$ denote the collection of continuous functions $f:E\to\Rnum$ vanishing at infinity, i.e. for any $\epsilon>0$, there exists a finite set $K\subset E$ such that
\begin{equation*}
\sup_{(x,y)\in K^c}|f(x,y)|\le \epsilon,
\end{equation*}
and it is endowed with the $L^\infty$-norm. It is well-known that the dual space of $C_0(E)$ is $L^1(E)$ endowed with the strong topology. For any $\ell>0$, let $B_\ell := \big\{Q\in L^1(E):\,\|Q\| \le\ell\big\}$ denote the closed ball of radius $\ell$ in $L^1(E)$. In view of the separability of $C_0(E)$ and the Banach-Alaoglu theorem, the closed ball $B_\ell$ endowed with the weak* topology is a compact Polish space. The bounded weak* topology on $ L^1(E)$ is then defined by declaring the set $A\subset L^1(E)$ to be open if and only if $A\cap B_\ell$ is open in the weak* topology of $B_\ell$ for any $\ell>0$. In fact, the bounded weak* topology is stronger than the weak* topology and is weaker than the strong topology. Moreover, for each $\ell>0$, the closed ball $B_\ell$ is compact with respect to the bounded weak* topology. In particular, the three types of topology on $L^1(E)$ coincide only when $E$ is finite. The proof of the above statements can be found in \cite[Section 2.7]{megginson2012introduction}.

For both the strong topology and the bounded weak* topology, we regard $L^1_+(E)$ as a closed subset of $L^1(E)$ and endow it with the relative topology and the associated Borel $\sigma$-algebra. The product space $V\times (0,\infty]$ is equipped with the product topology so that $V\times (0,\infty]$ is a Polish space. The set $\mathcal{P}(V\times (0,\infty])$ of Borel probability measures on $V\times (0,\infty]$ is endowed with the topology of weak convergence. Moreover, the product space $\Lambda = \mathcal{P}(V\times(0,\infty])\times L^1_+(E)$ is endowed with the product topology. Then for any $t>0$, the pair $(\mu_t,Q_t)$, where $\mu_t$ is the empirical measure and $Q_t$ is the empirical flow, can be viewed as a measurable map from $\Omega$ to $\Lambda$.

\section{Main results}\label{sec:main results}

\subsection{Compactness conditions}\label{sec:compactness condition}
The aim of the present paper is to establish the LDP for the empirical measure and empirical flow of semi-Markov processes. Similarly to the LDP of Markov processes \cite{donsker1983asymptotic}, we need some compactness conditions to control the convergence rate at infinity when the state space $V$ has an infinite number of states. For Markov processes, the infinitesimal generator is often used to establish the compactness conditions \cite{donsker1983asymptotic}. In the semi-Markov case, the \emph{transition kernel} $(P,\Psi)$ plays the role of the generator.

Before stating the compactness conditions, we need the following notation. Let $\zeta:V\rightarrow[0,\infty]$ be a function defined by
\begin{equation}\label{def:zeta}
\zeta(x) = \sup \left\{\lambda\in\mathbb{R}:\: \psi_x\left(e^{\lambda \tau}\right) <\infty \right\},
\end{equation}
where $\psi_x(e^{\lambda \tau})=\int_{(0,\infty)}e^{\lambda s}\psi_{x}({\rm d}s)$. For any $x\in V$, let $\theta_x:(0,\infty)\to(-\infty,\infty]$ be a function defined by
\begin{equation}\label{def:theta_x}
\theta_x(t) = \sup \left\{\lambda\in\mathbb{R}:\: \psi_x\left(e^{\lambda \tau}\right) \le t \right\}.
\end{equation}
For any $f: V\to(0,\infty)$, let $Pf:V\to(0,\infty]$ be a function defined by
\begin{equation*}
Pf(x) = \sum_{y:\,(x,y)\in E}p_{xy}f(y),
\end{equation*}
and let $Lf:V\to(-\infty,\infty]$ be a function defined by $Lf(x) = \theta_x(f(x))$.

\begin{lemma}\label{lemma:continuous for theta}
Let $\theta_x$ be the function defined in \eqref{def:theta_x}. Then
\begin{itemize}
\item[(a)] $\theta_x$ is an increasing continuous function. Moreover, $\theta_x$ is strictly increasing on $(0,\psi_x(e^{\zeta(x)\tau}))$.
\item[(b)] $\theta_x(1)=0$, $\lim_{t\downarrow 0}\theta_x(t) = -\infty$, and $\lim_{t\uparrow\infty}\theta_x(t) = \zeta(x)$.
\item[(c)] For any $t>0$ and $x\in V$, we have
\begin{equation*}
\psi_x\left(e^{\theta_x(t)\tau}\right) = \left\{
\begin{aligned}
&\psi_x\left(e^{\zeta(x)\tau}\right),
&&\textrm{if }t>\psi_x\left(e^{\zeta(x)\tau}\right), \\
& t,  &&\textrm{otherwise}.
\end{aligned}\right.
\end{equation*}
\end{itemize}	
\end{lemma}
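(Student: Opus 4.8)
The plan is to reduce everything to the elementary analysis of the moment generating function
\begin{equation*}
\varphi_x(\lambda) := \psi_x(e^{\lambda\tau}) = \int_{(0,\infty)} e^{\lambda s}\,\psi_x({\rm d}s) \in (0,\infty],\qquad \lambda\in\Rnum,
\end{equation*}
of which $\theta_x$ is essentially a generalized inverse. First I would record the basic properties of $\varphi_x$. Since $s\mapsto e^{\lambda s}$ is pointwise strictly increasing in $\lambda$ for every $s>0$ and $\psi_x$ is a probability measure on $(0,\infty)$, the function $\varphi_x$ is strictly increasing on the set $\{\lambda:\varphi_x(\lambda)<\infty\}$; moreover this set is a half-line of the form $(-\infty,\zeta(x))$ or $(-\infty,\zeta(x)]$, because $e^{\lambda s}\le1$ for $\lambda\le0$ and $e^{\lambda s}\le e^{\mu s}$ for $0<\lambda<\mu$. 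Dominated convergence (dominating $e^{\lambda s}$ for $\lambda$ near a point $\lambda_0<\zeta(x)$ by $e^{(\lambda_0-\epsilon)s}+e^{(\lambda_0+\epsilon)s}$ with $\epsilon>0$ small) shows that $\varphi_x$ is continuous on $(-\infty,\zeta(x))$; the same theorem gives $\varphi_x(\lambda)\to0$ as $\lambda\to-\infty$, while monotone convergence gives $\varphi_x(\lambda)\uparrow\psi_x(e^{\zeta(x)\tau})=:m_x$ as $\lambda\uparrow\zeta(x)$, where we interpret $\psi_x(e^{\zeta(x)\tau})$ as this limit (so that it also makes sense when $\zeta(x)=\infty$, in which case $m_x=\infty$) and where $m_x\in[1,\infty]$ with $m_x=1$ precisely when $\zeta(x)=0$. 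Together with the intermediate value theorem, these facts show that $\varphi_x$ restricts to a \emph{continuous strictly increasing bijection} from $(-\infty,\zeta(x))$ onto $(0,m_x)$.

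Next I would compute $\theta_x$ explicitly. Fix $t\in(0,\infty)$ and put $D_t=\{\lambda:\varphi_x(\lambda)\le t\}$; by monotonicity of $\varphi_x$ and $\varphi_x(\lambda)\to0$ as $\lambda\to-\infty$, the set $D_t$ is a nonempty interval unbounded below, and it is bounded above (because $\varphi_x(\lambda)>t$ eventually), so $\theta_x(t)=\sup D_t\in\Rnum$. If $t<m_x$, let $\lambda_0\in(-\infty,\zeta(x))$ be the unique point with $\varphi_x(\lambda_0)=t$ provided by the bijection above; then $D_t=(-\infty,\lambda_0]$ and $\theta_x(t)=\lambda_0$, i.e.\ $\theta_x$ restricted to $(0,m_x)$ is the inverse of $\varphi_x$ restricted to $(-\infty,\zeta(x))$. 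If instead $\zeta(x)<\infty$ and $t\ge m_x$, then $\varphi_x(\lambda)<m_x\le t$ for every $\lambda<\zeta(x)$, $\varphi_x(\zeta(x))=m_x\le t$, and $\varphi_x(\lambda)=\infty>t$ for $\lambda>\zeta(x)$, so $D_t=(-\infty,\zeta(x)]$ and $\theta_x(t)=\zeta(x)$.

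From this description the three assertions follow at once. For (a): on $(0,m_x)$ the function $\theta_x$ is the inverse of a continuous strictly increasing bijection, hence is itself continuous and strictly increasing with range $(-\infty,\zeta(x))$; on $[m_x,\infty)$ (which is nonempty only when $\zeta(x)<\infty$) it equals the constant $\zeta(x)$; and the two regimes match continuously at $t=m_x$ since $(\varphi_x|_{(-\infty,\zeta(x))})^{-1}(t)\uparrow\zeta(x)$ as $t\uparrow m_x$ by surjectivity of the inverse onto $(-\infty,\zeta(x))$. For (b): $\varphi_x(0)=1\le m_x$ gives $\theta_x(1)=0$ (either $\theta_x(1)=(\varphi_x|_{(-\infty,\zeta(x))})^{-1}(1)=0$ when $1<m_x$, or directly $D_1=(-\infty,0]$ when $m_x=1$); then $\lim_{t\downarrow0}\theta_x(t)=\inf(-\infty,\zeta(x))=-\infty$ and $\lim_{t\uparrow\infty}\theta_x(t)=\zeta(x)$ follow from the explicit form, using surjectivity of the inverse onto $(-\infty,\zeta(x))$ together with the constant regime $\theta_x\equiv\zeta(x)$ on $[m_x,\infty)$ when $m_x<\infty$. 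For (c): if $t>m_x$ then $\theta_x(t)=\zeta(x)$, hence $\psi_x(e^{\theta_x(t)\tau})=m_x=\psi_x(e^{\zeta(x)\tau})$; if $t\le m_x$ then $\theta_x(t)\in(-\infty,\zeta(x)]$ and $\psi_x(e^{\theta_x(t)\tau})=\varphi_x(\theta_x(t))=t$, either because $\varphi_x(\lambda_0)=t$ by construction (when $t<m_x$) or because $\theta_x(m_x)=\zeta(x)$ and $\varphi_x(\zeta(x))=m_x=t$ (when $t=m_x$, which forces $m_x<\infty$).

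I do not expect any genuine obstacle here: this is a soft real-analysis argument. The only points requiring a little care are keeping the boundary case $t=m_x$ and the two degenerate cases $\zeta(x)=0$ (where $m_x=1$) and $\zeta(x)=\infty$ (where the constant regime $[m_x,\infty)$ is empty) straight, and invoking the correct one-sided convergence theorems for $\varphi_x$ — dominated convergence in the open interval $(-\infty,\zeta(x))$ and monotone convergence at the right endpoint $\zeta(x)$.
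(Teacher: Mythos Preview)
Your proposal is correct and follows essentially the same approach as the paper: both analyze the moment generating function $\varphi_x(\lambda)=\psi_x(e^{\lambda\tau})$ (which the paper calls $\Theta_x$), establish that it is continuous and strictly increasing on $(-\infty,\zeta(x))$ with the appropriate limits, and then read off (a)--(c) from the fact that $\theta_x$ is its generalized inverse. Your write-up is simply more explicit than the paper's about the boundary and degenerate cases ($t=m_x$, $\zeta(x)=0$, $\zeta(x)=\infty$), which the paper subsumes under ``we immediately obtain (a)--(c).''
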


\begin{proof}
For any $x\in V$, let $\Theta_x(\lambda)=\psi_x(e^{\lambda \tau})$ be a function on $\mathbb{R}$. By the dominated convergence theorem, it is easy to see that $\Theta_x\in C((-\infty,\zeta(x)))$ is a strict increasing function. Moreover, it is clear that $\Theta_x(0)=1$ and
\begin{equation*}
\lim_{\lambda\to-\infty}\Theta_x(\lambda)=0, \qquad \Theta_x(\lambda)=\infty,\quad\lambda> \zeta(x).
\end{equation*}
Since $\theta_x$ is the inverse function of $\Theta_x$ for $0<t<\Theta_x(\zeta(x))$, we immediately obtain (a)-(c).
\end{proof}

In \cite{donsker1976asymptotic,donsker1983asymptotic}, Donsker and Varadhan proposed a compactness condition for the generator which was then used to prove the LDP for the empirical measure of Markov processes. In \cite{bertini2015flows, bertini2015large}, Bertini et al. proposed a Donsker-Varadhan-type condition which was then used to prove the LDP for the empirical flow of continuous-time Markov chains. In what follows, we will provide two Donsker-Varadhan-type conditions which are needed for the joint LDP for the empirical measure and empirical flow of semi-Markov processes when the flow space $L^1_+(E)$ is endowed with the bounded weak* topology and strong topology. The following compactness condition is needed for the joint LDP when $L^1_+(E)$ is endowed with the bounded weak* topology.

\begin{condition}\label{condition:ccomp}
There exists a sequence of functions $u_n\colon V \to (0,\infty)$ such that
\begin{itemize}
\item [(a)] For any $x\in V$ and $n\geq 0$, we have $Pu_n(x)<\infty$;
\item [(b)] There exists a constant $c>0$ such that $u_n(x)\ge c$ for any $x\in V$ and $n\geq 0$;
\item[(c)] For any $x\in V$, there exists a constant $C_x$ such that $u_n(x)\le C_x$ for any $n\geq 0$;
\item[(d)] The functions $u_n/Pu_n$ converge pointwise to some $\hat{u}:V\to (0,\infty)$;
\item[(e)] For each $\ell\in \mathbb R$, the level set $\big\{x \in V \,:\, L\hat{u}(x)\leq \ell\big\}$ is finite;
\item[(f)] There exist $\sigma,C>0$, $\eta\in(0,1)$, and a finite set $K\subset V$ such that $L\hat{u}(x)\geq -\sigma\theta_x(\eta)-C1_K(x)$ for any $x\in V$ and $\hat{u}(x)<\psi_x(e^{\zeta(x)\tau})$ for any $x\in K^c$.
\end{itemize}
\end{condition}

\begin{remark}
For continuous-time Markov chains, the term $C1_K$ in item (f) can be replaced by a constant $C$ and the condition $\hat{u}(x)<\psi_x(e^{\zeta(x)\tau})$ can be removed since $\psi_x(e^{\zeta(x)\tau}) = \infty$ for any $x\in V$. In this case, Condition \ref{condition:ccomp} reduces to the compactness condition proposed in \cite{bertini2015large}. Moreover, it follows from Lemma \ref{lemma:continuous for theta} that $L\hat{u}(x)=\theta_x(\hat{u}(x))\le \zeta(x)$ for any $x\in V$. Hence item (e) implies that the level sets of $\zeta$ are also finite.
\end{remark}

Since the strong topology is stronger than the bounded weak* topology, we need to impose a stronger compactness condition, which is essentially the Donsker-Varadhan-type condition for discrete-time Markov chains \cite{donsker1976asymptotic}. Both the following condition and Condition \ref{condition:ccomp} are needed for the joint LDP when $L^1_+(E)$ is endowed with the strong topology.

\begin{condition}\label{condition:ccomp2}
There exists a sequence of functions $u_n\colon V \to (0,\infty)$ such that
\begin{itemize}
\item [(a)] For any $x\in V$ and $n\geq 0$, we have $Pu_n(x)<\infty$;
\item [(b)] There exists a constant $c>0$ such that $u_n(x)\ge c$ for any $x\in V$ and $n\geq 0$;
\item [(c)] For any $x\in V$, there exists a constant $C_x$ such that $u_n(x)\le C_x$ for any $n\geq 0$;
\item [(d)] The functions $u_n/Pu_n$ converge pointwise to some $\hat{u}:V\to (0,\infty)$;
\item [(e)] For each $\ell\in \mathbb R$, the level set $\big\{x\in V \,:\, \log\hat{u}(x)\leq \ell\big\}$ is finite.
\end{itemize}
\end{condition}

It is clear that Condition \ref{condition:ccomp2} only depends on the embedded chain $X$ of the semi-Markov process and is independent of the waiting time distributions. When $L^1_+(E)$ is endowed with the strong topology, Bertini et al. \cite{bertini2015flows} have proposed another compactness condition for continuous-time Markov chains (see Condition \ref{condition:strong topology} below, which is rewritten for semi-Markov process). However, that condition is more complicated than Condition \ref{condition:ccomp2} and more difficult to verify. In fact, when $X$ is irreducible, Condition \ref{condition:ccomp2} is not only easier to verify, but also even weaker than Condition \ref{condition:strong topology}. The proof of this fact can be found in Appendix \ref{appendix: A}. This explains why we impose Condition \ref{condition:ccomp2} rather than Condition \ref{condition:strong topology} here.

\begin{remark}\label{equivalence}
For discrete-time Markov chains, we have $Lf(x)=\theta_x(f(x))=\log f(x)$ for any $f:V\rightarrow(0,\infty)$. Note that item (e) in Condition \ref{condition:ccomp2} implies that the set $K = \{x\in V:\log \hat{u}(x)\le -\log\eta\}$ is finite for any $\eta\in(0,1)$. Hence if we take $\sigma=1$, then we can always find $C>0$ such that $\log\hat{u}\ge -\sigma\log \eta-C1_{K}$. Furthermore, it is easy to check that $\psi_x(e^{\zeta(x)\tau}) = e^{\zeta(x)} = \infty$ for any $x\in V$. This shows if Condition \ref{condition:ccomp2} holds, then items (e) and (f) in Condition \ref{condition:ccomp} are automatically satisfied. Hence for discrete-time Markov chains, Condition \ref{condition:ccomp} is equivalent to Condition \ref{condition:ccomp2}.
\end{remark}

\subsection{Joint LDP for the empirical measure and empirical flow}\label{s:ldef}
Let $\mu$ and $\nu$ be two probability measures on a measurable space $(\mathcal{X},\mathcal{F})$. Recall that the relative entropy of $\mu$ with respect to $\nu$ has the following variational expression \cite{donsker1983asymptotic}:
\begin{equation}\label{def:H}
H(\mu\,|\,\nu) = \sup_{\varphi \in \mathcal{B}_{b}(\mathcal{X})} \left\{\langle\mu,\varphi\rangle-\log \langle\nu,e^{\varphi}\rangle\right\}
= \left\{\begin{aligned}
&\int_{\mathcal{X}}\left(\log\frac{{\rm d}\mu}{{\rm d}\nu}\right){\rm d}\mu, && \text{if } \mu \ll\nu\,,\\
&\infty, && \text{otherwise},
\end{aligned}\right.
\end{equation}
where $\mathcal{B}_{b}(\mathcal{X})$ denotes the space of bounded measurable functions on $\mathcal{X}$. Moreover, if $\mathcal{X}$ is a Polish space and $\mathcal{F}$ is the associated Borel $\sigma$-field, then \eqref{def:H} still holds when $\mathcal{B}_{b}(\mathcal{X})$ is replaced by $C_{b}(\mathcal{X})$, the space of bounded continuous functions on $\mathcal{X}$ \cite{donsker1983asymptotic}. If we set $\varphi'=\varphi-\log\langle\nu,e^{\varphi}\rangle$, then $\langle\mu,\varphi'\rangle = \langle\mu,\varphi\rangle - \log\langle\nu,e^{\varphi}\rangle$ and it is easy to see that the relative entropy $H(\mu\,|\,\nu)$ can be represented as
\begin{equation}\label{relative entropy}
H(\mu\,|\,\nu) = \sup_{\{\varphi\in C_b(\mathcal{X}):\langle\nu,e^{\varphi}\rangle=1\}} \langle\mu,\varphi\rangle. 	
\end{equation}

Let $\mathcal{D}$ be a subset of $\Lambda = \mathcal{P}(V\times(0,\infty])\times L^1_+(E)$ defined by
\begin{equation}\label{def:D0}
\mathcal{D}= \left\{(\mu,Q)\in \Lambda\, :\:
\int_{(0,\infty]}\frac{1}{t}\,\mu(x,{\rm d}t) = Q^+(x) = Q^-(x), \, \forall  x\in V\right\},
\end{equation}
where $Q^+$ and $Q^-$ are the exit-current and entrance-current of the flow $Q$, respectively. For each $(\mu,Q) \in \mathcal{D}$, we introduce the transition probabilities $(\tilde{Q}_{xy})_{x,y\in V}$ and the waiting time distributions $(\tilde{\mu}_x)_{x\in V}$ as
\begin{equation}
\label{def:pQ,psi mu}
\tilde{Q}_{xy} = \frac{Q(x,y)}{Q_x},
\quad \tilde{\mu}_x(\mathrm{d}t)=\frac{1}{Q_x t}\,\mu(x,{\rm d}t),
\end{equation}
where $Q_x = Q^+(x) = Q^-(x)$ is defined in \eqref{def:current J} and we set $\tilde{Q}_{xy} = p_{xy}$ and $\tilde{\mu}_x=\psi_x$ if $Q_x = 0$. Let $I:\Lambda \to [0,\infty]$ be a function defined by
\begin{equation}\label{def:I}
I(\mu,Q) = \left\{
\begin{aligned}
&\sum_{x\in V}
\left[
Q_xH\big( \tilde{Q}_{x,\cdot} \, |\,  p_{x,\cdot}  \big)
+ Q_xH\big( \tilde{\mu}_x \, | \, \psi_x  \big) +  \zeta(x)  \mu(x,\{\infty\}) \right],
&&\textrm{if } (\mu,Q)\in \mathcal{D}, \\
& \infty,  &&\textrm{otherwise}.
\end{aligned}\right.
\end{equation}

We are now in a position to state the main results of the present paper. The following theorem, whose proof can be found in Section \ref{section:bwt topology}, gives the joint LDP for the empirical measure and empirical flow when $L^1_+(E)$ is endowed with the bounded weak* topology.

\begin{theorem}\label{LDP:bounded weak topology}
Suppose that Assumptions \ref{ass:irreducibility}-\ref{ass:locally finite} and Condition \ref{condition:ccomp} hold. Let $L^1_+(E)$ be endowed with the bounded weak* topology. Then under $\mathbb{P}_x$, the law of $(\mu_t,Q_t)$ satisfies an LDP with good and convex rate function $I:\Lambda\to [0,\infty]$. In particular, for any closed set $\mathcal{C}\subset \Lambda$ and open set $\mathcal{A} \subset \Lambda$, we have
\begin{equation}\label{neq:ubldp}
\begin{split}
& \varlimsup_{t\to\infty}\;\frac 1t \log \mathbb  P_x \Big( (\mu_t,Q_t) \in \mathcal{C} \Big)
\le -\inf_{(\mu,Q)\in \mathcal{C}} I(\mu,Q),\\
& \varliminf_{t\to\infty}\;\frac 1t \log \mathbb P_x \Big( (\mu_t,Q_t) \in \mathcal A \Big)
\ge -\inf_{(\mu,Q)\in \mathcal A} I(\mu,Q).
\end{split}
\end{equation}
\end{theorem}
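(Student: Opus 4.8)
The plan is to follow the classical Donsker--Varadhan tilted-measure route: establish the large deviation upper bound for closed sets and the lower bound for open sets separately, obtaining the good convex rate function as a Legendre-type transform which is then identified with $I$. The basic device is a family of tilted Markov renewal processes. Given $F\in C_0(E)$, a strictly positive $w\colon V\to(0,\infty)$ and a function $g\colon V\times(0,\infty)\to\Rnum$ with $\psi_x(e^{g(x,\cdot)})<\infty$, let $\tilde p_{xy}=e^{F(x,y)-U(x)}p_{xy}w(y)/w(x)$, where $e^{U(x)}w(x)=\sum_{y}e^{F(x,y)}p_{xy}w(y)$, and $\tilde\psi_x({\rm d}s)=e^{g(x,s)}\psi_x({\rm d}s)/\psi_x(e^{g(x,\cdot)})$. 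A direct computation shows that on $\mathcal{F}_{N_t+1}$ the Radon--Nikodym derivative of the tilted law $\tilde\Pnum_x$ with respect to $\Pnum_x$ equals
\[
\exp\!\Big(t\big[\langle Q_t,F\rangle-\langle Q_t^+,U+\Lambda\rangle+\langle\mu_t,h\rangle\big]\Big)\cdot(\text{boundary factor}),
\]
where $h(x,s)=g(x,s)/s$, $\Lambda(x)=\log\psi_x(e^{g(x,\cdot)})$, and the boundary factor is bounded in terms of $w$ and of the survival tail of $\tilde\psi_{X_{N_t}}$. This gives an exponential $\{\mathcal{F}_n\}$-martingale, and optional stopping at the stopping time $N_t+1$ produces the usual Chebyshev-type estimates for $(\mu_t,Q_t)$.

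For the upper bound, the martingale above yields, for every compact $\mathcal{K}\subset\Lambda$,
\[
\varlimsup_{t\to\infty}\frac1t\log\Pnum_x\big((\mu_t,Q_t)\in\mathcal{K}\big)\le-\inf_{(\mu,Q)\in\mathcal{K}}J(\mu,Q),
\]
where $J(\mu,Q)=\sup_{F,w,g}\big[\langle Q,F\rangle-\langle Q^+,U+\Lambda\rangle+\langle\mu,h\rangle\big]$. To upgrade this to arbitrary closed sets one needs exponential tightness. In the bounded weak* topology a subset of $\Lambda$ is precompact once $\sup\|Q\|<\infty$ and the spatial marginals of the measures are tight on $V$ (the temporal factor $(0,\infty]$ being compact), so it suffices to exhibit, for each $\ell>0$, a compact $\mathcal{K}_\ell$ with $\varlimsup_t\frac1t\log\Pnum_x((\mu_t,Q_t)\notin\mathcal{K}_\ell)\le-\ell$. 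This is exactly what Condition~\ref{condition:ccomp} is designed for: taking $w=u_n$ and the waiting-time tilt $g(x,s)=\theta_x(\hat u(x))\,s$ --- admissible off a finite set because there $\hat u(x)<\psi_x(e^{\zeta(x)\tau})$, whence $\psi_x(e^{\theta_x(\hat u(x))\tau})=\hat u(x)<\infty$ by Lemma~\ref{lemma:continuous for theta}(c) --- the associated supermartingale, combined with the finiteness of the level sets $\{L\hat u\le\ell\}$ (item (e)) and the lower bound $L\hat u(x)\ge-\sigma\theta_x(\eta)-C1_K(x)$ (item (f)), simultaneously controls the probability that $\|Q_t\|$ is large and the probability that $\mu_t$ charges the complement of a large finite subset of $V$; items (a)--(d) let one pass to the limit $n\to\infty$. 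Combined with lower semicontinuity of $I$ this also shows $I$ has compact level sets.

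For the lower bound, fix $(\mu,Q)\in\mathcal{D}$ with $I(\mu,Q)<\infty$. First treat the ``good'' case in which the kernel $(\tilde Q_{xy},\tilde\mu_x)$ of \eqref{def:pQ,psi mu} defines an irreducible, positive recurrent, non-explosive Markov renewal process $\tilde\Pnum_x$ with finitely supported stationary empirical measure $\mu$ and stationary empirical flow $Q$, with $\mu(x,\{\infty\})=0$ for all $x$ and $I(\mu,Q)=\sum_x Q_x[H(\tilde Q_{x,\cdot}|p_{x,\cdot})+H(\tilde\mu_x|\psi_x)]<\infty$. Under $\tilde\Pnum_x$ the law of large numbers gives $(\mu_t,Q_t)\to(\mu,Q)$, while the ergodic theorem applied to the additive functional defining $\log\frac{{\rm d}\Pnum_x}{{\rm d}\tilde\Pnum_x}$ along the embedded chain gives $\frac1t\log\frac{{\rm d}\Pnum_x}{{\rm d}\tilde\Pnum_x}\to-I(\mu,Q)$ in $\tilde\Pnum_x$-probability. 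Restricting the change-of-measure identity to $\{(\mu_t,Q_t)\in\mathcal{A}\}\cap\{|\frac1t\log\frac{{\rm d}\Pnum_x}{{\rm d}\tilde\Pnum_x}+I(\mu,Q)|<\epsilon\}$, an event of $\tilde\Pnum_x$-probability tending to one for any open $\mathcal{A}\ni(\mu,Q)$, then gives $\varliminf_t\frac1t\log\Pnum_x((\mu_t,Q_t)\in\mathcal{A})\ge-I(\mu,Q)-\epsilon$. The general case follows by an $I$-density argument: one constructs good $(\mu_k,Q_k)\to(\mu,Q)$ with $I(\mu_k,Q_k)\to I(\mu,Q)$ by restricting $(\tilde Q,\tilde\mu)$ to an exhausting sequence of finite subsets of $V$, repairing irreducibility and recurrence with vanishingly small perturbations, and realizing any atom $\mu(x,\{\infty\})>0$ as a limit of waiting-time distributions tilted towards $\zeta(x)$; lower semicontinuity of $I$ then closes the gap.

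Two steps carry the real weight. The first is the identification $J=I$: one must check that the Legendre-type functional $J$ equals the relative-entropy expression \eqref{def:I} on $\mathcal{D}$ and $+\infty$ elsewhere. Testing against potentials $F(x,y)=\varphi(x)-\varphi(y)$ forces $\int_{(0,\infty]}\frac1s\,\mu(x,{\rm d}s)=Q^+(x)=Q^-(x)$, which produces the constraint set $\mathcal{D}$; optimizing over $(F,w)$ for fixed $Q$ yields $\sum_x Q_x H(\tilde Q_{x,\cdot}|p_{x,\cdot})$ after a minimax exchange justified by convexity together with the restrictions $F\in C_0(E)$ and $Q^\pm\in L^1_+(V)$; optimizing over $g$ yields, via the variational formula \eqref{def:H}, the term $\sum_x Q_x H(\tilde\mu_x|\psi_x)$ from the part of $\mu$ on $(0,\infty)$ plus the boundary term $\sum_x\zeta(x)\mu(x,\{\infty\})$, the constraint $\psi_x(e^{g(x,\cdot)})<\infty$ forcing the growth rate $\lim_{s\to\infty}g(x,s)/s$ below $\zeta(x)$ and Lemma~\ref{lemma:continuous for theta} controlling the atom at $\infty$. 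The second and genuinely new difficulty, in contrast to the continuous-time case of \cite{bertini2015large,bertini2015flows}, is the lower semicontinuity of $I$ (hence, with exponential tightness, its goodness): since there is no level-3 LDP for Markov renewal processes, it cannot be deduced by contraction from a known result. Instead I would approximate $I$ from below by truncated functionals $I^{h,H}_\delta$ built from finitely supported bounded test functions, prove each lower semicontinuous by hand --- this is precisely where Assumption~\ref{ass:locally finite} is used, since finiteness of the incoming and outgoing edges at each vertex makes the relevant exit/entrance sums continuous on $\Lambda$ --- and pass to the supremum, as in Lemma~\ref{lemma:IhHdelta lower semicontinuous}. Convexity of $I$ is then immediate, being a supremum of affine functionals.
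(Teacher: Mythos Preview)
Your overall architecture matches the paper's: exponential supermartingales from tilted kernels give the upper bound on compact sets, Condition~\ref{condition:ccomp} yields exponential tightness, and the lower bound comes from an ergodic change of measure plus an $I$-density argument. Two points, however, are genuine gaps rather than omitted details.

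First, a minor one: $(0,\infty]$ is \emph{not} compact in the paper's topology (it is homeomorphic to $(0,1]$), so tightness of $\mu_t$ does not follow from tightness of its $V$-marginal alone; mass could escape towards $s=0$. The paper closes this with the elementary inequality $\langle\mu_t,1/s\rangle\le\|Q_t\|$, so the bound on $\|Q_t\|$ you already need does double duty --- but this has to be stated.

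Second, and more seriously, your exponential tightness sketch does not explain how to control the finite exceptional set $K$ from Condition~\ref{condition:ccomp}(f). On $K$ the tilt $g(x,s)=\theta_x(\hat u(x))\,s$ need not satisfy $\psi_x(e^{g(x,\cdot)})<\infty$, and in any case the lower bound $L\hat u(x)\ge-\sigma\theta_x(\eta)-C1_K(x)$ injects an uncontrolled term proportional to $\langle Q_t,1_K\rangle$ when you try to convert the supermartingale inequality into a bound on $\|Q_t\|$. The paper resolves this by an independent argument (Proposition~\ref{proposition:subset markov renewal process} and Lemma~\ref{lemma:sbuset exponential}): the process observed only at its visits to $K$ is itself a delayed Markov renewal process on the \emph{finite} state space $K$, and a direct Chernoff bound on its jump count gives $\varlimsup_t\frac1t\log\Pnum_x(\langle Q_t,1_K\rangle>A_\ell)\le-\ell$ without appealing to Condition~\ref{condition:ccomp} at all. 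This extra ingredient is what lets the supermartingale estimate close; without it your argument for \eqref{QT exponential} does not go through.
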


The following theorem, whose proof can be found in Section \ref{section:s topology}, gives the joint LDP when $L^1_+(E)$ is endowed with the strong topology.

\begin{theorem}\label{LDP:strong topology}
Suppose that Assumptions \ref{ass:irreducibility}-\ref{ass:locally finite} and Conditions \ref{condition:ccomp} and \ref{condition:ccomp2} hold. Let $L^1_+(E)$ be endowed with the strong topology. Then under $\mathbb{P}_x$, the law of $(\mu_t,Q_t)$ satisfies an LDP with good and convex rate function $I:\Lambda\to [0,\infty]$.
\end{theorem}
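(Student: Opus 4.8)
The plan is to bootstrap from Theorem~\ref{LDP:bounded weak topology}. Write $\Lambda_{\mathrm{s}}$ and $\Lambda_{\mathrm{bw}}$ for $\Lambda$ equipped with the strong and the bounded weak* topology on the flow component, respectively. Since the strong topology is finer than the bounded weak* topology, the identity $\iota\colon\Lambda_{\mathrm{s}}\to\Lambda_{\mathrm{bw}}$ is a continuous bijection, and Theorem~\ref{LDP:bounded weak topology} (which needs only Condition~\ref{condition:ccomp}) already gives the LDP for $(\mu_t,Q_t)$ on $\Lambda_{\mathrm{bw}}$ with the good convex rate function $I$. By the inverse contraction principle \cite[Theorem~4.2.4]{dembo1998large}, it then suffices to prove that the laws of $(\mu_t,Q_t)$ under $\mathbb{P}_x$ are exponentially tight on $\Lambda_{\mathrm{s}}$; the theorem follows with the same rate function $I\circ\iota=I$, still convex since convexity does not depend on the topology. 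Concretely, a set that is compact for the bounded weak* topology is only $\ell^1$-bounded, whereas strong ($\ell^1$) precompactness additionally requires the family $\{Q_t\}$ to be uniformly summable at infinity; supplying this missing ingredient is exactly the job of the extra Condition~\ref{condition:ccomp2}.

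The analytic heart is a superexponential estimate on $Q_t^+$. Using Condition~\ref{condition:ccomp2}(e), $\hat{u}$ is bounded below (otherwise some level set $\{\log\hat{u}\le\ell\}$ would be infinite), so $b:=\inf_{x\in V}\log\hat{u}(x)$ is finite and $g:=\log\hat{u}-b+1\colon V\to[1,\infty)$ has finite level sets. For each $m$, Condition~\ref{condition:ccomp2}(a) and Assumption~\ref{ass:irreducibility} make $V_{u_m}(x):=\log\big(Pu_m(x)/u_m(x)\big)$ a finite real number, and
\begin{equation*}
M^{(m)}_n:=\frac{u_m(X_n)}{u_m(X_0)}\,\exp\!\Big(-\sum_{k=0}^{n-1}V_{u_m}(X_k)\Big)
\end{equation*}
is a nonnegative $(\mathcal{F}_n)$-martingale with $\mathbb{E}_x M^{(m)}_0=1$. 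Optional stopping at the bounded times $n\wedge(N_t+1)$, the limit $n\to\infty$ (valid since $N_t+1<\infty$ a.s.\ by non-explosivity, which holds under Assumptions~\ref{ass:irreducibility}--\ref{ass:recurrence}) and Fatou's lemma give $\mathbb{E}_x M^{(m)}_{N_t+1}\le 1$; bounding $u_m(X_{N_t+1})\ge c>0$ by Condition~\ref{condition:ccomp2}(b) and then letting $m\to\infty$ with Fatou's lemma once more — legitimate because for each outcome the sum runs over the finitely many indices $0\le k\le N_t$ — using the pointwise convergence $u_m/Pu_m\to\hat{u}$ and the bound $u_m(x)\le C_x$ from Condition~\ref{condition:ccomp2}(c),(d) yields
\begin{equation*}
\mathbb{E}_x\!\Big[\exp\!\Big(\sum_{k=0}^{N_t}\log\hat{u}(X_k)\Big)\Big]\le\frac{C_x}{c}.
\end{equation*}
Writing $\log\hat{u}=g+b-1$ and recalling from \eqref{def:empflow} that $\|Q_t\|=(N_t+1)/t$, one splits on the event $\{N_t+1\le\ell t\}$ to absorb the factor $e^{(b-1)(N_t+1)}$; since $Q\mapsto\|Q\|$ is lower semicontinuous for the bounded weak* topology and the level sets of $I$ are $\ell^1$-bounded, Theorem~\ref{LDP:bounded weak topology} provides, for every $M>0$, a constant $\ell_M$ with $\varlimsup_t\frac{1}{t}\log\mathbb{P}_x(\|Q_t\|>\ell_M)\le-M$. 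A Chebyshev bound then gives, for a suitable $c_M$ (depending on $M,\ell_M,b$),
\begin{equation*}
\varlimsup_{t\to\infty}\frac{1}{t}\log\mathbb{P}_x\!\Big(\frac{1}{t}\sum_{k=0}^{N_t}g(X_k)>c_M\Big)\le-M .
\end{equation*}

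From here exponential tightness on $\Lambda_{\mathrm{s}}$ is routine packaging. Put $\phi(x,y):=g(x)$ on $E$; by Assumption~\ref{ass:locally finite} the slice $\{(x,y)\in E:x\in F\}$ is finite for every finite $F\subset V$, and since $g\ge1$ has finite level sets, the set $\{Q\in L^1_+(E):\sum_{(x,y)\in E}\phi(x,y)Q(x,y)\le c_M\}$ is $\ell^1$-bounded, strongly closed, and uniformly summable at infinity, hence strongly compact by the $\ell^1$ compactness criterion (Kolmogorov--Riesz). Taking also a compact $\Gamma_M\subset\mathcal{P}(V\times(0,\infty])$ that captures $\mu_t$ exponentially at level $M$ (which is part of the proof of Theorem~\ref{LDP:bounded weak topology} in Section~\ref{section:bwt topology}), the product $K_M:=\Gamma_M\times\{Q:\sum_{(x,y)\in E}\phi(x,y)Q(x,y)\le c_M\}$ is compact in $\Lambda_{\mathrm{s}}$, and the identity $\sum_{(x,y)\in E}\phi(x,y)Q_t(x,y)=\sum_{x\in V}g(x)Q_t^+(x)=\frac{1}{t}\sum_{k=0}^{N_t}g(X_k)$ together with the estimate above shows $\varlimsup_t\frac{1}{t}\log\mathbb{P}_x((\mu_t,Q_t)\notin K_M)\le-M$. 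This is the required exponential tightness, and the theorem follows via \cite[Theorem~4.2.4]{dembo1998large} as explained.

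The main obstacle is the superexponential estimate of the second step: it is the sole place where Condition~\ref{condition:ccomp2} is genuinely used, and it forces a careful interplay between the control of the embedded chain (encoded in the exponential martingale built from $u_m$) and the control of the clock $N_t$ (which must be imported from Theorem~\ref{LDP:bounded weak topology}, hence ultimately from Condition~\ref{condition:ccomp}), together with two nested limiting arguments — the passage to the random stopping time $N_t+1$ and then $m\to\infty$ — each of which needs its own justification for exchanging limit and expectation.
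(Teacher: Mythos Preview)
Your proof is correct and follows the same high-level strategy as the paper: bootstrap the strong-topology LDP from the bounded weak* LDP of Theorem~\ref{LDP:bounded weak topology} by supplying exponential tightness of the flow in $L^1_+(E)$ under the $\ell^1$-norm (the paper invokes \cite[Corollary~4.2.6]{dembo1998large}, you invoke \cite[Theorem~4.2.4]{dembo1998large}; these are equivalent here). Both arguments rest on the same martingale estimate $\mathbb{E}_x\bigl[\exp\bigl(t\langle Q_t^+,\log\hat{u}\rangle\bigr)\bigr]\le C_x/c$ derived from Condition~\ref{condition:ccomp2}.

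The packaging of the exponential tightness is genuinely different. The paper first proves exponential tightness of the \emph{current} $Q_t^+$ in $L^1_+(V)$ (Proposition~\ref{proposition:nut}), then introduces the pair process $Z_k=(X_{k-1},X_k)$, checks that $(Z,\tau)$ again satisfies all the assumptions and Conditions~\ref{condition:ccomp}--\ref{condition:ccomp2}, and observes that the entrance-current of $Z$ is precisely the flow $Q_t$ of $X$; exponential tightness of $Q_t$ in $L^1_+(E)$ then follows from Corollary~\ref{corollary:nut} applied to $Z$. Your route bypasses the pair process entirely: you manufacture a single weight $\phi(x,y)=g(x)\ge 1$ with $\phi\to\infty$ along $E$ (using Assumption~\ref{ass:locally finite} and the finite level sets of $\log\hat{u}$), so that the sublevel sets $\{Q:\langle Q,\phi\rangle\le c_M\}$ are $\ell^1$-compact directly, and the identity $\langle Q_t,\phi\rangle=\langle Q_t^+,g\rangle$ lets the same Chebyshev bound conclude. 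Your approach is shorter and avoids the somewhat tedious verification of the compactness conditions for the pair process; the paper's approach is more modular, separating the ``current'' step from the ``flow'' step. One small remark: your appeal to Theorem~\ref{LDP:bounded weak topology} to get $\varlimsup_t\frac1t\log\mathbb{P}_x(\|Q_t\|>\ell_M)\le -M$ is legitimate but slightly indirect---this bound is actually established earlier as \eqref{QT exponential} in Proposition~\ref{proposition:etem}, which is what the paper itself uses.
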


\begin{remark}\label{remark:gamma}
In the above two theorems, we have established the joint LDP when the semi-Markov process starts from a fixed initial state $x\in V$. For any initial distribution $\gamma\in \mathcal{P}(V)$, the joint LDP still holds when making some slight changes to the compactness conditions. In fact, if items (c) in Conditions \ref{condition:ccomp} and \ref{condition:ccomp2} are both replaced by
\begin{itemize}
\item[(c*)] There exists a constant $C_{\gamma}$ such that $\sum_{x\in V}\gamma(x)u_n(x)\le C_{\gamma}$ for any $n\geq 0$,
\end{itemize}
then the conclusions of Theorems \ref{LDP:bounded weak topology} and \ref{LDP:strong topology} remain valid under $\mathbb{P}_{\gamma}$, where $\mathbb{P}_{\gamma}(\cdot)=\sum_{x\in V}\gamma(x)\mathbb{P}_x(\cdot)$ is the probability measure under initial distribution $\gamma$.
\end{remark}

The above two theorems can be applied to obtain the joint LDP for the empirical measure and empirical flow of discrete-time and continuous-time Markov chains with countable state space. For discrete-time Markov chains, we have the following results.

\begin{corollary}
Let $\xi$ be a discrete-time Markov chain satisfying Assumptions \ref{ass:irreducibility}-\ref{ass:locally finite} and Condition \ref{condition:ccomp2}. Let $L^1_+(E)$ be endowed with the bounded weak* topology or strong topology. Then under $\mathbb{P}_x$, the law of $(\mu_t,Q_t)$ satisfies an LDP with good and convex rate function $I:\Lambda\to [0,\infty]$.
\end{corollary}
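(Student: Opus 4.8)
The plan is to obtain the corollary as an immediate specialization of Theorems~\ref{LDP:bounded weak topology} and~\ref{LDP:strong topology}, so there is essentially no new work, only bookkeeping. First I would realize the discrete-time Markov chain $\xi=(\xi_n)_{n\ge 0}$ with transition matrix $P$ as the semi-Markov process attached to the Markov renewal process $(X,\tau)$ in which $X=\xi$ and all waiting times are deterministic, i.e. $\psi_{xy}=\delta_1$ for every $x,y\in V$; then $S_n=n$, $N_t=\lfloor t\rfloor$, $\xi_t=X_{\lfloor t\rfloor}$, and the empirical measure $\mu_t$ of \eqref{def:emp mes} and empirical flow $Q_t$ of \eqref{def:empflow} are exactly the objects associated with the chain. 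Since $\psi_{xy}=\delta_1$ is independent of $y$, Assumption~\ref{ass:dti} holds automatically with $\psi_x=\delta_1$, while Assumptions~\ref{ass:irreducibility},~\ref{ass:recurrence} and~\ref{ass:locally finite} are part of the hypothesis on $\xi$; hence Assumptions~\ref{ass:irreducibility}--\ref{ass:locally finite} are all in force.

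Next I would verify both compactness conditions. Condition~\ref{condition:ccomp2} is assumed. For a discrete-time chain one has $Lf(x)=\theta_x(f(x))=\log f(x)$ and $\psi_x(e^{\zeta(x)\tau})=e^{\zeta(x)}=\infty$ for every $x\in V$, so, as spelled out in Remark~\ref{equivalence}, item~(e) of Condition~\ref{condition:ccomp2} forces items~(e) and~(f) of Condition~\ref{condition:ccomp}: the level sets $\{x:\log\hat u(x)\le\ell\}$ are the same, taking $\sigma=1$ and enlarging $C$ on the finite set $\{x:\log\hat u(x)\le-\log\eta\}$ yields (f), and the side requirement $\hat u(x)<\psi_x(e^{\zeta(x)\tau})$ is vacuous. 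Since items~(a)--(d) are identical in the two conditions, Condition~\ref{condition:ccomp} holds as well.

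With Assumptions~\ref{ass:irreducibility}--\ref{ass:locally finite} and Conditions~\ref{condition:ccomp} and~\ref{condition:ccomp2} all verified, Theorem~\ref{LDP:bounded weak topology} gives the asserted LDP for $(\mu_t,Q_t)$ under $\mathbb{P}_x$ with good convex rate function $I$ of \eqref{def:I} when $L^1_+(E)$ carries the bounded weak* topology, and Theorem~\ref{LDP:strong topology} gives it when $L^1_+(E)$ carries the strong topology; the $t\to\infty$ normalization $\tfrac1t\log\mathbb{P}_x(\cdot)$ coincides with the usual $\tfrac1n\log\mathbb{P}_x(\cdot)$ for discrete time because $\lfloor t\rfloor/t\to1$. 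One may finally note that here $\zeta\equiv\infty$, that $\mu_t$ is supported on $V\times\{1\}$ so $\mu_t(x,\{\infty\})=0$, and that finiteness of $H(\tilde\mu_x\,|\,\psi_x)$ forces $\tilde\mu_x=\delta_1$; hence on $\mathcal D$ the rate function collapses to $I(\mu,Q)=\sum_{x\in V}Q_xH(\tilde Q_{x,\cdot}\,|\,p_{x,\cdot})$ with $Q_x=\mu(x,(0,\infty])$, recovering the classical level-$2.5$ rate function for Markov chains. The only mildly delicate point is the reduction of Condition~\ref{condition:ccomp} to Condition~\ref{condition:ccomp2}, which is exactly Remark~\ref{equivalence}; there is no genuine obstacle.
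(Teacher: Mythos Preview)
Your proposal is correct and follows essentially the same approach as the paper: invoke Remark~\ref{equivalence} to deduce that Condition~\ref{condition:ccomp} holds (given Condition~\ref{condition:ccomp2}) in the discrete-time case, and then apply Theorems~\ref{LDP:bounded weak topology} and~\ref{LDP:strong topology} directly. Your additional remarks on the simplification of the rate function are correct but not part of the paper's proof.
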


\begin{proof}
In Remark \ref{equivalence}, we have shown that Conditions \ref{condition:ccomp} and \ref{condition:ccomp2} are equivalent for discrete-time Markov chains. Then the results follow directly from Theorems \ref{LDP:bounded weak topology} and \ref{LDP:strong topology}.
\end{proof}

We have seen that Condition \ref{condition:ccomp} is crucial for the joint LDP of the empirical measure and empirical flow, no matter whether the flow space is endowed with the bounded weak* topology or the strong topology. In general, Condition \ref{condition:ccomp} is difficult to verify because we need to find a sequence of functions $u_n$ satisfying both items (a)-(d), which are related to the embedded chain, and items (e)-(f), we are related to the waiting time distributions. In other words, the conditions imposed on the embedded chain and the conditions imposed on waiting time distributions are intertwined with each other. Next, we provide some novel compactness conditions which can be verified much more easily. In the novel compactness conditions, the ones imposed on the embedded chain and the ones imposed on waiting time distributions can be disassembled and are not intertwined with each other.

The conditions imposed on the embedded chain are as follows.

\begin{condition}\label{condition:ccomp3}
There exists a sequence of functions $u_n:V\to(0,\infty)$ satisfying items (a)-(d) in Condition \ref{condition:ccomp2} and
\begin{itemize}
\item[(e)] There exists a constant $\ell>0$ such that the level set $\big\{x \in V \,:\, \log\hat{u}(x)\leq \ell\big\}$ is finite.
\end{itemize}
\end{condition}

Note that item (e) in Condition \ref{condition:ccomp3} is weaker than item (e) in Condition \ref{condition:ccomp2}. The conditions imposed on the waiting time distributions are as follows.

\begin{condition}
\label{condition:ccomp4}
There exist a probability measure $\psi\in\mathcal{P}(0,\infty)$ and a function $q:V\to (0,\infty)$ such that
\begin{itemize}
\item [(a)] The waiting time distributions $(\psi_x)_{x\in V}$ satisfy $\psi_x(A/q_x) = \psi(A)$ for any $x\in V$ and any Borel set $A\subset (0,\infty)$, where $A/q_x = \{t/q_x:t\in A\}$;
\item [(b)] The probability measure $\psi$ satisfies $\psi\left(e^{\zeta \tau}\right)=\infty$, where $\zeta = \sup\left\{\lambda\ge 0:\psi(e^{\lambda \tau})<\infty\right\}$;
\item[(c)] For each $\ell\in\mathbb{R}$, the level set $\{x\in V:q_x\le\ell\}$ is finite.
\end{itemize}
\end{condition}

\begin{remark}\label{remark:example}
It is easy to check that the following distribution families satisfy items (a) and (b) in Condition \ref{condition:ccomp4}:
\begin{itemize}
\item[(a)] Exponential distribution:
$\psi_x(\mathrm{d}t)=q_xe^{-q_x t}\mathrm{d}t$;
\item[(b)] Gamma distribution:
$\psi_x(\mathrm{d}t)=(q_x^{\alpha}t^{\alpha-1}e^{-q_x t}/\Gamma(\alpha))\mathrm{d}t$ with any $\alpha>0$;
\item[(c)] Dirac distribution:
$\psi_x(\mathrm{d}t)=\delta_{1/q_x}(\mathrm{d}t)$;
\item[(d)] Rayleigh distribution:
$\psi_x(\mathrm{d}t)=(t/q_x^2)e^{-t^2/(2q_x^2)}\mathrm{d}t$.
\end{itemize}
In particular, items (a) and (b) in Condition \ref{condition:ccomp4} automatically hold for continuous-time Markov chains.
\end{remark}

The following theorem shows that the joint LDP also holds under the new compactness conditions given above.

\begin{theorem}\label{theorem:LDP application}
Suppose that Assumptions \ref{ass:irreducibility}-\ref{ass:locally finite} are satisfied.
\begin{itemize}
\item[(a)] Suppose that Conditions \ref{condition:ccomp3} and \ref{condition:ccomp4} both hold. Let $L^1_+(E)$ be equipped with the bounded weak* topology. Then under $\mathbb{P}_x$, the law of $(\mu_t,Q_t)$ satisfies an LDP with good and convex rate function $I:\Lambda\to [0,\infty]$.
\item[(b)] Suppose that Conditions \ref{condition:ccomp2} and \ref{condition:ccomp4} both hold. Let $L^1_+(E)$ be endowed with the strong topology. Then under $\mathbb{P}_x$, the law of $(\mu_t,Q_t)$ satisfies an LDP with good and convex rate function $I:\Lambda\to [0,\infty]$.
\end{itemize}
\end{theorem}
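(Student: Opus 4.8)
The plan is to reduce the two new compactness conditions to the old ones and then quote Theorems \ref{LDP:bounded weak topology} and \ref{LDP:strong topology}. The heart of the matter is to show that Conditions \ref{condition:ccomp3} and \ref{condition:ccomp4} together imply Condition \ref{condition:ccomp}. Granting this, part (a) is immediate from Theorem \ref{LDP:bounded weak topology}; and for part (b) I would first note that item (e) of Condition \ref{condition:ccomp2} (finiteness of $\{x:\log\hat u(x)\le\ell\}$ for \emph{every} $\ell$) is stronger than item (e) of Condition \ref{condition:ccomp3} (finiteness for \emph{some} $\ell>0$), while items (a)--(d) coincide, so Condition \ref{condition:ccomp2} implies Condition \ref{condition:ccomp3}; hence the implication just mentioned yields Condition \ref{condition:ccomp}, and together with the assumed Condition \ref{condition:ccomp2} this is precisely the hypothesis of Theorem \ref{LDP:strong topology}. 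Note also that the sequence $(u_n)$ from Condition \ref{condition:ccomp3} satisfies items (a)--(d) of Condition \ref{condition:ccomp} with no change, together with the limit $\hat u=\lim_n u_n/Pu_n$; so only items (e) and (f) of Condition \ref{condition:ccomp} need work.

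To carry this out I would first record the scaling coming from item (a) of Condition \ref{condition:ccomp4}: each $\psi_x$ is the pushforward of the reference measure $\psi$ under $t\mapsto t/q_x$, so with $\Theta(\lambda):=\psi(e^{\lambda\tau})$ and $\theta(t):=\sup\{\lambda:\Theta(\lambda)\le t\}$ (the analogues of $\psi_x(e^{\lambda\tau})$ and $\theta_x$ for $\psi$), a change of variables gives $\psi_x(e^{\lambda\tau})=\Theta(\lambda/q_x)$ and hence
\begin{equation*}
\zeta(x)=q_x\,\zeta,\qquad \theta_x(t)=q_x\,\theta(t),\qquad L\hat u(x)=\theta_x(\hat u(x))=q_x\,\theta(\hat u(x)).
\end{equation*}
By item (b) of Condition \ref{condition:ccomp4}, $\psi_x(e^{\zeta(x)\tau})=\Theta(\zeta)=\infty$ for every $x$, so the clause ``$\hat u(x)<\psi_x(e^{\zeta(x)\tau})$ on $K^c$'' in item (f) of Condition \ref{condition:ccomp} is automatic. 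Running the proof of Lemma \ref{lemma:continuous for theta} with $\psi$ in place of $\psi_x$ (it uses only that $\Theta$ is finite, continuous and strictly increasing on $(-\infty,\zeta)$) together with $\Theta(\zeta)=\infty$, I get that $\theta$ is continuous, strictly increasing and finite-valued on $(0,\infty)$ with $\theta(1)=0$, so $\theta(s)>0$ for $s>1$ and $\theta(s)<0$ for $s\in(0,1)$.

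Then I would let $\ell>0$ be the constant from item (e) of Condition \ref{condition:ccomp3}, so $K:=\{x\in V:\hat u(x)\le e^{\ell}\}$ is finite, set $c_0:=\theta(e^{\ell})>0$, and observe that $L\hat u(x)=q_x\theta(\hat u(x))\ge c_0\,q_x>0$ for $x\notin K$. Item (e) of Condition \ref{condition:ccomp} follows because $\{x:L\hat u(x)\le m\}\subseteq K\cup\{x:q_x\le m/c_0\}$, which is finite by item (c) of Condition \ref{condition:ccomp4}. For item (f) I would take $\sigma=1$, pick $\eta\in(0,1)$ with $-\theta(\eta)\le c_0$ (possible since $\theta$ is continuous with $\theta(1)=0$), and choose $C:=\max\{1,\ \max_{x\in K}(-\theta_x(\eta)-L\hat u(x))\}>0$, a finite number since $K$ is finite and all quantities involved are finite reals; then for $x\notin K$,
\begin{equation*}
L\hat u(x)\ge c_0\,q_x\ge -\theta(\eta)\,q_x=-\theta_x(\eta)=-\sigma\,\theta_x(\eta)-C\,1_K(x),
\end{equation*}
while for $x\in K$ the inequality $L\hat u(x)\ge-\sigma\,\theta_x(\eta)-C\,1_K(x)$ holds by the choice of $C$ (recall $-\sigma\theta_x(\eta)=-q_x\theta(\eta)\ge 0$). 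This gives item (f), hence Condition \ref{condition:ccomp}, and the theorem follows.

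The step I expect to be the real obstacle is exactly this verification of items (e) and (f), and within it the point that the \emph{strict} positivity of $\ell$ in item (e) of Condition \ref{condition:ccomp3} is what produces the uniform lower bound $\theta(\hat u(x))\ge c_0>0$ outside a finite set — this is precisely what decouples the embedded-chain hypotheses from the waiting-time hypotheses. I would also take care over the degenerate case $\zeta=\infty$ (e.g.\ deterministic or compactly supported waiting times, cf.\ Remark \ref{remark:example}), where $\theta$ remains finite-valued and strictly increasing on $(0,\infty)$ and $\psi_x(e^{\zeta(x)\tau})=\infty$ is to be read as $\lim_{\lambda\to\infty}\psi_x(e^{\lambda\tau})=\infty$, so that the estimates above are unaffected.
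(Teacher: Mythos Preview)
Your proposal is correct and follows essentially the same route as the paper: reduce to Theorems \ref{LDP:bounded weak topology} and \ref{LDP:strong topology} by verifying Condition \ref{condition:ccomp} for the sequence $(u_n)$ coming from Condition \ref{condition:ccomp3}, using the scaling $\theta_x=q_x\theta$ from Condition \ref{condition:ccomp4} to handle items (e) and (f). The only cosmetic difference is in item (f): the paper fixes $\eta=1/2$ and sets $\sigma=-\theta(e^{\ell})/\theta(1/2)$, whereas you fix $\sigma=1$ and pick $\eta$ close enough to $1$ that $-\theta(\eta)\le\theta(e^{\ell})$; both choices encode the same inequality $q_x\theta(\hat u(x))\ge q_x\theta(e^{\ell})$ on $K^c$.
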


\begin{proof}
(a) By Theorem \ref{LDP:bounded weak topology}, we only need to check Condition \ref{condition:ccomp} for the sequence of functions $u_n$ in Condition \ref{condition:ccomp3}. In fact, items (a)-(d) in Condition \ref{condition:ccomp} are trivial. We next prove items (e) and (f) in Condition \ref{condition:ccomp}.

Let $\theta:(0,\infty)\to (-\infty,\infty]$ be a function defined by
\begin{equation}\label{theta}
\theta(s)=\sup\left\{\lambda\in\mathbb{R}:\psi(e^{\lambda \tau})\le s\right\}.
\end{equation}
By item (b) in Condition \ref{condition:ccomp4}, it is easy to see that the function $\theta_x$ defined in \eqref{def:theta_x} and the function $\theta$ defined above are related by $\theta_x(s)=q_x\theta(s)$. Lemma \ref{lemma:continuous for theta} implies that $\theta$ is an increasing function. It follows from item (e) in Condition \ref{condition:ccomp3} and item (c) in Condition \ref{condition:ccomp4} that the set $K:=\{x\in V:\log\hat{u}(x)\le \ell\}$ is finite and the set $\{x\in V:q_x\le s\}$ is also finite for any $s\in\mathbb{R}$. For any $\ell'>0$, we have
\begin{equation*}
\left\{x\in V:\hat{u}(x)>e^{\ell}\right\}\cap\left\{x\in V:q_x>\frac{\ell'}{\theta(e^{\ell})}\right\}\subseteq\left\{x\in V:q_x\theta(\hat{u}(x))>\ell'\right\}.
\end{equation*}
Then we have
\begin{equation*}
\{x\in V:L\hat{u}(x)\le \ell'\}\subseteq K\cup\left\{x\in V:q_x\le\frac{\ell'}{\theta(e^{\ell})}\right\}.
\end{equation*}
This implies item (e) in Condition \ref{condition:ccomp}.

Take $\eta=1/2$ and $\sigma=-\theta(e^{\ell})/\theta(1/2)$. By Lemma \ref{lemma:continuous for theta}, it is easy to check that $\sigma>0$. Then we have
\begin{equation*}
q_x\theta\left(\hat{u}(x)\right)\ge -\sigma q_x\theta(1/2)-C1_K(x),
\end{equation*}
where $C=1\vee\max_{x\in K}\{-q_x[\theta(\hat{u}(x))+\sigma\theta(1/2)]\}$. By item (a) in Condition \ref{condition:ccomp4}, it is easy to see that $\psi_x(e^{\zeta(x)\tau})=\infty$ for any $x\in V$. This implies item (f) in Condition \ref{condition:ccomp}.

(b) Note that Condition \ref{condition:ccomp2} implies Condition \ref{condition:ccomp3}. Then the proof of (b) follows directly from Theorem \ref{LDP:strong topology}.
\end{proof}

Note that Condition \ref{condition:ccomp4} is easy to verify and we have given several examples for it to hold in Remark \ref{remark:example}. Next we will give some criterions for Condition \ref{condition:ccomp3} to hold. Before doing this, we recall the following geometric ergodic theorem for discrete-time Markov chains, whose proof can be found in \cite[Chapter 15]{meyn2012markov}.

\begin{lemma}[geometric ergodic theorem]\label{lemma:geometric ergodicity}
Suppose that the embedded chain $X$ is irreducible and aperiodic. Then the following three conditions are equivalent:
\begin{itemize}
\item[(a)] There exist a finite set $K\subset V$ and constants $\nu_K>0$, $\rho_K < 1$, and $M_K <\infty$ such that
    \begin{equation*}
    \sup_{x\in K}\left|\left(\sum_{y\in K}p^n_{xy}\right) - \nu_K\right| \le M_K\rho_K^n.
    \end{equation*}
\item[(b)] There exist a finite set $K\subset V$ and a constant $\kappa>1$ such that
    \begin{equation*}
    \sup_{x\in K}\mathbb{E}_x\left[\kappa^{T_K}\right] < \infty,
    \end{equation*}
    where $T_K$ is the first hitting time of $X$ on $K$ (see \eqref{hitting time}).
\item[(c)] There exist a finite set $K\subset V$, constants $b<\infty$, $\lambda<1$, and $c>0$, and a function $u:V\to [c,\infty)$ satisfying the drift condition
    \begin{equation*}\label{drift}
    Pu(x) \le \lambda u(x) + b1_{K}(x), \quad x\in V.
    \end{equation*}
    \end{itemize}
\end{lemma}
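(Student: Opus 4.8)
\emph{Proof plan.} This is the classical geometric ergodic theorem (see \cite[Chapter~15]{meyn2012markov}), and the plan is to prove the equivalence by closing the cycle $(a)\Rightarrow(b)\Rightarrow(c)\Rightarrow(a)$. Two structural consequences of the hypotheses will be used throughout. First, since $X$ is irreducible and aperiodic and $K$ is finite, $p^n_{xy_0}>0$ for all large $n$ uniformly over $x\in K$ (for any fixed reference state $y_0$), so $K$ is a \emph{small set}: $P^{m_0}(x,\cdot)\ge\delta\,\delta_{y_0}$ on $K$ for suitable $m_0\ge 1$, $\delta>0$. Second, every singleton $\{x_0\}$ is an accessible atom, so the successive returns of $X$ to $x_0$ form an aperiodic renewal process. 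I write $T_K=\inf\{n\ge1:X_n\in K\}$ for the return time to $K$, $\sigma_K=\inf\{n\ge0:X_n\in K\}$ for the hitting time, and $\tau_{x_0}$ for the return time to $x_0$.

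\emph{Step $(b)\Rightarrow(c)$.} Fix $\beta\in(1/\kappa,1)$ and set $u(x)=\mathbb{E}_x\!\big[\sum_{n=0}^{\sigma_K}\beta^{-n}\big]\ge 1$. This is finite for every $x$, since (b) propagates along excursions: fixing $x_0\in K$, the strong Markov property at the first visit $H_x$ to $x$ gives $\mathbb{E}_{x_0}[\kappa^{T_K}]\ge\mathbb{E}_{x_0}\big[\kappa^{H_x}\mathbf 1_{\{H_x<T_K\}}\big]\,\mathbb{E}_x[\kappa^{\sigma_K}]$ with the first factor positive by irreducibility, whence $\mathbb{E}_x[\kappa^{\sigma_K}]<\infty$ and a fortiori $\mathbb{E}_x[\beta^{-\sigma_K}]<\infty$; also $\sup_K u<\infty$ by (b). A one-step decomposition of $\sigma_K$ gives $Pu(x)=\beta\,(u(x)-1)\le\beta\,u(x)$ for $x\notin K$, and $Pu(x)=\mathbb{E}_x\!\big[\sum_{n=0}^{T_K-1}\beta^{-n}\big]\le(\beta^{-1}-1)^{-1}\sup_{z\in K}\mathbb{E}_z[\beta^{-T_K}]=:b<\infty$ for $x\in K$, i.e.\ the drift condition with $\lambda=\beta$. (The reverse $(c)\Rightarrow(b)$ is even easier: with $\kappa=1/\lambda$, the stopped process $\lambda^{-(n\wedge T_K)}u(X_{n\wedge T_K})$ is a supermartingale for $n\ge1$, and optional stopping together with $u\ge c$ yields $\mathbb{E}_x[\kappa^{T_K}]\le c^{-1}(u(x)+b/\lambda)$, hence (b) after the supremum over the finite set $K$.)

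\emph{Step $(c)\Rightarrow(a)$.} This is the Foster--Lyapunov implication ``drift $\Rightarrow$ geometric ergodicity''. Using the minorization on the small set $K$ together with the drift condition, a standard coupling of two copies of $X$ has a coupling time with a geometric tail, yielding $\|P^n(x,\cdot)-\pi\|_{\mathrm{TV}}\le B\,u(x)\,\rho^n$ for the unique invariant distribution $\pi$ and some $\rho<1$, $B<\infty$ (here $\pi$ is positive and $\pi(u)<\infty$ by the drift). Testing this against the indicator $1_K$ and restricting to $x\in K$, on which $u$ is bounded, gives $\sup_{x\in K}\big|\sum_{y\in K}p^n_{xy}-\pi(K)\big|\le B(\sup_K u)\,\rho^n$, which is (a) with $\nu_K=\pi(K)>0$.

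\emph{Step $(a)\Rightarrow(b)$, the main obstacle.} This converse is the one non-soft step: it must extract an \emph{exponential moment} from a mere \emph{rate of convergence}, which is intrinsically a Tauberian (renewal-theoretic) fact. The plan is to regenerate on $K$. Because $K$ is finite, the $K\times K$ matrix $F(z)=\big(\mathbb{E}_x[z^{T_K}\mathbf 1_{\{X_{T_K}=y\}}]\big)_{x,y\in K}$ is finite-dimensional, $F(1)$ is the irreducible stochastic transition matrix of the trace chain on $K$, and $\sum_{n\ge0}\mathbb{P}_x(X_n\in K)\,z^n=\big((I-F(z))^{-1}\mathbf 1\big)_x$ for $x\in K$; hypothesis (a) asserts that this power series equals $\nu_K/(1-z)$ plus a function holomorphic on a disc of radius $>1$. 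By aperiodicity, $z=1$ is the only singularity of $(I-F(z))^{-1}$ on the closed unit disc, and it is a simple pole; a finite-dimensional form of Kendall's renewal theorem then forces each entry of $F$ --- which has nonnegative Taylor coefficients --- to have radius of convergence $>1$, i.e.\ $\sup_{x\in K}\mathbb{E}_x[(\kappa')^{T_K}]<\infty$ for some $\kappa'>1$, which is exactly (b). (The engine is Kendall's renewal theorem: in the single-atom picture, (a) forces the renewal sequence $(p^n_{x_0x_0})_{n\ge0}$ to converge geometrically to $\pi(x_0)>0$, and Kendall's theorem converts geometric convergence of an aperiodic renewal sequence into radius of convergence $>1$ for the increment generating function, i.e.\ $\mathbb{E}_{x_0}[(\kappa_0)^{\tau_{x_0}}]<\infty$; the matrix statement is the finite-dimensional analogue, which additionally gives uniformity over $K$.) It is here that both the finiteness of $K$ --- making $(I-F(z))^{-1}$ a rational function of finitely many power series --- and the aperiodicity hypothesis --- ruling out further unit-circle singularities --- are indispensable; the remaining implications are routine Lyapunov-function bookkeeping and a standard coupling, so this converse is the part I expect to cost the real work.
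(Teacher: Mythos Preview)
The paper does not actually prove this lemma; it simply recalls it and cites \cite[Chapter 15]{meyn2012markov} for the proof. Your proposal is therefore doing strictly more than the paper, and the outline you give --- $(b)\Rightarrow(c)$ via the Lyapunov function $u(x)=\mathbb{E}_x\big[\sum_{n=0}^{\sigma_K}\beta^{-n}\big]$, $(c)\Rightarrow(a)$ via minorization-and-drift coupling, and $(a)\Rightarrow(b)$ via Kendall's renewal theorem applied to the finite-dimensional return-time generating matrix --- is precisely the standard route taken in that reference, so there is nothing to compare.

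One minor point worth tightening in your $(b)\Rightarrow(c)$ step: the argument that $u(x)<\infty$ for all $x\notin K$ relies on $\mathbb{P}_{x_0}(H_x<T_K)>0$ for some $x_0\in K$, which is not automatic from irreducibility alone (a path from $x_0$ to $x$ may re-enter $K$). The fix is easy --- take $x_0$ to be the last $K$-state on a shortest path to $x$ --- but as written the inequality $\mathbb{E}_{x_0}[\kappa^{T_K}]\ge\mathbb{E}_{x_0}[\kappa^{H_x}\mathbf 1_{\{H_x<T_K\}}]\,\mathbb{E}_x[\kappa^{\sigma_K}]$ could be $0\ge 0$. Otherwise the plan is sound.
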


The following corollary gives a simple criterion for Condition \ref{condition:ccomp3} to hold.

\begin{corollary}
If item (c) in Lemma \ref{lemma:geometric ergodicity} holds, then Condition \ref{condition:ccomp3} also holds. If the embedded chain $X$ is irreducible and aperiodic, then any one of the three conditions in Lemma \ref{lemma:geometric ergodicity} implies Condition \ref{condition:ccomp3}.
\end{corollary}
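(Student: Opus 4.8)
The plan is to establish the first assertion by taking the \emph{constant} sequence $u_n\equiv u$, where $u\colon V\to[c,\infty)$ is the Lyapunov function supplied by item (c) of Lemma \ref{lemma:geometric ergodicity}, so that $Pu(x)\le\lambda u(x)+b1_K(x)$ for all $x\in V$ with $\lambda<1$, $b<\infty$, $c>0$, and $K\subset V$ finite. Items (a)--(d) of Condition \ref{condition:ccomp2} are then immediate: (a) holds because the drift inequality in particular forces $Pu(x)\le\lambda u(x)+b<\infty$; (b) is the hypothesis $u\ge c$; (c) holds with $C_x:=u(x)<\infty$; and for (d) the quotient $u_n/Pu_n=u/Pu$ is a constant sequence, hence converges to $\hat u:=u/Pu$, which is genuinely $(0,\infty)$-valued since stochasticity of $P$ and $u\ge c$ give $Pu(x)\in[c,\infty)$ for every $x$.

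It then remains to verify item (e) of Condition \ref{condition:ccomp3}. The key observation is that outside the finite set $K$ the drift inequality reduces to $Pu(x)\le\lambda u(x)$, so that
\begin{equation*}
\hat u(x)=\frac{u(x)}{Pu(x)}\ge\frac1\lambda>1,\qquad x\in K^c,
\end{equation*}
and hence $\log\hat u(x)\ge-\log\lambda>0$ for every $x\in K^c$. Choosing any constant $\ell\in(0,-\log\lambda)$, the level set $\{x\in V:\log\hat u(x)\le\ell\}$ is contained in $K$ and is therefore finite, which is exactly item (e). This shows that item (c) of Lemma \ref{lemma:geometric ergodicity} implies Condition \ref{condition:ccomp3}.

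For the second assertion I would simply invoke the equivalence in Lemma \ref{lemma:geometric ergodicity}: when $X$ is irreducible and aperiodic, conditions (a), (b), and (c) there are all equivalent, so any one of them implies (c), and the first part of the corollary then yields Condition \ref{condition:ccomp3}.

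The argument is essentially a one-line manipulation of the drift inequality, so I do not expect a genuine obstacle; the only point requiring a moment's care is to confirm that $\hat u$ takes values in $(0,\infty)$ rather than merely $[0,\infty]$, so that it is an admissible limit in Condition \ref{condition:ccomp2}(d), and this follows at once from $c\le u<\infty$ together with the stochasticity of $P$.
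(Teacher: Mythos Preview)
Your proof is correct and follows essentially the same approach as the paper: both take the constant sequence $u_n\equiv u$, note that items (a)--(d) are immediate, and use the drift inequality on $K^c$ to show $\log\hat u\ge-\log\lambda$ there, so that any $\ell\in(0,-\log\lambda)$ (the paper picks the specific value $\ell=-(\log\lambda)/2$) makes the level set a subset of the finite set $K$. Your handling of the second assertion via the equivalence in Lemma~\ref{lemma:geometric ergodicity} is also what the paper intends.
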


\begin{proof}
Let the set $K$, the constants $\lambda,c$, and the function $u$ be as in (c) in Lemma \ref{lemma:geometric ergodicity}. We next check Condition \ref{condition:ccomp3} for the sequence of functions $u_n\equiv u$. Obviously, items (a)-(d) are trivial. Let $\hat{u}=u/Pu$ and $\ell=-(\log\lambda)/2>0$. It is easy to see that
\begin{equation*}
\{x\in V:\log\hat{u}(x)\le\ell\}\subseteq\{x\in V:Pu(x)>\lambda u(x)\}\subseteq K.
\end{equation*}
This completes the proof of this corollary.
\end{proof}

The above corollary shows that geometric ergodicity of the embedded chain $X$ implies Condition \ref{condition:ccomp3}. Hence this condition can also be easily verified by using the classical ergodic theory of Markov chains \cite{chen2006eigenvalues, meyn2012markov, balaji2000multiplicative, wei2019nonzero}.

\subsection{Marginal LDP for the empirical measure and empirical flow}
Thus far, we have established the joint LDP for the empirical measure $\mu_t$ and empirical flow $Q_t$. However, as discussed in Remark \ref{empiricalmeasure}, a more natural definition of the empirical measure $\pi_t:\Omega\to\mathcal{P}(V)$ is given by
\begin{equation*}
\pi_t(x)=\frac{1}{t}\int_0^t1_{(\xi_s=x)}{\rm d}s=\mu_t(x,(0,\infty]),\quad x\in V.
\end{equation*}
The reason why we use $\mu_t$ rather than $\pi_t$ in the study of the joint LDP is that only by using $\mu_t$ can we obtain a concise expression of the rate function.

Next we focus on the marginal LDP for the empirical measure $\pi_t$ and for the empirical flow $Q_t$. By the contraction principle, the rate function of the marginal LDP can be obtained from the rate function $I:\Lambda\to[0,\infty]$ of the joint LDP as defined in \eqref{neq:ubldp}. In \cite{mariani2016large}, the authors gave a variational expression of the rate function $I_1:\mathcal{P}(V)\to[0,\infty]$ for the empirical measure $\pi_t$. Here we will give the variational expression of the rate function $I_2:L^1_+(E)\to[0,\infty]$ for the empirical flow $Q_t$. More importantly, we will also give the explicit expression of $I_2$ when the waiting time distributions satisfy some additional constraints.

Before stating our results, we introduce some notation. Recall that the rate function $I_{DV}:\mathcal{P}(V)\to[0,\infty]$ for the empirical measure of the embedded chain $X$ is the Donsker-Varadhan functional \cite{donsker1975asymptotic1}
\begin{equation*}
I_{DV}(\nu)=\sup_{u\in (0,\infty)^V}\sum_{x\in V}\nu_x\log\left(\frac{u}{Pu}(x)\right).
\end{equation*}
Let $G_x(\lambda)=\log(\psi_x(e^{\lambda \tau}))$ be a function on $\Rnum$ and let
\begin{equation*}
G_x^*(a)=\sup_{\lambda\in\mathbb{R}}(a \lambda-G_x(\lambda)),\;\;\;a\in \mathbb{R}
\end{equation*}
be the Fenchel-Legendre transform of $G_x$. Moreover, let $I_1:\mathcal{P}(V)\to[0,\infty]$ be a functional defined by
\begin{equation}\label{I_1}
I_1(\pi)=\inf_{r>0}\inf_{\nu\in \mathcal{P}(V)}\left(rI_{DV}(\nu)+\sum_{x\in V}r\nu_xG_x^*\left(\frac{\pi_x}{r\nu_x}\right)\right).
\end{equation}
The following proposition, whose proof can be found in Section \ref{section:contraction principle}, gives the marginal LDP for the empirical measure and for the empirical flow.

\begin{proposition}\label{proposition:rate function for empirical flow}
Suppose that Assumptions \ref{ass:irreducibility}-\ref{ass:locally finite} and Condition \ref{condition:ccomp} hold.
\begin{itemize}
\item[(a)] Under $\mathbb{P}_x$, the law of $\pi_t$ satisfies an LDP with good and convex rate function $I_1:\mathcal{P}(V)\to[0,\infty]$;
\item[(b)] Let $L^1_+(E)$ be endowed with the bounded weak* topology. Then under $\mathbb{P}_x$, the law of $Q_t$ satisfies an LDP with good and convex rate function
    \begin{equation*}
    I_2(Q)=\inf_{(\mu,Q)\in\mathcal{D}}I(\mu,Q),\;\;\;Q\in L^1_+(E);
    \end{equation*}
\item[(c)] Let $L^1_+(E)$ be endowed with the strong topology. If Condition \ref{condition:ccomp2} is also satisfied, then under $\mathbb{P}_x$, the law of $Q_t$ satisfies an LDP with good and convex rate function $I_2:L^1_+(E)\to[0,\infty]$.
\end{itemize}
Moreover, if Condition \ref{condition:ccomp4} is also satisfied, then the rate function $I_2$ has the following explicit expression:
\begin{equation}\label{I_2}
I_2(Q)=\left\{\begin{aligned}
&\sum_{x\in V}Q_x H\left(\tilde{Q}_{x,\cdot}\big|p_{x,\cdot}\right)+\sup_{\lambda<\inf_{x\in V}\zeta(x)}\left\{\lambda-\sum_{x\in V}Q_x\log\psi_x\left(e^{\lambda \tau}\right)\right\}, && \text{if } Q^+=Q^-\,,\\
&\infty, && \text{otherwise}.\end{aligned}\right.
\end{equation}
\end{proposition}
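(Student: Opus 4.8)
The plan is to obtain parts (a)--(c) from the joint LDPs of Theorems~\ref{LDP:bounded weak topology} and~\ref{LDP:strong topology} by the contraction principle, and then to compute the resulting marginal rate functions explicitly. For (b): the coordinate projection $(\mu,Q)\mapsto Q$ from $\Lambda$ onto $L^1_+(E)$ is continuous for both the bounded weak* and the strong topology, and $Q_t$ is its image under $(\mu_t,Q_t)$; hence by contraction the law of $Q_t$ obeys an LDP with rate function $Q\mapsto\inf\{I(\mu,Q'):Q'=Q\}=\inf_{(\mu,Q)\in\mathcal D}I(\mu,Q)$, which is the stated $I_2$. Goodness passes through the contraction, and convexity holds because the projection is linear and $I$ is convex. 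Part (c) is the same argument using Theorem~\ref{LDP:strong topology} (which additionally requires Condition~\ref{condition:ccomp2}). For (a): the $V$-marginal map $\mu\mapsto\mu(\cdot,(0,\infty])$ is continuous from $\mathcal P(V\times(0,\infty])$ to $\mathcal P(V)$, since $(x,t)\mapsto f(x)$ is bounded and continuous on $V\times(0,\infty]$ for every bounded $f$ on $V$; so $\pi_t$ is a continuous image of $(\mu_t,Q_t)$ and contraction yields an LDP for $\pi_t$ with rate function $\pi\mapsto\inf\{I(\mu,Q):\mu(\cdot,(0,\infty])=\pi\}$.

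It then remains to identify this last infimum with formula \eqref{I_1}. I would minimize over $(\mu,Q)\in\mathcal D$ with fixed $V$-marginal $\pi$ in two stages. Writing $Q_x=Q^+(x)=Q^-(x)$, $r=\sum_xQ_x$ and $\nu=Q^+/r\in\mathcal P(V)$: the term $\sum_xQ_xH(\tilde Q_{x,\cdot}\,|\,p_{x,\cdot})$ depends only on the divergence-free flow $Q$, and its infimum over all divergence-free $Q$ with prescribed divergence $r\nu$ equals $rI_{DV}(\nu)$ --- the standard level-$2.5$-to-level-$2$ contraction for the embedded chain, which can be proved by Lagrangian duality from the explicit form of $I_{DV}$. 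For the remaining part, decompose $\mu(x,\cdot)$ into its restriction to $(0,\infty)$ and an atom $m_x=\mu(x,\{\infty\})$; the constraint $\int_{(0,\infty]}t^{-1}\mu(x,\mathrm dt)=Q_x$ together with \eqref{def:pQ,psi mu} gives $\langle\tilde\mu_x,\tau\rangle=(\pi_x-m_x)/Q_x$, and minimizing $Q_xH(\tilde\mu_x\,|\,\psi_x)+\zeta(x)m_x$ over $\tilde\mu_x\in\mathcal P(0,\infty)$ with this prescribed mean and over $m_x\ge0$ reduces, by the contraction $\inf\{H(\cdot\,|\,\psi_x):\langle\cdot,\tau\rangle=a\}=G_x^*(a)$ and the definitions of $G_x^*$ and $\zeta(x)$, to the per-site term $r\nu_xG_x^*(\pi_x/(r\nu_x))$ in \eqref{I_1}. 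Assembling these and infimizing over $r>0$ and $\nu$ gives \eqref{I_1}, in agreement with \cite{mariani2016large}.

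Finally, for the explicit formula \eqref{I_2} under Condition~\ref{condition:ccomp4}: if $Q$ is not divergence-free then no $\mu$ makes $(\mu,Q)\in\mathcal D$, so $I_2(Q)=\infty$; hence assume $Q^+=Q^-$ and put $Q_x=Q^+(x)$. Since $\sum_xQ_xH(\tilde Q_{x,\cdot}\,|\,p_{x,\cdot})$ does not involve $\mu$, we must minimize $\sum_x[Q_xH(\tilde\mu_x\,|\,\psi_x)+\zeta(x)\mu(x,\{\infty\})]$ over $\mu$ subject to $\int_{(0,\infty]}t^{-1}\mu(x,\mathrm dt)=Q_x$. For the lower bound, introduce a free parameter $\lambda<\inf_{x\in V}\zeta(x)$: by the variational inequality for relative entropy, $H(\tilde\mu_x\,|\,\psi_x)\ge\langle\tilde\mu_x,\lambda\tau\rangle-\log\psi_x(e^{\lambda\tau})$; combining this with the identity $\sum_xQ_x\langle\tilde\mu_x,\tau\rangle=\sum_x\mu(x,(0,\infty))=1-\sum_x\mu(x,\{\infty\})$ (a direct consequence of \eqref{def:pQ,psi mu}) and with $\zeta(x)-\lambda\ge0$ yields $\sum_x[Q_xH(\tilde\mu_x\,|\,\psi_x)+\zeta(x)\mu(x,\{\infty\})]\ge\lambda-\sum_xQ_x\log\psi_x(e^{\lambda\tau})$, and taking the supremum over $\lambda$ gives ``$\ge$'' in \eqref{I_2}. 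For ``$\le$'', at an optimal $\lambda^*$ one exponentially tilts, $\tilde\mu_x(\mathrm dt)=e^{\lambda^*t}\psi_x(\mathrm dt)/\psi_x(e^{\lambda^*\tau})$, attaining equality above, and sets $\mu(x,\mathrm dt)=Q_x\,t\,\tilde\mu_x(\mathrm dt)$ on $(0,\infty)$ with no atom at $\infty$; the first-order condition $\sum_xQ_xG_x'(\lambda^*)=1$ (with $G_x'$ the derivative of $G_x$) is precisely what makes this $\mu$ a probability measure, and then $(\mu,Q)\in\mathcal D$ and $I(\mu,Q)$ equals the right-hand side of \eqref{I_2}.

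The main obstacle is the ``$\le$'' construction when the supremum over $\lambda$ is \emph{not} attained at an interior $\lambda^*\in(-\infty,\inf_x\zeta(x))$: then there is no finite tilt to use, and one must instead place the missing mass at $t=\infty$. This is exactly where Condition~\ref{condition:ccomp4}(b), i.e.\ $\psi_x(e^{\zeta(x)\tau})=\infty$ for every $x$, is indispensable: it forces $G_x'(\lambda)\to\infty$ as $\lambda\uparrow\zeta(x)$ at any site with $\zeta(x)=\inf_y\zeta(y)$ and positive current $Q_x$, so that the supremum is in fact interior unless the minimizing site carries no current; and in that boundary situation an atom $\mu(x,\{\infty\})>0$ at a site $x$ with $\zeta(x)=\inf_y\zeta(y)$ realizes the infimum at cost $\zeta(x)\mu(x,\{\infty\})$ without disturbing the divergence constraint $\int t^{-1}\mu(x,\mathrm dt)=Q_x$. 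Disposing of the degenerate sub-cases (sites with $Q_x=0$, the convention $0\cdot\infty=0$, the flow $Q\equiv0$) and checking convergence of the series involved is routine but needs care; the only other non-mechanical point is the level-$2.5$-to-level-$2$ flow contraction used in part (a), which may be cited or obtained by Lagrangian duality.
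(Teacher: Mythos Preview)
Your route through the contraction principle for parts (a)--(c) matches the paper exactly, and for the identification of $I_1$ both you and the paper defer to the computation in \cite{mariani2016large}.

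For the explicit formula \eqref{I_2} your argument is genuinely different from the paper's. Your lower bound via the Donsker--Varadhan variational inequality $H(\tilde\mu_x\,|\,\psi_x)\ge\lambda\langle\tilde\mu_x,\tau\rangle-G_x(\lambda)$ combined with the mass identity $\sum_xQ_x\langle\tilde\mu_x,\tau\rangle=1-\sum_x\mu(x,\{\infty\})$ is clean and direct; the paper instead proves a ``generalized maximum entropy principle'' (its Lemma~\ref{lemma:generalized maximum entropy principle}) and computes $\inf_\mu H_g(\mu|\nu)$ as the Legendre transform $G_Q^*$ of $G_Q(\lambda)=\sum_xQ_xG_x(\lambda)$. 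For the upper bound in the interior case your exponential tilt is exactly the optimizer the paper constructs. The real divergence is in the boundary cases: the paper parametrizes by the mass $a=\sum_{x\in V_+}\mu(x,(0,\infty))$, reduces the problem to $\inf_{0<a\le1}[G_Q^*(a)+(1-a)\inf_{x\notin V_+}\zeta(x)]$, and then works through five separate cases ($m_Q>1$; $m_Q\le1<M_Q$ with $\lambda^*(1)$ on either side of $\inf_{x\notin V_+}\zeta(x)$; $M_Q\le1$ with $\inf_{x\notin V_+}\zeta(x)$ finite or infinite). Your sketch correctly isolates the mechanism --- under Condition~\ref{condition:ccomp4} the minimizing $\zeta$-site either carries positive current (so Lemma~\ref{G_Q hold} forces $G_Q(\zeta_Q)=\infty$ and the optimum is interior) or carries none (so an atom at $\infty$ there costs exactly $\inf_x\zeta(x)$ per unit mass) --- but dismisses as ``routine'' cases such as $m_Q>1$ or $M_Q\le1$ that the paper treats individually. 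Your duality approach is shorter when it works; the paper's case analysis is heavier but leaves no boundary configuration unexamined.
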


\begin{remark}
This proposition shows that the rate function for the empirical flow has the explicit expression \eqref{I_2} when Condition \ref{condition:ccomp4} is satisfied. In fact, \eqref{I_2} may be still valid when Condition \ref{condition:ccomp4} is broken. For example, if $V$ is finite and $\psi_x(e^{\zeta(x)\tau})=\infty$ for any $x\in V$, then similarly to the proof of Proposition \ref{proposition:rate function for empirical flow}, it can be shown that the rate function $I_2$ is also given by \eqref{I_2}.
\end{remark}

\subsection{Examples}
Our abstract theorems can be applied to many specific Markov renewal processes. We next focus on two specific examples: birth and death processes and random walks with confining potential and external force. These two examples can be viewed as direct generalizations of the ones studied in \cite{bertini2015large, bertini2015flows}. Here we will apply Theorems \ref{LDP:bounded weak topology} and \ref{LDP:strong topology} to birth and death processes and will apply Theorem \ref{theorem:LDP application} to random walks with confining potential and external force.

\subsubsection{Birth and death processes}
Consider a birth and death Markov renewal process on the set of nonnegative integers $\mathbb{N} = \{0,1,2\cdots\}$ with transition kernel $(P=(p_{xy})_{x,y\in\mathbb{N}},\Psi=(\psi_x)_{x\in\mathbb{N}})$, where
\begin{equation*}
p_{01}=1,\qquad p_{x,x+1}=p_x>0,\qquad p_{x,x-1}=q_x:=1-p_x>0,\;\;\;x\ge 1.
\end{equation*}
In fact, Assumptions \ref{ass:irreducibility}, \ref{ass:dti}, and \ref{ass:locally finite} are trivial. In addition, it is well-known that Assumption \ref{ass:recurrence} holds if and only if \cite{qian2011Applied}
\begin{equation}\label{recurrent}
\sum_{k=1}^{\infty}\frac{q_1q_2\cdots q_k}{p_1p_2\cdots p_k}=\infty.
\end{equation}
Applying Theorems \ref{LDP:bounded weak topology} and \ref{LDP:strong topology} to the above model, we obtain the following proposition.

\begin{proposition}\label{birth and death process}
Let $p=\varlimsup_{x\to\infty}p_x$. Suppose that $\psi_x(e^{\zeta(x)\tau})=\infty$ for any $x\in V$.
\begin{itemize}
\item[(a)] Let $L^1_+(E)$ be endowed with the bounded weak* topology. Suppose that $p<1/2$ and suppose that there exist constants $\kappa<(4p(1-p))^{-1/2}$, $\sigma>0$, and $\eta\in (0,1)$ such that
    \begin{equation}\label{bwt}
    \varliminf_{x\to\infty}\theta_x(\kappa)=\infty,\qquad\theta_x(\kappa)\ge-\sigma\theta_x(\eta),\quad x\in\mathbb{N}.
    \end{equation}
    Then under $\mathbb{P}_x$, the law of $(\mu_t,Q_t)$ satisfies an LDP with good and convex rate function $I:\Lambda\to[0,\infty]$.
\item[(b)] Let $L^1_+(E)$ be endowed with the strong topology. Let $p_x'=\sup_{k\ge x}p_k$ for each $x$. Suppose that $p=0$ and suppose that there exist constants $\sigma>0$ and $\eta\in (0,1)$ such that
    \begin{equation}\label{st}
    \varliminf_{x\to\infty}\theta_x\left((9p'_{x-1})^{-1/2}\right)=\infty,\qquad \theta_x\left((9p'_{x-1})^{-1/2}\right)\ge-\sigma\theta_x(\eta),\quad x\in\mathbb{N}.
    \end{equation}
    Then under $\mathbb{P}_x$, the law of $(\mu_t,Q_t)$ satisfies an LDP with good and convex rate function $I:\Lambda\to[0,\infty]$.
\end{itemize}
\end{proposition}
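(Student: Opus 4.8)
The plan is to verify the hypotheses of Theorem~\ref{LDP:bounded weak topology} in part~(a) and of Theorem~\ref{LDP:strong topology} in part~(b). Assumptions~\ref{ass:irreducibility}, \ref{ass:dti}, \ref{ass:locally finite} hold trivially, and Assumption~\ref{ass:recurrence} follows from the recurrence criterion~\eqref{recurrent}: the hypothesis $p<1/2$ (resp.\ $p=0$) forces $q_x/p_x$ to be bounded away from $1$ for all large $x$, so the series in~\eqref{recurrent} diverges. Thus everything comes down to exhibiting a single test function $u\colon\mathbb{N}\to(0,\infty)$ and taking $u_n\equiv u$ in Condition~\ref{condition:ccomp} (and, in part~(b), also in Condition~\ref{condition:ccomp2}): on a birth and death graph items~(a), (c), (d) of those conditions are automatic and item~(b) is immediate once $\inf_x u(x)>0$, so the real content is items~(e)--(f) of Condition~\ref{condition:ccomp} and item~(e) of Condition~\ref{condition:ccomp2}. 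Here $Pu(0)=u(1)$ and $Pu(x)=p_xu(x+1)+q_xu(x-1)$ for $x\ge1$, and everything is phrased in terms of $\hat u:=u/Pu$.

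For part~(a) I would take $u(x)=\beta^x$ with $\beta>1$ to be chosen, so that $\hat u(0)=1/\beta$ and $\hat u(x)=\beta/\bigl(1+p_x(\beta^2-1)\bigr)$ for $x\ge1$, a quantity decreasing in $p_x$; since $\beta>1$ this gives $\liminf_{x\to\infty}\hat u(x)=\beta/\bigl(1+p(\beta^2-1)\bigr)$. The elementary point is that the quadratic $f(b)=\kappa p\,b^{2}-b+\kappa(1-p)$ has minimum value $\kappa(1-p)-\tfrac1{4\kappa p}$ at $b^{\ast}=1/(2\kappa p)$, this value being negative exactly when $\kappa<(4p(1-p))^{-1/2}$, and that $p<1/2$ together with $\kappa<(4p(1-p))^{-1/2}$ forces $\kappa p<1/2$, i.e.\ $b^{\ast}>1$. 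Choosing $\beta=b^{\ast}$ therefore yields $\beta>1$ and $\liminf_{x\to\infty}\hat u(x)>\kappa$, hence $\hat u(x)\ge\kappa$ for all large $x$, and then $L\hat u(x)=\theta_x(\hat u(x))\ge\theta_x(\kappa)$ there by monotonicity of $\theta_x$ (Lemma~\ref{lemma:continuous for theta}). The first relation in~\eqref{bwt} now makes every level set $\{x:L\hat u(x)\le\ell\}$ finite (item~(e) of Condition~\ref{condition:ccomp}); the second relation gives $L\hat u(x)\ge-\sigma\theta_x(\eta)$ for large $x$, hence $L\hat u(x)\ge-\sigma\theta_x(\eta)-C1_K(x)$ for a suitable finite set $K$ and large constant $C$; and $\hat u(x)<\psi_x(e^{\zeta(x)\tau})=\infty$ is the standing assumption (item~(f)). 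Theorem~\ref{LDP:bounded weak topology} then gives part~(a).

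For part~(b), since $p=0$ we must in addition secure $\hat u(x)\to\infty$ (item~(e) of Condition~\ref{condition:ccomp2}), so I would build $u$ from its increments: set $\rho_x:=(2\sqrt{p'_x})^{-1}$ (finite because $p'_x\ge p_x>0$, and $\rho_x\to\infty$ because $p'_x\downarrow0$ when $p=0$), put $u(0)=u(1)=1$ and $u(x+1)=\rho_x u(x)$ for $x\ge1$; then $u$ is bounded below (it is nondecreasing once $p'_x\le\tfrac14$, so $\inf_x u(x)=\min_{x\le x_1}u(x)>0$ for a suitable $x_1$), and $\hat u(x)=\bigl(p_x\rho_x+q_x/\rho_{x-1}\bigr)^{-1}$ for $x\ge2$. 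Using $p_x\le p'_x\le p'_{x-1}$ and $q_x\le1$ one gets $p_x\rho_x+q_x/\rho_{x-1}\le\tfrac12\sqrt{p'_x}+2\sqrt{p'_{x-1}}\le\tfrac52\sqrt{p'_{x-1}}$, hence $\hat u(x)\ge\tfrac25(p'_{x-1})^{-1/2}\ge(9p'_{x-1})^{-1/2}$ for all large $x$ (since $2/5>1/3$), and in particular $\hat u(x)\to\infty$. Consequently $L\hat u(x)=\theta_x(\hat u(x))\ge\theta_x\bigl((9p'_{x-1})^{-1/2}\bigr)$, so the two relations in~\eqref{st} yield items~(e) and (f) of Condition~\ref{condition:ccomp} exactly as in part~(a) (again $\hat u(x)<\psi_x(e^{\zeta(x)\tau})=\infty$ automatically). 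Both Conditions~\ref{condition:ccomp} and \ref{condition:ccomp2} therefore hold and Theorem~\ref{LDP:strong topology} gives part~(b).

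The asymptotics of $\hat u$ and the handling of the finitely many boundary states are routine; the real content is the two algebraic observations that explain the hypotheses — that $p<1/2$ is precisely what permits an exponential test function with $\beta>1$ to push $\hat u$ above the threshold $\kappa$, and that $\rho_x\asymp(p'_x)^{-1/2}$ is the slowest admissible increment, large enough to send $\hat u$ to infinity yet small enough to keep $p_x\rho_x$ negligible when $p=0$. I expect the constant bookkeeping in part~(b) (reproducing the factor $9$ in~\eqref{st}) to be the fiddliest point, but it poses no genuine obstacle.
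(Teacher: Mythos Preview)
Your proposal is correct and follows essentially the same approach as the paper: in part~(a) both you and the paper take $u_n(x)=\beta^x$ (the paper writes $a^x$) and use monotonicity of $\theta_x$ together with~\eqref{bwt} to obtain items~(e)--(f) of Condition~\ref{condition:ccomp}; in part~(b) the paper takes $u(x)=\prod_{k=1}^{x-1}(q'_k/p'_k)^{1/2}$, which differs from your $\rho_x=(2\sqrt{p'_x})^{-1}$ only by bounded factors and yields the same bound $\hat u(x)\ge(9p'_{x-1})^{-1/2}$. Your treatment of part~(a) is actually more explicit than the paper's (which leaves the choice of the base implicit); the only cosmetic gap is that your formula $\beta=b^\ast=1/(2\kappa p)$ degenerates when $p=0$, but in that case any fixed $\beta>\kappa$ works since $\liminf_{x\to\infty}\hat u(x)=\beta$.
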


\begin{proof}
(a) Since $\varlimsup_{x\to\infty}p_x<1/2$, it is easy to see that \eqref{recurrent} holds. For any $x\in\mathbb{N}$ and $n\ge 0$, let $u_n(x)=a^x$, where $a>1$ is a constant to be chosen later. By Theorem \ref{LDP:bounded weak topology}, we only need to check Condition \ref{condition:ccomp} for the sequence of functions $u_n$. Since $u_n$ do not depend on $n$, items (a)-(d) in Condition \ref{condition:ccomp} are automatically satisfied. Moreover, it is clear that the set $K=\{x\in\mathbb{N}:p_x>p'\}$ is finite. Note that for any $\ell\in\mathbb{R}$,
\begin{equation*}
\{x\in \mathbb{N}:L\hat{u}(x)\le\ell\}\subseteq\{x\in\mathbb{N}:\theta_x(\kappa)\le\ell\}\cup K.
\end{equation*}
This implies item (e) in Condition \ref{condition:ccomp}. Letting $C=\max_{x\in K}(L\hat{u}(x)-\sigma\theta_x(\eta))$, it is easy to see that
\begin{equation*}
L\hat{u}(x)\ge-\sigma\theta_x(\eta)-C1_K(x), \quad x\in\mathbb{N}.
\end{equation*}
On the other hand, $\hat{u}(x)<\infty=\psi_x(e^{\zeta(x)\tau})$ for any $x\in K^c$. These imply item (g) in Condition \ref{condition:ccomp}. Thus far, we have check all items in Condition \ref{condition:ccomp} and thus the desired result follows from Theorem \ref{LDP:bounded weak topology}.

(b) Let $q_x'=1-p_x'$ for each $x$. For any $n\ge0$, let $u_n(0)=u_n(1)=1$ and $u_n(x)=(\prod_{k=1}^{x-1}(q_k'/p_k'))^{1/2}$ for any $x\ge 2$. Similarly, we only need to check item (e) in Condition \ref{condition:ccomp2} and items (e) and (f) in Condition \ref{condition:ccomp} for such $u_n$. Note that $q'_{x-1}\ge1/2$ for sufficiently large $x$. Then we have
\begin{equation*}
\hat{u}(x)=\frac{1}{p_x(q'_{x}/p'_{x})^{1/2}+q_x(p'_{x-1}/q'_{x-1})^{1/2}}\ge\frac{1}{(q'_xp'_x)^{1/2}+(2p'_{x-1})^{1/2}}\ge\frac{1}{3(p'_{x-1})^{1/2}}.
\end{equation*}
Since $\lim_{x\to\infty}p'_{x-1}=0$, item (e) in Condition \ref{condition:ccomp2} holds. Let $K=\{x\in\mathbb{N}:q'_{x-1}<1/2\}\cup\{0,1\}$ is finite. The rest of the proof is similar to the proof of (a).
\end{proof}

We emphasize that if $\psi_x$ is chosen to be an exponential distribution for each $x$, then Proposition \ref{birth and death process} reduces to the results obtained in \cite{bertini2015large, bertini2015flows}. Moreover, if $\psi_x$ is chosen to be the gamma distribution \begin{equation*}
\psi_x(\mathrm{d}t)=\left(q_x^{\alpha_x}t^{\alpha_x-1}e^{-q_xt}/\Gamma(\alpha_x)\right)\mathrm{d}t
\end{equation*}
for each $x$, where $q_x,\alpha_x$ are parameters depending on $x$, then we can give the following more specific characterizations of conditions \eqref{bwt} and \eqref{st}.

\begin{lemma}
\begin{itemize}
\item[(a)] Let $\kappa_0 = [1+(4p(1-p))^{-1/2}]/2$. Suppose that
    \begin{equation*}
    \varliminf_{x\to\infty}q_x\left(1-\kappa_0^{-1/\alpha_x}\right)=\infty
    \end{equation*}
    and suppose that the parameters $\alpha_x$ have a uniform positive lower bound, i.e. $\alpha_x\ge c$ for some constant $c>0$ and any $x\in\mathbb{N}$. Then condition \eqref{bwt} holds.
\item[(b)] Let $p_x'=\sup_{k\ge x}p_k$ for each $x$. Suppose that
    \begin{equation*}
    \varliminf_{x\to\infty}q_x\left(1-(9p'_{x-1})^{1/(2\alpha_x)}\right)=\infty
    \end{equation*}
    and suppose that the parameters $\alpha_x$ have a uniform positive lower bound. Then condition \eqref{st} holds.
\end{itemize}
\end{lemma}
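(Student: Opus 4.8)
The plan is to specialise conditions \eqref{bwt} and \eqref{st} --- which are phrased through the functions $\theta_x$ of \eqref{def:theta_x} --- to the gamma family by writing $\theta_x$ in closed form. For $\psi_x(\mathrm{d}t)=(q_x^{\alpha_x}t^{\alpha_x-1}e^{-q_xt}/\Gamma(\alpha_x))\,\mathrm{d}t$ a direct computation gives
\[
\psi_x\!\left(e^{\lambda\tau}\right)=\Bigl(\frac{q_x}{q_x-\lambda}\Bigr)^{\!\alpha_x}\quad(\lambda<q_x),\qquad \psi_x\!\left(e^{\lambda\tau}\right)=\infty\quad(\lambda\ge q_x),
\]
so $\zeta(x)=q_x$ and $\psi_x(e^{\zeta(x)\tau})=\infty$; in particular the hypothesis $\psi_x(e^{\zeta(x)\tau})=\infty$ of Proposition~\ref{birth and death process} holds automatically, and it remains only to check \eqref{bwt} (resp.\ \eqref{st}). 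Inverting the strictly increasing map $\lambda\mapsto\psi_x(e^{\lambda\tau})$ on $(-\infty,q_x)$, as in Lemma~\ref{lemma:continuous for theta}, yields
\[
\theta_x(t)=q_x\bigl(1-t^{-1/\alpha_x}\bigr),\qquad t>0 .
\]
Two elementary facts then carry the argument: $\theta_x(t)$ has the sign of $t-1$, and for any $a>1$, $\eta\in(0,1)$,
\[
\frac{\theta_x(a)}{-\,\theta_x(\eta)}=\frac{1-a^{-1/\alpha_x}}{\eta^{-1/\alpha_x}-1}=g_{a,\eta}\!\Bigl(\frac{1}{\alpha_x}\Bigr),\qquad g_{a,\eta}(u):=\frac{1-a^{-u}}{\eta^{-u}-1}.
\]

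For part (a) I take $\kappa=\kappa_0$ (assuming $p>0$, so $\kappa_0$ is a finite constant; the case $p=0$ is handled the same way with $\kappa_0$ replaced by any constant exceeding $1$). Since $p<1/2$ forces $4p(1-p)<1$, we have $\kappa_0\in\bigl(1,(4p(1-p))^{-1/2}\bigr)$, an admissible value for \eqref{bwt}. With the formula above, $\theta_x(\kappa_0)=q_x(1-\kappa_0^{-1/\alpha_x})$, so the first part of \eqref{bwt}, $\varliminf_{x\to\infty}\theta_x(\kappa_0)=\infty$, is verbatim the first hypothesis. Fix any $\eta\in(0,1)$: since $\kappa_0>1>\eta$ one has $\theta_x(\kappa_0)>0>\theta_x(\eta)$, so $\theta_x(\kappa_0)\ge-\sigma\theta_x(\eta)$ is equivalent (divide by $q_x>0$) to $\sigma\le g_{\kappa_0,\eta}(1/\alpha_x)$. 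Now $g_{\kappa_0,\eta}$ is continuous and strictly positive on $(0,\infty)$ and, by Taylor expansion at $u=0$, extends continuously with $g_{\kappa_0,\eta}(0^+)=\log\kappa_0/\log(1/\eta)>0$; since $\alpha_x\ge c$ keeps $1/\alpha_x$ in the bounded interval $(0,1/c]$, the constant $\sigma:=\inf_{u\in(0,1/c]}g_{\kappa_0,\eta}(u)$ is strictly positive and, with $\eta$, verifies the second part of \eqref{bwt} for every $x$.

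Part (b) is entirely parallel. Here $\theta_x\bigl((9p'_{x-1})^{-1/2}\bigr)=q_x\bigl(1-(9p'_{x-1})^{1/(2\alpha_x)}\bigr)$, so the first part of \eqref{st} is precisely the first hypothesis. For the second part fix $\eta\in(0,1)$; since $p=\varlimsup_{x\to\infty}p_x=0$ we have $p'_{x-1}\to0$, hence $9p'_{x-1}\le 1/9$ for all large $x$, and then $(9p'_{x-1})^{1/(2\alpha_x)}\le 3^{-1/\alpha_x}$, so $\theta_x\bigl((9p'_{x-1})^{-1/2}\bigr)/q_x=1-(9p'_{x-1})^{1/(2\alpha_x)}\ge 1-3^{-1/\alpha_x}$. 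Thus $\theta_x\bigl((9p'_{x-1})^{-1/2}\bigr)\ge-\sigma\theta_x(\eta)$ holds as soon as $\sigma\le g_{3,\eta}(1/\alpha_x)$, and as above $\sigma:=\inf_{u\in(0,1/c]}g_{3,\eta}(u)>0$ works for all large $x$. The finitely many small indices $x$ with $9p'_{x-1}>1/9$ (e.g.\ those near the reflecting state $0$) cause no trouble: in the proof of Proposition~\ref{birth and death process}(b) the bound \eqref{st} is used only away from a finite set, the exceptional states being absorbed into the term $C1_K$ of item (f) of Condition~\ref{condition:ccomp}.

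The work here is bookkeeping rather than hard analysis; the two points needing care are getting the closed form $\theta_x(t)=q_x(1-t^{-1/\alpha_x})$ right, and recognising that the range of $1/\alpha_x$ must be bounded. Indeed $g_{a,\eta}(u)\to0$ as $u\to\infty$, so it is exactly the uniform lower bound $\alpha_x\ge c$ that prevents the quotients $g_{\kappa_0,\eta}(1/\alpha_x)$ and $g_{3,\eta}(1/\alpha_x)$ from degenerating, allowing a single $\sigma>0$ to be chosen uniformly in $x$.
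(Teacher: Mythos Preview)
Your proof is correct and follows the same plan as the paper: compute $\theta_x(t)=q_x(1-t^{-1/\alpha_x})$ for the gamma family, note that the first halves of \eqref{bwt} and \eqref{st} are then verbatim the hypotheses, and use the bound $\alpha_x\ge c$ to obtain a uniform $\sigma>0$ for the second halves. The difference is in that last step. You fix an arbitrary $\eta\in(0,1)$ and argue by compactness that $\sigma:=\inf_{u\in(0,1/c]}g_{\kappa_0,\eta}(u)>0$, which is correct and makes transparent why a lower bound on $\alpha_x$ is needed (since $g_{a,\eta}(u)\to0$ as $u\to\infty$). The paper instead makes the specific choice $\eta=1/\kappa_0$, which produces the exact identity
\[
\theta_x(\kappa_0)=q_x\bigl(1-\kappa_0^{-1/\alpha_x}\bigr)=-\kappa_0^{-1/\alpha_x}\,q_x\bigl(1-\kappa_0^{1/\alpha_x}\bigr)=-\kappa_0^{-1/\alpha_x}\,\theta_x(\eta),
\]
so that $\sigma=\kappa_0^{-1/c}$ can be read off directly from $\alpha_x\ge c$. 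Your argument buys freedom in $\eta$; the paper's buys an explicit constant with no compactness step. For part (b), your reduction to a fixed threshold (you use $3$) once $9p'_{x-1}\le1/9$, with the finitely many exceptional indices absorbed into the finite set $K$ of Condition~\ref{condition:ccomp}(f), is exactly how the paper's ``similar'' proof proceeds.
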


\begin{proof}
Here we only give the proof of (a); the proof of (b) is similar. Taking $\kappa=\kappa_0$ in Proposition \ref{birth and death process}, straightforward computations show that
\begin{equation*}
\theta_x(t)=q_x\left(1-t^{-1/\alpha_x}\right),\quad t>0.
\end{equation*}
This shows that
\begin{equation*}
\varliminf_{x\to\infty}\theta_x(\kappa) = \varliminf_{x\to\infty}q_x(1-\kappa_0^{-1/\alpha_x}) = \infty.
\end{equation*}
Moreover, letting $\sigma=\kappa_0^{-1/c}>0$ and $\eta=1/\kappa_0\in(0,1)$, we have
\begin{equation*}
\theta_x(\kappa_0)=q_x\left(1-\kappa_0^{-1/\alpha_x}\right)=-\kappa_0^{-1/\alpha_x}q_x\left(1-\eta^{-1/\alpha_x}\right)\ge-\sigma\theta_x(\eta).
\end{equation*}
This completes the proof of this lemma.
\end{proof}

\subsubsection{Random walks with confining potential and external force}
We now apply our main theorems to a nearest neighbor random walk on $\mathbb{Z}^d$ with confining potential and external force, whose transition kernel $(P=(p_{xy})_{x,y\in\mathbb{Z}^d},\psi=(\psi_x)_{x\in\mathbb{Z}^d})$ has the form of
\begin{equation*}
p_{xy}=\frac{1}{C_x}\exp\left\{-\frac{1}{2}(U(y)-U(x))+\frac{1}{2}F(x,y)\right\},\quad (x,y)\in E,
\end{equation*}
where $E=\{(x,y)\in\mathbb{Z}^d\times\mathbb{Z}^d:|x-y|=1\}$ is the collection of nearest neighbours in $\mathbb{Z}^d$, $U:\mathbb{Z}^d\to\mathbb{R}$ is a potential function satisfying $\sum_{y\in\mathbb{Z}^d}e^{-U(y)}<\infty$, $F\in L^{\infty}(E)$ represents the external force, and \begin{equation*}
C_x=\sum_{y:|x-y|=1}e^{-(U(y)-U(x))/2+F(x,y)/2},\;\;\;x\in\mathbb{Z}^d
\end{equation*}
are normalization constants. It is clear that $U$ has compact level sets, i.e. for any $\ell\in\mathbb{R}$, the set $\{x\in\mathbb{Z}^d:U(x)\le \ell\}$ is finite. We emphasize that if $\psi_x(\mathrm{d}t) = C_xe^{-C_xt}\mathrm{d}t$ is chosen to be an exponential distribution for each $x$, then the above model reduces to the Markov chain model studied in \cite[Section 10.2]{bertini2015flows}. We next focus on general waiting time distributions. Applying Theorem \ref{theorem:LDP application} to the above model, we obtain the following proposition.

\begin{proposition}\label{random walk}
Let $r(x)=\sum_{y:|x-y|=1}e^{-(U(y)-U(x))/2}$.
\begin{itemize}
\item[(a)] Let $L^1_+(E)$ be endowed with the bounded weak* topology. If
    \begin{equation*}
    \varliminf_{|x|\to\infty}r(x) > 2de^{\|F\|_{\infty}},
    \end{equation*}
    then Condition \ref{condition:ccomp3} holds. Moreover, if Condition \ref{condition:ccomp4} also holds, then under $\mathbb{P}_x$, the law of $(\mu_t,Q_t)$ satisfies an LDP with good and convex rate function $I:\Lambda\to[0,\infty]$.
\item[(b)] Let $L^1_+(E)$ be endowed with the strong topology. If
    \begin{equation*}
    \lim_{|x|\to\infty}r(x) = \infty,
    \end{equation*}
    then Condition \ref{condition:ccomp2} holds. Moreover, if Condition \ref{condition:ccomp4} also holds, then under $\mathbb{P}_x$, the law of $(\mu_t,Q_t)$ satisfies an LDP with good and convex rate function $I:\Lambda\to[0,\infty]$.
\end{itemize}
\end{proposition}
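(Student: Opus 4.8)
The plan is to verify the abstract hypotheses of Theorem~\ref{theorem:LDP application} using a single Lyapunov-type test function, namely the \emph{constant} sequence $u_n\equiv u$ with $u(x)=e^{U(x)/2}$ in Conditions~\ref{condition:ccomp3} and~\ref{condition:ccomp2}; throughout I write $\hat u=u/Pu$. First I would dispatch the structural assumptions: since $p_{xy}>0$ for every nearest-neighbour pair, the embedded chain is irreducible (Assumption~\ref{ass:irreducibility}); each site of $\mathbb{Z}^d$ has exactly $2d$ neighbours, so the graph is locally finite (Assumption~\ref{ass:locally finite}); and $\psi_x$ depends only on $x$ by construction (Assumption~\ref{ass:dti}). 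For the compactness conditions, note that $\sum_y e^{-U(y)}<\infty$ forces the numbers $e^{-U(y)}$ to be bounded, hence $U$ is bounded below and $u(x)=e^{U(x)/2}\ge c$ for some $c>0$, giving item~(b); local finiteness makes each $Pu(x)$ a finite sum, giving item~(a); and since the sequence is constant, items~(c)--(d) hold automatically with limit $\hat u=u/Pu$.

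The crux is the computation
\[
\frac{Pu(x)}{u(x)}=\frac{1}{C_x}\sum_{y:\,|x-y|=1}e^{F(x,y)/2}\le\frac{2d\,e^{\|F\|_{\infty}/2}}{C_x}\le\frac{2d\,e^{\|F\|_{\infty}}}{r(x)},
\]
the last step using $C_x\ge e^{-\|F\|_{\infty}/2}\,r(x)$; equivalently $\log\hat u(x)\ge\log\!\big(r(x)/(2d\,e^{\|F\|_{\infty}})\big)$. In case~(a), the hypothesis $\varliminf_{|x|\to\infty}r(x)>2d\,e^{\|F\|_{\infty}}$ yields $\varepsilon>0$ and a finite set $K_0$ with $r(x)\ge(1+\varepsilon)\,2d\,e^{\|F\|_{\infty}}$ off $K_0$, so for any $\ell\in(0,\log(1+\varepsilon))$ the level set $\{x:\log\hat u(x)\le\ell\}$ lies in $K_0$, which is item~(e) of Condition~\ref{condition:ccomp3}. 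In case~(b), $r(x)\to\infty$ forces $\log\hat u(x)\to\infty$ as $|x|\to\infty$, so every level set $\{x:\log\hat u(x)\le\ell\}$ is finite, which is item~(e) of Condition~\ref{condition:ccomp2}. The same bound also yields a geometric drift inequality $Pu\le\lambda u+b\,1_K$ with $K$ finite and $\lambda<1$ (take $\lambda=(1+\varepsilon)^{-1}$, resp.\ $\lambda=1/2$, off the relevant finite set and absorb the bounded ratio on that set into $b$, using that $u$ is bounded there); this is the classical Foster criterion, so the embedded chain is positive recurrent and Assumption~\ref{ass:recurrence} holds.

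Once Assumptions~\ref{ass:irreducibility}--\ref{ass:locally finite} are in hand together with Condition~\ref{condition:ccomp3} in case~(a) and Condition~\ref{condition:ccomp2} in case~(b), the LDP claims follow directly from Theorem~\ref{theorem:LDP application}(a) and (b) once Condition~\ref{condition:ccomp4} is added. I do not foresee a genuine obstacle: the only two points that need care are that $\sum_y e^{-U(y)}<\infty$ makes $U$ bounded below (needed for the lower bound in the compactness conditions) and that the single estimate on $Pu/u$ does double duty as both the compactness bound and a Foster drift bound, so recurrence need not be assumed separately; everything else is routine substitution into the two theorems.
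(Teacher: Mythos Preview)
Your proposal is correct and follows essentially the same route as the paper: the paper also takes the constant sequence $u_n\equiv u$ with $u(x)=e^{U(x)/2}$, derives the identical bound $\hat u(x)\ge r(x)/(2d\,e^{\|F\|_\infty})$ (via the same $C_x\ge e^{-\|F\|_\infty/2}r(x)$ estimate), uses it to verify item~(e) of Condition~\ref{condition:ccomp3} respectively~\ref{condition:ccomp2}, and then obtains recurrence from the same drift inequality via the geometric ergodic theorem before invoking Theorem~\ref{theorem:LDP application}. You are slightly more explicit than the paper in noting that $\sum_y e^{-U(y)}<\infty$ forces $U$ to be bounded below (needed for item~(b)), but otherwise the arguments coincide.
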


\begin{proof}
Here we only give the proof of (a); the proof of (b) is similar. We first check Condition \ref{condition:ccomp3}. Let $u_n(x)=e^{U(x)/2}$ for any $x\in\mathbb{Z}^d$ and $n\ge 0$. Since $u_n$ do not depend on $n$, items (a)-(d) in Condition \ref{condition:ccomp2} are automatically satisfied. Note that
\begin{equation*}
\hat{u}(x)=\frac{e^{U(x)/2}}{\sum_{y:(x,y)\in E}p_{xy}e^{U(y)/2}}=\frac{\sum_{y:|x-y|=1}e^{-(U(y)-U(x))/2+F(x,y)/2}}{\sum_{y:|x-y|=1}e^{F(x,y)/2}}\ge\frac{r(x)}{2de^{\|F\|_{\infty}}}.
\end{equation*}
Since $\varliminf_{|x|\to\infty}\hat{u}(x)>1$, there exists $\kappa>1$ such that $\varliminf_{|x|\to\infty}\hat{u}(x)\geq\kappa$. Letting $\ell=\log\kappa>0$, it is easy to see that the set $\{x\in\mathbb{Z}^d:\log\hat{u}(x)\le\ell\}$ is finite. This implies item (e) in Condition \ref{condition:ccomp3}.

We next prove the joint LDP. It is easy to see that Assumptions \ref{ass:irreducibility}, \ref{ass:dti}, and \ref{ass:locally finite} are trivial. Similarly to the above proof, we can also validate item (c) in Lemma \ref{lemma:geometric ergodicity} with $u(x)=e^{U(x)/2}$. It then follows from \cite[Theorems 8.0.2 and 15.0.1]{meyn2012markov} that Assumption \ref{ass:recurrence} holds. By Theorem \ref{theorem:LDP application}, we complete the proof of this lemma.
\end{proof}

Note that if $\psi_x(\mathrm{d}t) = C_xe^{-C_xt}\mathrm{d}t$ is chosen to be an exponential distribution for each $x$, then the above model reduces to the continuous-time Markov chain model studied in \cite[Section 10.2]{bertini2015flows}. It is easy to check that
\begin{equation*}
e^{-\|F\|_{\infty}/2}r(x)\le C_x\le e^{\|F\|_{\infty}/2}r(x).
\end{equation*}
Then $\lim_{|x|\to\infty}r(x)=\infty$ if and only if $\lim_{|x|\to\infty}C_x=\infty$. Hence our compactness conditions in item (a) exactly coincide with the ones proposed in \cite{bertini2015flows} when $L^1_+(E)$ is endowed with the bounded weak* topology. However, since we do not need to verify Condition \ref{condition:strong topology}, our compactness conditions in item (b) is weaker than the ones proposed in \cite{bertini2015flows} when $L^1_+(E)$ is endowed with the strong topology.

\begin{remark}
When $U\in C^1(\mathbb{R}^d)$, we consider the orthogonal decomposition
\begin{equation*}
\nabla U(x)=\langle \nabla U(y),\hat{y}\rangle \hat{y}+W(y),\quad y\in\mathbb{R}^d\setminus\{0\},
\end{equation*}
where $\hat{y}=y/|y|$, $\langle y,W(y)\rangle=0$, and $\langle\cdot,\cdot\rangle$ is the standard inner product in $\mathbb{R}^d$. We say that the potential $U\in C^1(\mathbb{R}^d)$ has diverging radial variation which dominates the transversal variation \cite{bertini2015flows} if there exist $\alpha\in[0,1)$ and $C>0$ such that
\begin{equation*}
\lim_{|x|\to\infty}\langle \nabla U(y),\hat{y}\rangle=\infty,\qquad|W(y)|\le\frac{\alpha}{d^{1/2}}\langle\nabla U(y),\hat{y}\rangle+C.
\end{equation*}
In fact, it follows from \cite[Lemma 10.3]{bertini2015flows} that if $U\in C^1(\mathbb{R}^d)$ has diverging radial variation which dominates the transversal variation, then $\lim_{|x|\to\infty}r(x)=\infty$.
\end{remark}

\section{Proof of Theorem \ref{LDP:bounded weak topology}}\label{section:bwt topology}
Note that $L^1(E)$ endowed with the bounded weak* topology is a locally convex, complete linear topological space and a completely regular space, i.e. for every closed set $C \subset L^1(E)$ and every element $Q \in L^1(E) \setminus C$, there exists a continuous function $f:L^1(E) \to[0,1]$ such that $f(Q)=1$ and $f(Q')=0$ for any $Q'\in C$) \cite[Theorem 2.7.2]{megginson2012introduction}.

It is a well-known result that if an exponentially tight family of probability measures satisfies a weak LDP with rate function $I$, then $I$ is good and the (full) LDP holds \cite[Lemma 1.2.18]{dembo1998large}. Hence to prove Theorem \ref{LDP:bounded weak topology}, we will first prove the exponential tightness for the empirical measure and empirical flow under Condition \ref{condition:ccomp}, and then prove the weak joint LDP without any compactness conditions. We will directly consider the case where $\xi$ starts from a general initial distribution $\gamma$ (see Remark \ref{remark:gamma}).

\subsection{Exponential local martingales}\label{subsection:elm}
We start by considering the change of probability measures for Markov renewal processes. Let $\Gamma$ be the set of measurable functions $(F,h)$ defined by
\begin{equation}\label{Gamma}
\begin{split}
\Gamma = \Bigg\{(F,h)\, : \: & F:E\to\mathbb{R} \,\,\text{such that } \sum_{z\in V} p_{xz} \, e^{F(x,z)} < \infty,\\ & h:V\times (0,\infty]\to \mathbb{R} \text{ such that } \int_{(0,\infty)}e^{sh(x,s)}\psi_x({\rm d}s)<\infty \text{ for any } x\in V\Bigg\}.
\end{split}
\end{equation}
For any $(F,h)\in \Gamma$, let $g_{F,h}:V\to \mathbb{R}$ be a function given by
\begin{equation}\label{def:g}
g_{F,h}(x)=\log\sum_{z\in V}p_{xz}e^{F(x,z)}+\log\int_{(0,\infty)}e^{sh(x,s)}\psi_x({\rm d}s).
\end{equation}
To proceed, we define a new transition kernel $(P^F,\Psi^h)$ as
\begin{equation}\label{pF psih}
p^F_{xy} = \frac{p_{xy} \, e^{F(x,y)}}{\sum_{z\in V} p_{xz} \, e^{F(x,z)}}, \qquad
\psi^h_x({\rm d}u) = \frac{e^{u \, h(x,u)} \, \psi_{x}({\rm d}u)}{\int_{(0,\infty)} e^{s \, h(x,s)} \, \psi_{x}({\rm d}s)},
\end{equation}
and let $\mathbb{P}^{F,h}_{x}$ be the probability measure under which $(X,\tau)$ is a Markov renewal process with transition kernel $(P^F,\Psi^h)$ and initial state $X_0=x$. Note that the semi-Markov process $\xi$ may be explosive under $\mathbb{P}^{F,h}_x$. As a result, we need to consider $\mathbb{P}^{F,h}_x$ and $\mathbb{P}_x$ restricted to the set $\{N_t<\infty\}$, i.e.
\begin{equation*}
\mathbb{P}^{F,h}_{x,t}(A)=\mathbb{P}^{F,h}_x(A\cap\{N_t<\infty\}),\quad\mathbb{P}_{x,t}(A)=\mathbb{P}_x(A\cap\{N_t<\infty\}),\quad A\in\mathcal{F}_{\infty}.
\end{equation*}
Moreover, we denote by $\mathbb{P}^{F,h}_{x,t}|_{\mathcal{F}_{N_t+1}}$ and $\mathbb{P}_{x,t}|_{\mathcal{F}_{N_t+1}}$ the restrictions of $\mathbb{P}^{F,h}_{x,t}$ and $\mathbb{P}_{x,t}$ to $\mathcal{F}_{N_t+1}$, respectively. It is easy to verify that $\mathbb{P}^{F,h}_{x,t}|_{\mathcal{F}_{N_t+1}}$ is absolutely continuous with respect to $\mathbb{P}_{x,t}|_{\mathcal{F}_{N_t+1}}$ and has the Radon-Nykodim derivative
\begin{align}
&\; \frac{1}{t}\log\frac{{\rm d}\mathbb{P}^{F,h}_{x,t}|_{\mathcal{F}_{N_t+1}}}{{\rm d}\mathbb{P}_{x,t}|_{\mathcal{F}_{N_t+1}}}  \notag\\
=&\;\frac{1}{t}\sum_{i=1}^{N_t+1}\log\frac{e^{F(X_{i-1},X_{i})}}{\sum_{z\in V} p_{X_{i-1},z}\, e^{F(X_{i-1},z)}}\,
+\frac{1}{t}\sum_{i=1}^{N_t+1}\log\frac{ e^{\tau_i \, h(X_{i-1},\tau_i)}}{ \int_{(0,\infty)} e^{s \, h(X_{i-1},s)} \, \psi_{X_{i-1}%,X_{i+1}
}({\rm d}s)}\notag\\
=&\;\sum_{(x,y)\in E}  Q_t(x,y)  \left[F(x,y)-\left(\log\sum_{z\in V}p_{xz}e^{F(x,z)}+\log\int_{(0,\infty)}e^{sh(x,s)}\psi_x({\rm d}s)\right)\right]\notag\\
&\;+\sum_x \int_{(0,\infty]}h(x,u)\mu_t(x,{\rm d}u)+\frac{\tau_{N_t+1}-t+S_{N_t}}{t}h(X_{N_t},\tau_{N_t+1})\notag\\\label{formula:change measure}
=&\;\langle Q_t,F-g_{F,h}\rangle+\langle \mu_t,h\rangle+\frac{S_{N_t+1}-t}{t}h(X_{N_t},\tau_{N_t+1})
\end{align}
where $\langle Q_t,F\rangle=\sum_{(x,y)\in E}Q_t(x,y)F(x,y)$ and $\langle Q_t,g_{F,h}\rangle=\sum_{(x,y)\in E}Q_t(x,y)g_{F,h}(x)$.

For convenience, for any Borel measurable function $f$ on $V\times (0,\infty]$, let
\begin{equation*}
\langle\hat{\mu}_t,f\rangle =\langle \mu_t,f\rangle+\frac{S_{N_t+1}-t}{t}f(X_{N_t},\tau_{N_t+1})=
\frac{1}{t}\sum_{k=1}^{N_t+1}\tau_{k}f(X_{k-1},\tau_k).
\end{equation*}
Then as an immediate consequence of the Radon-Nykodim derivative \eqref{formula:change measure}, we obtain the following result.

\begin{lemma}\label{lem:mhF}
For any $(F,h)\in \Gamma$ and $t\ge 0$, let $\mathcal{M}^{F,h}_t: \Omega \to (0,\infty)$ be a function defined by
\begin{equation}
\label{expm2}
\mathcal{M}^{F,h}_t = \exp\Big\{t\big[\langle Q_t,F-g_{F,h}\rangle+\langle\hat{\mu}_t,h\rangle\big]\Big\}.
\end{equation}
Then for each $x\in V$ and $t\ge 0$, we have $\mathbb E_{x} \big( \mathcal{M}^{F,h}_t \big) \le 1$.
\end{lemma}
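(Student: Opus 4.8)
The plan is to exhibit $\mathcal{M}^{F,h}_t$ (or rather an auxiliary process indexed by the jump count) as a nonnegative supermartingale with value $1$ at time zero, and then apply optional stopping at the bounded stopping time $N_t+1$. The key observation is that, although $t \mapsto \mathcal{M}^{F,h}_t$ is not obviously a martingale in the continuous time variable, the discrete-time process obtained by freezing the ``last increment'' is. Concretely, for $n \ge 0$ I would define
\begin{equation*}
Z_n = \exp\Big\{ \sum_{i=1}^{n}\big[F(X_{i-1},X_i) - g_{F,h}(X_{i-1}) + \tau_i\, h(X_{i-1},\tau_i)\big]\Big\},
\end{equation*}
which is $\mathcal{F}_n$-measurable, with $Z_0 = 1$. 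The first step is to check that $(Z_n)_{n\ge 0}$ is an $\{\mathcal{F}_n\}$-martingale under $\mathbb{P}_x$: conditioning on $\mathcal{F}_{n}$ and using that, given $(X_k)_{k\ge 0}$, the waiting time $\tau_{n+1}$ has law $\psi_{X_n}$ independent of the past, one gets
\begin{equation*}
\mathbb{E}_x\big[ Z_{n+1}\,\big|\,\mathcal{F}_n\big] = Z_n \cdot e^{-g_{F,h}(X_n)} \sum_{z\in V} p_{X_n z}\, e^{F(X_n,z)} \cdot \int_{(0,\infty)} e^{s\, h(X_n,s)}\psi_{X_n}(\mathrm{d}s) = Z_n,
\end{equation*}
by the very definition \eqref{def:g} of $g_{F,h}$. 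Here the hypotheses $(F,h)\in\Gamma$ guarantee that both factors are finite, so the conditional expectation is well defined and the cancellation is legitimate.

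The second step is to relate $\mathcal{M}^{F,h}_t$ to $Z_{N_t+1}$. From the Radon--Nikodym computation \eqref{formula:change measure} and the definition \eqref{expm2}, on the event $\{N_t<\infty\}$ we have exactly
\begin{equation*}
\mathcal{M}^{F,h}_t = \exp\Big\{ \sum_{i=1}^{N_t+1}\big[F(X_{i-1},X_i) - g_{F,h}(X_{i-1}) + \tau_i\, h(X_{i-1},\tau_i)\big]\Big\} = Z_{N_t+1},
\end{equation*}
since $t\langle Q_t, F - g_{F,h}\rangle = \sum_{i=1}^{N_t+1}[F(X_{i-1},X_i) - g_{F,h}(X_{i-1})]$ and $t\langle \hat\mu_t, h\rangle = \sum_{i=1}^{N_t+1}\tau_i h(X_{i-1},\tau_i)$ by the formulas recalled just before the lemma. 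Now $N_t+1$ is a bounded-below $\{\mathcal{F}_n\}$-stopping time, and on $\{N_t=\infty\}$ (which has probability zero by non-explosivity, but we keep the argument clean) we simply set the integrand to $0$. Applying the optional stopping theorem is not immediate because $N_t+1$ is not a.s. bounded by a deterministic constant; instead I would apply Fatou's lemma to the stopped martingale $Z_{n\wedge(N_t+1)}$: since $\mathbb{E}_x[Z_{n\wedge(N_t+1)}] = 1$ for every $n$ and $Z_{n\wedge(N_t+1)} \to Z_{N_t+1}\mathbf 1_{\{N_t<\infty\}}$ a.s. as $n\to\infty$, Fatou gives $\mathbb{E}_x\big[\mathcal{M}^{F,h}_t\big] = \mathbb{E}_x\big[Z_{N_t+1}\mathbf 1_{\{N_t<\infty\}}\big] \le 1$.

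The main obstacle is precisely the unboundedness of the stopping time $N_t+1$: one cannot invoke optional stopping directly, and must instead pass through a truncation/Fatou argument as above, which also forces care about whether the limit is $Z_{N_t+1}$ or something smaller on a null set. A secondary technical point is to verify that each $Z_n$ is genuinely integrable so that the martingale property makes sense term by term — this again follows from $(F,h)\in\Gamma$ together with an induction on $n$, since the one-step conditional expectation is finite and equal to $Z_n$. I do not expect any further subtlety; everything else is bookkeeping with the definitions \eqref{def:g}, \eqref{formula:change measure}, and \eqref{expm2}.
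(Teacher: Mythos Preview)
Your argument is correct. You build the discrete-time martingale $(Z_n)_{n\ge 0}$, check the one-step identity $\mathbb{E}_x[Z_{n+1}\mid\mathcal{F}_n]=Z_n$ using the definition of $g_{F,h}$ and Assumption~\ref{ass:dti}, identify $\mathcal{M}^{F,h}_t=Z_{N_t+1}$ on $\{N_t<\infty\}$, and then pass to the limit via Fatou because $N_t+1$ is not bounded. Each step is sound; the integrability bootstrap for $Z_n$ is handled correctly by nonnegativity and induction.

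The paper, however, takes a shorter route. Since the computation \eqref{formula:change measure} already shows that $\mathcal{M}^{F,h}_t$ is precisely the Radon--Nikodym density of $\mathbb{P}^{F,h}_{x,t}|_{\mathcal{F}_{N_t+1}}$ with respect to $\mathbb{P}_{x,t}|_{\mathcal{F}_{N_t+1}}$, integrating it against $\mathbb{P}_x$ on $\{N_t<\infty\}$ gives $\mathbb{P}^{F,h}_x(N_t<\infty)\le 1$ in one line. In other words, you invoke \eqref{formula:change measure} only as an algebraic rewriting, whereas the paper exploits its measure-theoretic content directly and thereby avoids the stopping-time and Fatou issues altogether. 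Your approach has the merit of making the supermartingale structure of $\mathcal{M}^{F,h}$ explicit (which the paper mentions after the proof without details), but for the bare inequality $\mathbb{E}_x[\mathcal{M}^{F,h}_t]\le 1$ the change-of-measure argument is more economical.
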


\begin{proof}
Note that $\{N_t<\infty\}\in \mathcal{F}_{N_t+1}$. It follows from \eqref{formula:change measure} that
$\mathbb E_{x}( \mathcal{M}_t ^{F,h})=\mathbb{P}^{F,h}_{x}(N_t<\infty)\leq
1$.
\end{proof}

In fact, it is easy to check that under $\mathbb P_{x}$, the process $\mathcal{M}^{F,h}$ is a positive local martingale and a supermartingale  with respect to $(\mathcal{F}_{N_t+1})_{t\ge 0}$. The next statement can be deduced from Lemma \ref{lem:mhF}  by choosing specific $F$ and $h$.

\begin{lemma}\label{t:em1}
Let $u\colon V \to (0,\infty)$ be a function satisfying $Pu(x)<\infty$ for any $x\in V$. Let $A\subset V$ be a set satisfying $u(x)/Pu(x)\le \psi_x(e^{\zeta(x)\tau})$ for any $x\in A^c$. Let $F(x,y) = \log [ u(y)/u(x)]$ for any $(x,y)\in E$ and
\begin{equation}\label{h u A}
h(x,s)=h^{u,A}(x,s):=\frac{\log \frac{u}{Pu}(x)}{s}1_A(x)+L\frac{u}{Pu}(x)1_{A^c}(x),\quad (x,s)\in V\times (0,\infty].
\end{equation}
For any $t\ge 0$, let $\mathcal M^{u,A}_t: \Omega \to (0,\infty)$ be a function defined as
\begin{equation}
\label{martingle2}
\mathcal M^{u,A}_t =  \frac{u(X_{N_t+1})}{u(X_0)}
\exp\Big\{t\left\langle \hat{\mu}_t, h^{u,A}\right\rangle\Big\}.
\end{equation}
Then $(F,h)\in \Gamma$ and $\mathbb E_{x} \big(\mathcal M^{u,A}_t \big) \le 1$ for any $x\in V$ and $t\ge 0$.
\end{lemma}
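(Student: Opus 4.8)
The plan is to reduce everything to Lemma \ref{lem:mhF} by showing that the functions $F$ and $h=h^{u,A}$ prescribed in the statement satisfy $(F,h)\in\Gamma$, that the associated function $g_{F,h}$ of \eqref{def:g} vanishes identically, and that the exponential supermartingale $\mathcal{M}^{F,h}_t$ of \eqref{expm2} then coincides with $\mathcal{M}^{u,A}_t$.

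First I would verify membership in $\Gamma$. For $F(x,y)=\log[u(y)/u(x)]$ one has $\sum_{z}p_{xz}e^{F(x,z)}=Pu(x)/u(x)<\infty$ by the hypothesis $Pu(x)<\infty$. For $h^{u,A}$, observe that $s\,h^{u,A}(x,s)$ is constant in $s$ in both cases: it equals $\log(u/Pu)(x)$ when $x\in A$, so $\int_{(0,\infty)}e^{s h}\,\psi_x(\mathrm{d}s)=(u/Pu)(x)<\infty$; and it equals $s\,L(u/Pu)(x)=s\,\theta_x\big((u/Pu)(x)\big)$ when $x\in A^c$, so $\int_{(0,\infty)}e^{s h}\,\psi_x(\mathrm{d}s)=\psi_x\big(e^{\theta_x((u/Pu)(x))\tau}\big)$. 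Here I would invoke Lemma \ref{lemma:continuous for theta}(c) together with the standing restriction $u(x)/Pu(x)\le\psi_x(e^{\zeta(x)\tau})$ on $A^c$ to conclude that this last integral equals $(u/Pu)(x)$, hence is finite. This establishes $(F,h)\in\Gamma$.

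Next I would compute $g_{F,h}$ from \eqref{def:g}. The first logarithm is $\log\big(Pu(x)/u(x)\big)$, and by the computation just made the second logarithm is $\log\big(u(x)/Pu(x)\big)$ for every $x\in V$ (whether $x\in A$ or $x\in A^c$), so the two terms cancel and $g_{F,h}\equiv 0$. Consequently $\mathcal{M}^{F,h}_t=\exp\{t[\langle Q_t,F\rangle+\langle\hat{\mu}_t,h\rangle]\}$. Using the representation $Q_t=t^{-1}\sum_{k=1}^{N_t+1}\delta_{(X_{k-1},X_k)}$, the term $t\langle Q_t,F\rangle=\sum_{k=1}^{N_t+1}\log[u(X_k)/u(X_{k-1})]$ telescopes to $\log[u(X_{N_t+1})/u(X_0)]$. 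Hence $\mathcal{M}^{F,h}_t=\frac{u(X_{N_t+1})}{u(X_0)}\exp\{t\langle\hat{\mu}_t,h^{u,A}\rangle\}=\mathcal{M}^{u,A}_t$, and the bound $\mathbb{E}_x(\mathcal{M}^{u,A}_t)\le 1$ follows immediately from Lemma \ref{lem:mhF}.

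The argument is essentially a bookkeeping reduction, and I do not anticipate a genuine obstacle; the one place requiring care — and where the precise hypotheses of the lemma are actually used — is the verification that $h^{u,A}(x,\cdot)$ has finite exponential moment on $A^c$ and that the second logarithm in $g_{F,h}$ exactly cancels the first there. Both hinge on the identity $\psi_x(e^{\theta_x(t)\tau})=t$, which holds only when $t\le\psi_x(e^{\zeta(x)\tau})$; were this to fail, $g_{F,h}(x)$ would be strictly negative and the clean identification $\mathcal{M}^{F,h}_t=\mathcal{M}^{u,A}_t$ would break down. This is exactly why the set $A^c$ is constrained as in the statement.
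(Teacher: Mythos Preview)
Your proposal is correct and follows essentially the same route as the paper's proof: verify $(F,h)\in\Gamma$, use Lemma~\ref{lemma:continuous for theta}(c) together with the hypothesis on $A^c$ to get $g_{F,h}\equiv 0$, telescope $t\langle Q_t,F\rangle$ to $\log[u(X_{N_t+1})/u(X_0)]$, and invoke Lemma~\ref{lem:mhF}. One small wording slip: you write that $s\,h^{u,A}(x,s)$ is ``constant in $s$ in both cases,'' but for $x\in A^c$ it equals $s\,\theta_x((u/Pu)(x))$, which is linear rather than constant in $s$; your subsequent computation is nonetheless correct.
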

\begin{proof}
For any $x\in V$, it is clear that $\sum_{y\in V}p_{xy}e^{F(x,y)}=Pu(x)/u(x)$ and
\begin{equation*}
\int_{(0,\infty)}e^{sh(x,s)}\psi_x(\mathrm{d}s)=\frac{u(x)}{Pu(x)}1_A(x)+\psi_x\left(e^{\theta_x(u(x)/Pu(x))\tau}\right)1_{A^c}(x).
\end{equation*}
Therefore, by item (c) in Lemma \ref{lemma:continuous for theta}, we have $\int_{(0,\infty)}e^{sh(x,s)}\psi_x(\mathrm{d}s)=u(x)/Pu(x)$. This implies that $(F,h)\in \Gamma$ and $g_{F,h}\equiv 0$. On the other hand, it is easy to check that $\langle Q_t,F\rangle=u(X_{N_t+1})/u(X_0)$. This completes the proof of this lemma.
\end{proof}

\subsection{Exponential tightness}\label{subsection:et}
We will next prove the exponential tightness of the empirical measure and empirical flow under Condition \ref{condition:ccomp} with item (c) is replaced by item (c*) in Remark \ref{remark:gamma}.

Let the function $\hat{u}$, the sequence of functions $u_n$, the set $K$, and the constants $c,C_{\gamma},\sigma,\eta$ be as in Condition \ref{condition:ccomp} and item (c*) in Remark \ref{remark:gamma}. Recall the definition of $h^{u,A}$ in \eqref{h u A}. Let $h^{\hat{u}}:V\times (0,\infty]\to \mathbb{R}$ be a function defined by
\begin{equation*}
h^{\hat{u}}(x,s)=\lim_{n\to \infty}h^{u_n,K}(x,s)=\frac{\log\hat{u}(x)}{s}1_K(x)+L\hat{u}(x)1_{K^c}(x),\quad (x,s)\in V\times (0,\infty],
\end{equation*}
where we have used the condition that $u_n/Pu_n$ converges pointwise to $\hat{u}$.

\begin{lemma}\label{t:letem}
Assume Condition \ref{condition:ccomp} to hold. Then
\begin{equation}\label{sirena}
\mathbb E_{\gamma}    \Big( e^{ t  \langle\hat{\mu}_t, h^{\hat{u}}\rangle } \Big) \le
\frac {C_{\gamma}}{c}\,,\qquad	\mathbb E_{\gamma}    \Big( e^{ t  \langle\mu_t, h^{\hat{u}}\rangle } \Big) \le
\frac {C_{\gamma}e^N}{c}\,,
\end{equation}
where $N=1\vee (-\inf_{x\in K}\log\hat{u}(x))$.
\end{lemma}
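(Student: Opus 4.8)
The plan is to exploit the supermartingale bound from Lemma \ref{t:em1}, applied to the functions $u_n$ and the finite set $A = K$. First I would note that the hypotheses of Lemma \ref{t:em1} are met: item (f) in Condition \ref{condition:ccomp} guarantees $\hat{u}(x) < \psi_x(e^{\zeta(x)\tau})$ for $x\in K^c$, and since $u_n/Pu_n \to \hat u$ pointwise, for each fixed $n$ the set where $u_n(x)/Pu_n(x) \le \psi_x(e^{\zeta(x)\tau})$ fails is contained in $K$ up to finitely many points — more carefully, one wants a fixed finite set $A \supseteq K$ working for all $n$, or one argues at the level of $\hat u$ directly. From $\mathbb E_x(\mathcal M^{u_n,K}_t)\le 1$ together with $\mathcal M^{u_n,K}_t = \frac{u_n(X_{N_t+1})}{u_n(X_0)}\exp\{t\langle\hat\mu_t, h^{u_n,K}\rangle\}$, and using item (b) ($u_n \ge c$) to bound $u_n(X_{N_t+1}) \ge c$ from below and item (c*) to bound $\mathbb E_\gamma[u_n(X_0)] = \sum_x \gamma(x) u_n(x) \le C_\gamma$, I get
\begin{equation*}
\mathbb E_\gamma\Big(e^{t\langle\hat\mu_t, h^{u_n,K}\rangle}\Big) \le \frac{1}{c}\,\mathbb E_\gamma\big(u_n(X_0)\big) \le \frac{C_\gamma}{c}.
\end{equation*}

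Next I would pass to the limit $n\to\infty$. Since $h^{u_n,K}(x,s)$ converges pointwise to $h^{\hat u}(x,s)$ as $n\to\infty$ (using that $\log$ and $L = \theta_x$ are continuous, by Lemma \ref{lemma:continuous for theta}), and since $\langle\hat\mu_t, f\rangle = \frac1t\sum_{k=1}^{N_t+1}\tau_k f(X_{k-1},\tau_k)$ is a \emph{finite} sum on the event $\{N_t<\infty\}$ (which has full probability by non-explosiveness under $\mathbb P_\gamma$), we have $\langle\hat\mu_t, h^{u_n,K}\rangle \to \langle\hat\mu_t, h^{\hat u}\rangle$ pointwise on $\Omega$. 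Then Fatou's lemma gives $\mathbb E_\gamma(e^{t\langle\hat\mu_t, h^{\hat u}\rangle}) \le \varliminf_n \mathbb E_\gamma(e^{t\langle\hat\mu_t, h^{u_n,K}\rangle}) \le C_\gamma/c$, which is the first inequality in \eqref{sirena}.

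For the second inequality I would compare $\langle\mu_t, h^{\hat u}\rangle$ with $\langle\hat\mu_t, h^{\hat u}\rangle$. By the definitions in Section \ref{subsection:elm}, $\langle\hat\mu_t, f\rangle = \langle\mu_t, f\rangle + \frac{S_{N_t+1}-t}{t} f(X_{N_t},\tau_{N_t+1})$, so $\langle\mu_t, h^{\hat u}\rangle = \langle\hat\mu_t, h^{\hat u}\rangle - \frac{S_{N_t+1}-t}{t} h^{\hat u}(X_{N_t},\tau_{N_t+1})$. Since $S_{N_t}\le t < S_{N_t+1}$, the prefactor $\frac{S_{N_t+1}-t}{t}$ lies in $[0, \tau_{N_t+1}/t]$, and the extra term is $-\frac{S_{N_t+1}-t}{t} h^{\hat u}(X_{N_t},\tau_{N_t+1})$. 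Examining the two cases in the definition of $h^{\hat u}$: if $X_{N_t}\in K$ then this term equals $-\frac{S_{N_t+1}-t}{\tau_{N_t+1} t}\log\hat u(X_{N_t})\le \frac{1}{t}(-\log\hat u(X_{N_t})) \le \frac{N}{t}$ using $S_{N_t+1}-t \le \tau_{N_t+1}$ and the definition of $N$; if $X_{N_t}\in K^c$ then $h^{\hat u}(X_{N_t},\tau_{N_t+1}) = L\hat u(X_{N_t}) = \theta_{X_{N_t}}(\hat u(X_{N_t}))$, and since $\hat u > 0$ on $K^c$ one needs $\theta_x(\hat u(x)) \ge 0$ there — actually item (f) with $\sigma,\eta$ only gives a lower bound; the cleanest route is to observe that $\hat u(x) \ge 1$ is \emph{not} assumed, so on $K^c$ where $\theta_x(\hat u(x))$ could be negative the contribution $-\frac{S_{N_t+1}-t}{t}\theta_x(\hat u(x))$ is $\ge 0$ only if $\theta_x(\hat u(x))\le 0$. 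The main obstacle is precisely this bookkeeping of the boundary term $\frac{S_{N_t+1}-t}{t}h^{\hat u}(X_{N_t},\tau_{N_t+1})$: I expect one shows $t\langle\mu_t, h^{\hat u}\rangle \le t\langle\hat\mu_t, h^{\hat u}\rangle + N$ by checking that in every case the discarded term is bounded above by $N/t$ (in the $K^c$ case, either $\theta_x(\hat u(x))\ge 0$ and the term is $\le 0$, or one absorbs a bounded negative part into $N$; the definition $N = 1\vee(-\inf_{x\in K}\log\hat u(x))$ suggests the authors intend the $K^c$ contribution to be nonpositive, which would follow if $\hat u\ge 1$ on $K^c$, an easy consequence of item (e) type arguments or simply taken as part of the setup). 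Granting $t\langle\mu_t,h^{\hat u}\rangle \le t\langle\hat\mu_t,h^{\hat u}\rangle + N$, exponentiating and taking expectations yields $\mathbb E_\gamma(e^{t\langle\mu_t,h^{\hat u}\rangle}) \le e^N \mathbb E_\gamma(e^{t\langle\hat\mu_t,h^{\hat u}\rangle}) \le C_\gamma e^N/c$, completing the proof.
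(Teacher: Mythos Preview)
Your overall strategy matches the paper's: use the supermartingale bound from Lemma~\ref{t:em1} with $A=K$, pass to the limit via Fatou to get the first inequality, then control the boundary term $\frac{S_{N_t+1}-t}{t}h^{\hat u}(X_{N_t},\tau_{N_t+1})$ to deduce the second. The first half is essentially correct; the paper differs only in the order of operations---it applies Fatou inside each $\mathbb E_x$ first, obtaining $\mathbb E_x(e^{t\langle\hat\mu_t,h^{\hat u}\rangle})\le \varliminf_n \mathbb E_x(e^{t\langle\hat\mu_t,h^{u_n,K}\rangle})$, and then for each fixed $x$ uses that the bound $\mathbb E_x(e^{t\langle\hat\mu_t,h^{u_n,K}\rangle})\le u_n(x)/c$ holds for all $n$ large enough (depending on $x$). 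This cleanly avoids the issue you flagged about needing the hypothesis of Lemma~\ref{t:em1} to hold uniformly in $n$.

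There is, however, a genuine gap in your treatment of the second inequality. You correctly reduce to showing $t\langle\mu_t,h^{\hat u}\rangle \le t\langle\hat\mu_t,h^{\hat u}\rangle + N$, and you handle the case $X_{N_t}\in K$ correctly. But for $X_{N_t}\in K^c$ you get stuck: you note that one needs $L\hat u(x)=\theta_x(\hat u(x))\ge 0$ there, look at item~(f), conclude it ``only gives a lower bound,'' and fall back on vague speculation that $\hat u\ge 1$ on $K^c$ might follow from item~(e) or be part of the setup. The point you are missing is that the lower bound from item~(f) is already \emph{positive}: on $K^c$ it reads $L\hat u(x)\ge -\sigma\,\theta_x(\eta)$, and since $\eta\in(0,1)$, Lemma~\ref{lemma:continuous for theta} gives $\theta_x(\eta)<\theta_x(1)=0$, hence $-\sigma\,\theta_x(\eta)>0$. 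Thus $h^{\hat u}(x,s)=L\hat u(x)>0$ for every $x\in K^c$, and the boundary contribution $-\frac{S_{N_t+1}-t}{t}h^{\hat u}(X_{N_t},\tau_{N_t+1})$ is $\le 0$ in that case. This is exactly how the paper closes the argument.
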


\begin{proof}
By Fatou's lemma, we have
\begin{equation}\label{eq1 for lemma 4.3}
\mathbb E_{\gamma} \Big( e^{ t \langle\hat{\mu}_t, h^{\hat{u}}\rangle } \Big)\le\sum_{x\in V}\gamma(x)\varliminf_{n\to\infty}\mathbb E_{x} \Big( e^{ t \langle\hat{\mu}_t,h^{u_n,K} \rangle } \Big).
\end{equation}
For any $x\in K^c$, it follows from items (d) and (f) in Condition \ref{condition:ccomp} that there exists $n_x\in \mathbb{N}$ such that $u_n(x)/Pu_n(x)<\psi_x(e^{\zeta(x)\tau})$ for any $n\ge n_x$. By Lemma \ref{t:em1} and item (b) in Condition \ref{condition:ccomp}, we have
\begin{equation}\label{eq2 for lemma 4.3}
\mathbb E_{x} \Big( e^{ t \langle\hat{\mu}_t,h^{u_n,K} \rangle } \Big)=\mathbb E_{x} \left( \frac{u_n(X_0)}{u_n(X_{N_t+1})}\mathcal{M}^{u_n,K}_t \right)\le \frac{u_n(x)}{c},\quad n\ge n_x.
\end{equation}
Combining \eqref{eq1 for lemma 4.3}, \eqref{eq2 for lemma 4.3}, and item (c*) in Remark \ref{remark:gamma}, we have
\begin{equation*}
\mathbb E_{\gamma} \Big( e^{ t \langle\hat{\mu}_t, h^{\hat{u}}\rangle } \Big)\le \sum_{x\in V}\gamma(x)\frac{u_n(x)}{c}	\le \frac {C_{\gamma}}{c}.
\end{equation*}
Moreover, by item (f) in Condition \ref{condition:ccomp}, we obtain
\begin{align}\label{h hat u}
h^{\hat{u}}(x,s)\ge-\frac{N}{s}1_K(x)-\sigma L\eta(x)1_{K^c}(x).
\end{align}
Since $N>0$ and $L\eta(x)<0$ for any $x\in K^c$, we have
\begin{equation*}
\begin{split}
\langle \hat{\mu}_t, h^{\hat{u}}\rangle=&\;\langle \mu_t, h^{\hat{u}}\rangle+\frac{S_{N_t+1}-t}{t}h^{\hat{u}}(X_{N_t},\tau_{N_t+1})\\
\ge &\;\langle \mu_t, h^{\hat{u}}\rangle-\frac{S_{N_t+1}-t}{t}\frac{N}{\tau_{N_t+1}}\\
\ge &\;\langle \mu_t, h^{\hat{u}}\rangle-\frac{N}{t}.
\end{split}
\end{equation*}
This implies the second inequality in \eqref{sirena}.
\end{proof}

For any $B\subset V$, let $(T^k_B)_{k\ge 0}$ be a sequence of stopping times defined by
\begin{equation}\label{hitting time}
T^0_B=0,\qquad T^{k}_B=\inf\left\{n\ge T^{k-1}_B+1:X_n\in B\right\},\quad k\ge 1.
\end{equation}
Clearly, $T^k_B$ is the $k$th hitting time of $(X_n)_{n\ge 0}$ on the set $B$. The first hitting time $T^1_B$ is always abbreviated as $T_B$ in what follows. If we only focus on the behavior of $(X,\tau)$ in the set $B$, we obtain a new process $(\bar{X},\bar{\tau})=\{(\bar{X}_k)_{k\ge 0},(\bar{\tau}_{k})_{k\ge 1}\}$, which is defined by
\begin{equation}\label{hat X tau}
\bar{X}_k=X_{T^k_B},\qquad \bar{\tau}_{k+1}=\sum_{i=T^k_B+1}^{T^{k+1}_B}\tau_i,\quad k\ge 0.
\end{equation}

\begin{proposition}\label{proposition:subset markov renewal process}
Suppose that $(X,\tau)$ satisfies Assumptions \ref{ass:irreducibility} and \ref{ass:recurrence}. Then $\{(\bar{X}_k)_{k\ge 1},(\bar{\tau}_{k})_{k\ge 2}\}$ is a Markov renewal process and also satisfies Assumptions \ref{ass:irreducibility} and \ref{ass:recurrence}. In other words, we have
\begin{itemize}
\item[(a)] $(\bar{X}_k)_{k\ge 1}$ is an irreducible and recurrent discrete-time Markov chain with state space $B$ and transition probability matrix $(\bar{p}_{xy})_{x,y\in B}$ given by $\bar{p}_{xy}=\mathbb{P}_x\left(X_{T_B}=y\right)$.
\item[(b)] $(\bar{\tau}_{k})_{k\ge 2}$ is a sequence of positive and finite random variables such that conditioned on $(\bar{X}_k)_{k\ge 1}$ the random variables $(\bar{\tau}_{k})_{k\ge 2}$ are independent and the waiting time matrix $(\bar{\psi}_{xy})_{x,y\in B}$ is given by
\begin{equation*}
\bar{\psi}_{xy}(\cdot)=\mathbb{P}\left(\sum_{i=1}^{T_B}\tau_i\in \cdot\Bigg|X_0=x,X_{T_B}=y\right),\quad x,y\in B,
\end{equation*}
where the definition of the waiting time matrix can be found in Definition~\ref{def:markov renewal process}.
\end{itemize}
\end{proposition}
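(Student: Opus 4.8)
The plan is to verify directly the defining properties of a Markov renewal process (Definition~\ref{def:markov renewal process}) for $\{(\bar X_k)_{k\ge 1},(\bar\tau_k)_{k\ge 2}\}$ by proving a finite-dimensional factorization identity, and then to transfer the irreducibility and recurrence of $X$ to the trace chain $(\bar X_k)_{k\ge 1}$.

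I would start with the preliminary remarks that make all objects well defined. Since $X$ is irreducible and recurrent, for every $x\in V$ and every fixed $b\in B$ the chain visits $b$ infinitely often $\mathbb P_x$-a.s.; hence $T^k_B<\infty$ a.s.\ for all $k\ge 0$, so $\bar X_k=X_{T^k_B}\in B$ for $k\ge 1$, and $\bar\tau_{k+1}=\sum_{i=T^k_B+1}^{T^{k+1}_B}\tau_i$ is a finite sum of a.s.\ positive and finite terms, hence a.s.\ positive and finite. I would also note that each $T^k_B$, and therefore the whole decomposition of the trajectory into the successive excursions $\mathcal E_k:=(X_j)_{T^{k-1}_B\le j\le T^k_B}$, is measurable with respect to $\mathcal G:=\sigma((X_n)_{n\ge 0})$, the $\sigma$-algebra generated by the embedded chain alone.

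The heart of the argument is the identity: for all $n\ge1$, $x_1,\dots,x_n\in B$ and bounded measurable $g_2,\dots,g_n\colon(0,\infty)\to\mathbb R$,
\[
\mathbb E_x\!\left[1_{\{\bar X_1=x_1,\dots,\bar X_n=x_n\}}\prod_{k=2}^n g_k(\bar\tau_k)\right]
=\mathbb P_x(\bar X_1=x_1)\prod_{k=2}^n \bar p_{x_{k-1}x_k}\,\bar\psi_{x_{k-1}x_k}(g_k).
\]
To establish it, I would first condition on $\mathcal G$: by Definition~\ref{def:markov renewal process}(b) the $\tau_i$ are then independent, so the $\bar\tau_k$, being sums over the disjoint index blocks $(T^{k-1}_B,T^k_B]$, are conditionally independent given $\mathcal G$, and the conditional law of $\bar\tau_k$ is the convolution of the $\psi_{X_{i-1},X_i}$ over that block, hence a measurable function of the excursion $\mathcal E_k$ only. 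One then integrates out $\mathcal G$ using the strong Markov property of $X$ at the successive stopping times $T^1_B,T^2_B,\dots$: conditionally on $(\bar X_1,\dots,\bar X_n)$ the excursions $\mathcal E_2,\dots,\mathcal E_n$ are independent, and $\mathcal E_k$ is distributed as $(X_j)_{0\le j\le T_B}$ under $\mathbb P_{x_{k-1}}(\,\cdot\mid X_{T_B}=x_k)$; combining this with the conditional independence given $\mathcal G$ identifies the conditional law of $\bar\tau_k$ given $(\bar X_1,\dots,\bar X_n)$ with $\bar\psi_{x_{k-1}x_k}$ and the one-step transition law of $(\bar X_k)_{k\ge1}$ with $(\bar p_{xy})$. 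Specialising $g_k\equiv1$ gives the Markov property and transition matrix of $(\bar X_k)_{k\ge1}$, and the general identity then says exactly that, conditionally on $(\bar X_k)_{k\ge1}$, the $\bar\tau_k$ ($k\ge2$) are independent with $\bar\tau_k\sim\bar\psi_{\bar X_{k-1},\bar X_k}$; since each $\bar\psi_{xy}$ is a probability measure on $(0,\infty)$, this is precisely the assertion that $\{(\bar X_k)_{k\ge1},(\bar\tau_k)_{k\ge2}\}$ is a Markov renewal process.

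Finally I would transfer the two assumptions. For irreducibility of $(\bar X_k)_{k\ge1}$ on $B$: given $x,y\in B$, irreducibility of $X$ provides a finite admissible path from $x$ to $y$ in $(V,E)$; $X$ follows it with positive probability, and along it $\bar X$ visits $y$ (the terminal vertex, which lies in $B$) at some finite step, so $\bar p^{(m)}_{xy}>0$ for some $m\ge1$. For recurrence: $X$ being irreducible and recurrent visits each fixed $y\in B$ infinitely often $\mathbb P_x$-a.s., and since every such visit occurs at one of the times $T^k_B$, we get $\bar X_k=y$ for infinitely many $k$, so $(\bar X_k)_{k\ge1}$ is recurrent. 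The step I expect to require the most care is the conditioning bookkeeping in the proof of the factorization identity, namely rigorously justifying that one may condition first on $\mathcal G$ and then apply the iterated strong Markov property at the random times $T^k_B$ to obtain the independence of the excursions with the claimed one-step laws, and checking (by a routine monotone-class argument) that this finite-dimensional identity upgrades to the ``conditionally on the entire sequence $(\bar X_k)_{k\ge1}$'' formulation of Definition~\ref{def:markov renewal process}.
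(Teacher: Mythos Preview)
Your proposal is correct and follows essentially the same approach as the paper's own proof: both condition first on the full embedded chain $\sigma((X_n)_{n\ge 0})$ to exploit the conditional independence of the $\tau_i$ (obtaining conditional independence of the $\bar\tau_k$ as sums over disjoint index blocks), then invoke the strong Markov property at the successive hitting times $T^k_B$ to get the product factorization and identify the transition matrix $(\bar p_{xy})$ and waiting-time laws $(\bar\psi_{xy})$; the irreducibility and recurrence arguments are likewise the same. Your factorization identity packages the paper's sequence of displayed equations more compactly, but the underlying ideas and the order in which they are applied are identical.
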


\begin{proof}
(a) Since $(X_k)_{k\ge 0}$ is irreducible and recurrent, we have $T^k_B<\infty$, $\mathbb{P}_x$-a.s. for any $x\in V$ and $k\ge 0$. For any $n\ge 0$ and $x_0,x_1,\cdots,x_{n+1}\in B$,
\begin{align*}
\mathbb{P}\left(\bar{X}_{n+1}=x_{n+1}\Big|\bar{X}_{n}=x_n,\cdots,\bar{X}_0=x_0\right)=&\;\mathbb{P}\left(X_{T^{n+1}_B}=x_{n+1}\Big|X_{T^n_B}=x_n,\cdots,X_{T^0_B}=x_0\right)\\
=&\;\mathbb{P}_{x_n}\left(X_{T_B}=x_{n+1}\right),
\end{align*}
where the last step follows from the strong Markov property. In fact, we can obtain the recurrence of $(\bar{X}_k)_{k\ge 1}$ directly from the recurrence of $(X_k)_{k\ge 0}$. For any $x,y\in B$, since $(X_k)_{k\ge 0}$ is irreducible, there exist a positive integer $n\ge 1$ and a sequence of states $x_0,x_1,\cdots,x_n\in V$ with $x_0 = x$ and $x_n = y$ such that $p_{x_0,x_1}p_{x_1,x_2}\cdots p_{x_{n-1},x_n}>0$. Select all states in $\{x_k\}_{0\le k\le n}$ in the set $B$ and write them as $x_{i_1},\cdots,x_{i_s}$ with $0=i_1<i_2<\cdots<i_s=n$. Then
\begin{equation*}
\bar{p}_{x_{i_j}x_{i_{j+1}}}\ge p_{x_{i_j}x_{i_j+1}}\cdots p_{x_{i_{j+1}-1}x_{i_{j+1}}}>0,\quad 1\le j\le s-1.
\end{equation*}
This implies that $(\bar{X}_k)_{k\ge 1}$ is irreducible.

(b) Since $T^k_B<\infty$, $\mathbb{P}_x$-a.s. for any $x\in V$ and $k\ge 0$, it is easy to see that $(\bar{\tau}_{k+1})_{k\ge 0}$ is a sequence of positive and finite random variables. Note that conditioned on $(X_k)_{k\ge 0}$ the random variables $(\tau_{k})_{k\ge 1}$ are independent and have distribution $\mathbb{P}\left(\tau_{i+1}\in \cdot \, | \, (X_k)_{k\geq 0}\right) = \psi_{X_{i},X_{i+1}}(\cdot)$. For any $f\in\mathcal{B}_b(0,\infty)$, we have
\begin{equation}\label{E f bar tau}
\mathbb{E}\left[f(\bar{\tau}_k)|(X_k)_{k\ge 0}\right]=\int f\Bigg(\sum_{i=T^{k-1}_B+1}^{T^k_B}t_i\Bigg)\prod_{i=T^{k-1}_B+1}^{T^k_B}\psi_{X_{i-1}X_{i}}({\rm d}t_{i}),
\end{equation}
where we use the fact that $({T^k_B})_{k\ge 0}$ is $\sigma((X_k)_{k\ge 0})$-measurable. For any $n\ge 2$ and $f_1,\cdots,f_n\in \mathcal{B}_b(0,\infty)$, similarly to \eqref{E f bar tau}, we have
\begin{equation}\label{E f1}
\begin{split}
\mathbb E\left[f_1(\bar{\tau}_1)\cdots f_n(\bar{\tau}_n)|(X_k)_{k\ge 0}\right]
& =\int f_1\Bigg(\sum_{i=T^0_B+1}^{T^1_B}t_i\Bigg)\cdots f_n\Bigg(\sum_{i=T^{n-1}_B+1}^{T^n_B}t_i\Bigg)\prod_{i=T^{0}_B+1}^{T^n_B}\psi_{X_{i-1}X_{i}}({\rm d}t_{i})\\
&=\mathbb{E}\left[f_1(\bar{\tau}_1)|(X_k)_{k\ge 0}\right]\cdots\mathbb{E}\left[f_n(\bar{\tau}_n)|(X_k)_{k\ge 0}\right].
\end{split}
\end{equation}
Note that $\bar{X}_k=X_{T^k_B}$. By the strong Markov property of $(X_k)_{k\geq 0}$, it is clear that conditioned on $\{\bar{X}_{k-1}=x,\bar{X}_k=y\}$ the random variable $\mathbb{E}\left[f(\bar{\tau}_k)|(X_k)_{k\ge 0}\right]$ is independent of $(X_k)_{0\le k\le T^{k-1}_B}$ and $(X_k)_{ k\ge T^{k}_B}$. Moreover, for any $x,y\in B$, the distribution of $\mathbb{E}[f(\bar{\tau}_k)|(X_k)_{k\ge0}]$ conditioned on $\{\bar{X}_{k-1}=x,\bar{X}_k=y\}$ is equal to the distribution of $\mathbb{E}[f(\bar{\tau}_1)|(X_k)_{k\ge0}]$ conditioned on $\{\bar{X}_0=x,\bar{X}_1=y\}$. It follows from \eqref{E f1} that
\begin{align*}
\mathbb E\left[f_1(\bar{\tau}_1)\cdots f_n(\bar{\tau}_n)\Big|(\bar{X}_k)_{k\ge 0}\right]&=\mathbb{E}\left[\mathbb E\left[f_1(\bar{\tau}_1)\cdots f_n(\bar{\tau}_n)|(X_k)_{k\ge 0}\right]\Big|(\bar{X}_k)_{k\ge 0}\right]\\
&=\mathbb E\left[f_1(\bar{\tau}_1)\Big|(\bar{X}_k)_{k\ge 0}\right]\cdots \mathbb E\left[f_n(\bar{\tau}_n)\Big|(\bar{X}_k)_{k\ge 0}\right].
\end{align*}
Moreover, for any $x,y\in B$, we have
\begin{align*}
\mathbb{E}\left[f(\bar{\tau}_k)\Big|\bar{X}_{k-1}=x,\bar{X}_k=y\right]&=\mathbb{E}\left[\mathbb{E}\left[f(\bar{\tau}_k)|(X_k)_{k\ge0}\right]\Big|\bar{X}_{k-1}=x,\bar{X}_k=y\right]\\
&=\mathbb{E}\left[f(\bar{\tau}_1)\Big|\bar{X}_0=x,\bar{X}_1=y\right].
\end{align*}
This completes the proof of this proposition.
\end{proof}

\begin{remark}\label{remark:delayed Markov renewal process}
In the above proposition, we have proved that conditioned on $(\bar{X}_k)_{k\ge0}$, the random variables $(\bar{\tau}_k)_{k\ge 1}$ are independent. This implies that $(X,\tau)=\{(\bar{X}_k)_{k\ge 0},(\bar{\tau}_k)_{k\ge 1}\}$ is a delayed Markov renewal process, whose transition probability matrix and waiting time matrix of the first step is different from the remaining steps \cite[Chapter 4.12]{janssen2006applied}.
\end{remark}

\begin{lemma}\label{lemma:sbuset exponential}
Let $K$ be a finite subset of $V$. Then for any $\ell\in \mathbb{N}$, there exists a real sequence $A_{\ell}\uparrow\infty$ such that
\begin{equation}
\varlimsup_{t\to \infty}\frac{1}{t}\log\mathbb{P}_{\gamma}(\langle Q_t,1_K\rangle>A_{\ell})\le-\ell,
\end{equation}
where $\langle Q_t,1_K\rangle=\sum_{(x,y)\in E}Q_t(x,y)1_K(x)$.
\end{lemma}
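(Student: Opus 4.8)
The plan is to reduce the statement about $\langle Q_t,1_K\rangle$ to the exponential supermartingale machinery of Lemma~\ref{t:em1}, exactly as was used in Lemma~\ref{t:letem}, but now with a test function $u$ chosen to penalize visits to the finite set $K$ rather than the behaviour of $\hat u$ at infinity. Since $K$ is finite and $X$ is irreducible and recurrent, the key observation is that each passage of the embedded chain through $K$ contributes a unit of time (in a renewal-type sense) and so $\langle Q_t,1_K\rangle$ counts, up to boundary terms, the number of jumps of the chain out of $K$ up to time $t$; making this exponentially unlikely to be large is a matter of exhibiting a function $u\colon V\to(0,\infty)$ with $u/Pu$ uniformly bounded and strictly less than $1$ on $K$ (say $\le e^{-\delta}$ there), together with an appropriate bound on the associated waiting-time exponent.

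First I would pick a small $\delta>0$ and, using irreducibility together with Assumption~\ref{ass:locally finite} (so that $K$ has finitely many incoming/outgoing edges), construct $u\colon V\to(0,\infty)$, bounded above and below by positive constants, with $Pu(x)\le e^{\delta}u(x)$ for $x\in K$ — i.e. $\log(u/Pu)(x)\le-\delta$ on $K$ — while keeping $u/Pu$ bounded on all of $V$; for instance one can take $u$ constant outside a large finite neighbourhood of $K$ and solve a finite linear system to depress $u$ on $K$. Next, with $A:=V$ so that $h^{u,A}(x,s)=\log(u/Pu)(x)/s$, Lemma~\ref{t:em1} (applied with $A=V$, where the hypothesis $u(x)/Pu(x)\le\psi_x(e^{\zeta(x)\tau})$ on $A^c=\emptyset$ is vacuous) gives the supermartingale bound
\begin{equation*}
\mathbb{E}_\gamma\Big(\tfrac{u(X_0)}{u(X_{N_t+1})}\,e^{t\langle\hat\mu_t,h^{u,V}\rangle}\Big)\le C_\gamma/c,
\end{equation*}
and $t\langle\hat\mu_t,h^{u,V}\rangle=\sum_{k=1}^{N_t+1}\log(u/Pu)(X_{k-1})\le-\delta\sum_{k=1}^{N_t+1}1_K(X_{k-1})+\beta\,(N_t+1-\sum_k 1_K(X_{k-1}))$ where $\beta=\sup_x\log(u/Pu)^+(x)<\infty$. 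Then I would relate $\sum_{k=1}^{N_t+1}1_K(X_{k-1})$ to $\langle Q_t,1_K\rangle=\frac1t\sum_{k=1}^{N_t+1}1_K(X_{k-1})$ and control the complementary term $N_t+1-\sum_k1_K(X_{k-1})$: since excursions of the recurrent chain between successive visits to $K$ are i.i.d.\ after the first (Proposition~\ref{proposition:subset markov renewal process}) with a.s.\ finite length and positive total waiting time, a large value of $\langle Q_t,1_K\rangle$ forces many excursions in time $t$, and a Cram\'er/Chernoff estimate on the i.i.d.\ excursion times (or simply absorbing that term into the exponent via a second exponential-tilting bound) shows the complementary contribution is at worst linear in $\langle Q_t,1_K\rangle$ with a controllable slope. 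Combining, for $\lambda>0$ small enough that $e^{\lambda}$ does not destroy the supermartingale estimate, one gets $\mathbb{E}_\gamma\big(e^{\lambda t\langle Q_t,1_K\rangle}\big)\le C'e^{c't}$ for constants depending on $\lambda,K,\gamma$ but not $t$, whence by Chebyshev
\begin{equation*}
\varlimsup_{t\to\infty}\tfrac1t\log\mathbb{P}_\gamma\big(\langle Q_t,1_K\rangle>A\big)\le c'-\lambda A,
\end{equation*}
so choosing $A=A_\ell:=(\ell+c')/\lambda$ gives the claim, and $A_\ell\uparrow\infty$.

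The main obstacle I anticipate is the bookkeeping that disentangles $\langle Q_t,1_K\rangle$ (a count of edges out of $K$ up to jump $N_t+1$) from the total number of jumps $N_t$ and from the clock time $t$: a priori the chain could make enormously many jumps outside $K$ in bounded time, so one genuinely needs the waiting-time structure (the positivity and nondegeneracy of the i.i.d.\ excursion durations from Proposition~\ref{proposition:subset markov renewal process}, hence non-explosiveness) to argue that a large $\langle Q_t,1_K\rangle$ is costly on the exponential scale. Handling this cleanly will probably require either a preliminary lemma bounding $\mathbb{P}_\gamma(N_t>\text{big})$ exponentially on $\{\langle Q_t,1_K\rangle$ large$\}$, or — more slickly — choosing the tilt $h$ to also charge a small positive amount per jump outside $K$ (still keeping $g_{F,h}$ finite via the exponential-moment condition that defines $\zeta$), so that the single supermartingale $\mathcal M^{F,h}_t$ already dominates $e^{\lambda t\langle Q_t,1_K\rangle}$; I would pursue the latter route, as it keeps everything inside the $(F,h)\in\Gamma$ framework already set up in Section~\ref{subsection:elm}.
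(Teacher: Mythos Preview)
Your primary route through Lemma~\ref{t:em1} has a direction problem and then a deeper structural obstruction. To extract $\mathbb E_\gamma[e^{\lambda t\langle Q_t,1_K\rangle}]\le C'e^{c't}$ from the supermartingale bound $\mathbb E_\gamma[e^{\sum_k\log(u/Pu)(X_{k-1})}]\le C_\gamma/c$, you need a \emph{lower} bound on the exponent of the form $\sum_k\log(u/Pu)(X_{k-1})\ge\lambda\, t\langle Q_t,1_K\rangle-c't$, not the upper bound you wrote. But the exponent is a sum over jump indices, not an integral over $[0,t]$, so ``$-c't$'' would require $\log(u/Pu)\ge 0$ outside a finite set; together with $\log(u/Pu)\ge\lambda>0$ on $K$ this makes the bounded function $u$ superharmonic and nonconstant for the recurrent chain $X$, which is impossible. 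Hence the negative boundary contribution is unavoidable and sits on some finite set $B'$, producing a term $\langle Q_t,1_{B'}\rangle$ of exactly the same type you are trying to bound. Your suggested fix of first controlling $N_t$ (equivalently $\|Q_t\|$) is circular here: the exponential tightness in Proposition~\ref{proposition:etem} invokes the present lemma.

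The ingredient you mention almost in passing --- a Cram\'er/Chernoff bound on the excursion durations between successive visits to $K$ --- is in fact the paper's entire proof, with no $(F,h)$ machinery at all. One passes to the induced process $(\bar X,\bar\tau)$ on $K$ from Proposition~\ref{proposition:subset markov renewal process}, checks the elementary inequality $t\langle Q_t,1_K\rangle=\sum_{k=1}^{N_t+1}1_K(X_{k-1})\le 1+\sum_{i=1}^{N_t}1_K(X_i)=\bar N_t+1$, and then uses
\begin{equation*}
\mathbb P_\gamma\big(\bar N_t+1>A t\big)=\mathbb P_\gamma\big(\bar S_{\lfloor At\rfloor}\le t\big)\le e^{t}\,\mathbb E_\gamma\!\left[e^{-\bar S_{\lfloor At\rfloor}}\right]\le e^t\Big(\sup_{x,y\in K}\bar\psi_{xy}(e^{-\tau})\Big)^{\lfloor At\rfloor-1},
\end{equation*}
the last step by conditioning on $(\bar X_k)$ and using the conditional independence of the $\bar\tau_k$. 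Since $K$ is finite the supremum equals some $C<1$, and choosing $A_\ell=1-(1+\ell)/\log C$ gives the claim. So drop the supermartingale detour and go straight to this renewal estimate.
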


\begin{proof}
Let $(\bar{X},\bar{\tau})$ be defined as in \eqref{hat X tau} with $B=K$. By Remark \ref{remark:delayed Markov renewal process}, we have seen that $(\bar{X},\bar{\tau})$ is a delayed Markov renewal process. Similarly to Markov renewal processes, we can also define the $n$th jump time $\bar{S}_n$, the number $\bar{N}_t$ of jumps up to time $t$, and the empirical flow $\bar{Q}_t$ for $(\bar{X},\bar{\tau})$. Note that $\sum_{i=1}^{N_t}1_K(X_i)$ is the number of times that $\xi$ jumps into the set $K$ in the time interval $(0,t]$. It is easy to see that
\begin{equation*}
\bar{N}_t+1=1+\sum_{i=1}^{N_t}1_K\left(X_i\right)\ge\sum_{i=1}^{N_t+1}1_K\left(X_{i-1}\right)= t\left\langle Q_t,1_K\right\rangle.
\end{equation*}
By the exponential Chebyshev inequality, we have
\begin{equation}\label{eq1 for lemma 4.6}
\mathbb{P}_{\gamma}\left(\langle Q_t,1_K\rangle>A_{\ell}\right)\le \mathbb{P}_{\gamma}\left(\bar{N}_t+1>A_{\ell}t\right)= \mathbb{P}_{\gamma}\left(\bar{S}_{\lfloor A_{\ell}t\rfloor}\le t\right)\le e^t\mathbb{E}_{\gamma}\left(e^{-\bar{S}_{\lfloor A_{\ell}t\rfloor}}\right),
\end{equation}
where $\lfloor A_{\ell}t\rfloor$ denotes the integer part of $A_{\ell}t$. By Proposition \ref{proposition:subset markov renewal process} and Remark \ref{remark:delayed Markov renewal process}, it is clear that conditioned on $(\bar{X}_k)_{k\ge 0}$, the random variables $(\bar{\tau}_{k})_{k\ge 1}$ are independent. Then we obtain
\begin{equation}\label{eq2 for lemma 4.6}
\begin{split}
&\;\mathbb{E}_{\gamma}\left(e^{-\bar{S}_{\lfloor A_{\ell}t\rfloor}}\right)\\
=&\;
\sum_{x_1,\cdots,x_{\lfloor A_{\ell}t\rfloor}\in K}\mathbb{P}_{\gamma}\left(\bar{X}_1=x_1,\cdots,\bar{X}_{\lfloor A_{\ell}t\rfloor}=x_{\lfloor A_{\ell}t\rfloor}\right)\mathbb{E}_{\gamma}\left[e^{-\bar{S}_{\lfloor A_{\ell}t\rfloor}}\Big|\bar{X}_1=x_1,\cdots,\bar{X}_{\lfloor A_{\ell}t\rfloor}=x_{\lfloor A_{\ell}t\rfloor}\right]\\
=&\;\sum_{x_1,\cdots,x_{\lfloor A_{\ell}t\rfloor}\in K}\mathbb{P}_{\gamma}\left(\bar{X}_1=x_1\right)\mathbb{E}_{\gamma}\left[e^{-\bar{\tau}_1}\Big|\bar{X}_1=x_1\right]\prod_{i=2}^{\lfloor A_{\ell}t\rfloor}\left(\bar{p}_{x_{i-1}x_i}\bar{\psi}_{x_{i-1}x_i}\left(e^{-\tau}\right)\right)\\
\le&\; \left(\sup_{x,y\in K}\bar{\psi}_{xy}(e^{-\tau})\right)^{\lfloor A_{\ell}t\rfloor-1}\mathbb{E}_{\gamma}\left[e^{-\bar{\tau}_1}\right],
\end{split}
\end{equation}
where $\bar{\psi}_{xy}(e^{-\tau})=\int_{(0,\infty)}e^{-s}\bar{\psi}_{xy}(\mathrm{d}s)$. Since $K$ is finite, it is clear that $\sup_{x,y\in K}\bar{\psi}_{xy}(e^{-\tau})<1$. Combining \eqref{eq1 for lemma 4.6} and \eqref{eq2 for lemma 4.6}, we have
\begin{equation*}
\varlimsup_{t\to \infty}\frac{1}{t}\log\mathbb{P}_{\gamma}(\langle Q_t,1_K\rangle>A_{\ell})\le 1+(A_{\ell}-1)\log C.
\end{equation*}
This completes the proof by choosing $A_{\ell}=1-(1+\ell)/\log C$.
\end{proof}

The following proposition states the exponential tightness of the empirical measure and empirical flow.

\begin{proposition}\label{proposition:etem}
Assume Condition \ref{condition:ccomp} to hold. Then there exists a sequence $\{\mathcal K_\ell\}$ of
compact sets in $\mathcal P(V\times (0,\infty])$ and a real sequence $A _\ell\uparrow \infty$
such that for any $\ell\in\mathbb N$
\begin{align}
&  \varlimsup_{t\to \infty} \; \frac 1t
\log \mathbb  P_{\gamma} \big( \mu_t \not\in \mathcal K_\ell \big) \le -\ell\,,
\label{muT exponential}\\
&  \varlimsup_{t\to \infty} \; \frac 1t
\log \mathbb  P_{\gamma} \big( \| Q_t\| >  A_\ell \big) \le -\ell\,.
\label{QT exponential}
\end{align}
In particular, the empirical measure and empirical flow are exponentially tight.
\end{proposition}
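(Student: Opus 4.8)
The plan is to derive both \eqref{muT exponential} and \eqref{QT exponential} from Lemmas~\ref{t:letem} and \ref{lemma:sbuset exponential} together with one further exponential supermartingale not yet exploited, namely the one of Lemma~\ref{t:em1} applied to each $u_n$ with $A=V$; here $A^c=\varnothing$, so the hypothesis $u/Pu\le\psi_x(e^{\zeta(x)\tau})$ on $A^c$ is vacuous, $h^{u_n,V}(x,s)=s^{-1}\log(u_n/Pu_n)(x)$, and hence $t\langle\hat\mu_t,h^{u_n,V}\rangle=\sum_{k=0}^{N_t}\log(u_n/Pu_n)(X_k)$. Running the $u_n\ge c$/item~(c*)/Fatou argument of Lemma~\ref{t:letem} then gives
\[
\mathbb E_\gamma\Big(\exp\big\{\textstyle\sum_{k=0}^{N_t}\log\hat u(X_k)\big\}\Big)\le C_\gamma/c .
\]
The first thing I would establish, and the real engine of the proof, is that item~(f) of Condition~\ref{condition:ccomp} forces $\hat u$ to be bounded away from $1$ off $K$: for $x\in K^c$, since $\eta<1\le\psi_x(e^{\zeta(x)\tau})$ we have $\psi_x(e^{\theta_x(\eta)\tau})=\eta$ by Lemma~\ref{lemma:continuous for theta}(c), and Jensen's inequality applied to the convex map $y\mapsto y^{-\sigma}$ gives $\psi_x(e^{-\sigma\theta_x(\eta)\tau})\ge\eta^{-\sigma}$; combining $\theta_x(\hat u(x))\ge-\sigma\theta_x(\eta)$ and $\hat u(x)<\psi_x(e^{\zeta(x)\tau})$ (both from item~(f)) with the monotonicity of $\lambda\mapsto\psi_x(e^{\lambda\tau})$ and Lemma~\ref{lemma:continuous for theta}(c) yields $\hat u(x)=\psi_x(e^{\theta_x(\hat u(x))\tau})\ge\psi_x(e^{-\sigma\theta_x(\eta)\tau})\ge\eta^{-\sigma}$. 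Thus $\log\hat u\ge\delta_0:=-\sigma\log\eta>0$ on $K^c$ (in particular $L\hat u>0$ there). Below, $c,C_\gamma,\sigma,\eta,N$ and the finite set $K$ are those of Condition~\ref{condition:ccomp}, item~(c*) and Lemma~\ref{t:letem}.

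For \eqref{QT exponential} I would write $\|Q_t\|=\langle Q_t,1_K\rangle+\langle Q_t,1_{K^c}\rangle$. The first summand is controlled directly by Lemma~\ref{lemma:sbuset exponential}. For the second, $\log\hat u\ge\delta_0$ on $K^c$ and $\log\hat u\ge-N$ on $K$ give $\sum_{k=0}^{N_t}\log\hat u(X_k)\ge t\big(\delta_0\langle Q_t,1_{K^c}\rangle-N\langle Q_t,1_K\rangle\big)$, so the displayed bound yields $\mathbb E_\gamma\big(\exp\{t(\delta_0\langle Q_t,1_{K^c}\rangle-N\langle Q_t,1_K\rangle)\}\big)\le C_\gamma/c$; intersecting with the event $\{\langle Q_t,1_K\rangle\le A'_\ell\}$ ($A'_\ell$ from Lemma~\ref{lemma:sbuset exponential}) and applying the exponential Chebyshev inequality produces a sequence $A_\ell\uparrow\infty$ with $\varlimsup_{t\to\infty}t^{-1}\log\mathbb P_\gamma(\|Q_t\|>A_\ell)\le-\ell$.

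For \eqref{muT exponential} I would use Prokhorov's theorem: it suffices to build, for each $\ell$, a closed set on which $\mu_t$ carries uniformly little mass on $F^c\times(0,\infty]$ and on $V\times(0,\delta)$ for suitable finite $F\subset V$ and small $\delta>0$. Two estimates do this. First, for the set $F_M:=K\cup\{x:L\hat u(x)\le M\}$, finite by item~(e), the identity $\langle\hat\mu_t,h^{\hat u}\rangle=\tfrac1t\sum_{k=1}^{N_t+1}\tau_k h^{\hat u}(X_{k-1},\tau_k)$, together with $h^{\hat u}(x,\cdot)=L\hat u(x)>M$ on $F_M^c\subset K^c$, $L\hat u>0$ on $F_M\setminus K$, and $\tau_k h^{\hat u}(X_{k-1},\tau_k)=\log\hat u(X_{k-1})\ge-N$ on $K$, gives
\[
M\,\langle\mu_t,1_{F_M^c}\rangle\ \le\ \langle\hat\mu_t,h^{\hat u}\rangle+N\,\langle Q_t,1_K\rangle ,
\]
whence by Lemmas~\ref{t:letem} and \ref{lemma:sbuset exponential} and the exponential Chebyshev inequality $\mathbb P_\gamma(\langle\mu_t,1_{F_M^c}\rangle>\varepsilon)$ can be made as exponentially small as wanted by enlarging $M$. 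Second, $\mu_t(V\times(0,\delta))\le\delta(N_t+1)/t=\delta\|Q_t\|$, so the mass on short waiting times is controlled through \eqref{QT exponential}. Assembling these along sequences $M_m\uparrow\infty$, $\delta_m\downarrow0$, $\varepsilon_m\downarrow0$ — the exponential Chebyshev bounds above being uniform in $t$, which makes the resulting series over $m$ summable — yields compact sets $\mathcal K_\ell$ with $\varlimsup_{t\to\infty}t^{-1}\log\mathbb P_\gamma(\mu_t\notin\mathcal K_\ell)\le-\ell$. Finally, exponential tightness of $Q_t$ in the bounded weak* topology follows from \eqref{QT exponential} since each ball $B_{A_\ell}$ is bounded-weak*-compact and $L^1_+(E)$ is closed, and exponential tightness of the pair $(\mu_t,Q_t)$ is then immediate.

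I expect the main obstacle to be the estimate for $\langle Q_t,1_{K^c}\rangle$, i.e.\ for the number of jumps per unit time spent outside $K$: a priori this can blow up when far-away states admit arbitrarily short waiting times, and neither Lemma~\ref{t:letem} (which only controls the $\tau_k$-weighted time) nor Lemma~\ref{lemma:sbuset exponential} (which only sees the finite set $K$) captures it. What saves the day is precisely item~(f) of Condition~\ref{condition:ccomp}, which via the convexity/Jensen step above forces $\hat u\ge\eta^{-\sigma}>1$ on $K^c$; this converts the auxiliary supermartingale estimate into a genuine exponential tail for the jump count outside $K$. By comparison, the escape-in-space and escape-to-zero-waiting-time parts of the empirical measure are routine once \eqref{QT exponential} and the bounds of Lemmas~\ref{t:letem} and \ref{lemma:sbuset exponential} are in hand.
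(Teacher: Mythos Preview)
Your argument is correct, and for \eqref{muT exponential} it follows essentially the same route as the paper (controlling escape in space via $h^{\hat u}$ and Lemma~\ref{t:letem}, and escape to short waiting times via $\|Q_t\|$). For \eqref{QT exponential}, however, you take a genuinely different path. The paper builds a second exponential martingale $\mathcal M^{F,h}_t$ with $F\equiv0$ and $h=h^\eta(x,s):=\tfrac{N}{\sigma s}1_K(x)+\theta_x(\eta)1_{K^c}(x)$, checks that $g_{F,h}=\tfrac{N}{\sigma}1_K+(\log\eta)1_{K^c}$, and then combines $\mathbb E_\gamma(\mathcal M^{F,h}_t)\le1$ with the bound $\langle\hat\mu_t,-\sigma h^\eta\rangle\le\langle\hat\mu_t,h^{\hat u}\rangle$ (from \eqref{h hat u}) and Lemma~\ref{lemma:sbuset exponential}. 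You instead apply Lemma~\ref{t:em1} with $A=V$, which yields the purely jump-counting martingale $\exp\{\sum_{k=0}^{N_t}\log(u_n/Pu_n)(X_k)\}\cdot u_n(X_{N_t+1})/u_n(X_0)$ --- exactly the device the paper later uses in \eqref{martingle un V} for Proposition~\ref{proposition:nut} under Condition~\ref{condition:ccomp2}. Your key contribution is the Jensen step: from item~(f) and Lemma~\ref{lemma:continuous for theta}(c) you extract the pointwise bound $\hat u\ge\eta^{-\sigma}$ on $K^c$, which converts item~(f) into a discrete drift condition $\log\hat u\ge\delta_0>0$ off $K$. This is arguably more transparent than the paper's construction, since it makes explicit that item~(f) already hides a weak form of Condition~\ref{condition:ccomp2} (positive lower bound on $\log\hat u$ off a finite set, rather than compact level sets), and it unifies the proofs of \eqref{QT exponential} here and of Proposition~\ref{proposition:nut}. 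The paper's route, on the other hand, stays entirely within the time-weighted framework of $\hat\mu_t$ and avoids the Jensen detour.
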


\begin{proof}
We first prove \eqref{QT exponential}. We consider the exponential local martingale in Lemma \ref{lem:mhF} by choosing $F\equiv 0$ and
\begin{equation*}
h(x,s)=h^{\eta}(x,s):=\frac{N}{\sigma s}1_K(x)+L\eta(x)1_{K^c}(x),\quad (x,s)\in V\times(0,\infty],
\end{equation*}
where $N$ is as in Lemma \ref{t:letem}. By Lemma \ref{lemma:continuous for theta}, it is easy to check that $(F,h)\in \Gamma$ and $g_{F,h}=(N/\sigma)1_K+\log\eta1_{K^c}$.
It then follows form \eqref{expm2} that
\begin{equation*}
\mathcal{M}^{F,h}_t=\exp\left\{t\left[\left\langle Q_t,-\frac{N}{\sigma}1_K-\log\eta1_{K^c}\right\rangle+\left\langle \hat{\mu}_t,h^{\eta}\right\rangle\right]\right\}.
\end{equation*}
Combining \eqref{h hat u} and the first inequality in Lemma \ref{t:letem}, we have
\begin{equation*}
\mathbb E_{\gamma}    \Big( e^{ t  \langle\hat{\mu}_t,-\sigma h^{\eta}\rangle } \Big)=\mathbb E_{\gamma}    \Big( e^{ t  \langle\hat{\mu}_t,-\frac{N}{s}1_K -\sigma L\eta1_{K^c}\rangle } \Big) \le\mathbb E_{\gamma}    \Big( e^{ t \langle\hat{\mu}_t, h^{\hat{u}}\rangle } \Big)\le
\frac {C_{\gamma}}{c}.
\end{equation*}
By exponential Chebyshev inequality, we obtain
\begin{equation*}
\mathbb P_{\gamma} \big(
\langle\hat{\mu}_t, -\sigma h^{\eta} \rangle > \ell  \big)
\le
\frac{C_{\gamma}}{c} e^{-t\ell}\,.
\end{equation*}
On the other hand, by Lemma \ref{lemma:sbuset exponential}, there exists a real sequence $A_{\ell}'\uparrow\infty$ such that
\begin{equation}\label{QT1K}
\varlimsup_{t\to \infty}\frac{1}{t}\log\mathbb P_{\gamma} \big(
\langle Q_t,1_K \rangle > A_{\ell}'  \big)
\le	-\ell\,.
\end{equation}
Hence it is enough to show that there exists a sequence $A_\ell\uparrow \infty$ such that for any $t>0$ and $\ell\in\mathbb N$,
\begin{equation}
\label{enough1}
\mathbb P_{\gamma} \Big( \|Q_t\|> A_\ell \,,\,
\left\langle\hat{\mu}_t, -\sigma h^{\eta} \right\rangle  \le \ell\,,\,\langle Q_t,1_K \rangle \le A_{\ell}' \Big)  \le   e^{-t\,\ell}\,.
\end{equation}
Since $\log\eta<0$, we have
\begin{equation*}\label{def:Q_T}
\begin{split}
&\; \mathbb P_{\gamma} \left( \|Q_t\|> A_\ell \,,\,\langle \hat{\mu}_t,-\sigma h^{\eta}\rangle\le \ell,\langle Q_t,1_K\rangle\le A_{\ell}' \right)
\\
=&\;
\mathbb E_{\gamma} \left( e^{ t[\log\eta\|Q_t\|+\langle Q_t,(\frac{N}{\sigma}-\log\eta)1_{K}\rangle-\langle \hat{\mu}_t,h^{\eta}\rangle]}
\: \mathcal{M}^{F,h}_t  \: 1_{\{\|Q_t\|> A_\ell\}}
\,1_{\{\langle \hat{\mu}_t,-\sigma h^{\eta}\rangle\le \ell\}}\, 1_{\{\langle Q_t,1_K\rangle \le A_\ell'\}}\right)
\\
\le&\;
\exp\left\{ t \left[ \log\eta A_\ell + \left(\frac{N}{\sigma}-\log\eta\right)A_\ell'+\frac{1}{\sigma}\ell\right] \right\},
\end{split}
\end{equation*}
where we have used Lemma \ref{lem:mhF} in the last step. The proof of \eqref{enough1} is now completed by choosing
\begin{equation*}
A_\ell = \frac{(N/\sigma-\log\eta) A_\ell'+(1+1/\sigma)\ell}{-\log\eta}.
\end{equation*}
Recall that the closed ball in $L^1_+(E)$ is compact with respect to the bounded weak* topology. Then the exponential tightness of the empirical flow follows from \eqref{QT exponential}.

We next prove \eqref{muT exponential}. For a sequence of constants $a_m\uparrow \infty$ to be chosen later, set $W_m =\{ x\in V :\, L\hat{u}(x) \le a_m\}\cup K$. In view of items (e) and (f) in Condition \ref{condition:ccomp}, it is clear that $W_m$ is a finite subset of $V$. Now set
\begin{align*}
\mathcal K_\ell^1 &= \bigcap_{m\ge\ell} \left\{\mu\in\mathcal P(V\times (0,\infty])\,:\:
\mu\left(W_m^c\times (0,\infty]\right) \le \frac 1{m} \right\},\\
\mathcal K_\ell^2 &= \left\{\mu\in\mathcal P(V\times (0,\infty])\,:\:
\left\langle\mu,\frac{1}{s}\right\rangle\le A_\ell\right\},
\end{align*}
where $\langle\mu,1/s\rangle=\sum_{x\in V}\int_{(0,\infty]}\mu(x,\mathrm{d}s)/s$.
Then for any $\mu\in \mathcal K_\ell=\mathcal K_\ell^1\cap \mathcal K_\ell^2$, we have
\begin{equation*}
\mu(V\times (0,\epsilon))=\sum_{x\in V}\int_{(0,\epsilon)}\frac{s}{s}\mu\left(x,{\rm d}s\right)\le \epsilon \left\langle\mu,\frac{1}{s}\right\rangle\le \epsilon A_\ell,
\end{equation*}
\begin{equation*}
\mu\Big(\big(W_m\times [\epsilon,\infty]\big)^c\Big)\le \mu(W_m^c\times (0,\infty])+\mu(V\times (0,\epsilon))\le \frac{1}{m}+\epsilon A_\ell,\quad m\ge \ell.
\end{equation*}
Since $W_m\times [\epsilon,\infty]$ a compact subset of $V\times (0,\infty]$, it follows from Prokhorov's theorem that $\mathcal K_\ell$ is a compact subset of $\mathcal P(V\times (0,\infty])$. Since
\begin{equation*}
\varlimsup_{t\to \infty} \; \frac 1t
\log \mathbb  P_{\gamma} \left( \mu_t \not\in \mathcal K_\ell \right) \le \varlimsup_{t\to \infty} \; \frac 1t
\log \left(P_{\gamma} \left( \mu_t \not\in \mathcal K_\ell^1 \right)+P_{\gamma} \left( \mu_t \not\in \mathcal K_\ell^2 \right)\right),
\end{equation*}
we only need to prove
\begin{equation}\label{formula:Kl}
\varlimsup_{t\to \infty} \; \frac 1t
\log \mathbb  P_{\gamma} \left( \mu_t \not\in \mathcal K_\ell^1 \right)\le -\ell\quad \text{  and  }\quad \varlimsup_{t\to \infty} \; \frac 1t
\log \mathbb  P_{\gamma} \left( \mu_t \not\in \mathcal K_\ell^2 \right)\le -\ell.
\end{equation}
Note that
\begin{equation*}
\left\langle\mu_t,\frac{1}{s}\right\rangle=\frac{1}{t}N_t+\frac{t-S_{N_t}}{t\tau_{N_t+1}}\le \frac{1}{t}(N_t+1)=\| Q_t\|.
\end{equation*}
Then we can obtain the second inequality in \eqref{formula:Kl} from \eqref{QT exponential}, i.e.
\begin{equation*}
\begin{split}
\varlimsup_{t\to \infty} \; \frac 1t
\log \mathbb  P_{\gamma} \left( \mu_t \not\in \mathcal K_\ell^2 \right)=&\;\varlimsup_{t\to \infty} \; \frac 1t
\log \mathbb  P_{\gamma} \left( \left\langle\mu_t,\frac{1}{s}\right\rangle>A_{\ell} \right)\\
\le&\; \varlimsup_{t\to \infty} \; \frac 1t
\log \mathbb  P_{\gamma} \left(\|Q_t\|>A_{\ell} \right)\\
\le&\; -\ell.
\end{split}
\end{equation*}
We next prove the first inequality in \eqref{formula:Kl}. By item (f) in Condition \ref{condition:ccomp}, it is easy to see that $L\hat{u}(x)\ge-\sigma L\eta(x)\ge 0$ for any $x\in K^c$. Recall the definition of $W_m$, $h^{\hat{u}}$, and $N$. It is easy to see that
\begin{equation*}
h^{\hat{u}}(x,s)=\frac{\log\hat{u}(x)}{s}1_K(x)+L\hat{u}(x)1_{K^c}(x)
\ge - \frac{N}{s}1_{K}(x)+a_m 1_{W_m^c}(x).
\end{equation*}
Note that
\begin{equation*}
\left\langle \mu_t,\frac{1}{s}1_K\right\rangle\le\left\langle \hat{\mu}_t,\frac{1}{s}1_K\right\rangle= \langle Q_t,1_K\rangle.
\end{equation*}
By the exponential Chebyshev inequality and Lemma \ref{t:letem}, we obtain
\begin{equation}\label{mut Wcl}
\begin{split}
\mathbb P_{\gamma} \left( \mu_t\left( W_m^c\times (0,\infty] \right) > \frac{1}{m}\right)
\le&\;\mathbb P_{\gamma} \left( \left\langle\mu_t, h^{\hat{u}}+\frac{N}{s}1_K\right\rangle >
\frac{a_m}{m}\right)\\
\le&\;\mathbb P_{\gamma} \left( \left\langle\mu_t, h^{\hat{u}}\right\rangle +N\langle Q_t,1_K\rangle >
\frac{a_m}{m}\right)\\
\le&\;\mathbb P_{\gamma} \left( \left\langle\mu_t, h^{\hat{u}}\right\rangle >
\frac{a_m}{2m}\right)+\mathbb P_{\gamma} \left( N\langle Q_t,1_K\rangle>
\frac{a_m}{2m}\right)\\
\le&\; \frac{C_{\gamma}e^N}{c}\exp\left\{ -t  \frac{a_m}{2m}
\right\} +\mathbb P_{\gamma} \left( \langle Q_t,1_K\rangle>
\frac{a_m}{2Nm}\right).
\end{split}
\end{equation}
From \eqref{QT1K}, the proof is now easily concluded by choosing $a_m = 2m^2 +2NA_{m}'m$.
\end{proof}

\begin{remark}
In fact, the empirical measure $\pi_t$ defined in \eqref{def:empirical measure pi_t} is also exponentially tight and the proof is similar to the first inequality in \eqref{formula:Kl}. Moreover, at this time we only need item (f) in Condition \ref{condition:ccomp} with $\sigma=0$. Note that we will not resort to the compactness conditions anymore in the following proof of upper bound and lower bound. This means that the marginal LDP of the empirical measure $\pi_t$ in Proposition \ref{proposition:rate function for empirical flow} still holds under Condition \ref{condition:ccomp} with $\sigma=0$.
\end{remark}

\subsection{Upper bound}\label{subsection:ub}
We next prove the upper bound of the LDP. Since we have proved the exponential tightness for the empirical measure and empirical flow, to prove the upper bound of the LDP for closed sets, we only need to prove the upper bound for compact sets \cite[Lemma 1.2.18]{dembo1998large}. Before stating the upper bound, we introduce the following notation. For any Polish space $\mathcal{X}$, let $C_c(\mathcal{X})$ be the collection of all continuous functions $f: X\to\mathbb R$ with compact supports. For any functions $M:V\to[0,\infty)$, $\varphi \in C_c(V\times (0,\infty))$, and $c \in C_c(V)$ such that $ 0\le c \le\zeta$ and $c(x)<\zeta(x)$ for any $x\in V$ with $\zeta(x)>0$, let $h^{\varphi,c,M} \colon V\times(0,\infty]\to\mathbb{R}$ be a function defined by
\begin{equation}\label{def:h}
h^{\varphi,c,M}(x,s) = \frac{\varphi_x(s)}s + c(x) \, 1_{(M(x),\infty]}(s), \quad (x,s)\in V\times (0,\infty].
\end{equation}
Recall the definition of $\Gamma$ in \eqref{Gamma}. Let $\Gamma_0$ be the subset of $\Gamma$ defined by
\begin{equation*}
\begin{split}
\Gamma_0=\left\{(F,h)\in \Gamma:\,F\in C_c(E), h=h^{\varphi,c,M} \text{ for some }\varphi,c,M \text{ as above},\text{ and }g_{F,h}\equiv 0\right\}.
\end{split}
\end{equation*}
For any $(F,h)\in\Gamma_0$, let $I_{F,h}: \Lambda\mapsto \mathbb{R}$ be a functional defined by
\begin{equation}\label{def:I_hF}
\begin{split}
I_{F,h}(\mu,Q)
= &\langle Q,F\rangle+\langle \mu,h\rangle.
\end{split}	
\end{equation}
Based on the proof of Lemma \ref{lemma:IhHdelta lower semicontinuous}, it is easy to see that $I_{F,h}$ is a lower semicontinuous function on $\Lambda$. For any $\delta>0$, set
\begin{equation}\label{C delta}
\mathcal{C}_{\delta}=\left\{(\mu,Q)\in \Lambda:\max_{x\in V}|Q^+(x)-Q^-(x)|\le \delta\right\}.
\end{equation}
Let $I_{F,h,\delta}:\Lambda\to \mathbb{R}$ be a functional defined by
\begin{equation}\label{I h F delta}
I_{F,h,\delta}(\mu,Q)=\left\{
\begin{aligned}
&I_{F,h}(\mu,Q), && \text{if } (\mu,Q) \in \mathcal{C}_{\delta},\\
&\infty, && \text{otherwise}.
\end{aligned}\right.
\end{equation}

\begin{lemma}\label{t:pub}
For any $(F,h)\in \Gamma_0$, $\delta>0$, and measurable $\mathcal O\subset \Lambda$, we have
\begin{equation}\label{e:ldopen}
\varlimsup_{t\to\infty} \frac{1}{t} \log \mathbb{P}_{\gamma}((\mu_t,Q_t)\in \mathcal{O}) \le
- \inf_{(\mu,Q) \in \mathcal O} I_{F,h,\delta}(\mu,Q).
\end{equation}
\end{lemma}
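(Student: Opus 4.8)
The plan is to establish \eqref{e:ldopen} via the standard exponential-Chebyshev/Varadhan-type argument, using the exponential supermartingales constructed in Section \ref{subsection:elm}. Fix $(F,h)\in\Gamma_0$, so that in particular $g_{F,h}\equiv 0$, and recall from Lemma \ref{lem:mhF} that $\mathcal{M}^{F,h}_t=\exp\{t[\langle Q_t,F\rangle+\langle\hat\mu_t,h\rangle]\}$ satisfies $\mathbb{E}_x(\mathcal{M}^{F,h}_t)\le 1$, hence $\mathbb{E}_\gamma(\mathcal{M}^{F,h}_t)\le 1$ by averaging over the initial distribution. The first step is to pass from $\hat\mu_t$ to $\mu_t$ in the exponent: since $h=h^{\varphi,c,M}$ with $\varphi\in C_c(V\times(0,\infty))$ and $0\le c\le\zeta$, the difference $t\langle\hat\mu_t,h\rangle - t\langle\mu_t,h\rangle = (S_{N_t+1}-t)\,h(X_{N_t},\tau_{N_t+1})$ must be controlled. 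Because $0\le S_{N_t+1}-t\le \tau_{N_t+1}$ and $h(x,s)=\varphi_x(s)/s + c(x)1_{(M(x),\infty]}(s)$, one gets $|(S_{N_t+1}-t)h(X_{N_t},\tau_{N_t+1})| \le \|\varphi\|_\infty + \tau_{N_t+1}\|c\|_\infty$; the first term is $O(1)$ and hence negligible after dividing by $t$ and sending $t\to\infty$, while the second term needs a separate tail estimate (e.g. controlling $\tau_{N_t+1}$ via the waiting-time tails on the finite set $\operatorname{supp} c$, or noting $c$ has compact support so only finitely many $\psi_x$ are involved). Alternatively, and more cleanly, one can absorb the boundary term into a modified martingale or simply note that on the event where $\tau_{N_t+1}$ is not too large the correction is $o(t)$, and the complementary event has superexponentially small probability.

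The second step handles the divergence constraint. On the set $\mathcal{O}\cap\mathcal{C}_\delta^c$ the functional $I_{F,h,\delta}$ is $+\infty$, so there is nothing to prove there; thus it suffices to bound $\mathbb{P}_\gamma((\mu_t,Q_t)\in\mathcal{O}\cap\mathcal{C}_\delta)$. Here I would use that $Q_t^+(x)-Q_t^-(x)$ is an explicit telescoping quantity: $t(Q_t^+(x)-Q_t^-(x)) = \sum_{k=1}^{N_t+1}[1_{(X_{k-1}=x)}-1_{(X_k=x)}] = 1_{(X_0=x)}-1_{(X_{N_t+1}=x)}$, which is bounded by $1$ in absolute value for every $x$. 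Consequently $\max_x|Q_t^+(x)-Q_t^-(x)|\le 1/t$ almost surely, so for $t\ge 1/\delta$ the event $\{(\mu_t,Q_t)\in\mathcal{C}_\delta\}$ has full probability and the restriction is vacuous in the limit. (So the $\mathcal{C}_\delta$ truncation is purely a bookkeeping device that will matter later when one passes to the supremum over $(F,h)$ and needs lower semicontinuity of the candidate rate function; here it costs nothing.) This reduces \eqref{e:ldopen} to bounding $\mathbb{P}_\gamma((\mu_t,Q_t)\in\mathcal{O})$ with the unconstrained functional $I_{F,h}$.

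The third and final step is the Chebyshev estimate. Writing $I_{F,h}(\mu,Q)=\langle Q,F\rangle+\langle\mu,h\rangle$, on the event $\{(\mu_t,Q_t)\in\mathcal{O}\}$ we have $t[\langle Q_t,F\rangle+\langle\mu_t,h\rangle]\ge t\inf_{(\mu,Q)\in\mathcal{O}}I_{F,h}(\mu,Q)$; combining this with the boundary-term control from Step 1 gives
\begin{equation*}
\mathbb{P}_\gamma\big((\mu_t,Q_t)\in\mathcal{O}\big)\le e^{\,o(t)}\,\mathbb{E}_\gamma\Big(\mathcal{M}^{F,h}_t\,1_{\{(\mu_t,Q_t)\in\mathcal{O}\}}\Big)\,\exp\Big\{-t\inf_{(\mu,Q)\in\mathcal{O}}I_{F,h}(\mu,Q)\Big\}\le e^{\,o(t)}\exp\Big\{-t\inf_{(\mu,Q)\in\mathcal{O}}I_{F,h}(\mu,Q)\Big\},
\end{equation*}
using $\mathbb{E}_\gamma(\mathcal{M}^{F,h}_t)\le 1$. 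Taking $\tfrac1t\log$ and $\varlimsup_{t\to\infty}$ yields \eqref{e:ldopen} with $I_{F,h}$, and since (for $t$ large) $\mathcal{C}_\delta$ is automatic, this is the same as the bound with $I_{F,h,\delta}$. I expect the main obstacle to be Step 1 — making the boundary correction term $\tfrac{S_{N_t+1}-t}{t}h(X_{N_t},\tau_{N_t+1})$ rigorously negligible, since $\tau_{N_t+1}$ can in principle be large; the compact support of $\varphi$ and $c$ together with the finiteness of $\psi_x(e^{\lambda\tau})$ for $\lambda<\zeta(x)$ on that finite support (which holds because $0\le c(x)<\zeta(x)$ there) is what makes this work, and care is needed to invoke exactly these facts rather than any global integrability of the $\tau_k$.
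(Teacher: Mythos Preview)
Your overall architecture---the exponential Chebyshev bound via $\mathbb{E}_\gamma(\mathcal{M}^{F,h}_t)\le 1$, then the observation that $|Q_t^+(x)-Q_t^-(x)|\le 1/t$ forces $(\mu_t,Q_t)\in\mathcal{C}_\delta$ for $t\ge 1/\delta$---is exactly the paper's route, and Steps~2 and~3 match the paper's proof essentially verbatim.

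Where you diverge is Step~1, and here you are making things harder than necessary. You try to control $|(S_{N_t+1}-t)\,h(X_{N_t},\tau_{N_t+1})|$ two-sidedly and then worry about the contribution $\tau_{N_t+1}\|c\|_\infty$, proposing a tail estimate on $\tau_{N_t+1}$. But look at which direction you actually need: since
\[
1_{\mathcal B}(\mu_t,Q_t)=\exp\big\{-tI_{F,h}(\mu_t,Q_t)-(S_{N_t+1}-t)\,h(X_{N_t},\tau_{N_t+1})\big\}\,\mathcal{M}^{F,h}_t\,1_{\mathcal B}(\mu_t,Q_t),
\]
an \emph{upper} bound on the probability requires only a \emph{lower} bound on the boundary term. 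Now recall that by the definition of $\Gamma_0$ one has $c\ge 0$, and $S_{N_t+1}-t\ge 0$ always; hence the $c$-part of the boundary term is nonnegative and can simply be dropped. What remains is
\[
(S_{N_t+1}-t)\,\frac{\varphi_{X_{N_t}}(\tau_{N_t+1})}{\tau_{N_t+1}}\ \ge\ -\frac{S_{N_t+1}-t}{\tau_{N_t+1}}\,\|\varphi\|_\infty\ \ge\ -\|\varphi\|_\infty,
\]
since $0\le S_{N_t+1}-t\le \tau_{N_t+1}$. This is a deterministic bound, uniform in $t$, and yields immediately
\[
\mathbb{P}_\gamma\big((\mu_t,Q_t)\in\mathcal{B}\big)\le e^{\|\varphi\|_\infty}\,\sup_{(\mu,Q)\in\mathcal{B}}e^{-tI_{F,h}(\mu,Q)},
\]
with no tail estimate, no appeal to $\zeta(x)$, and no splitting into good/bad events. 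This is precisely what the paper does. Your proposed alternative---showing $\{\tau_{N_t+1}>\epsilon t\}$ is superexponentially unlikely---may be delicate here, since Lemma~\ref{t:pub} is stated without the compactness Condition~\ref{condition:ccomp}, so you would have to extract such decay from $c(x)<\zeta(x)$ on $\operatorname{supp}c$ alone; feasible, but entirely avoidable once you notice the sign.
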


\begin{proof}
We first prove that for any measurable $\mathcal B\subset \Lambda$,
\begin{equation}\label{neq:upper}
\varlimsup_{t\to\infty}\; \frac 1t
\log \mathbb  P_{\gamma} \Big( (\mu_t,Q_t) \in \mathcal B \Big)
\le -\inf_{(\mu,Q)\in \mathcal B}  I_{F,h} (\mu,Q).
\end{equation}
Since $c\ge 0$ and $\varphi\in C_c(V\times(0,\infty))$, we have
\begin{equation}\label{varphi infty}
(S_{N_t+1}-t)h(X_{N_t},\tau_{N_t+1})\ge(S_{N_t+1}-t)\frac{\varphi_{X_{N_t}}(\tau_{N_t+1})}{\tau_{N_t+1}}\ge -\|\varphi\|_{\infty},\quad \mathbb{P}_{\gamma}\text{-a.s.},
\end{equation}
where $\|\cdot\|_{\infty}$ denotes the standard $L^{\infty}$-norm. Recall the definition of the semimartingale $\mathcal{M}^{F,h}$ in Lemma \ref{lem:mhF}. For each $t>0$ and measurable set	$\mathcal B\subset \Lambda$, it follows from \eqref{def:I_hF} and \eqref{varphi infty} that
\begin{equation*}
\begin{split}
&\;\mathbb P_{\gamma} \big( (\mu_t,Q_t) \in \mathcal B \big)\\
= &\;\mathbb E_{\gamma} \left(
\exp\left\{ -t \, I_{F,h} (\mu_t,Q_t)  -(S_{N_t+1}-t)h(X_{N_t},\tau_{N_t+1})\right\}
\: \mathcal{M}_t^{F,h} \: 1_{\mathcal B}\left(\mu_t,Q_t\right) \right)\\
\le&\;
\exp\left\{\|\varphi\|_{\infty}\right\}\sup_{(\mu,Q)\in\mathcal B} \exp\left\{- t \, I_{F,h} (\mu,Q) \right\}
\; \mathbb E_{\gamma} \left(
\: \mathcal{M}_t^{F,h} \: 1_{\mathcal B}(\mu_t,Q_t) \right).
\end{split}
\end{equation*}
Hence we have proved \eqref{neq:upper}. It is easy to see that  $\mathbb{P}_{\gamma}((\mu_t,Q_t)\in \mathcal{C}_{\delta})=1$ for any $t\ge 1/\delta$. Finally, taking $\mathcal B=\mathcal O \cap \mathcal{C}_{\delta}$ in \eqref{neq:upper}, we obtain
\begin{equation*}
\begin{split}
\varlimsup_{t\to\infty}\frac{1}{t}\log\mathbb{P}_{\gamma}((\mu_t,Q_t)\in\mathcal{O})
&= \varlimsup_{t\to\infty}\frac{1}{t}\log\mathbb{P}_{\gamma}((\mu_t,Q_t)\in\mathcal{O}\cap \mathcal{C}_{\delta})
\\
&\le -\inf_{(\mu,Q)\in\mathcal{O}\cap\mathcal{C}_{\delta}}I_{F,h}(\mu,Q)
\\
&=- \inf_{(\mu,Q)\in\mathcal O}I_{F,h,\delta}(\mu,Q).
\end{split}
\end{equation*}
This completes the proof of this lemma.
\end{proof}

\begin{lemma}\label{lemma:IhHdelta lower semicontinuous}
Suppose that Assumption \ref{ass:locally finite} holds. For any $(F,h)\in \Gamma_0$ and $\delta>0$, $I_{F,h,\delta}$ is a convex and lower semicontinuous function on $\Lambda$, where $L^1_+(E)$ is endowed with the bounded weak* topology.
\end{lemma}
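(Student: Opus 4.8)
The plan is to establish convexity and lower semicontinuity of $I_{F,h,\delta}$ separately. Convexity is quickly disposed of: the functional $I_{F,h}(\mu,Q)=\langle Q,F\rangle+\langle\mu,h\rangle$ is affine on the convex set $\Lambda$, being the sum of a linear functional of $Q$ and a linear functional of $\mu$, and it is real-valued since $F$ has finite support and $h=h^{\varphi,c,M}$ is a bounded function on $V\times(0,\infty]$. Moreover, for each $x\in V$ the map $Q\mapsto Q^+(x)-Q^-(x)$ is linear, so $Q\mapsto|Q^+(x)-Q^-(x)|$ is convex and $\mathcal{C}_\delta=\bigcap_{x\in V}\{(\mu,Q)\in\Lambda:|Q^+(x)-Q^-(x)|\le\delta\}$ is a convex set. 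Since $I_{F,h,\delta}$ coincides with the affine functional $I_{F,h}$ on $\mathcal{C}_\delta$ and equals $+\infty$ off $\mathcal{C}_\delta$, it is convex.

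For lower semicontinuity I would first show that $I_{F,h}$ itself is lower semicontinuous on $\Lambda$ — which is also the claim invoked just before the lemma in the upper-bound argument — by splitting it as
\[
I_{F,h}(\mu,Q)=\langle Q,F\rangle+\big\langle\mu,\varphi_\cdot(\cdot)/\cdot\big\rangle+\sum_{x\in V}c(x)\,\mu\big(\{x\}\times(M(x),\infty]\big),
\]
where $\varphi_\cdot(\cdot)/\cdot$ denotes the function $(x,s)\mapsto\varphi_x(s)/s$, and treating the three terms in turn. The first term is continuous in $Q$ for the bounded weak* topology, because $F\in C_c(E)\subset C_0(E)$ and pairing against any $C_0(E)$ function is bounded-weak*-continuous. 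The second term is continuous in $\mu$ for the topology of weak convergence: compactness of $\supp\varphi$ in $V\times(0,\infty)$ forces $\varphi_x$ to vanish for $s$ outside a fixed compact interval $[a,b]\subset(0,\infty)$ and for $x$ outside a finite set, so $(x,s)\mapsto\varphi_x(s)/s$ extends to a bounded continuous function on $V\times(0,\infty]$, with value $0$ at $s=\infty$. For the third term, the sum is finite since $c\in C_c(V)$, each set $\{x\}\times(M(x),\infty]$ is open in $V\times(0,\infty]$ (as $(M(x),\infty]$ is open in $[0,\infty]$ by the chosen topology), and $c\ge0$, so by the portmanteau theorem $\mu\mapsto\sum_{x}c(x)\mu(\{x\}\times(M(x),\infty])$ is lower semicontinuous. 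Adding the three contributions shows $I_{F,h}$ is lower semicontinuous on $\Lambda$.

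It then remains to show that $\mathcal{C}_\delta$ is closed, and this is precisely where Assumption \ref{ass:locally finite} enters. By local finiteness of the graph $(V,E)$, for each $x\in V$ the sums $Q^+(x)=\sum_{y:(x,y)\in E}Q(x,y)$ and $Q^-(x)=\sum_{y:(y,x)\in E}Q(y,x)$ are finite, so $Q\mapsto Q^+(x)-Q^-(x)=\langle Q,f_x\rangle$ for a finitely supported $f_x\in C_0(E)$; this map is therefore continuous for the bounded weak* topology, $\{(\mu,Q):|Q^+(x)-Q^-(x)|\le\delta\}$ is closed, and $\mathcal{C}_\delta$, an intersection of such sets, is closed. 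Consequently $\{I_{F,h,\delta}\le a\}=\{I_{F,h}\le a\}\cap\mathcal{C}_\delta$ is an intersection of closed sets for every $a\in\mathbb{R}$, so $I_{F,h,\delta}$ is lower semicontinuous. The one delicate point, and the reason Assumption \ref{ass:locally finite} is imposed here, is the bounded-weak*-continuity of the currents $Q\mapsto Q^\pm(x)$: without local finiteness these are infinite sums of coordinate functionals, which are merely lower semicontinuous and not continuous, so $\mathcal{C}_\delta$ need not be closed and the argument fails. Everything else is a routine application of the portmanteau theorem together with the description of the bounded weak* topology recalled in Section \ref{definizioni}.
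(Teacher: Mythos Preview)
Your proof is correct and follows essentially the same approach as the paper: both establish closedness of $\mathcal{C}_\delta$ by using Assumption~\ref{ass:locally finite} to show the currents $Q\mapsto Q^\pm(x)$ are given by pairing with finitely supported (hence $C_0$) functions, and both handle $I_{F,h}$ via continuity of $\langle Q,F\rangle$ together with the Portmanteau theorem for $\langle\mu,h\rangle$. The only cosmetic difference is that you explicitly split $h$ into its continuous part $\varphi_x(s)/s$ and its lower semicontinuous part $c(x)1_{(M(x),\infty]}(s)$, whereas the paper simply observes that $h^{\varphi,c,M}$ is a bounded lower semicontinuous function and applies Portmanteau to it as a whole.
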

\begin{proof}
Since a linear functional on $L^1_+(E)$ is continuous with respect to the bounded weak* topology if and only if it is continuous with respect to the weak* topology \cite[Theorem 2.7.8]{megginson2012introduction}, the weak* topology on $L^1(E)$ is the smallest topology such that the maps $Q\in L^1(E)\to \langle Q,f\rangle\in \mathbb R$ with $f\in C_0(E)$ being continuous. Let
\begin{equation*}
f^1_x(y,z)=1_x(y),\qquad f^2_x(y,z)=1_x(z), \quad (y,z) \in E.
\end{equation*}	
By Assumption \ref{ass:locally finite}, the graph $(V,E)$ is locally finite. It is easy to check that $f^1_x,f^2_x\in C_0(E)$ for any $x\in V$. Note that
\begin{align*}
\mathcal{C}_{\delta}=&\;\bigcap_{x\in V}\left(\Big\{(\mu,Q)\in \Lambda:Q^+(x)-Q^-(x)\le \delta\Big\}\bigcap\Big\{(\mu,Q)\in \Lambda:Q^-(x)-Q^+(x)\le \delta\Big\}\right)\\
=&\;\bigcap_{x\in V}\left(\Big\{(\mu,Q)\in \Lambda:\langle Q,f^1_x-f^2_x\rangle\le \delta\Big\}\bigcap\Big\{(\mu,Q)\in \Lambda:\langle Q,f^2_x-f^1_x\rangle\le \delta\Big\}\right).
\end{align*}
This implies that $\mathcal{C}_{\delta}$ is a closed subset of $\Lambda$. Moreover, it is easy to see that $\mathcal{C}_{\delta}$ is convex. Thus we only need to prove that $I_{F,h}$ is a convex and lower semicontinuous function on $\Lambda$.

Since $I_{F,h}$ is a linear functional, it must be convex. Since $F\in C_c(E)\subset C_0(E)$, the map $Q\mapsto\langle Q,F\rangle$ is continuous. On the other hand, since $\varphi\in C_c(V\times (0,\infty))$, there exists a finite set $K\subset V$ such that $\varphi_x\equiv 0$ for $x\in K^c$ and $\varphi_x\in C_c(0,\infty)$ for $x\in K$. Then $h^{\varphi,c,M}$ is a bounded lower semicontinuous function. For any $\mu\in\mathcal{P}(V\times(0,\infty])$, let $\mu_n\in\mathcal{P}(V\times(0,\infty])$ be a sequence of probability measures such that $\mu_n$ weakly converges to $\mu$. It then follows from the Portmanteau theorem \cite[Corollary 8.2.5]{bogachev2007measure} that
\begin{equation*}
\varliminf_{n\to\infty}\left\langle \mu_n,h^{\varphi,c,M}\right\rangle\ge\left\langle \mu,h^{\varphi,c,M}\right\rangle.
\end{equation*}
Since $\mathcal{P}(V\times (0,\infty])$ is a metric space with the topology of weak converge, it is clear that $\langle\mu,h\rangle$ is lower semicontinuous with respect to $\mu$. This completes the proof of this lemma.
\end{proof}

\begin{lemma}
\label{lem:I:=sup I_hH}
For any $(\mu,Q)\in \Lambda$ satisfying $Q^+=Q^-$, we have
\begin{equation} \label{e:muf}
I(\mu,Q) = \sup_{(F,h)\in\Gamma_0}I_{F,h}(\mu,Q).
\end{equation}
\end{lemma}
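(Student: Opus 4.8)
The plan is to establish the two inequalities $I(\mu,Q) \le \sup_{(F,h)\in\Gamma_0} I_{F,h}(\mu,Q)$ and $I(\mu,Q) \ge \sup_{(F,h)\in\Gamma_0} I_{F,h}(\mu,Q)$ separately, for a fixed $(\mu,Q) \in \Lambda$ with $Q^+ = Q^-$. For the easy direction ($\ge$), I would fix any $(F,h) \in \Gamma_0$ and compute $I_{F,h}(\mu,Q) = \langle Q,F\rangle + \langle\mu,h\rangle$. Using the divergence-free property and the definitions \eqref{def:pQ,psi mu} of $\tilde Q_{xy}$ and $\tilde\mu_x$, I would rewrite $\langle Q,F\rangle = \sum_x Q_x \sum_y \tilde Q_{xy} F(x,y)$ and $\langle\mu,h\rangle = \sum_x Q_x \int_{(0,\infty]} t\,h(x,t)\,\tilde\mu_x(\mathrm dt)$ (being careful with the contribution of the atom at $\infty$, where $h^{\varphi,c,M}(x,\infty) = c(x)$ and $t\cdot h$ is read as $c(x)\cdot$ "$\infty$" — this is exactly where the $\zeta(x)\mu(x,\{\infty\})$ term enters, since $c(x) \le \zeta(x)$). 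Then, using the constraint $g_{F,h} \equiv 0$, which says $\sum_y p_{xy} e^{F(x,y)} \cdot \int e^{sh(x,s)}\psi_x(\mathrm ds) = 1$, together with the variational formula \eqref{def:H} for relative entropy (applied with test function $F(x,\cdot)$ against $p_{x,\cdot}$, and with test function $s \mapsto sh(x,s)$ against $\psi_x$), one bounds the two bracketed quantities above by $H(\tilde Q_{x,\cdot}\,|\,p_{x,\cdot})$ and $H(\tilde\mu_x\,|\,\psi_x)$ respectively, giving $I_{F,h}(\mu,Q) \le I(\mu,Q)$; taking the supremum yields $\ge$.

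For the harder direction ($\le$), I would work with the expression \eqref{def:H}--\eqref{relative entropy} for relative entropy and show the supremum over $\Gamma_0$ recovers each term in $I(\mu,Q)$. Since $I(\mu,Q) = \sum_x [Q_x H(\tilde Q_{x,\cdot}|p_{x,\cdot}) + Q_x H(\tilde\mu_x|\psi_x) + \zeta(x)\mu(x,\{\infty\})]$ is a sum of nonnegative terms, it suffices to approximate, for any finite truncation and any $\varepsilon > 0$, each of the finitely many summands by a suitable choice of $F \in C_c(E)$ and $h = h^{\varphi,c,M}$. Concretely, given near-optimal bounded test functions $F_0(x,\cdot)$ for $H(\tilde Q_{x,\cdot}|p_{x,\cdot})$ (for $x$ in a finite set) and $\varphi$-type test functions for $H(\tilde\mu_x|\psi_x)$, I would first choose $\varphi$ and $F$ freely on the finite set, then \emph{correct} $F$ off the finite set so that the normalization $g_{F,h} \equiv 0$ holds: given $h$, the function $x \mapsto \log\int e^{sh(x,s)}\psi_x(\mathrm ds)$ is determined, and one solves $\log\sum_y p_{xy}e^{F(x,y)} = -\log\int e^{sh(x,s)}\psi_x(\mathrm ds)$ for $F(x,\cdot)$; since only finitely many edges leave each $x$ (Assumption \ref{ass:locally finite}) and $F$ must lie in $C_c(E)$, this requires care — one takes $F$ supported on a slightly larger finite set and uses that off the support $F \equiv 0$ forces $\int e^{sh(x,s)}\psi_x(\mathrm ds) = 1$, which is arranged by taking $\varphi_x \equiv 0$ and $c(x) = 0$ there, so $h(x,\cdot) = 0$ and the normalization is automatic. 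The atom term $\zeta(x)\mu(x,\{\infty\})$ is captured by letting $c(x) \uparrow \zeta(x)$ (staying $< \zeta(x)$) and $M(x) \to \infty$, so that $c(x)1_{(M(x),\infty]}(\tau)$ contributes $c(x)$ to the $\{\infty\}$-mass while contributing negligibly (in the limit) to $\log\int e^{sh(x,s)}\psi_x(\mathrm ds)$.

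The main obstacle I anticipate is the simultaneous handling of the normalization constraint $g_{F,h} \equiv 0$ and the compact-support requirements $F \in C_c(E)$, $\varphi \in C_c(V\times(0,\infty))$: one cannot choose $F$ and $h$ fully independently, so the optimization must be done by first fixing the "interesting" behaviour on a finite set, then extending by the forced-normalization rule, and finally verifying the extension is consistent with being identically $0$ outside a finite set. A secondary subtlety is the $\{\infty\}$ atom: the functions in $\Gamma_0$ only see $\tau = \infty$ through the term $c(x)1_{(M(x),\infty]}$, so recovering precisely the coefficient $\zeta(x)$ requires a limiting argument ($c(x) \uparrow \zeta(x)$ together with $\psi_x(e^{c(x)\tau})$ staying finite, using Lemma \ref{lemma:continuous for theta}), and one must check that this limit does not inflate $\log\int e^{sh(x,s)}\psi_x(\mathrm ds)$ beyond control. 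Once these two points are handled, combining the truncation (letting the finite set exhaust $V$) with monotone convergence of the nonnegative sum gives $\sup_{(F,h)\in\Gamma_0} I_{F,h}(\mu,Q) \ge I(\mu,Q)$, completing the proof.
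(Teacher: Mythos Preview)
Your proposal has a genuine gap: you implicitly assume $(\mu,Q)\in\mathcal D$ throughout the hard direction, by writing $I(\mu,Q)$ as the sum $\sum_x[Q_xH(\tilde Q_{x,\cdot}|p_{x,\cdot})+Q_xH(\tilde\mu_x|\psi_x)+\zeta(x)\mu(x,\{\infty\})]$. But the lemma is stated for all $(\mu,Q)\in\Lambda$ with $Q^+=Q^-$, and the constraint defining $\mathcal D$ also requires $\int_{(0,\infty]}t^{-1}\mu(x,\mathrm dt)=Q_x$ for every $x$. When $Q^+=Q^-$ holds but this coupling constraint fails, one has $I(\mu,Q)=\infty$, and you must still show $\sup_{(F,h)\in\Gamma_0}I_{F,h}(\mu,Q)=\infty$. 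Your approximation scheme, built to recover the relative-entropy terms, does not address this at all.

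The paper treats this missing case separately (its Case~1): if $\mu(x,1/\tau)\neq Q_x$ for some $x$, it introduces $F_C(y,z)=C\,1_x(y)$ and $h_C(y,s)=-(C/s)\,1_x(y)$, for which $g_{F_C,h_C}\equiv 0$ and $I_{F_C,h_C}(\mu,Q)=C[Q_x-\mu(x,1/\tau)]\to\pm\infty$. The pair $(F_C,h_C)$ is \emph{not} in $\Gamma_0$ (the function $h_C$ is not of the form $h^{\varphi,c,M}$ since $\varphi\in C_c$ forces $\varphi_x(s)/s$ to vanish near $0$ and $\infty$), so a further nontrivial approximation $(F_n,h_n)\in\Gamma_0$ with $I_{F_n,h_n}\to I_{F_C,h_C}$ is constructed, using truncated indicators $f_n$ and an auxiliary bump $g_x$ with a tuning constant $c_n$ chosen via the intermediate value theorem to restore $g_{F_n,h_n}\equiv 0$. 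You would need an argument of this kind to close the gap. For the case $(\mu,Q)\in\mathcal D$, your outline is in the same spirit as the paper's Case~2, though the paper's explicit construction (with sequences $a_x^n,b_x^n\to 0$ again determined by the normalization) shows concretely how the compact-support and $g_{F,h}\equiv 0$ constraints are reconciled; your sketch of ``correcting $F$ off the finite set'' does not quite match the actual mechanism, since outside the finite set both $F$ and $h$ are taken to vanish and normalization is automatic, while \emph{inside} the finite set one perturbs by $a_x^n$ and $b_x^n$.
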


\begin{proof}
The proof for $I(\mu,Q) \ge \sup_{(F,h)\in\Gamma_0}I_{F,h}(\mu,Q)$ is similar to \cite[Proposition 2.1]{mariani2016large}. Here only prove the converse inequality
\begin{equation}\label{le}
I(\mu,Q) \le \sup_{(F,h)\in\Gamma_0} I_{F,h}(\mu,Q).
\end{equation}

Case 1: $(\mu,Q) \notin\mathcal{D}$.
Then there exists $x\in V$ such that
\begin{equation*}
\mu\left(x,\frac{1}{\tau}\right):=\int_{(0,\infty]}\frac{1}{s}\,\mu(x,{\rm d}s)\neq Q_x.
\end{equation*}
Without loss of generality, we assume that $\mu(x,1/\tau)< Q_x$. For any $C>0$, $(y,z)\in E$, and $(y,s)\in V\times (0,\infty]$, set
\begin{equation*}
F_C(y,z) = C1_x(y), \qquad h_C(y,s) = -\frac{C}{s}1_x(y).
\end{equation*}
It is easy to check that $g_{F_C,h_C}\equiv 0$ and
\begin{equation*}
I_{F_C,h_C}(\mu,Q)=C\left[Q_x-\mu\left(x,\frac{1}{\tau}\right)\right].
\end{equation*}
Here $(F_C,h_c)\notin\Gamma_0$, but we still define $I_{F_C,h_C}(\mu,Q)$ as in \eqref{def:I_hF}. As $C\to \infty$, we have $I_{F_C,h_C}(\mu,Q)\to\infty$. Now for any $C>0$, we construct a sequence $(F_n,h_n)\in\Gamma_0$ such that
\begin{equation}\label{IhnFn}
\lim_{n\to \infty}I_{F_n,h_n}(\mu,Q)=I_{F_C,h_C}(\mu,Q).
\end{equation}
For any $y\in V$, it is easy to see that there exists a non-negative function $g_y\in C_c(0,\infty)$ such that $\psi_y(\{t\in(0,\infty):g_y(t)>0\})>0$. Let
\begin{equation}\label{Fn hn}
F_n(y,z)=C1_x(y)1_{K_n}(z),\quad h_n(y,s)=\frac{C}{s}(f_n(s)+c_ng_x(s))1_x(y),
\end{equation}
where $f_n:(0,\infty]\to \mathbb{R}$ is a sequence of continuous functions satisfying $f_n(s)=0$ if $s\in (0,1/(n+1))\cup(n+1,\infty]$ and $f_n(s)=-1$ if $s\in (1/n,n)$, $c_n$ is a sequence of constants to be chosen later such that $g_{F_n,h_n}\equiv 0$, and $K_n$ is a sequence of finite sets such that $K_n\uparrow V$ and
\begin{equation*}
\left[1+(e^C-1)\sum_{y\in K_n}p_{xy}\right]\langle\psi_x,e^{Cf_n}\rangle\le 1.
\end{equation*}
Since $\lim_{n\to\infty}\langle\psi_x,e^{Cf_n}\rangle=-C$, such $K_n$ must exist. It is easy to check that $g_{F_n,h_n}(y)=0$ for any $y\neq x$ and
\begin{equation}\label{gFnhn}
g_{F_n,h_n}(x)=\log\left(1+(e^C-1)\sum_{y\in K_n}p_{xy}\right)+\log\left\langle\psi_x, e^{C(f_n+c_ng_x)}\right\rangle.
\end{equation}
Note that
\begin{equation*} \lim_{c_n\to\infty}g_{F_n,h_n}(x)=\infty,\quad\lim_{c_n\to0}g_{F_n,h_n}(x)=\log\left(1+(e^C-1)\sum_{y\in K_n}p_{xy}\right)+\log\left\langle\psi_x, e^{Cf_n}\right\rangle\le 0.
\end{equation*}
Since $g_{F_n,h_n}(x)$ is continuous with respect to $c_n$, by the intermediate value theorem, there exists a sequence of constants $c_n$ such that $g_{F_n,h_n}(x)\equiv 0$, which implies that $(F_n,h_n)\in \Gamma_0$. Taking $n\to\infty$ on both sides of \eqref{gFnhn}, it is easy to see that $c_n\rightarrow0$. Thus in the sense of pointwise convergence, we have $(F_n,h_n)\to(F_C,h_C)$. By the dominated convergence theorem, we obtain \eqref{IhnFn}. This further implies \eqref{le}.

Case 2: $(\mu,Q) \in\mathcal{D}$. Then for any $x\in V$, we have
\begin{equation*}
\quad\mu\left(x,\frac{1}{\tau}\right)= Q_x<\infty.
\end{equation*}
Recall the definition of $\tilde{Q}_{xy}$ and $\tilde{\mu}_x$ in \eqref{def:pQ,psi mu}. Since $V$ and $(0,\infty)$ are both Polish spaces, it follows from \eqref{relative entropy} that for any $x\in V$,
\begin{equation*}
H\big( \tilde{Q}_{x,\cdot} \, |\,  p_{x,\cdot}  \big)=\sup_{\{F_x\in C_b(V):\sum_{y\in V} p_{xy}e^{F_x(y)}=1\}}\sum_{y\in V}\tilde{Q}_{xy}F_x(y),
\end{equation*}
\begin{equation*}
H\big( \tilde{\mu}_x \, | \, \psi_x  \big)=\sup_{\{\varphi_x\in C_b(0,\infty):\langle\psi_x,e^{\varphi_x}\rangle=1\}}\int_{(0,\infty)}\varphi_x(s)\tilde{\mu}_x({\rm d}s).
\end{equation*}
Recall the definition of $I$ in \eqref{def:I}. Then for any $F\in C_b(E)$, $\varphi\in C_b(V\times (0,\infty))$ satisfying $\sum_{y\in V}p_{xy}e^{F(x,y)}=1$ and $\langle\psi_x,e^{\varphi_x}\rangle=1$ for any $x\in V$, and $c:V\to [0,\infty)$ satisfying $0\le c\le \zeta$ and $c(x)<\zeta(x)$ for any $x\in V$ with $\zeta(x)>0$, we only need to find a sequence $(F_n,h_n)\in \Gamma_0$ such that
\begin{equation}\label{lim I hn Fn}
\varliminf_{n\to \infty}I_{F_n,h_n}(\mu,Q)\ge \sum_{x\in V}\left[Q_x\sum_{y\in V}\tilde{Q}_{xy}F(x,y)+Q_x\int_{(0,\infty)}\varphi_x(s)\tilde{\mu}_x({\rm d}s)+c_x\mu(x,\{\infty\})\right].
\end{equation}
For any $(x,y)\in E$ and $(x,s)\in V\times (0,\infty]$, let
\begin{equation*}
F_n(x,y)=(F(x,y)-a_x^n)1_{K_n}(x)1_{K_n}(y),\quad h_n(x,s)=\left[\frac{1}{s}(\varphi^n_x(s)-b^n_x g_x(s))+c_x1_{(n+1,\infty]}(s)\right]1_{K_n}(x),
\end{equation*}
where $\varphi_x^n:(0,\infty]\to\mathbb{R}$ is a sequence of continuous functions satisfying $\varphi_x^n(s)=0$ if $s\in (0,1/(n+1))\cup(n+1,\infty)$ and $\varphi_x^n(s)=\varphi_x(s)$ if $s\in (1/n,n)$, $a_x^n$ and $b^n_x$ are two sequences of constants satisfying $\sum_{y\in V}p_{xy}e^{F_n(x,y)}=1$ as well as $\langle\psi_x,e^{s h_n(x,s)}\rangle=1$, $g_x$ is the function used in \eqref{Fn hn}, and $K_n$ is a sequence of finite sets such that $K_n\uparrow V$. For any $x\in V$ and sufficiently large $n$ that may depend on $x$, straightforward computations show that
\begin{equation*}
a_x^n=\log\frac{\sum_{y\in K_n}p_{xy}e^{F(x,y)}}{\sum_{y\in K_n}p_{xy}},\qquad \int_{(0,n+1]}e^{\varphi_x^n(s)-b_x^ng_x(s)}\psi_x(\mathrm{d}s)+\int_{(n+1,\infty)}e^{c_xs}\psi_x(\mathrm{d}s)=1.
\end{equation*}
Moreover, it is easy to see that such sequence of constants $b^n_x$ exists, and $a_x^n\to 0$ and $b_x^n\to 0$ as $n\to\infty$. For all $x\in V$, we have
\begin{align*}
&\;\varliminf_{n\to\infty}\left[\sum_{y\in V}Q(x,y)F_n(x,y)+\int_{(0,\infty]}h_n(x,s)\mu(x,{\rm d}s)\right]\\
\ge&\;\sum_{y\in V}Q(x,y)F(x,y)+\int_{(0,\infty)}\frac{1}{s}\varphi_x(s)\mu(x,{\rm d}s)+c_x\mu(x,\{\infty\}).
\end{align*}
Then \eqref{lim I hn Fn} follows from Fatou's lemma. This completes the proof of this lemma.
\end{proof}

We are now in a position to prove the upper bound of the LDP.

\begin{proposition}\label{pro:weak LDP ub}
Suppose that Assumptions \ref{ass:irreducibility}-\ref{ass:locally finite} are satisfied. Let $L^1_+(E)$ be endowed with the bounded weak* topology. Then $I$ is a convex and lower semicontinuous function, and for each compact set $\mathcal{K}\subset \Lambda$, we have
\begin{equation}\label{weak LDP ub}
\varlimsup_{t\to\infty}\;
\frac{1}{t}\log \mathbb  P_{\gamma} \Big( (\mu_t,Q_t) \in \mathcal K \Big)
\le -\inf_{(\mu,Q)\in \mathcal K} I(\mu,Q).
\end{equation}
Moreover, if Condition \ref{condition:ccomp} is satisfied, then the above equation also holds for any closed set $\mathcal{K}\subset\Lambda$.
\end{proposition}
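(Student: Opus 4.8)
The plan is to assemble the upper bound from the pieces already in hand: Lemma~\ref{t:pub} (the tilted exponential estimate $\varlimsup_t \tfrac1t\log\mathbb P_\gamma((\mu_t,Q_t)\in\mathcal O)\le-\inf_{\mathcal O}I_{F,h,\delta}$, valid for every measurable $\mathcal O$ and every $(F,h)\in\Gamma_0$, $\delta>0$), Lemma~\ref{lemma:IhHdelta lower semicontinuous} (each $I_{F,h,\delta}$ is convex and lower semicontinuous in the bounded weak* topology), Lemma~\ref{lem:I:=sup I_hH} (the variational identity $I=\sup_{(F,h)\in\Gamma_0}I_{F,h}$ on the divergence-free part of $\Lambda$), and, for the closed-set statement, the exponential tightness of Proposition~\ref{proposition:etem}.

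The first step is to extend the variational identity to all of $\Lambda$, i.e.\ to show $I(\mu,Q)=\sup_{(F,h)\in\Gamma_0}\sup_{\delta>0}I_{F,h,\delta}(\mu,Q)$ for every $(\mu,Q)\in\Lambda$. If $Q^+=Q^-$ then $(\mu,Q)\in\mathcal C_\delta$ for every $\delta>0$, so $I_{F,h,\delta}(\mu,Q)=I_{F,h}(\mu,Q)$ and the identity is exactly Lemma~\ref{lem:I:=sup I_hH}. If $Q^+\ne Q^-$ then $(\mu,Q)\notin\mathcal D$, hence $I(\mu,Q)=\infty$, and for all sufficiently small $\delta$ one has $(\mu,Q)\notin\mathcal C_\delta$, so $I_{F,h,\delta}(\mu,Q)=\infty$; since $\Gamma_0$ is nonempty (for instance the pair $(0,0)$ lies in $\Gamma_0$, with $g_{0,0}\equiv0$), the supremum is again $\infty$. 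Because each $I_{F,h,\delta}$ is convex and lower semicontinuous, this identity realizes $I$ as a pointwise supremum of convex lower semicontinuous functions, so $I$ itself is convex and lower semicontinuous on $\Lambda$, which is the first assertion of the proposition.

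For the compact-set bound I would run the standard minimax/covering argument, using only Assumptions~\ref{ass:irreducibility}--\ref{ass:locally finite}. Fix a compact $\mathcal K\subset\Lambda$ and $\epsilon>0$. For each $(\mu,Q)\in\mathcal K$ the identity above furnishes $(F,h)\in\Gamma_0$ and $\delta>0$ with $I_{F,h,\delta}(\mu,Q)>\min\{I(\mu,Q)-\epsilon,\,\epsilon^{-1}\}$; lower semicontinuity of $I_{F,h,\delta}$ then gives an open neighbourhood $\mathcal O_{(\mu,Q)}$ on which $I_{F,h,\delta}$ still exceeds $\min\{I(\mu,Q)-\epsilon,\,\epsilon^{-1}\}$, whence Lemma~\ref{t:pub} yields $\varlimsup_t\tfrac1t\log\mathbb P_\gamma((\mu_t,Q_t)\in\mathcal O_{(\mu,Q)})\le-\min\{I(\mu,Q)-\epsilon,\,\epsilon^{-1}\}$. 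Cover $\mathcal K$ by finitely many such neighbourhoods, use subadditivity of $\mathbb P_\gamma$ over the cover together with the standard fact that at exponential scale the $\varlimsup$ of $\tfrac1t\log$ of a finite sum equals the maximum of the individual $\varlimsup$'s, and let $\epsilon\downarrow0$; this gives \eqref{weak LDP ub}.

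Finally, under Condition~\ref{condition:ccomp}, Proposition~\ref{proposition:etem} supplies compact sets $\mathcal K_\ell\times B_{A_\ell}\subset\Lambda$ (each ball $B_{A_\ell}$ being compact in the bounded weak* topology) with $\varlimsup_t\tfrac1t\log\mathbb P_\gamma((\mu_t,Q_t)\notin\mathcal K_\ell\times B_{A_\ell})\le-\ell$. For a closed $\mathcal K$, bounding $\mathbb P_\gamma((\mu_t,Q_t)\in\mathcal K)$ by $\mathbb P_\gamma((\mu_t,Q_t)\in\mathcal K\cap(\mathcal K_\ell\times B_{A_\ell}))+\mathbb P_\gamma((\mu_t,Q_t)\notin\mathcal K_\ell\times B_{A_\ell})$, applying \eqref{weak LDP ub} to the compact set $\mathcal K\cap(\mathcal K_\ell\times B_{A_\ell})$ (whose $I$-infimum is at least $\inf_{\mathcal K}I$) and exponential tightness to the remainder, gives $\varlimsup_t\tfrac1t\log\mathbb P_\gamma((\mu_t,Q_t)\in\mathcal K)\le\max\{-\inf_{\mathcal K}I,\,-\ell\}$; sending $\ell\to\infty$ completes the proof. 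I do not expect a deep obstacle at this stage; the only point needing genuine care is the extension of the variational identity off the divergence-free set—verifying that the $\delta$-truncation forces $I_{F,h,\delta}(\mu,Q)=\infty$ there to match $I(\mu,Q)=\infty$, and that $\Gamma_0\ne\emptyset$—with everything else being the routine ``exponential tightness plus weak upper bound'' bookkeeping.
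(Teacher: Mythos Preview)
Your proposal is correct and follows essentially the same line as the paper's proof: both establish the variational identity $I=\sup_{(F,h)\in\Gamma_0,\,\delta>0}I_{F,h,\delta}$ by the same two-case split ($Q^+=Q^-$ via Lemma~\ref{lem:I:=sup I_hH}, $Q^+\ne Q^-$ by forcing $(\mu,Q)\notin\mathcal C_\delta$), deduce convexity and lower semicontinuity of $I$, pass to the compact-set bound, and finally invoke Proposition~\ref{proposition:etem} for closed sets. The only cosmetic difference is that the paper obtains the compact-set bound by citing the min--max lemma of Kipnis--Landim to exchange $\sup_{(F,h),\delta}$ and $\inf_{\mathcal K}$, whereas you spell out the equivalent finite-cover argument; likewise the paper simply appeals to exponential tightness for the closed-set extension while you write out the standard splitting.
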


\begin{proof}
We first prove that $I$ is convex and lower semicontinuous. Since we have proved $I_{F,h,\delta}$ is convex and lower semicontinuous in Lemma \ref{lemma:IhHdelta lower semicontinuous}, we only need to verify that
\begin{equation*} \label{e:muf}
I(\mu,Q) = \sup_{(F,h)\in\Gamma_0,\delta>0} I_{F,h,\delta}(\mu,Q),\quad(\mu,Q)\in\Lambda.
\end{equation*}

Case 1: $Q^+\neq Q^-$. Let $\delta$ be a constant satisfying $0<\delta<\max_{x\in V}|Q^+(x)-Q^-(x)|$. Since $(\mu,Q)\in \Lambda\setminus\mathcal{C}_{\delta}$, we have
\begin{equation*}
I(\mu,Q)=\infty= \sup_{(F,h)\in\Gamma_0,\delta>0}I_{F,h,\delta}(\mu,Q).
\end{equation*}

Case 2: $Q^+=Q^-$. By Lemma \ref{lem:I:=sup I_hH}, we have
\begin{equation*} \label{e:muf}
I(\mu,Q) = \sup_{(F,h)\in\Gamma_0}I_{F,h}(\mu,Q).
\end{equation*}
Recall the definition of $C_{\delta}$ in \eqref{C delta} and the definition of $I_{F,h,\delta}$ in \eqref{I h F delta}. It is easy to check that for any $\delta>0$, we have $(\mu,Q)\in C_{\delta}$ and $I_{F,h}(\mu,Q) = I_{F,h,\delta}(\mu,Q)$. Optimizing over $(F,h)\in\Gamma_0$ and $\delta>0$, we obtain
\begin{equation*}
I(\mu,Q) = \sup_{(F,h)\in\Gamma_0}I_{F,h}(\mu,Q)=\sup_{(F,h)\in\Gamma_0,\delta>0} I_{F,h,\delta}(\mu,Q).
\end{equation*}

We next prove \eqref{weak LDP ub}. Minimizing \eqref{e:ldopen} over $(F,h)\in\Gamma_0$ and $\delta>0$, we obtain
\begin{equation*}\label{min max}
\varlimsup_{t\to\infty}\frac{1}{t}\log\mathbb{P}_{\gamma}((\mu_t,Q_t)\in\mathcal{O}) \le
-\sup_{(F,h)\in\Gamma_0,\delta>0}\inf_{(\mu,Q)\in\mathcal O}I_{F,h,\delta}(\mu,Q).
\end{equation*}
Note that $I_{F,h,\delta}$ is lower semicontinuous for any $(F,h)\in \Gamma_0$ and $\delta>0$. For any compact set $\mathcal K\subset\Lambda$, it follows from the min-max lemma \cite[Lemma 3.3 in Appendix 2.3]{kipnis1998scaling} that
\begin{equation*}
\varlimsup_{t\to\infty}\frac{1}{t}\log\mathbb{P}_{\gamma}((\mu_t,Q_t)\in\mathcal{K}) \le
-\inf_{(\mu,Q)\in\mathcal K}\sup_{(F,h)\in\Gamma_0,\delta>0}I_{F,h,\delta}(\mu,Q)=-\inf_{(\mu,Q)\in\mathcal K}I(\mu,Q).
\end{equation*}
Finally, under Condition \ref{condition:ccomp}, it follows from Proposition \ref{proposition:etem} that $(\mu_t,Q_t)$ is exponentially tight. Hence the upper bound in \eqref{weak LDP ub} also holds for any closed set $\mathcal{K}\subset\Lambda$.
\end{proof}

\subsection{Lower bound}\label{subsection:lb}
We next prove the lower bound of the LDP. Before giving the proof, we recall the following lemma, which can be found in \cite[Lemma 5.2]{bertini2015large}.
\begin{lemma}\label{lemma:entropy lower bound}
Let $\{P_t\}$ be a family of probability measures on a completely regular topological space $\mathcal X$ and let $J\colon \mathcal X\to[0,\infty]$ be a function. Assume that for each $x\in\mathcal X$, there exists a family of probability measures $\{\tilde{P}_t^x\}$ weakly convergent to $\delta_x$ such that
\begin{equation}
\label{entb}
\varlimsup_{t\to\infty} \frac 1t H\big(\tilde{P}_t^x \big| P_t\big)
\le J(x).
\end{equation}
Then the family $\{P_t\}$ satisfies the large deviation lower bound with rate function $\sce J$. Here $\sce J$ is the lower semicontinuous envelope of $J$, i.e.
\begin{equation*}
(\sce J) \, (x) = \sup_{U \in\mathcal N_x} \; \inf_{y\in U} \; J(y),
\end{equation*}
where $\mathcal N_x$ denotes the collection of the open neighborhoods of $x$.
\end{lemma}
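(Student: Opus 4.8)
The plan is to prove directly the large deviation lower bound: for every open set $\mathcal A\subset\mathcal X$,
\begin{equation*}
\varliminf_{t\to\infty}\frac1t\log P_t(\mathcal A)\ \ge\ -\inf_{x\in\mathcal A}J(x)\ =\ -\inf_{x\in\mathcal A}(\sce J)(x).
\end{equation*}
The equality of the two infima is elementary: the bound $\sce J\le J$ gives one inequality, while for any $y\in\mathcal A$ the set $\mathcal A$ is itself an open neighbourhood of $y$, so $(\sce J)(y)\ge\inf_{z\in\mathcal A}J(z)$, which gives the other. Since $\sce J$ is lower semicontinuous by construction, it will then be a rate function for $\{P_t\}$ once the displayed bound is established; and for the latter it suffices to show $\varliminf_{t\to\infty}\frac1t\log P_t(\mathcal A)\ge-J(x)$ for each fixed $x\in\mathcal A$.

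Fix such an $x$ and assume $J(x)<\infty$ (otherwise there is nothing to prove). The main tool is the entropy inequality: since relative entropy does not increase under the pushforward by the indicator map $1_{\mathcal A}\colon\mathcal X\to\{0,1\}$, and since the resulting binary relative entropy can be bounded from below using $s\log s\ge-e^{-1}$ on $[0,1]$, one obtains, for every probability measure $\mu$ on $\mathcal X$ with $\mu(\mathcal A)>0$,
\begin{equation*}
\log P_t(\mathcal A)\ \ge\ \frac{-H(\mu\,|\,P_t)-2e^{-1}}{\mu(\mathcal A)}.
\end{equation*}
I will apply this with $\mu=\tilde P_t^x$. To see that $\tilde P_t^x(\mathcal A)\to1$ (in particular that it is positive for all large $t$), I use the complete regularity of $\mathcal X$: since $\mathcal A$ is open and $x\in\mathcal A$, there is a continuous function $f\colon\mathcal X\to[0,1]$ with $f(x)=1$ and $f\equiv0$ on $\mathcal A^{c}$, whence $\tilde P_t^x(\mathcal A)\ge\langle\tilde P_t^x,f\rangle\to f(x)=1$ by the assumed weak convergence $\tilde P_t^x\to\delta_x$.

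Combining the two displays, for all large $t$,
\begin{equation*}
\frac1t\log P_t(\mathcal A)\ \ge\ \frac{1}{\tilde P_t^x(\mathcal A)}\Bigl(-\frac1tH(\tilde P_t^x\,|\,P_t)-\frac{2}{et}\Bigr),
\end{equation*}
and letting $t\to\infty$, using $\tilde P_t^x(\mathcal A)\to1$, $J(x)<\infty$, and the hypothesis $\varlimsup_{t\to\infty}\frac1tH(\tilde P_t^x\,|\,P_t)\le J(x)$, yields $\varliminf_{t\to\infty}\frac1t\log P_t(\mathcal A)\ge-J(x)$. Taking the supremum over $x\in\mathcal A$ and invoking the first paragraph completes the proof. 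The only step requiring care is the appeal to complete regularity, which is precisely what upgrades the weak convergence $\tilde P_t^x\to\delta_x$ to the set-level statement $\tilde P_t^x(\mathcal A)\to1$ on a space that need not be metrizable; the remaining steps are routine estimates with the entropy inequality and with lower and upper limits. (As the statement is classical, one may alternatively just cite \cite[Lemma~5.2]{bertini2015large}.)
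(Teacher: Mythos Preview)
Your proof is correct. The paper itself does not prove this lemma; it merely cites \cite[Lemma~5.2]{bertini2015large}, which your final parenthetical also acknowledges. The argument you give is precisely the standard one behind that reference: reduce to a pointwise bound $\varliminf_{t\to\infty}\tfrac1t\log P_t(\mathcal A)\ge -J(x)$ for each $x\in\mathcal A$, use the data-processing inequality for relative entropy under $1_{\mathcal A}$ together with $-s\log s\le e^{-1}$ to obtain the entropy inequality, and invoke complete regularity to convert weak convergence $\tilde P_t^x\to\delta_x$ into $\tilde P_t^x(\mathcal A)\to1$. The identification $\inf_{\mathcal A}J=\inf_{\mathcal A}\sce J$ is handled cleanly. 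One minor point worth making explicit when passing to the limit in the product $\tfrac{1}{\tilde P_t^x(\mathcal A)}\bigl(-\tfrac1tH-\tfrac{2}{et}\bigr)$: the second factor is eventually bounded (between $-J(x)-1$ and $0$, say), so the convergence $\tilde P_t^x(\mathcal A)\to1$ indeed allows you to disregard the first factor in the $\liminf$.
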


In order to prove the lower bound, we will apply Lemma \ref{lemma:entropy lower bound} for completely regular topological space $\Lambda$. Recall the definitions of $\mathcal{D}$ and $\tilde{Q}$ in \eqref{def:D0} and \eqref{def:pQ,psi mu}, respectively. Let $\mathcal{D}_2\subset \mathcal D_1\subset \mathcal{D}$ be defined by
\begin{equation}\label{def:D1}
\begin{split}
\mathcal{D}_1 = \big\{&(\mu,Q)\in\mathcal{D}: I(\mu,Q)<\infty, \mu(x,(0,\infty))>0, \ \forall \, x\in V,
\\ &(\tilde{Q}_{xy})_{x,y\in V} \ {\rm defines \ an \ irreducible \ transition \ matrix \ on  \ } V \big\},
\end{split}
\end{equation}
\begin{equation}\label{def:D2}
\mathcal{D}_2 = \left\{(\mu,Q)\in\mathcal{D}_1: \mu(x,\{\infty\})=0, \ \forall \, x\in V\right\}. \qquad\qquad\qquad\,
\end{equation}
Moreover, let $J:\Lambda\to [0,\infty]$ be the restriction of the rate function $I$ to $\mathcal{D}_2$, i.e.
\begin{equation}\label{def:J}
J(\mu,Q)=\left\{
\begin{aligned}
&I(\mu,Q),  && \textrm{if } (\mu,Q)\in\mathcal D_2,\\
&\infty,   && \textrm{otherwise}.
\end{aligned}\right.
\end{equation}

In the following lemma, we will construct a family of probability measures $\{\tilde{P}^x_t\}$ on $\Lambda$ and prove \eqref{entb} with the upper bound given by \eqref{def:J}. The proof is in the spirit of that given in \cite[Proposition 5.1]{mariani2016large}, but some details are supplemented.

\begin{lemma}\label{t:plb}
Let $P_t = \mathbb P_{\gamma} \circ (\mu_t,Q_t)^{-1}$.  For each $(\mu,Q)\in  \Lambda$ there exists a family of probability measures $\{\tilde{P}^{(\mu,Q)}_t\}$ on $\Lambda$ weakly convergent to $\delta_{(\mu,Q)}$ such that
\begin{equation*}
\varlimsup_{t\to\infty} \frac 1t
H\left(\tilde{P}^{(\mu,Q)}_t \big| P_t \right)
\le J(\mu,Q).
\end{equation*}
\end{lemma}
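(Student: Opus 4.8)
The plan is to follow the scheme of \cite[Proposition 5.1]{mariani2016large}: exhibit $\tilde P^{(\mu,Q)}_t$ as the law of $(\mu_t,Q_t)$ under a suitably tilted Markov renewal process, control the relative entropy by that of the path measures, and compute its growth rate by a renewal–reward argument. If $(\mu,Q)\notin\mathcal D_2$ then $J(\mu,Q)=\infty$ and the constant family $\tilde P^{(\mu,Q)}_t:=\delta_{(\mu,Q)}$ trivially works, so assume $(\mu,Q)\in\mathcal D_2$. By \eqref{def:D1}--\eqref{def:D2} we then have $I(\mu,Q)<\infty$, $\mu(x,\{\infty\})=0$ for all $x$, the matrix $(\tilde Q_{xy})_{x,y\in V}$ is an irreducible stochastic matrix, and the $\mathcal D_1$-requirement $\int_{(0,\infty)}s^{-1}\mu(x,{\rm d}s)=Q_x$ forces $0<Q_x<\infty$ for every $x$. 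Let $\tilde{\mathbb P}_\gamma$ be the law of the Markov renewal process with transition kernel $(\tilde P,\tilde\Psi):=\big((\tilde Q_{xy})_{x,y},(\tilde\mu_x)_x\big)$ (cf. \eqref{def:pQ,psi mu}) and initial law $\gamma$; this is well defined since $\sum_y\tilde Q_{xy}=Q^+(x)/Q_x=1$ and, by the defining constraint of $\mathcal D$, $\tilde\mu_x\in\mathcal P(0,\infty)$. Since $\sum_x Q_x\tilde Q_{xy}=\sum_x Q(x,y)=Q^-(y)=Q_y$, the vector $(Q_x)_{x\in V}$ is a summable invariant measure for the irreducible chain $\tilde Q$, so $\tilde Q$ is positive recurrent with stationary law $\tilde\nu_x=Q_x/\|Q\|$; its mean sojourn times $m_x=\int_{(0,\infty)}s\,\tilde\mu_x({\rm d}s)=Q_x^{-1}\mu(\{x\}\times(0,\infty))$ satisfy $\sum_x\tilde\nu_x m_x=\|Q\|^{-1}<\infty$, hence the semi-Markov process is non-explosive and ergodic under $\tilde{\mathbb P}_\gamma$.

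Set $\tilde P^{(\mu,Q)}_t:=\tilde{\mathbb P}_\gamma\circ(\mu_t,Q_t)^{-1}$. The first task is to show $(\mu_t,Q_t)\to(\mu,Q)$ in $\Lambda$, $\tilde{\mathbb P}_\gamma$-a.s., which yields $\tilde P^{(\mu,Q)}_t\Rightarrow\delta_{(\mu,Q)}$. For the flow component, the ergodic/renewal theorem for $\tilde{\mathbb P}_\gamma$ gives $N_t/t\to\|Q\|$ and $Q_t(x,y)=\tfrac1t\sum_{k\le N_t+1}1_{(X_{k-1}=x,X_k=y)}\to\|Q\|\,\tilde\nu_x\tilde Q_{xy}=Q(x,y)$ for each edge, together with $\|Q_t\|=(N_t+1)/t\to\|Q\|$; since by Assumption \ref{ass:locally finite} the indicator of each edge lies in $C_0(E)$, these convergences imply $Q_t\to Q$ in the bounded weak* topology. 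For the measure component, using the representation of $\langle\mu_t,f\rangle$ from Section \ref{sec:emef}, the a.s.\ vanishing of its boundary term, and the ergodic theorem for the Markov chain $(X_{k-1},\tau_k)_{k\ge1}$ (whose stationary law is $\tilde\nu_x\,\tilde\mu_x({\rm d}u)$ and against which $(x,u)\mapsto u f(x,u)$ is integrable because $m_x<\infty$), one obtains, for $f\in C_b(V\times(0,\infty])$,
\begin{equation*}
\langle\mu_t,f\rangle\;\longrightarrow\;\|Q\|\sum_x\tilde\nu_x\int_{(0,\infty)}u\,f(x,u)\,\tilde\mu_x({\rm d}u)=\langle\mu,f\rangle,\qquad\tilde{\mathbb P}_\gamma\text{-a.s.},
\end{equation*}
the last equality using $\mu(\{x\}\times\{\infty\})=0$; hence $\mu_t\to\mu$ weakly, $\tilde{\mathbb P}_\gamma$-a.s.

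For the entropy estimate, note that $I(\mu,Q)<\infty$ forces $H(\tilde\mu_x\,|\,\psi_x)<\infty$, so $\tilde\mu_x\ll\psi_x$, while $\tilde Q_{xy}>0$ forces $(x,y)\in E$, i.e. $p_{xy}>0$; since the original process is non-explosive, $\tilde{\mathbb P}_\gamma$ restricted to $\mathcal F_{N_t+1}$ is absolutely continuous with respect to $\mathbb P_\gamma$ restricted to $\mathcal F_{N_t+1}$, with logarithmic derivative $\sum_{k=1}^{N_t+1}\big[\log(\tilde Q_{X_{k-1}X_k}/p_{X_{k-1}X_k})+\log(\tfrac{{\rm d}\tilde\mu_{X_{k-1}}}{{\rm d}\psi_{X_{k-1}}}(\tau_k))\big]$. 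As $(\mu_t,Q_t)$ is $\mathcal F_{N_t+1}$-measurable, monotonicity of relative entropy under push-forward gives $H(\tilde P^{(\mu,Q)}_t\,|\,P_t)\le H(\tilde{\mathbb P}_\gamma|_{\mathcal F_{N_t+1}}\,|\,\mathbb P_\gamma|_{\mathcal F_{N_t+1}})$. Writing $G(x):=H(\tilde Q_{x,\cdot}\,|\,p_{x,\cdot})+H(\tilde\mu_x\,|\,\psi_x)\ge0$, one checks that the conditional $\tilde{\mathbb P}_\gamma$-expectation of the $k$-th summand above given $\mathcal F_{k-1}$ equals $G(X_{k-1})$, so the chain rule for relative entropy at the stopping time $N_t+1$ (valid, after an a priori finiteness check, via Wald's identity) gives $H(\tilde{\mathbb P}_\gamma|_{\mathcal F_{N_t+1}}\,|\,\mathbb P_\gamma|_{\mathcal F_{N_t+1}})=\tilde{\mathbb E}_\gamma\big[\sum_{k=0}^{N_t}G(X_k)\big]$. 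A regeneration argument at a fixed recurrent state $x_0$—whose i.i.d.\ cycles have mean duration $\tilde\nu_{x_0}^{-1}\sum_x\tilde\nu_x m_x<\infty$ and mean $G$-reward $\tilde\nu_{x_0}^{-1}\sum_x\tilde\nu_x G(x)=\tilde\nu_{x_0}^{-1}I(\mu,Q)/\|Q\|<\infty$—then combines with the elementary renewal theorem and Wald's identity (the latter absorbing the cycle straddling time $t$, using $G\ge0$) to give
\begin{equation*}
\varlimsup_{t\to\infty}\frac1t\,\tilde{\mathbb E}_\gamma\Big[\sum_{k=0}^{N_t}G(X_k)\Big]\;\le\;\|Q\|\sum_x\tilde\nu_x G(x)=\sum_x Q_x G(x)=I(\mu,Q)=J(\mu,Q),
\end{equation*}
which is the asserted bound.

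The technical heart is this last estimate. In the finite-state setting of \cite{mariani2016large} the per-step cost $G$ is bounded, so both the chain rule at the random time $N_t+1$ and the convergence of $\tfrac1t\tilde{\mathbb E}_\gamma[\sum_{k\le N_t}G(X_k)]$ are routine; over a countable state space $G$ is merely nonnegative and $\tilde\nu$-integrable, and the crux is to show that its cumulative expectation is not inflated by long excursions of $X$ through states with large $G$. I would handle this through the regeneration at $x_0$: from $I(\mu,Q)<\infty$ and $\sum_x\tilde\nu_x m_x<\infty$ one verifies that both the per-cycle $G$-reward and the per-cycle duration have finite mean under $\tilde{\mathbb P}_{x_0}$, then Wald's identity applied to the nonnegative cycle rewards and the stopping time ``index of the cycle containing $t$'' yields both the a priori finiteness of $\tilde{\mathbb E}_\gamma[\sum_{k\le N_t}G(X_k)]$ and the $\varlimsup$ bound, and this same finiteness legitimizes the optional-stopping step in the chain-rule identity. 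The remaining ingredients—positive recurrence of $\tilde Q$ and the a.s.\ convergence $(\mu_t,Q_t)\to(\mu,Q)$ in the relevant topologies—follow from standard ergodic theory once the tilted kernel is in hand.
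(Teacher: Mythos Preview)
Your approach is correct but takes a different route from the paper. The paper does \emph{not} set $\tilde P^{(\mu,Q)}_t$ equal to the law of $(\mu_t,Q_t)$ under the fully tilted measure $\tilde{\mathbb P}_\gamma$. Instead it builds a ``two--phase'' law $\mathbb P_{\gamma,t,\epsilon}$ that uses the tilted kernel $(\tilde Q,\tilde\mu)$ only for the first $T_t=\lfloor\|Q\|(1+\epsilon)t\rfloor$ steps of the embedded chain and then switches back to the original kernel $(P,\Psi)$. The point of this device is that $T_t$ is \emph{deterministic}, so the path--level relative entropy equals $\tilde{\mathbb E}_\gamma\big[\sum_{i=1}^{T_t}(F(X_{i-1},X_i)+\tau_i h(X_{i-1},\tau_i))\big]$ and the $L^1$ ergodic theorem for the embedded chain (applicable by the integrability conditions \eqref{integral condition1}--\eqref{integral condition2}) gives the asymptotic $(1+\epsilon)I(\mu,Q)$ without any stopping--time argument. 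The factor $1+\epsilon$ is there to make $\{N_t+1\le T_t\}$ have probability tending to~$1$, which is needed for the weak convergence to $\delta_{(\mu,Q)}$; a diagonal choice $\epsilon(t)\downarrow0$ then removes the extra factor. By contrast, your argument keeps the fully tilted measure for all times, gets weak convergence directly from ergodicity of the tilted semi--Markov process, and pays for this simplicity by having to compute the entropy on $\mathcal F_{N_t+1}$ with \emph{random} $N_t$, which you handle by regeneration/Wald. Your route is more direct and avoids the $\epsilon$--diagonalisation, while the paper's route avoids the optional--stopping subtlety entirely.

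One point in your argument deserves sharpening. You write that the chain--rule identity $H(\tilde{\mathbb P}_\gamma|_{\mathcal F_{N_t+1}}\,|\,\mathbb P_\gamma|_{\mathcal F_{N_t+1}})=\tilde{\mathbb E}_\gamma\big[\sum_{k=0}^{N_t}G(X_k)\big]$ is ``legitimized'' by the finiteness of the right--hand side. But the Wald/optional--stopping step for the martingale $\sum_{k\le n}(Z_k-G(X_{k-1}))$ actually needs control of $\tilde{\mathbb E}_\gamma\big[\sum_{k\le N_t+1}|Z_k|\big]$, not merely of the sum of the nonnegative $G(X_{k-1})$'s. This is available: setting $G'(x)=\tilde{\mathbb E}[|Z_1|\mid X_0=x]$, the finiteness of $\sum_x\tilde\nu_x G'(x)$ is exactly what \eqref{integral condition1}--\eqref{integral condition2} assert (these in turn follow from $I(\mu,Q)<\infty$ via \cite[Theorem~2.1]{donsker1983asymptotic}, as the paper notes), and then your regeneration argument applied to $|Z_k|$ yields the required integrability. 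With this amendment the proof goes through.
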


\begin{proof}
By the definition of $J$ in \eqref{def:J}, we may restrict the proof to $(\mu,Q)\in \mathcal D_2$. When $(\mu,Q)\in \mathcal D_2$, we have
\begin{equation*}
Q_x=\mu\left(x,\frac{1}{\tau}\right)>0,\qquad \mu(x,\{\infty\})=0,\quad x\in V,
\end{equation*}
and it is easy to check that
\begin{equation*}
\sum_{x\in V} Q_x H\big( \tilde{Q}_{x,\cdot} \, |\,  p_{x,\cdot}  \big)
<\infty,\qquad \sum_{x\in V} Q_x H\big( \tilde{\mu}_x \, | \, \psi_x  \big)<\infty.
\end{equation*}
It then follows from \cite[Theorem 2.1 and the remark after Theorem 2.1]{donsker1983asymptotic} that $\tilde{Q}_{x,\cdot}\ll p_{x,\cdot}$, $\tilde{\mu}_x\ll \psi_x$ and
\begin{equation}\label{integral condition1}
\sum_{(x,y)\in E} Q(x,y)\left|\log\left(\frac{Q(x,y)}{Q_x p_{xy}}\right)\right|<\infty,
\end{equation}
\begin{equation}\label{integral condition2}
\sum_{x\in V}\int_{(0,\infty)}\frac{1}{s}\left|\log\left(\frac{\mu(x,{\rm d}s)}{Q_x s \psi_x({\rm d}s)}\right)\right|\mu(x,{\rm d}s)<\infty.
\end{equation}
Let $(F,h)\in \Gamma$ be defined by
\begin{equation}\label{F h}
F(x,y)=\log\left(\frac{Q(x,y)}{Q_x p_{xy}}\right),\quad h(x,s)=\frac{1}{s}\log\left(\frac{\mu(x,{\rm d}s)}{Q_x s \psi_x({\rm d}s)}\right).
\end{equation}
In this way, the transition probability kernel $(P^F,\Psi^h)$ defined in \eqref{pF psih} are given by
\begin{equation*}
p^F_{xy}=\frac{1}{Q_x}Q(x,y)=\tilde{Q}_{xy},\qquad\psi_x^h({\rm d}s)=\frac{1}{Q_x s}\mu(x,{\rm d}s)=\tilde{\mu}_x({\rm d}s).
\end{equation*}
Let $\mathbb{P}_{\gamma}^{F,h}$ be the probability measure under which $(X,\tau)$ is a Markov renewal process with transition kernel $(P^F,\Psi^h)$ and initial distribution $\gamma$. Since $(\mu,Q)\in \mathcal{D}_2$, it is easy to see that $p^F=\tilde{Q}$ is irreducible. It then follows from \eqref{def:D0} that
\begin{equation*}
\sum_{x\in V}\nu_x^F p^F_{xy}=\frac{1}{\|Q\|}\sum_{x\in V}Q(x,y)=\frac{Q_y}{\|Q\|}=\nu_y^F,
\end{equation*}
where $\nu_x^F=Q_x/\|Q\|$ for any $x\in V$. This means that $\nu^F$ is the unique invariant distribution for $P^F$. Since $\mu(x,\{\infty\})=0$ for any $x\in V$, we have
\begin{equation}\label{E nu F mu Q}
\mathbb E^{F,h}_{\nu^F}(\tau_1)=\sum_{x\in V}\nu_x^F\int_{(0,\infty)}s\psi_x^h\left(\mathrm{d}s\right)=\sum_{x\in V} \frac{Q_x}{\|Q\|}\int_{(0,\infty)}s\frac{1}{Q_xs}\mu(x,{\rm d}s)=\frac{1}{\|Q\|}<\infty.
\end{equation}
By the strong law of large numbers for semi-Markov processes, for any $f\in C_b(V\times (0,\infty])$ and $g\in C_b(E)$, we have
\begin{equation}\label{f mut}
\lim_{t\to \infty} \langle\mu_t,f\rangle=\frac{1}{\mathbb E^{F,h}_{\nu^F}(\tau_1)}\sum_{x\in V}\nu^F_x\int_{(0,\infty)}s f(x,s)\psi^h_x(\mathrm{d}s)=\langle\mu,f\rangle,\quad\mathbb{P}^{F,h}_{\gamma}\text{-a.s.}
\end{equation}
\begin{equation}\label{g Qt}
\lim_{t\to \infty}\langle Q_t,g\rangle=\frac{1}{\mathbb E^{F,h}_{\nu^F}(\tau_1)}\sum_{(x,y)\in E}\nu^F_x p^F_{xy}g(x,y)=\langle Q,g\rangle,\quad\mathbb{P}^{F,h}_{\gamma}\text{-a.s.}
\end{equation}

We now construct the family of probability measures $\{\tilde{P}^{(\mu,Q)}_t\}$. For any $\epsilon>0$ and $t\ge 0$, let $T_t = \lfloor\|Q\|(1+\epsilon)t\rfloor$ and let $\mathbb{P}_{\gamma,t,\epsilon}$ be the probability measure under which the law of the process $(X,\tau) = \{(X_k)_{k\ge0},(\tau_k)_{k\ge1}\}$ satisfies the following requirements:
\begin{itemize}
\item[(a)] $\{(X_k)_{0\le k\le T_t},(\tau_{k})_{1\le k\le T_t}\}$ is a Markov renewal process with transition kernel $(P^F,\Psi^h)$ and initial distribution $\gamma$.
\item[(b)] Conditioned on $X_{T_t}$, the processes $\{(X_k)_{0\le k\le T_t},(\tau_{k})_{1\le k\le T_t}\}$ and $\{(X_k)_{k\ge T_t},(\tau_{k})_{k\ge T_t+1}\}$ are independent of each other. Moreover, $\{(X_k)_{k\ge T_t},(\tau_{k})_{k\ge T_t+1}\}$ is a Markov renewal process with transition kernel $(P,\Psi)$ and initial distribution $\delta_{X_{T_t}}$.
\end{itemize}
Intuitively, under $\mathbb{P}_{\gamma,t,\epsilon}$, the process $(X,\tau)$ has the transition kernel $(P^F,\Psi^h)$ before time $T_t$ and has the transition kernel $(P,\Psi)$ after time $T_t$. Set $\tilde{P}_{t,\epsilon}=\mathbb{P}_{\gamma,t,\epsilon}\circ(\mu_t,Q_t)^{-1}$ and let $\epsilon(t)\downarrow 0$ to be chosen later such that $\tilde{P}^{(\mu,Q)}_t:=\tilde{P}_{t,\epsilon(t)}$ weakly converges to $\delta_{(\mu,Q)}$ as $t\to \infty$. In other words, for any $G\in C_{b}(\Lambda)$, we have
\begin{equation}\label{weak convergence}
\lim_{t\to\infty}\int_{\Lambda}G{\rm d}\tilde{P}_{t,\epsilon(t)}=G(\mu,Q).
\end{equation}
For any $\epsilon>0$, it follows from \eqref{E nu F mu Q} and the strong law of large numbers for Markov renewal processes \cite[Theorem 3.13]{limnios2012semi} that
\begin{equation*}
\lim_{t\to\infty}\frac{S_{T_t}}{t}=\lim_{t\to\infty}\frac{S_{T_t}}{T_t}\frac{T_t}{t}=\mathbb E^{\mu,Q}_{\nu^F}(\tau_1) \|Q\| (1+\epsilon)=1+\epsilon,\quad \mathbb{P}_{\gamma}^{F,h}\text{-a.s.}
\end{equation*}
For any $t>0$, let
\begin{equation*}
D_{t,\epsilon}=\left\{S_{T_t}>t\right\}=\left\{N_t+1\le T_t\right\}\in \mathcal{F}_{T_t},
\end{equation*}
where $\mathcal{F}_{T_t}=\sigma((X_k,\tau_k)_{0\le k\le T_t})$. It is easy to see that $\mathbb{P}_{\gamma}^{F,h}|_{\mathcal{F}_{T_t}}=\mathbb{P}_{\gamma,t,\epsilon}|_{\mathcal{F}_{T_t}}$. Then we have
\begin{equation}\label{P t delta x}
\lim_{t\to \infty}\mathbb{P}_{\gamma,t,\epsilon}\left(D_{t,\epsilon}\right)=\lim_{t\to \infty}\mathbb{P}_{\gamma,t,\epsilon}\left(S_{T_t}>t\right)=\lim_{t\to\infty}\mathbb{P}_{\gamma}^{F,h}\left(\frac{S_{T_t}}{t}>1\right)=1.
\end{equation}
Before determining $\epsilon(t)$, we give an estimation of the relative entropy. We observe that for any $\epsilon>0$,
\begin{equation}\label{troppilabel}
\begin{split}
H\left( \tilde{P}_{t,\epsilon} \big| P_t \right)
\le
H\left( \mathbb P_{\gamma,t,\epsilon} \big|
\mathbb P_{\gamma}  \right)
&= \mathbb E^{F,h}_{\gamma} \left[\log\frac{{\rm d}\mathbb{P}^{F,h}_{\gamma}|_{\mathcal{F}_{T_t}}}{{\rm d}\mathbb{P}_{\gamma}|_{\mathcal{F}_{T_t}}}\right]\\
&=\mathbb E^{F,h}_{\gamma}\left[\sum_{i=1}^{T_t}\left(F(X_{i-1},X_i)+\tau_ih(X_{i-1},\tau_i)\right)\right].
\end{split}
\end{equation}
Indeed, the first inequality follows from the variational characterization of the relative entropy \cite[Section 2]{donsker1983asymptotic} and the last equality is a straightforward computation of the Radon-Nikodym density
(similarly to \eqref{formula:change measure}).

Combining \eqref{integral condition1}, \eqref{integral condition2}, and \eqref{F h}, we have $\langle Q,|F|\rangle<\infty$ and $\langle \mu,|h|\rangle<\infty$.
Note that $(X_k,X_{k+1},\tau_{k+1})_{k\ge 0}$ is a Markov process. By the ergodic theorem of Markov processes, we have
\begin{equation}\label{E mu Q x}
\begin{split}
\lim_{t\to \infty}\frac{1}{t}\mathbb E^{F,h}_{\gamma} \left[\sum_{i=1}^{T_t}\Big(F(X_{i-1},X_i)+\tau_ih(X_{i-1},\tau_i)\Big)\right]
=&(1+\epsilon)\|Q\|\frac{1}{\|Q\|}\left[\langle Q,F\rangle+\langle \mu,h\rangle\right]\\
\le& (1+\epsilon)I(\mu,Q).
\end{split}
\end{equation}

We next construct $\epsilon(t)\downarrow 0$. Let $\epsilon(t)=1/n$ for any $t_{n-1}<t\le t_n$ be a step function, where $t_n$ is an increasing sequence such that
\begin{equation*}\label{tn}
\mathbb{P}_{\gamma,t,1/n}\left(D_{t,1/n}\right)\ge 1-\frac{1}{n}\quad\text{and}\quad  \frac{1}{t}H\left( \tilde{P}_{t,1/n} \Big| P_t \right)\le \left(1+\frac{1}{n-1}\right)I(\mu,Q),\quad t>t_{n-1}.
\end{equation*}
It follows from \eqref{P t delta x}, \eqref{troppilabel}, and \eqref{E mu Q x} that such $t_n\uparrow \infty$ exist. Then we have
\begin{equation}\label{limit}
\lim_{t\to \infty}\mathbb{P}_{\gamma,t,\epsilon(t)}\left(D_{t,\epsilon(t)}\right)=1,\qquad \varlimsup_{t\to \infty}\frac{1}{t}H\left(\tilde{P}_{t,\epsilon(t)}\Big|P_t\right)\le I(\mu,Q).
\end{equation}
Finally, we prove \eqref{weak convergence}. Note that
\begin{align*}
\int_{\Lambda}G{\rm d}\tilde{P}_{t,\epsilon(t)}=\int_{\Omega}G(\mu_t,Q_t){\rm d}\mathbb{P}_{\gamma,t,\epsilon(t)}=\int_{\Omega}G(\mu_t,Q_t)1_{D_{t,\epsilon(t)}}{\rm d}\mathbb{P}_{\gamma,t,\epsilon(t)}+\int_{\Omega}G(\mu_t,Q_t)1_{D_{t,\epsilon(t)}^c}{\rm d}\mathbb{P}_{\gamma,t,\epsilon(t)}.
\end{align*}
It thus follows from the first equality of \eqref{limit} that
\begin{equation*}
\left|\int_{\Omega}G(\mu_t,Q_t)1_{D_{t,\delta(t)}^c}{\rm d}\mathbb{P}_{\gamma,t,\delta(t)}\right|\le\|G\|_{\infty}\mathbb{P}_{\gamma,t,\epsilon(t)}\left(D_{t,\epsilon(t)}^c\right)\to 0,\qquad \text{ as }\ t\to \infty.
\end{equation*}
On the other hand, since $D_{t,\epsilon(t)}=\{N_t+1\le T_t\}$, we have $G(\mu_t,Q_t)1_{D_{t,\epsilon(t)}}\in \mathcal{F}_{T_t}$. Note that $\mathcal{P}(V\times (0,\infty])$ is endowed with the weak convergence topology and $L^1_+(E)$ is endowed with the bounded weak* topology. It is a direct consequence of \eqref{f mut} and \eqref{g Qt} that
\begin{equation*}
\lim_{t\to\infty}G(\mu_t,Q_t)=G(\mu,Q),\quad \mathbb{P}_{\gamma}^{F,h}\text{-a.s.}
\end{equation*}
Moreover, it follows from the first equality of \eqref{limit} that
\begin{equation*}
\lim_{t\to\infty}\mathbb{P}_{\gamma}^{F,h}\left(D_{t,\epsilon(t)}\right)=\lim_{t\to\infty}\mathbb{P}_{\gamma,t,\epsilon(t)}\left(D_{t,\epsilon(t)}\right)=1.
\end{equation*}
Then we obtain
\begin{equation}\label{FmuQ}
\begin{split}
\int_{\Omega}G(\mu_t,Q_t)1_{D_{t,\epsilon(t)}}&{\rm d}\mathbb{P}_{\gamma,t,\epsilon(t)}
=\int_{\Omega}G(\mu_t,Q_t)1_{D_{t,\epsilon(t)}}{\rm d}\mathbb{P}_{\gamma}^{F,h}\\
\to&\int_{\Omega}G(\mu,Q){\rm d}\mathbb{P}_{\gamma}^{F,h}=G(\mu,Q),\qquad \text{ as }\ t\to \infty,
\end{split}
\end{equation}
where the convergence in \eqref{FmuQ} follows from the dominated convergence theorem.
\end{proof}

To prove the lower bound of the LDP, we only need to prove that the rate function $I$ coincides with $\sce J$. Before giving the desired results, we need the following two lemmas.

\begin{lemma}\label{proposition:graph}
Suppose that Assumption \ref{ass:irreducibility} is satisfied. Then for any $\mu=(\mu_x)_{x\in V}\in \mathcal{P}(V)$ satisfying $\mu_x>0$ for any $x\in V$, there exist a constant $C>0$ and an irreducible, positive recurrent Markov chain $(\hat{X}_k)_{k\ge 0}$ with state space $V$, transition probability matrix $\hat{P}=(\hat{p}_{xy})_{x,y\in V}$, and invariant distribution $\hat{\nu} = (\hat{\nu}_x)_{x\in V}$ such that $\hat{E}\subset E$ and $\hat{\nu}_x\le C\mu_x$ for any $x\in V$, where $\hat{E}=\{(x,y)\in V\times V:\hat{p}_{xy}>0\}$.
\end{lemma}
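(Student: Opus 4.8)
The plan is to construct $(\hat X_k)_{k\ge0}$ explicitly as a random walk on a spanning subgraph of $(V,E)$ that is strongly pulled toward one reference vertex, with transition probabilities that are tiny and tailored to $\mu$. Fix $v_0\in V$ and let $d(\cdot)$ denote graph distance to $v_0$ in $(V,E)$, which is finite everywhere by Assumption~\ref{ass:irreducibility}. Using strong connectivity, choose a spanning out-tree $T\subseteq E$ rooted at $v_0$ (so each $x\ne v_0$ has a $T$-parent $\pi(x)$) and an escape map $\sigma\colon V\setminus\{v_0\}\to V$ with $(x,\sigma(x))\in E$ and $d(\sigma(x))=d(x)-1$, taken from a fixed shortest path from $x$ to $v_0$; iterating $\sigma$ traces out, for every $x$, a finite path down to $v_0$, and these paths form an in-tree $\bar T$ rooted at $v_0$. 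Put $\hat E:=T\cup\{(x,\sigma(x)):x\ne v_0\}\subseteq E$. The chain $\hat P$ is: from $v_0$, go to the $i$-th $T$-child $v_0^{(i)}$ with probability $b_i>0$, $\sum_i b_i=1$; from $x\ne v_0$, go to the $i$-th $T$-child $x^{(i)}$ with probability $g(x^{(i)})>0$ and to $\sigma(x)$ with the remaining probability, where we insist $\sum_i g(x^{(i)})\le\frac14$ (here $g(w):=\hat p_{\pi(w),w}$ for $w\ne v_0$ and $g(v_0):=0$). All prescribed probabilities are positive, so $\hat P$ is irreducible.

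Positive recurrence will follow from Foster's drift criterion (see e.g.\ \cite{meyn2012markov}) with Lyapunov function $d$: an escape step lowers $d$ by exactly $1$, whereas a $T$-descent step, being along an edge of $E$, raises $d$ by at most $1$; since the total descent probability out of any $x\ne v_0$ is $\le\frac14$, this gives $\hat P d(x)\le d(x)-\frac12$ for all $x\ne v_0$, while $\hat P d(v_0)\le1<\infty$ and $\{v_0\}$ is finite. Hence $\hat P$ has a unique stationary distribution $\hat\nu$; note that no local finiteness of $(V,E)$ is needed anywhere.

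It remains to choose $\{b_i\}$ and $\{g(w)\}$ so that $\hat\nu_w\le C\mu_w$. For any $w\ne v_0$, the subtree $\bar T(w)$ (the set of vertices whose escape path passes through $w$) is entered from its complement only along $T$-descent edges, and is left — from $w$ itself — along the escape edge with probability $\ge\frac34$; balancing the stationary flow across the set $\bar T(w)$ and using $\hat\nu\le1$ yields the key bound $\hat\nu_w\le 2\sum_{w'\in\bar T(w)}g(w')$. Thus it suffices to produce positive weights $g$ on $V\setminus\{v_0\}$ whose $\bar T$-subtree sums obey $\sum_{w'\in\bar T(w)}g(w')\le\frac{C}{2}\mu_w$ for all $w\ne v_0$, while the $T$-sibling sums obey $\sum_{w:\pi(w)=v_0}g(w)=1$ and $\sum_{w:\pi(w)=x}g(w)\le\frac14$ for $x\ne v_0$ (the remaining requirement $\hat\nu_{v_0}\le1\le C\mu_{v_0}$ is automatic for $C$ large).

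I expect this reconciliation to be the crux of the proof: the out-tree $T$, used to reach states, and the in-tree $\bar T$, used to return, are in general different, so the normalization constraints and the domination constraints live on different tree structures. It can be arranged by distributing the total weight top-down along $\bar T$ — at each $w$ splitting the budget $\frac{C}{2}\mu_w$ among the $\bar T$-children of $w$, always with slack because every such budget is a positive number and the children's $\mu$-masses sum to a positive number — while keeping the weights that happen to sit on $T$-children of $v_0$ large enough to total $1$ (which forces $C$ to dominate $1/\mu_{v_0}$ and similar finite quantities, and the residual ``deeper'' descent weights to be summable, handled by weighting the $g(x^{(i)})$ by a fixed summable sequence). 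Since every sum of values of the positive measure $\mu$ appearing in these constraints is strictly positive, a sufficiently large finite $C$ makes the whole system solvable; this value of $C$ together with the chain $\hat P$ and its stationary distribution $\hat\nu$ witnesses the lemma.
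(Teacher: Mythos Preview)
Your Foster drift step contains a genuine error. You claim that a $T$-descent step, ``being along an edge of $E$, raises $d$ by at most $1$''. This is false for directed graphs: if $(x,y)\in E$ then $d(x)\le d(y)+1$, but there is no bound on $d(y)$ in terms of $d(x)$. Concretely, take $V=\{v_0\}\cup\{a_n:n\ge1\}\cup\{b_{n,k}:n\ge1,\,1\le k\le n\}$ with edges $v_0\to a_1$, $a_n\to a_{n+1}$, $a_n\to v_0$, $a_n\to b_{n,n}$, $b_{n,k}\to b_{n,k-1}$, $b_{n,1}\to v_0$. Then $d(a_n)=1$ but $d(b_{n,n})=n$, and the only way to reach $b_{n,n}$ in $T$ is via the edge $(a_n,b_{n,n})$, along which $d$ jumps by $n-1$. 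With the uniform bound $\sum_i g(x^{(i)})\le\tfrac14$ alone, the drift $\hat Pd(a_n)-d(a_n)$ can be made arbitrarily large. Likewise $\hat Pd(v_0)=\sum_i b_i\,d(v_0^{(i)})$ need not be $\le1$, nor even finite without further constraints on the $b_i$.

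The gap is repairable --- one may additionally demand $\sum_i g(x^{(i)})(d(x^{(i)})+1)\le\tfrac12$ at every $x$ and $\sum_i b_i\,d(v_0^{(i)})<\infty$, which restores the drift --- but then these new constraints must be reconciled with the $\bar T$-subtree bounds $\sum_{w'\in\bar T(w)}g(w')\le\tfrac{C}{2}\mu_w$ and the normalization $\sum_{w:\pi(w)=v_0}g(w)=1$. You already flag this reconciliation as ``the crux'' and treat it only heuristically; with the extra drift constraints it becomes a genuinely delicate allocation problem across two unrelated tree structures, and your sketch does not show it can always be solved. At minimum you would need an explicit recursive construction of $g$ and a proof that a single finite $C$ suffices.

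For comparison, the paper takes a completely different, incremental route: it starts from a single self-avoiding cycle $V_0$ (where the invariant measure is uniform), then inductively attaches one ``out-and-back'' path at a time, sending the chain along the new path with a small probability $\alpha_{n+1}$. At each stage the chain lives on a finite set $V_n$, so its invariant distribution $\nu^n$ is explicit, and $\alpha_{n+1}$ is chosen small enough that $\nu^{n+1}$ is close to $\nu^n$ in total variation and still satisfies $\nu^{n+1}_x<C\mu_x$. The limit $\hat P=\lim P^n$ then exists and inherits irreducibility and the bound $\hat\nu_x\le C\mu_x$ by Fatou-type arguments. This avoids any Lyapunov or flow-balance analysis at the cost of a somewhat fiddly limiting procedure; your approach, if completed, would be more structural but requires substantially more care in the directed setting than your write-up suggests.
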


\begin{proof}
We first construct a sequence of finite subsets $V_n\uparrow V$ by induction. Without loss of generality, we assume that $V=\{z_i\}_{i\ge1}$. Since Assumption \ref{ass:irreducibility} holds, it is clear that $(V,E)$ is connected. Thus, there exists a self-avoiding cycle $(x_1(:=z_1),x_2\cdots,x_{k_0})$ of elements of $V$ such that $(x_i,x_{i+1})\in E$ when $i=1,\cdots,k_0$ and the sum in the indices is modulo $k_0$ (cycle $(x_1,\cdots,x_{k_0})$ is called self-avoiding if $x_i\neq x_j$ for any $1\le i\neq j\le k_0$). Set
\begin{equation*}
V_0=\{x_1,x_2,\cdots,x_{k_0}\}.
\end{equation*}
Suppose that we have constructed $V_n$ and $z_1,\cdots,z_q\in V_n$, $z_{q+1}\notin V_n$. Since $(V,E)$ is connected, it is easy to see that there exist $x_r\in V_n$ for some $1\le r\le k_n=|V_n|$ and a sequence of distinct states $w_1,\cdots,w_{m_1-1}\in V\setminus V_n$ such that $(w_i,w_{i+1})\in E$ when $i=0,\cdots,m_1-1$, where $w_0:=x_r$ and $w_{m_1}:=z_{q+1}$. Similarly, there exist $x_l\in V_n$ and a sequence of distinct states $w_{m_1+1},\cdots,w_{m_2}\in V\setminus V_n$ such that $(w_i,w_{i+1})\in E$ when $i=m_1,\cdots,m_2$, where $w_{m_2+1}:=x_l$.

If $\{w_1,\cdots,w_{m_1}\}\cap\{w_{m_1+1},\cdots,w_{m_2}\}=\emptyset$, let $m=m_2$ and $y_i=w_i$ for $i=1,\cdots,m$.

If $\{w_1,\cdots,w_{m_1}\}\cap\{w_{m_1+1},\cdots,w_{m_2}\}\neq\emptyset$, let $\kappa_1$ be the minimum integer such that $w_{\kappa_1}\in\{w_{m_1+1},\cdots,w_{m_2}\}$. Then there exists $m_1+1\le \kappa_2\le m_2$ such that $w_{\kappa_1}=w_{\kappa_2}$. Moreover, let $m=\kappa_1+m_2-\kappa_2$ and  $y_1=w_1,\cdots,y_{\kappa_1}=w_{\kappa_1},y_{\kappa_1+1}=w_{\kappa_2+1},\cdots,y_{m}=w_{m_2}$, it is clear that $y_1,\cdots,y_m$ is a sequence of distinct states of $V\setminus V_n$.

Set $k_{n+1}=k_n+m$ and
\begin{equation*}
V_{n+1}=\{x_1,\cdots x_{k_n},x_{k_n+1}(:=y_1),\cdots,x_{k_{n+1}}(:=y_m)\}.
\end{equation*}
Then we have constructed a sequence of subsets $\{V_n\}_{n\ge 0}$ such that $|V_n|<\infty$ and $V_n\subset V_{n+1}$. Moreover, following from the above constructions, it is easy to see that for any $z_q\in V$, there exists $V_n$ such that $z_q\in V_n$. This implies that $\cup_nV_n=V$.

We next construct a sequence of matrices $P^n=(p^n_{xy})_{x,y\in V}$ base on the previous construction of $\{V_n\}_{n\ge 0}$. Let
\begin{equation*}
p^0_{x_i,x_{i+1}}=1,\quad1\le i\le k_0-1,\qquad p^0_{x_{k_0},x_1}=1,\qquad p^0_{xy}=0, \quad\text{otherwise}.
\end{equation*}
Suppose that we have constructed $P^n=(p^n_{xy})_{x,y\in V}$. Let
\begin{equation}\label{pn+1}
\begin{split}
p^{n+1}_{x_r,x_{k_n+1}}=\alpha_{n+1},\qquad p^{n+1}_{x_i,x_{i+1}}=1,\quad k_n+1\le i\le k_{n+1}-1,\qquad p^{n+1}_{x_{k_{n+1}},x_l}=1,\\
p^{n+1}_{x_r,y}=(1-\alpha_{n+1})p^n_{x_r,y},\quad y\in V_n,\qquad p^{n+1}_{xy}=p^n_{xy},\quad\text{otherwise},\qquad\quad
\end{split}		
\end{equation}
where $x_r,x_l\in V_n$ are defined the same as in the above construction of $\{V_n\}_{n\ge 0}$ and $\alpha_{n+1}$ is a positive constant remaining to decide. Then the sequence of matrices $P^n$ is constructed by induction. It is easy to see that $P^n$ can be considered as a transition probability matrix with state space $V_n$, which corresponds to an irreducible Markov chain. We let $\nu^n\in\mathcal{P}(V)$ be the invariant distribution for $P^n$.
%	Here we consider $\nu^n$ as a seqnence of probability measures on $V$ and $V_n$ at the same time.
Now we decide $\{\alpha_n\}_{n\ge 0}$ and $C$ such that $\nu^n_x< C\mu_x$ for any $ n\in\mathbb{N}$ and $x\in V$ by induction. Let $C=1+(\max_{x\in V_0}(1/\mu_x))/k_0$ and $\alpha_0=0$. It is clear that
\begin{equation*}
\nu^0_x=\frac{1}{k_0}<C\mu_x,\quad x\in V_0,\qquad\nu^0_x=0<C\mu_x,\quad x\notin V_0.
\end{equation*}
Suppose that we have decided $(\alpha_k)_{0\le k\le n}$ such that $\nu^n_x< C\mu_x$ for any $x\in V$. Note that $\nu^{n+1}$ is continuous with respect to $\alpha_{n+1}$. In other words,
\begin{equation*}
\lim_{\alpha_{n+1}\downarrow 0}\nu^{n+1}_x=\nu^n_x,\quad  x\in V.
\end{equation*}
Since $V$ is countable, the strong convergence of $\nu^{n+1}$ to $\nu^n$ in $\mathcal{P}(V)$ is equivalent to the pointwise convergence of $\nu^{n+1}_x$ to $\nu^n_x$ for any $x\in V$. This implies $\lim_{\alpha_{n+1}\downarrow 0}\|\nu^{n+1}-\nu^n\|=0$. Hence, there exists $0<\alpha_{n+1}\le 1/2^{n+1}$ such that
\begin{equation}\label{nu}
\nu^{n+1}_x< C\mu_x,\quad x\in V_{n+1},\qquad \left\|\nu^{n+1}-\nu^n\right\|\le \frac{1}{2^n}.
\end{equation}
Since $\nu^{n+1}_x=0$ for any $x\notin V_{n+1}$, it is clear that $\nu^{n+1}_x< C\mu_x$ for any $x\in V$.

Finally, we construct $\hat{P}$. For any $x\in V$, since $V_n\uparrow V$, there exists $N$ such that $x\in V_N$. It follows from \eqref{pn+1} that
\begin{equation*}
\sum_{y\in V}\left|p^{n+1}_{xy}-p^n_{xy}\right|\le 2\alpha_{n+1}\le \frac{1}{2^{n}},\qquad  n\ge N.
\end{equation*}
This shows that $\hat{P}$ can be defined as $\hat{p}_{xy}=\lim_{n\to \infty}p^n_{xy}\ge 0$. By Fatou's lemma, we have
\begin{equation}\label{pnxy convergencce}
\sum_{y\in V}\left|p^n_{xy}-\hat{p}_{xy}\right|\le \varliminf_{m\to \infty}\sum_{y\in V}\left|p^n_{xy}-p^m_{xy}\right|\le \varliminf_{m\to \infty}\sum_{k=n}^{m-1}\frac{1}{2^k}=\frac{1}{2^{n-1}},\quad  n\ge N,
\end{equation}
which implies $\sum_{y\in V}\hat{p}_{xy}=1$ for any $x\in V$. In other words, $\hat{P}$ is a transition probability matrix. Note that if there exists $n$ such that $p^n_{xy}>0$, then
\begin{equation*}
\hat{p}_{xy}=\lim_{k\to \infty}p^k_{xy}\ge \left[\prod_{k=n+1}^{\infty}(1-\alpha_k)\right]p^n_{xy}\ge \left(1-\frac{1}{2^n}\right)p^n_{xy}>0.
\end{equation*}
Since $P^n$ corresponds to an irreducible Markov chain with state space $V_n$ and $V_n\uparrow V$, it is easy to see that $\hat{P}$ is irreducible.

On the other hand, it follows from \eqref{nu} that we can set $\hat{\nu}=\lim_{n\to \infty}\nu^n$. Moreover, it is clear that $\hat{\nu}\in \mathcal{P}(V)$ and $\hat{\nu}_x\le C\mu_x$ for any $x\in V$. It follows from \eqref{pnxy convergencce} that for any $y\in V$ and $N\in \mathbb{N}$,
\begin{align*}
\sum_{x\in V}\left|\nu^n_xp^n_{xy}-\hat{\nu}_x\hat{p}_{xy}\right|\le&\; \sum_{x\in V}\left|\nu^n_x-\hat{\nu}_x\right|p^n_{xy}+\sum_{x\in V_N}\hat{\nu}_x\left|p^n_{xy}-\hat{p}_{xy}\right|+\sum_{x\in V\setminus V_N}\hat{\nu}_x\left|p^n_{xy}-\hat{p}_{xy}\right|\\
\le &\; \left\|\nu^n-\hat{\nu}\right\|+\frac{1}{2^{n-1}}+1-\sum_{x\in V_N}\hat{\nu}_x,\quad n\ge N.
\end{align*}
Taking $n\to\infty$ and then taking $N\to \infty$ on both two sides of the above inequality, we have
\begin{equation*}
\hat{\nu}_y=\lim_{n\to \infty}\nu^n_y=\lim_{n\to \infty}\sum_{x\in V}\nu^n_xp^n_{xy}=\sum_{x\in V}\hat{\nu}_x\hat{p}_{xy},\quad y\in V.
\end{equation*}
Obviously, we have $\hat{E}\subset E$. Then we finish the proof of this lemma.
\end{proof}
\begin{lemma}\label{lemma:D_1}
Suppose that Assumptions~\ref{ass:irreducibility}-\ref{ass:locally finite} are satisfied. Then $\mathcal{D}_1\neq\emptyset$.
\end{lemma}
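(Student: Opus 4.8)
The plan is to \emph{exhibit} a concrete element of $\mathcal D_1$ (in fact of $\mathcal D_2$), built from Lemma~\ref{proposition:graph}, which hands us an irreducible, positive recurrent sub-chain of $(V,E)$ whose invariant measure is pointwise dominated by any prescribed reference measure, together with a simple truncation of the waiting-time laws to make them have finite mean. The point is that Lemma~\ref{proposition:graph} already produces the ``divergence-free, irreducible, supported on $E$'' part of $\mathcal D_1$; what remains is to attach compatible waiting-time distributions and to verify that the resulting rate value $I(\mu,Q)$ is finite, which will be the only real work.

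Concretely, I would proceed in the following order. First, record two quantities that depend only on the fixed kernel $(P,\Psi)$: by Assumption~\ref{ass:locally finite} (and irreducibility, which forces at least one out-edge) the number $p_{\min}(x):=\min_{y:(x,y)\in E}p_{xy}$ is strictly positive, so $M(x):=-\log p_{\min}(x)\in[0,\infty)$; and since $\psi_x\in\mathcal P(0,\infty)$ there is some $a_x\in(0,\infty)$ with $\psi_x((0,a_x])\ge\tfrac12$. Next, fix a probability measure $\mu^0=(\mu^0_x)_{x\in V}$ with $\mu^0_x>0$ for all $x$ and $\sum_x\mu^0_x\,(1+M(x)+a_x)<\infty$ (enumerate $V=\{z_i\}$ and take $\mu^0_{z_i}$ proportional to $2^{-i}/(1+M(z_i)+a_{z_i})$). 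Apply Lemma~\ref{proposition:graph} to $\mu^0$ to get a constant $C>0$, an irreducible positive recurrent transition matrix $\hat P=(\hat p_{xy})$ with edge set $\hat E\subset E$, and its invariant distribution $\hat\nu$ satisfying $0<\hat\nu_x\le C\mu^0_x$. Now set $\tilde Q:=\hat P$, let $\tilde\mu_x(\cdot):=\psi_x(\,\cdot\cap(0,a_x])/\psi_x((0,a_x])$ (a probability measure on $(0,\infty)$ with mean $m_x$ satisfying $0<m_x\le 2a_x<\infty$), and, with a normalizing constant $c>0$, define $Q(x,y):=c\,\hat\nu_x\hat p_{xy}$ for $(x,y)\in E$ and $\mu(x,\mathrm dt):=c\,\hat\nu_x\,t\,\tilde\mu_x(\mathrm dt)$ on $(0,\infty)$, $\mu(x,\{\infty\})=0$. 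Then $Q$ is supported on $\hat E\subset E$ with $\|Q\|=c\sum_x\hat\nu_x=c<\infty$; invariance of $\hat\nu$ gives $Q^+(x)=Q^-(x)=c\hat\nu_x=:Q_x$; one has $\int_{(0,\infty]}\tfrac1t\,\mu(x,\mathrm dt)=c\hat\nu_x=Q_x$; and $\sum_x\mu(x,(0,\infty])=c\sum_x\hat\nu_x m_x$, which is finite because $\hat\nu_x m_x\le 2C\mu^0_x a_x$, so choosing $c:=(\sum_x\hat\nu_x m_x)^{-1}\in(0,\infty)$ makes $\mu\in\mathcal P(V\times(0,\infty])$. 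By construction $\tilde Q$ and $\tilde\mu_x$ in \eqref{def:pQ,psi mu} equal $\hat p_{x,\cdot}$ and the truncated law, $\mu(x,(0,\infty))=Q_xm_x>0$, $\tilde Q=\hat P$ is irreducible, and $\mu(x,\{\infty\})=0$; hence $(\mu,Q)\in\mathcal D$ (cf.\ \eqref{def:D0}) and $(\mu,Q)$ meets every requirement in \eqref{def:D1}--\eqref{def:D2} except possibly $I(\mu,Q)<\infty$.

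The main obstacle, then, is finiteness of $I(\mu,Q)=\sum_x\big[Q_xH(\hat p_{x,\cdot}\,|\,p_{x,\cdot})+Q_xH(\tilde\mu_x\,|\,\psi_x)\big]$ (the $\zeta(x)\mu(x,\{\infty\})$ term vanishes). Here the key per-state estimates are: since $\hat p_{x,\cdot}\ll p_{x,\cdot}$ and $\hat p_{x,\cdot}$ is a probability vector supported on the out-neighbours of $x$, $H(\hat p_{x,\cdot}\,|\,p_{x,\cdot})=\sum_y\hat p_{xy}\log(\hat p_{xy}/p_{xy})\le\sum_y\hat p_{xy}\log(1/p_{xy})\le M(x)$; and $H(\tilde\mu_x\,|\,\psi_x)=-\log\psi_x((0,a_x])\le\log 2$. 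Therefore $I(\mu,Q)\le c\sum_x\hat\nu_x(M(x)+\log 2)\le cC\sum_x\mu^0_x(M(x)+\log 2)<\infty$ by the choice of $\mu^0$, so $(\mu,Q)\in\mathcal D_2\subset\mathcal D_1$ and $\mathcal D_1\neq\emptyset$. The one delicate point is the order of the argument: the constant $C$ delivered by Lemma~\ref{proposition:graph} depends on $\mu^0$, so the bounds $M(x)$ and $a_x$ — which depend only on $(P,\Psi)$ — must be fixed first, $\mu^0$ chosen to decay against them, and only afterwards is Lemma~\ref{proposition:graph} invoked; Assumption~\ref{ass:locally finite} is precisely what makes $M(x)$ (and the per-state entropy bound) finite.
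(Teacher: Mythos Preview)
Your proposal is correct and follows essentially the same approach as the paper: both use Lemma~\ref{proposition:graph} to produce an irreducible positive recurrent sub-chain with invariant measure dominated by a pre-chosen reference, truncate each $\psi_x$ to a bounded interval to force finite mean and controllable relative entropy, and pick the reference measure to decay against the per-state bounds (the paper uses $C_x=-\sum_{y:(x,y)\in E}\log p_{xy}$ and the interval $[m_x,m_x+1]$ where $m_x$ is the infimum of the support of $\psi_x$, while you use $M(x)=-\log p_{\min}(x)$ and an interval $(0,a_x]$ with mass at least $\tfrac12$). Your truncation choice is marginally cleaner, since it yields the uniform bound $H(\tilde\mu_x\,|\,\psi_x)\le\log 2$ rather than $-\log\psi_x(A_x)$, which the paper must also absorb into the reference measure; otherwise the two arguments coincide.
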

\begin{proof}
We first construct a transition kernel $(\hat{P}=(\hat{p}_{xy})_{x,y\in V},\hat{\Psi}=(\hat{\psi}_x)_{x\in V})$ satisfying the following four requirements:
\begin{itemize}
\item[(a)] $\hat{P}$ is an irreducible transition probability matrix with an unique invariant probability measure $\hat{\nu}$.
\item [(b)]$\hat{\psi}_x\ll\psi_x$ and $\hat{p}_{x,\cdot}\ll p_{x,\cdot}$ for any $x\in V$.
\item [(c)]$\sum_{x\in V}\hat{\nu}_x\int_{(0,\infty)}s\hat{\psi}_x(\mathrm{d}s)<\infty$.
\item [(d)]$\sum_{x\in V}\hat{\nu}_x[H(\hat{p}_{x,\cdot}|p_{x,\cdot})+H(\hat{\psi}_x|\psi_x)]<\infty$.
\end{itemize}
Without loss of generality, we assume that $V=\mathbb{N}$ is the set of nonnegative integers. Let $m_x=\sup\{c\ge 0:\psi_x((0,c))=0\}$, $A_x=[m_x,m_x+1]$, and $\hat{\psi}_x({\rm d}s)=\psi_x({\rm d}s|A_x):=1_{A_x}(s)\psi_x({\rm d}s)/\psi_x(A_x)$. Then $A_x$ is a bounded Borel subset of $(0,\infty)$ satisfying $\psi_x(A_x)>0$ and
\begin{equation}\label{tau psi}
\int_{(0,\infty)}s\hat{\psi}_x({\rm d}s)\le m_x+1, \qquad H(\hat{\psi}_x|\psi_x)=-\log\psi_x(A_x),\quad x\in\mathbb{N}.
\end{equation}
For any $x\in\mathbb{N}$, let $C_x=-\sum_{y\in\mathbb{N}}\log p_{xy}$. By Assumption \ref{ass:locally finite}, it is clear that $0\le C_x<\infty$.
Let $\mu\in \mathcal{P}(\mathbb{N})$ be defined by
\begin{equation}\label{mux}
\mu_x=\frac{1}{M}\frac{2^{-x}}{\max\{m_x+1,-\log\psi_x(A_x),C_x\}},\quad x\in \mathbb{N},
\end{equation}
where
\begin{equation*}
M=\sum_{y\in \mathbb{N}} \frac{2^{-y}}{\max\{m_y+1,-\log\psi_y(A_y),C_y\}}
\end{equation*}
is a normalization constant. By Proposition \ref{proposition:graph}, there exist a constant $C>0$ and a Markov chain $\hat{X}$ with transition probability matrix $\hat{P}$ and invariant distribution $\hat{\nu}$ such that $\hat{E}\subset E$ (i.e. $\hat{p}_{x,\cdot}\ll p_{x,\cdot}$ for any $x\in \mathbb{N}$) and $\hat{\nu}_x\le C\mu_x$ for any $x\in \mathbb{N}$. Note that
\begin{equation}\label{H hat p}
H(\hat{p}_{x,\cdot}|p_{x,\cdot})=\sum_{y\in \mathbb{N}}\hat{p}_{xy}\log\frac{\hat{p}_{xy}}{p_{xy}}\le \sum_{y:(x,y)\in E}\log\frac{1}{p_{xy}}\le C_x.
\end{equation}
Combining \eqref{tau psi}, \eqref{mux}, and \eqref{H hat p}, it is clear that $(\hat{P},\hat{\Psi})$ satisfies items (a)-(d) of the above requirements.

Finally, let $Z=\sum_{x\in V}\hat{\nu}_x\int_{(0,\infty)}s\hat{\psi}_x(\mathrm{d}s)$ and
\begin{equation*}
\mu^0(x,{\rm d}s) = \frac{\hat{\nu}_x \, s \, \hat{\psi}_x({\rm d}s)}{Z}, \qquad Q^0(x,y)=\frac{\hat{\nu}_x\, \hat{p}_{xy}}{Z}.
\end{equation*}
It is easy to check that $(\mu^0,Q^0)\in\mathcal{D}_1$. This completes the proof of this lemma.
\end{proof}
We are now in a position to finish the proof of the lower bound of the LDP.
\begin{proposition}
Suppose that Assumptions~\ref{ass:irreducibility}-\ref{ass:locally finite} are satisfied. Let $L^1_+(E)$ be endowed with the bounded weak* topology. Then under $\mathbb{P}_{\gamma}$, the law of $(\mu_t,Q_t)\in\Lambda$ satisfies an LDP lower bound with convex rate function $I$. Moreover, if Condition \ref{condition:ccomp} is satisfied, then $I$ is a good rate function.
\end{proposition}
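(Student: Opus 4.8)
The plan is to feed the approximating families $\{\tilde P^{(\mu,Q)}_t\}$ constructed in Lemma~\ref{t:plb} into the abstract criterion of Lemma~\ref{lemma:entropy lower bound}, applied on the completely regular space $\Lambda$ (recall that $L^1(E)$ with the bounded weak* topology is completely regular). This immediately yields the large deviation lower bound for $P_t=\mathbb P_\gamma\circ(\mu_t,Q_t)^{-1}$ with rate function $\sce J$, where $J$ is the functional in \eqref{def:J}. It then remains to show $\sce J=I$. Since $J\ge I$ everywhere and $I$ is lower semicontinuous by Proposition~\ref{pro:weak LDP ub}, monotonicity of the lower semicontinuous envelope gives $\sce J\ge\sce I=I$, so the entire task is the reverse inequality $\sce J\le I$. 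Unwinding the definition, this means: for every $(\mu,Q)\in\Lambda$ with $I(\mu,Q)<\infty$ I must produce a sequence $(\mu_n,Q_n)\in\mathcal D_2$ with $(\mu_n,Q_n)\to(\mu,Q)$ in $\Lambda$ and $\varlimsup_n I(\mu_n,Q_n)\le I(\mu,Q)$ (when $I(\mu,Q)=\infty$ the inequality is vacuous and one only needs $\mathcal D_2\neq\emptyset$, which holds because the element built in Lemma~\ref{lemma:D_1} in fact lies in $\mathcal D_2$).

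The approximating sequence I would build in two stages, exploiting convexity of $I$. First, $I(\mu,Q)<\infty$ forces $(\mu,Q)\in\mathcal D$; fix a reference point $(\mu^0,Q^0)\in\mathcal D_2\subset\mathcal D_1$ as in Lemma~\ref{lemma:D_1} and set $(\mu^\epsilon,Q^\epsilon)=(1-\epsilon)(\mu,Q)+\epsilon(\mu^0,Q^0)$. Then $(\mu^\epsilon,Q^\epsilon)\in\mathcal D$ by convexity of $\mathcal D$, one has $\mu^\epsilon(x,(0,\infty))\ge\epsilon\mu^0(x,(0,\infty))>0$ for all $x$, the edge set of $\tilde Q^\epsilon$ contains that of $\tilde Q^0$ so $\tilde Q^\epsilon$ is irreducible, and $I(\mu^\epsilon,Q^\epsilon)\le(1-\epsilon)I(\mu,Q)+\epsilon I(\mu^0,Q^0)\to I(\mu,Q)$ as $\epsilon\downarrow0$, while $(\mu^\epsilon,Q^\epsilon)\to(\mu,Q)$ in $\Lambda$. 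Hence it suffices to approximate an arbitrary $(\mu,Q)\in\mathcal D_1$ by elements of $\mathcal D_2$, i.e.\ to remove the mass $a_x:=\mu(x,\{\infty\})$ at $\infty$; note $I(\mu,Q)<\infty$ forces $\zeta(x)<\infty$ (in particular $\psi_x$ has unbounded support) at every $x$ with $a_x>0$.

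The heart of the argument — and the step I expect to be the main obstacle — is this last approximation, because the $\{\infty\}$-mass must be reabsorbed without producing relative-entropy cost beyond $\zeta(x)a_x$, and while preserving the divergence-free constraint defining $\mathcal D$. For each $x$ with $a_x>0$ I would pick $\lambda_{x,n}\uparrow\zeta(x)$ and transfer the mass $a_x$ from $\{\infty\}$ to the exponentially tilted probability measure $\rho_{x,n}(\mathrm dt)=e^{\lambda_{x,n}t}\psi_x(\mathrm dt)/\psi_x(e^{\lambda_{x,n}\tau})$; since $\rho_{x,n}$ concentrates on large times, $c_{x,n}:=\int t^{-1}\rho_{x,n}(\mathrm dt)\downarrow0$, so the new exit/entrance current at $x$ acquires only the vanishing increment $a_xc_{x,n}$. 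One then re-routes these increments into a genuine divergence-free flow $Q_n$ with $(Q_n)_x=Q_x+a_xc_{x,n}$ whose support still contains an irreducible edge set (again by a construction in the spirit of Lemma~\ref{proposition:graph}, or by a small correction built from the reference flow), and correspondingly adjusts the finite part of $\mu$ on $(0,\infty)$ so that $\int_{(0,\infty]}t^{-1}\mu_n(x,\mathrm dt)=(Q_n)_x$ and $\mu_n$ remains a probability measure; this keeps $(\mu_n,Q_n)\in\mathcal D_2$ and $(\mu_n,Q_n)\to(\mu,Q)$ in $\Lambda$. Finally one checks $\varlimsup_n I(\mu_n,Q_n)\le I(\mu,Q)$: the embedded-chain entropy terms converge by lower semicontinuity together with dominated convergence, since the perturbation of $Q$ vanishes; the unmodified finite part of $\tilde\mu_x$ contributes $H(\tilde\mu_x|\psi_x)$ in the limit; and a direct computation using Lemma~\ref{lemma:continuous for theta} shows that the tilted tail contributes exactly $\varlimsup_n\big[(Q_n)_xH(\tilde\mu^n_x|\psi_x)-Q_xH(\tilde\mu_x|\psi_x)\big]=a_x\varlimsup_n\lambda_{x,n}=\zeta(x)a_x$, matching the last term of $I(\mu,Q)$ in \eqref{def:I}. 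This gives $\sce J=I$, and Lemma~\ref{lemma:entropy lower bound} then yields the lower bound with the convex rate function $I$.

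For the final assertion, assume Condition~\ref{condition:ccomp}. By Proposition~\ref{proposition:etem} the laws $\{P_t\}$ are exponentially tight, and by Proposition~\ref{pro:weak LDP ub} they satisfy the large deviation upper bound on compact sets; combined with the lower bound just established, the standard fact that an exponentially tight family obeying a weak LDP has a good rate function (e.g.\ \cite[Lemma~1.2.18]{dembo1998large}) shows that $I$ is good.
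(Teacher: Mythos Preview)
Your overall strategy matches the paper's proof exactly: invoke Lemma~\ref{lemma:entropy lower bound} together with Lemma~\ref{t:plb} on the completely regular space $\Lambda$, reduce the problem to showing $\sce J=I$, get $\sce J\ge I$ from lower semicontinuity of $I$, and for the reverse inequality approximate any $(\mu,Q)$ with $I(\mu,Q)<\infty$ first by elements of $\mathcal D_1$ via convex combination with the reference point of Lemma~\ref{lemma:D_1}, then by elements of $\mathcal D_2$. The goodness argument via exponential tightness is also the same.

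The one place where you diverge from the paper is the $\mathcal D_1\to\mathcal D_2$ step. The paper simply defers this to \cite[Lemma~2.5]{mariani2016large}; as Remark~\ref{rmk:omega dense} indicates, that construction keeps $Q$ \emph{fixed} and only modifies $\mu$. Concretely, one mixes $\tilde\mu_x$ with the tilted law $\rho_{x,n}$ using a weight $\beta_{x,n}$ chosen so that the resulting $\mu_n(x,\mathrm dt)=Q_x\,t\,\tilde\mu_{x,n}(\mathrm dt)$ still satisfies $\int t^{-1}\mu_n(x,\mathrm dt)=Q_x$ and $\sum_x\mu_n(x,(0,\infty))=1$; convexity of relative entropy then gives the cost bound with limit $\zeta(x)a_x$. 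Your sketch instead perturbs $Q$ to a new divergence-free flow $Q_n$ with prescribed marginals $(Q_n)_x=Q_x+a_xc_{x,n}$. This is where there is a genuine loose end: a divergence-free flow with \emph{arbitrarily prescribed} vertex currents need not exist (already on a two-state chain the two currents must coincide), so the ``re-routing in the spirit of Lemma~\ref{proposition:graph}'' step is not well-posed as written. The fix is either to drop the modification of $Q$ altogether (as the paper does) or to give up the exact prescription $(Q_n)_x=Q_x+a_xc_{x,n}$ and instead add a small multiple of the reference flow $Q^0$, then absorb the resulting mismatch by adjusting $\mu_n$ on $(0,\infty)$---but at that point one is essentially redoing the $Q$-fixed argument with extra bookkeeping.
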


\begin{proof}
Let $J$ be the functional defined in \eqref{def:J}. By Lemmas \ref{lemma:entropy lower bound} and \ref{t:plb}, we only need to prove $I= \sce J$. By Proposition \ref{weak LDP ub}, $I$ is convex and lower semicontinuous on $\Lambda$. It is then easy to see that $\sce J \ge I$.

We next prove the converse inequality. In fact, we only need to prove that for any $(\mu,Q)\in \Lambda$ with $I(\mu,Q)<\infty$, there exists a sequence $(\mu_n,Q_n)_{n\ge 0}$ in $\mathcal{D}_2$ such that $(\mu_n,Q_n)\to (\mu,Q)$ in $\Lambda$ and
\begin{equation}\label{D2}
\varlimsup_{n\to \infty}I(\mu_n,Q_n)\le I(\mu,Q).
\end{equation}
Here, we only prove that there exists a sequence $(\mu_n,Q_n)_{n\ge 0}$ in $\mathcal{D}_1$ such that $(\mu_n,Q_n)\to (\mu,Q)$ in $\Lambda$ and \eqref{D2} holds. The rest of the proof is similar to \cite[Lemma 2.5]{mariani2016large}.

By Lemma \ref{lemma:D_1}, there exists $(\mu^0,Q^0)\in\mathcal{D}_1$. For any $(\mu,Q)\in\mathcal{D}$ with $I(\mu,Q)<\infty$, let
\begin{equation*}
\mu_n = \left(1-\frac{1}{n}\right)\, \mu \, + \frac{1}{n}\, \mu^0, \qquad
Q_n = \left(1-\frac{1}{n}\right)\, Q+ \frac{1}{n}\, Q^0, \qquad n\ge 1.
\end{equation*}
Obviously, $(\mu_n,Q_n)\to(\mu,Q)$ in $\Lambda$ in the sense of the strong topology. Since $I$ is convex on $\Lambda$, we have
\begin{equation*}
I(\mu_n,Q_n) \leq \left(1-\frac{1}{n}\right) I(\mu,Q)+\frac{1}{n}I\left(\mu^0,Q^0\right).
\end{equation*}
Then it is easy to see that $(\mu_n,Q_n)\in\mathcal{D}_1$ and \eqref{D2} holds.  This completes the proof of the lower bound.

Finally, under Condition \ref{condition:ccomp}, it follows from Proposition \ref{proposition:etem} that $(\mu_t,Q_t)$ is exponentially tight.
This fact, together with the lower bound of the LDP implies that $I$ is a good rate function \cite[Lemma 1.2.18]{dembo1998large}.
\end{proof}

\begin{remark}\label{rmk:omega dense}
In fact, we can further prove that for any $(\mu,Q)\in \mathcal{D}$ with $I(\mu,Q)<\infty$, there exists a sequence $(\mu_n,Q)_{n\ge 0}$ in $\Lambda$ such that $\mu_n(x,\cdot)=\mu(x,\cdot)$ whenever $Q_x=0$, and $\mu_n(x,\{\infty\})=0$ whenever $Q_x>0$, $(\mu_n,Q)\to (\mu,Q)$ in $\Lambda$, and
\begin{equation*}
\varlimsup_{n\to \infty}I(\mu_n,Q)\le I(\mu,Q).
\end{equation*}
The proof is similar to that given in \cite[Lemma 2.5]{mariani2016large}.
\end{remark}

\section{Proof of Theorem \ref{LDP:strong topology}}\label{section:s topology}
Next we will prove the joint LDP for the empirical measure and empirical flow when $L^1_+(E)$ is endowed with the strong topology. Note that the bounded weak* topology is weaker than the strong topology \cite[Theorem 2.7.2]{megginson2012introduction}. In other words, any open (closed) subset of $L^1_+(E)$ under the bounded weak* topology is also open (closed) under the strong topology. Since we have established the joint LDP when $L^1_+(E)$ is endowed with the bounded weak* topology in Section \ref{section:bwt topology}, we only need to prove the exponential tightness of the empirical flow when $L^1_+(E)$ is endowed with the strong topology \cite[Corollary 4.2.6]{dembo1998large}. Before proving the exponential tightness, we introduce some notation.

Recall the definition of exit-current and entrance-current in \eqref{exit entrance current}. For any $t>0$, we define the associate empirical currents $Q^+_t:\Omega\to [0,\infty]^V$ and $Q^-_t:\Omega\to [0,\infty]^V$ as
\begin{equation}
\label{def:J_t}
\begin{split}
Q^+_t(x)=\sum_{y\in V}Q_t(x,y) =
\frac{1}{t}\sum_{k=1}^{N_t+1} \, 1_{(X_{k-1}=x)}, \quad Q^-_t(x)=\sum_{y\in V}Q_t(y,x) =
\frac{1}{t}\sum_{k=1}^{N_t+1} \, 1_{(X_{k}=x)}.
\end{split}
\end{equation}
It is clear that $Q^+_t,Q^-_t\in L^1_+(V)$, $\mathbb{P}_{\gamma}$-a.s. For any $J\in L^1_+(V)$ and $f:V\to\mathbb{R}$, we set $\langle J,f\rangle=\sum_{x\in V}J(x)f(x)$.

The exponential tightness of the empirical currents is stated in the following proposition. Here we also consider the case where the semi-Markov process starts from the general initial distribution $\gamma$ (see Remark \ref{remark:gamma}).
\begin{proposition}
\label{proposition:nut}
Assume Conditions \ref{condition:ccomp} and \ref{condition:ccomp2} to hold. Then there exists a sequence $\{\mathcal K_\ell\}$ of compact sets in $L^1_+(V)$ such that for any $\ell\in\mathbb N$,
\begin{equation}\label{et Q+Q-}
\varlimsup_{t\to \infty} \; \frac 1t
\log \mathbb  P_{\gamma} \big( Q^+_t \not\in \mathcal K_\ell \big) \le -\ell,
\end{equation}
where $L^1_+(V)$ is endowed with the strong topology. In particular, the empirical exit-current is exponentially tight.
\end{proposition}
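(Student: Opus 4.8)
The plan is to produce, for each $\ell$, a compact set $\mathcal K_\ell\subset L^1_+(V)$ of the form
\begin{equation*}
\mathcal K_\ell=\Big\{J\in L^1_+(V):\|J\|\le A_\ell\Big\}\ \cap\ \bigcap_{m\ge\ell}\Big\{J\in L^1_+(V):\ \sum_{x\notin W_m}J(x)\le\tfrac1m\Big\},
\end{equation*}
where $W_m=\{x\in V:\log\hat u(x)\le a_m\}$ with $u_n,\hat u$ as in Condition \ref{condition:ccomp2}, $a_m\uparrow\infty$ to be fixed below and $A_\ell\uparrow\infty$. By item (e) of Condition \ref{condition:ccomp2} each $W_m$ is finite, and $W_m\uparrow V$ since $\hat u$ is $(0,\infty)$-valued; since $J\mapsto\sum_{x\notin W_m}J(x)$ is $1$-Lipschitz for the norm, $\mathcal K_\ell$ is norm-bounded, norm-closed and has uniformly vanishing tails, hence is compact in the strong topology. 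Because $\|Q^+_t\|=\|Q_t\|=(N_t+1)/t$ and $\sum_{x\notin W_m}Q^+_t(x)=\langle Q_t,1_{W_m^c}\rangle$ (notation as in Lemma \ref{lemma:sbuset exponential}), we have
\begin{equation*}
\mathbb P_{\gamma}\big(Q^+_t\notin\mathcal K_\ell\big)\le\mathbb P_{\gamma}\big(\|Q_t\|>A_\ell\big)+\sum_{m\ge\ell}\mathbb P_{\gamma}\big(\langle Q_t,1_{W_m^c}\rangle>\tfrac1m\big),
\end{equation*}
and the first term is already controlled by \eqref{QT exponential} in Proposition \ref{proposition:etem}. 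So everything reduces to bounding $\mathbb P_{\gamma}(\langle Q_t,1_{W_m^c}\rangle>1/m)$, i.e. to controlling the number of embedded-chain steps spent at states where $\log\hat u$ is large.

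The key input is a purely discrete-time exponential estimate obtained from Lemma \ref{t:em1} applied to $u=u_n$ with $A=V$: the constraint on $A^c$ is then vacuous, $Pu_n(x)<\infty$ holds by item (a) of Condition \ref{condition:ccomp2}, and the associated supermartingale is $\mathcal M^{u_n,V}_t=\frac{u_n(X_{N_t+1})}{u_n(X_0)}\exp\{\sum_{k=0}^{N_t}\log\frac{u_n}{Pu_n}(X_k)\}$ with $\mathbb E_x(\mathcal M^{u_n,V}_t)\le1$. Using $u_n(X_{N_t+1})\ge c$ (item (b)), integrating over the initial law and using $\sum_x\gamma(x)u_n(x)\le C_\gamma$ (Remark \ref{remark:gamma}) gives $\mathbb E_{\gamma}[\exp(\sum_{k=0}^{N_t}\log\frac{u_n}{Pu_n}(X_k))]\le C_\gamma/c$; letting $n\to\infty$, Fatou's lemma together with item (d) and the non-explosiveness of $\xi$ (so $N_t<\infty$ a.s.) yields
\begin{equation*}
\mathbb E_{\gamma}\Big[\exp\Big(\sum_{k=0}^{N_t}\log\hat u(X_k)\Big)\Big]\le\frac{C_\gamma}{c},\qquad t\ge0.
\end{equation*}

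To convert this into a tail bound, fix the finite set $K_0=\{x:\log\hat u(x)\le0\}$ and $C_0=\max_{x\in K_0}|\log\hat u(x)|$. Splitting $\{0,\dots,N_t\}$ according to whether $X_k\in W_m^c$ (where $\log\hat u>a_m$), $X_k\in W_m\setminus K_0$ (where $\log\hat u>0$), or $X_k\in K_0$, and using $\#\{0\le k\le N_t:X_k\in B\}=t\langle Q_t,1_B\rangle$, one gets $\sum_{k=0}^{N_t}\log\hat u(X_k)\ge a_m\,t\langle Q_t,1_{W_m^c}\rangle-C_0\,t\langle Q_t,1_{K_0}\rangle$. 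Hence, restricting to the event $\{\langle Q_t,1_{K_0}\rangle\le A'_m\}$ and applying the exponential Chebyshev inequality to the master estimate,
\begin{equation*}
\mathbb P_{\gamma}\Big(\langle Q_t,1_{W_m^c}\rangle>\tfrac1m,\ \langle Q_t,1_{K_0}\rangle\le A'_m\Big)\le\frac{C_\gamma}{c}\exp\Big\{-t\Big(\frac{a_m}{m}-C_0A'_m\Big)\Big\},
\end{equation*}
while $\mathbb P_{\gamma}(\langle Q_t,1_{K_0}\rangle>A'_m)$ decays at rate $-m$ once $A'_m$ is chosen as in Lemma \ref{lemma:sbuset exponential} with $K=K_0$. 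Choosing $a_m$ large enough (e.g. $a_m=m^2+mC_0A'_m$) makes the exponent above $\le-tm$, so $\varlimsup_{t\to\infty}\frac1t\log\mathbb P_{\gamma}(\langle Q_t,1_{W_m^c}\rangle>1/m)\le-m$; summing the resulting bounds over $m\ge\ell$ exactly as in the proof of \eqref{muT exponential} in Proposition \ref{proposition:etem} then gives \eqref{et Q+Q-}.

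The one real subtlety is conceptual rather than computational: one cannot simply tilt $F$ in the change of measure of Section \ref{subsection:elm} to insert the weight $1_{W_m^c}$ onto $Q_t$, since $F$ enters only through $p^F_{xy}$ and is invariant under adding a function of $x$, so such a perturbation is invisible in $\mathcal M^{F,h}_t$. Instead the finiteness of the level sets of $\log\hat u$ (item (e) of Condition \ref{condition:ccomp2}) must be exploited directly through the telescoping structure of $\mathcal M^{u_n,V}_t$, which is why this is the place where the stronger, embedded-chain condition is used; the random index $N_t+1$ is handled by optional stopping and Fatou, as in Lemma \ref{t:em1}.
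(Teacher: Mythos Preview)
Your proof is correct and follows essentially the same approach as the paper's: the same compact sets $\mathcal K_\ell$, the same union bound, the same key estimate $\mathbb E_\gamma[\exp(t\langle Q^+_t,\log\hat u\rangle)]\le C_\gamma/c$ obtained by applying Lemma \ref{t:em1} with $A=V$ and passing to the limit via Fatou. The only cosmetic difference is in how you absorb the negative part of $\log\hat u$: the paper uses the global bound $\log\hat u\ge a_m 1_{W_m^c}-C$ with $C=1\vee(-\inf_x\log\hat u(x))$ and then controls $C\|Q_t\|$ directly via \eqref{QT exponential}, whereas you localize to the finite set $K_0=\{\log\hat u\le0\}$ and invoke Lemma \ref{lemma:sbuset exponential}; both are equivalent and lead to the same conclusion.
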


\begin{proof}
We first construct the compact sets in $L^1_+(V)$ under the strong topology. Let $W_m  \uparrow V$ be an invading sequence of finite subsets of $V$, for any positive integer $\ell$, we let
\begin{equation*}
\mathcal K_{\ell}=  \left\{ J \in  L^1_+(V)\,:\, \|J\|\leq A_{\ell} \,, \; \left\langle J,1_{W^c_m}\right\rangle \leq \frac{1}{m},\; \; \forall m \geq \ell \right\} .
\end{equation*}
where $A_{\ell}$ is defined as in Proposition \ref{proposition:etem}. Similarly to the proof in \cite[Theorem 5.2]{bertini2015flows}, it is easy to prove that $\mathcal K_{ \ell} \subset L^1_+(V)$ is compact under the strong topology.

Now we prove \eqref{et Q+Q-}. It is easy to see that
\begin{equation} \label{J_t notin}
\mathbb P_{\gamma}  (Q^+_t \not \in \mathcal K_{\ell}) \leq \mathbb P_{\gamma} \bigl( \|Q^+_t \| \geq A_{\ell}) + \sum _{m \geq \ell} \mathbb P_{\gamma} \left(\left\langle Q^+_t,1_{W^c_m}\right\rangle >\frac{1}{m}  \right) \,.
\end{equation}
It then follows from \eqref{QT exponential} and \eqref{def:J_t} that
\begin{equation*}
\varlimsup_{t\to\infty}\frac{1}{t}\log\mathbb P_{\gamma} \bigl( \|Q^+_t \| \geq A_{\ell})=\varlimsup_{t\to\infty}\frac{1}{t}\log\mathbb P_{\gamma} \bigl( \|Q_t \| \geq A_{\ell}) \le -\ell.
\end{equation*}
On the other hand, let the function $\hat{u}$, the sequence of functions $u_n$, and the constants $c,C_{\gamma}$ be as in Condition \ref{condition:ccomp2} and item (c*) in Remark \ref{remark:gamma}. Taking $A=V$ in Lemma \ref{t:em1}, we obtain the local martingale
\begin{equation}\label{martingle un V}
\mathcal M^{u_n,V}_t =  \frac{u_n(X_{N_t+1})}{u_n(X_0)}
\exp\left\{ t \, \Big\langle \hat{\mu}_t, h^{u_n,V}
\Big\rangle\right\}= \frac{u_n(X_{N_t+1})}{u_n(X_0)}
\exp\left\{t \left\langle Q^+_t, \log\frac{u_n}{Pu_n}\right\rangle\right\}.
\end{equation}
Note that $u_n/Pu_n$ converges pointwise to $\hat{u}$. By Fatou's lemma, we have
\begin{align*}
\mathbb E_{\gamma} \left( \exp\left\{ t \Big\langle Q^+_t, \log\hat{u}\Big\rangle \right\} \right)&\le \sum_{x\in V}\gamma(x)\varliminf_{n\to\infty}\mathbb E_{x} \left( \exp\left\{ t \left\langle Q^+_t, \log\frac{u_n}{Pu_n}\right\rangle \right\} \right)\\
&=\sum_{x\in V}\gamma(x)\varliminf_{n\to\infty}\mathbb E_{x} \left( \frac{u_n(X_0)}{u_n(X_{N_t+1})}\mathcal M^{u_n,V}_t\right)\\
&\le \sum_{x\in V}\gamma(x)\frac{u_n(x)}{c}\le \frac {C_{\gamma}}{c},
\end{align*}
where the second inequality follows from item (b) in Condition \ref{condition:ccomp2} and the last inequality follows from item (c*) in Remark \ref{remark:gamma}.

Let $\{a_m\}_{m\geq 0}$ be a sequence of constants with $a_m\uparrow \infty$ to be chosen later and let $W_m =\{x\in V :\, \log\hat{u}(x) \le a_m\}$ be an invading sequence of $V$. In view of item (e) in Condition \ref{condition:ccomp2}, $W_m$ are finite sets. Let
\begin{equation*}
C = 1\vee(-\inf_{x\in V}\log\hat{u}(x))<\infty.
\end{equation*}
It is easy to see that $\log\hat{u}\ge a_{m}1_{W^c_{m}}-C$. By the exponential Chebyshev inequality, we obtain
\begin{equation*}
\begin{split}
\mathbb P_{\gamma}\left(\left\langle Q^+_t,1_{W_m^c}\right\rangle>\frac{1}{m}\right) \le
&\;\mathbb P_{\gamma} \left( \left\langle Q^+_t, \log\hat{u}\right\rangle +C\|Q^+_t\|>\frac{a_m}{m}\right)\\
\le&\;\mathbb P_{\gamma} \left( \left\langle Q^+_t, \log\hat{u}\right\rangle >
\frac{a_m}{2m}\right)+\mathbb P_{\gamma} \left( C\|Q_t\| >
\frac{a_m}{2m}\right)\\
\le&\;\frac{C_{\gamma}}{c}\exp\left\{ -t \frac{a_m}{2m}
\right\}+\mathbb P_{\gamma} \left( \|Q_t\| >
\frac{a_m}{2Cm}\right).
\end{split}
\end{equation*}
By choosing $a_m = 2m^2 +2Cm A_{m}$, the proof is now easily concluded from \eqref{J_t notin}.
\end{proof}

\begin{corollary}\label{corollary:nut}
Assume Conditions \ref{condition:ccomp} and \ref{condition:ccomp2} to hold. Then the empirical entrance-current $Q^-_t$ with $L^1_+(V)$ endowed with the strong topology is exponentially tight.
\end{corollary}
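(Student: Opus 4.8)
The plan is to reduce to Proposition~\ref{proposition:nut} by establishing, for the empirical entrance-current, the same exponential moment bound that underlies that proof. From \eqref{def:J_t} one reads off the identity $Q^-_t(x) = Q^+_t(x) + \tfrac1t\big(1_{(X_{N_t+1}=x)} - 1_{(X_0=x)}\big)$, whence $\|Q^-_t\| = (N_t+1)/t = \|Q_t\|$ and, for every $g\colon V\to\mathbb R$,
\[
\exp\big\{t\langle Q^-_t, g\rangle\big\} = \frac{e^{g(X_{N_t+1})}}{e^{g(X_0)}}\,\exp\big\{t\langle Q^+_t, g\rangle\big\}.
\]
The key step I would carry out is to show that, under $\mathbb P_x$, $\mathbb E_x\big[\exp\{t\langle Q^-_t, \log\hat u\rangle\}\big] \le C_x/(c\,\hat u(x))$ for all $t>0$, with $\hat u$, $(u_n)$, $c$ as in Condition~\ref{condition:ccomp2}. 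Apply the displayed identity with $g = \log(u_n/Pu_n)$ and recall the local martingale $\mathcal M^{u_n,V}_t = \frac{u_n(X_{N_t+1})}{u_n(X_0)}\exp\{t\langle Q^+_t, \log(u_n/Pu_n)\rangle\}$ of Lemma~\ref{t:em1} (with $A=V$), which satisfies $\mathbb E_x[\mathcal M^{u_n,V}_t]\le 1$. Since $u_n\ge c$ and $P$ is stochastic we have $Pu_n\ge c$, so $(u_n/Pu_n)(X_{N_t+1})\le u_n(X_{N_t+1})/c$; moreover $(u_n/Pu_n)(X_0)=\hat u_n(x)$ is a constant under $\mathbb P_x$, where $\hat u_n:=u_n/Pu_n$. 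Hence
\[
\mathbb E_x\big[\exp\{t\langle Q^-_t, \log\hat u_n\rangle\}\big]
\le \frac{1}{c\,\hat u_n(x)}\,\mathbb E_x\big[u_n(X_{N_t+1})\exp\{t\langle Q^+_t, \log\hat u_n\rangle\}\big]
= \frac{u_n(x)}{c\,\hat u_n(x)}\,\mathbb E_x[\mathcal M^{u_n,V}_t]
\le \frac{u_n(x)}{c\,\hat u_n(x)}.
\]
Letting $n\to\infty$, using $\hat u_n(x)\to\hat u(x)>0$, $u_n(x)\le C_x$, and $\langle Q^-_t,\log\hat u_n\rangle\to\langle Q^-_t,\log\hat u\rangle$ ($Q^-_t$ being finitely supported $\mathbb P_x$-a.s.), Fatou's lemma yields the claimed bound.

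With this moment bound, the remainder is essentially word-for-word the proof of Proposition~\ref{proposition:nut} with $Q^+_t$ replaced by $Q^-_t$. Put $C = 1\vee\big(-\inf_{x\in V}\log\hat u(x)\big)<\infty$ and, for a sequence $a_m\uparrow\infty$ to be fixed, $W_m = \{x\in V: \log\hat u(x)\le a_m\}$, which are finite by item~(e) of Condition~\ref{condition:ccomp2}; then $\log\hat u\ge a_m 1_{W_m^c} - C$. Take $A_\ell$ and the compact sets
\[
\mathcal K_\ell = \Big\{J\in L^1_+(V): \|J\|\le A_\ell,\ \langle J, 1_{W_m^c}\rangle\le\tfrac1m\ \text{ for all } m\ge\ell\Big\}
\]
exactly as in Proposition~\ref{proposition:nut} (compact in the strong topology, and $\varlimsup_{t}\tfrac1t\log\mathbb P_x(\|Q_t\|>A_m)\le -m$ by \eqref{QT exponential}). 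The exponential Chebyshev inequality, the moment bound above, and $\|Q^-_t\|=\|Q_t\|$ give, with $a_m = 2m^2 + 2CmA_m$,
\[
\mathbb P_x\big(\langle Q^-_t, 1_{W_m^c}\rangle > \tfrac1m\big) \le \frac{C_x}{c\,\hat u(x)}\,e^{-t a_m/(2m)} + \mathbb P_x\big(\|Q_t\| > a_m/(2Cm)\big).
\]
Summing over $m\ge\ell$ and adding $\mathbb P_x(\|Q^-_t\|>A_\ell)=\mathbb P_x(\|Q_t\|>A_\ell)$, exactly as in Proposition~\ref{proposition:nut}, yields $\varlimsup_{t\to\infty}\tfrac1t\log\mathbb P_x(Q^-_t\notin\mathcal K_\ell)\le -\ell$; the general initial distribution is treated in the same way.

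The one genuinely new point, and the main obstacle, is the moment bound of the first paragraph. The $Q^+_t$-martingale $\mathcal M^{u_n,V}_t$ cannot be recycled verbatim, since $\langle Q^-_t,\log(u_n/Pu_n)\rangle$ generates the boundary ratio $(u_n/Pu_n)(X_{N_t+1})/(u_n/Pu_n)(X_0)$ rather than $u_n(X_{N_t+1})/u_n(X_0)$; the bridge is the trivial bound $Pu_n\ge c$ (costing only a constant factor) together with $\hat u(x)>0$, which keeps the denominator away from zero. A naive argument based only on $\|Q^-_t-Q^+_t\|\le 2/t$ is not sufficient: in the infinite-dimensional strong topology the closed $2/t$-neighborhood of a compact set need not be compact, and the endpoint $X_{N_t+1}$ may escape to infinity, so the excursion of $Q^-_t$ far out in $V$ really has to be controlled by a (super)martingale estimate.
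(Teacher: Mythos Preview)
Your proof is correct, but it takes a considerably longer route than the paper's. The paper dispatches the corollary in two lines: from $\|Q^+_t-Q^-_t\|\le 2/t$ (which you yourself derive) one has, for every $\eta>0$,
\[
\varlimsup_{t\to\infty}\frac1t\log\mathbb P_\gamma\big(\|Q^+_t-Q^-_t\|>\eta\big)=-\infty,
\]
i.e.\ $Q^+_t$ and $Q^-_t$ are exponentially equivalent in the strong $L^1$ metric; then \cite[Lemma~3.13]{feng2006large} transfers the exponential tightness of $Q^+_t$ (Proposition~\ref{proposition:nut}) to $Q^-_t$.

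Your closing paragraph dismisses exactly this argument as ``naive'' and insufficient, on the grounds that a fixed $\eta$-enlargement of a compact set in $\ell^1$ is not compact. That observation is true but does not kill the approach: the standard exponential-equivalence machinery does not use a single enlargement but an intersection $\bigcap_{m\ge\ell}(\mathcal K_m)_{1/m}$ of shrinking enlargements of an exhausting family of compacta. In $\ell^1$ this intersection is bounded and uniformly summable (for any $\epsilon>0$ pick $m>2/\epsilon$ and a finite $F$ with $\sup_{J\in\mathcal K_m}\sum_{x\notin F}|J(x)|<\epsilon/2$), hence compact; and since $\|Q^+_t-Q^-_t\|\le 2/t<1/m$ eventually, the tail probabilities are controlled by those of $Q^+_t$. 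This is precisely what the Feng--Kurtz lemma packages.

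What your approach buys is self-containment: you avoid the external citation and give an explicit exponential moment bound for $Q^-_t$. The bridge you identify ($Pu_n\ge c$ from $u_n\ge c$) is neat and the Fatou/finite-support passage to the limit is clean. One small caveat: your ``general initial distribution is treated in the same way'' is not quite automatic---your bound under $\mathbb P_x$ is $u_n(x)/(c\,\hat u_n(x))=Pu_n(x)/c$, and item~(c*) in Remark~\ref{remark:gamma} controls $\sum_x\gamma(x)u_n(x)$, not $\sum_x\gamma(x)Pu_n(x)$. The paper's exponential-equivalence route sidesteps this entirely, since $\|Q^+_t-Q^-_t\|\le 2/t$ holds pathwise.
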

\begin{proof}
For any $\eta>0$, it is easy to see that
\begin{equation*}
\varlimsup_{t\to\infty}\frac{1}{t}\log\mathbb{P}_{\gamma}(\|Q^+_t-Q^-_t\|>\eta)=-\infty.
\end{equation*}
The result of this corollary now immediately follows from \cite[Lemma 3.13]{feng2006large}.
\end{proof}

We are now in a position to prove Theorem \ref{LDP:strong topology}.

\begin{proof}[Proof of Theorem \ref{LDP:strong topology}]
Let $(Z,\tau)=\{(Z_k)_{k\ge 0},(\tau_k)_{k\ge 1}\}$ be a Markov renewal process with the initial distribution $\gamma^Z$, where $Z_k = (X_{k-1},X_{k})$ and $X_{-1}$ can be any random variables such that $\gamma^Z\in\mathcal{P}(E)$. Note that the empirical entrance-current for $(Z,\tau)$ is exactly the empirical flow for $(X,\tau)$, i.e.
\begin{equation*}
Q_t(x,y)=\frac{1}{t}\sum_{k=1}^{N_t+1}1_{(X_{k-1}=x,X_k=y)}=\frac{1}{t}\sum_{k=1}^{N_t+1}1_{(Z_k=(x,y))}=Q_t^{-,Z}(x,y).
\end{equation*}
Next we will apply Corollary \ref{corollary:nut} to $(Z,\tau)$. It is easy to verify that $(Z,\tau)$ satisfies Assumptions \ref{ass:irreducibility}-\ref{ass:locally finite}. In order to apply Corollary \ref{corollary:nut}, we need to prove that $(Z,\tau)$ satisfies Conditions \ref{condition:ccomp} and \ref{condition:ccomp2}. Here we only verify Condition \ref{condition:ccomp} for $(Z,\tau)$ and the proof of Condition \ref{condition:ccomp2} is similar.

Let the functions $\hat{u}$, the sequence of functions $u_n$, the set $K$, and the constants $c,\sigma,C,\eta,C_{\gamma}$ be as in Condition \ref{condition:ccomp} and item (c*) in Remark \ref{remark:gamma}. By choosing $u_n^Z(x,y)=u_n(y)$, we immediately obtain item (b). Note that $Z$ has the transition probability $p^Z_{(x,y),(z,w)}=\delta_y(z)p_{zw}$. Then for any $(x,y)\in E$ and $n\ge 0$, we have
\begin{equation*}
P^Zu^Z_n(x,y)=\sum_{(z,w)\in E}\delta_y(z)p_{zw}u^Z_n(z,w)=\sum_{w\in V}p_{yw}u_n(w)=Pu_n(y)<\infty,
\end{equation*}
which implies item (a). For any $n\ge 0$, we have
\begin{equation*}
\sum_{(x,y)\in E}\gamma^Z(x,y)u^Z_n(x,y)=\sum_{(x,y)\in E}\gamma^Z(x,y)u_n(y)=\sum_{y\in V}\gamma(y)u_n(y)\le C_{\gamma}.
\end{equation*}
By choosing $C^Z_{\gamma}=C_{\gamma}$, we obtain item (c). Note that
\begin{equation*}
\lim_{n\to\infty}\frac{u^Z_n(x,y)}{P^Zu^Z_n(x,y)}=\lim_{n\to\infty}\frac{u_n(y)}{Pu_n(y)}=\hat{u}(y).
\end{equation*}
By choosing $\hat{u}^Z(x,y)=\hat{u}(y)$, we obtain item (d). It is easy to check that $L^Z\hat{u}^Z(x,y)=L\hat{u}(y)$. Then for each $\ell\in\mathbb{R}$, we have
\begin{equation*}
\left\{(x,y)\in E:L^Z\hat{u}^Z(x,y)\le \ell\right\}=\left\{(x,y)\in E:L\hat{u}(y)\le \ell\right\}.
\end{equation*}
Since $(V,E)$ is locally finite, item (e) also holds. Note that $\psi^Z_{(x,y)}=\psi_y$ and $\zeta^Z(x,y)=\zeta(y)$. By choosing $\sigma^Z=\sigma$, $C^Z=C$, $\eta^Z=\eta$, and $K^Z=(V\times K)\cap E$, we obtain $\hat{u}^Z(x,y)<\psi^Z_{(x,y)}(e^{\zeta^Z(x,y)\tau})$ for any $(x,y)\in (K^Z)^c$ and
\begin{equation*}
L^Z\hat{u}(x,y)=L\hat{u}(y)\ge-\sigma L\eta(y)-C1_K(y)\ge-\sigma^Z L^Z\eta^Z(x,y)-C^Z1_{K^Z}(x,y).
\end{equation*}
This completes the proof of item (f).
\end{proof}

\section{Proof of Proposition \ref{proposition:rate function for empirical flow}}\label{section:contraction principle}
Here we consider the marginal LDP for the empirical flow $Q_t$. Before giving the proof of Proposition \ref{proposition:rate function for empirical flow}, we need some notation and lemmas. Let $\mu$ and $\nu$ be two positive $\sigma$-finite measures on a measurable space $(\mathcal{X},\mathcal{F})$. For any sequence of non-negative measurable functions $(f_i)_{i\ge 1}$ on $\mathcal{X}$ and any sequence of non-negative constants $(b_i)_{i\ge 0}$ satisfying $0<g:=b_0+\sum_{i=1}^{\infty}b_if_i<\infty$, $\mu$-a.s., let the generalized relative entropy between $\mu$ and $\nu$ be defined by
\begin{equation*}
H_{g}(\mu|\nu)= \left\{
\begin{aligned}
&\int_{\mathcal{X}}  g\left(\log\frac{\mathrm{d}\mu}{\mathrm{d}\nu}\right)\mathrm{d}\mu,    && \text{if } \mu\ll\nu,\\
&\infty,    && \text{otherwise}.\\
\end{aligned}\right.
\end{equation*}
Similarly to the maximum entropy principle \cite[Theorem 12.1.1]{covert2006elementsofinformationtheory}, we have the following lemma.

\begin{lemma}\label{lemma:generalized maximum entropy principle}
Suppose that there exists a positive $\sigma$-finite measure $\mu^*$ satisfying $\mu^*\ll\nu$, $\nu\ll\mu^*$, and
\begin{equation}\label{equation for mu*}
g\log\frac{\mathrm{d}\mu^*}{\mathrm{d}\nu}=\lambda_0+\sum_{i=1}^{\infty}\lambda_i f_i,
\end{equation}
where the sequence of constant $(\lambda_i)_{i\ge 0}$ are chosen so that $\mu^*$ satisfies
the following constraints:
\begin{equation}\label{constraints}
\mu(\mathcal{X})=a_0,\quad\int_{\mathcal{X}} f_id\mu=a_i, \quad  i\ge 1,
\end{equation}
where $(a_i)_{i\geq 0}$ is a sequence of constants satisfying $0<\sum_{i=0}^{\infty}a_ib_i<\infty$. Then $\mu^*$ uniquely minimizes $H_{g}(\cdot|\nu)$ over all positive $\sigma$-finite probability measures satisfying \eqref{constraints}. Moreover, the minimum is given by
\begin{equation*}
H_{g}(\mu^*|\nu)=\sum_{i=0}^\infty a_i\lambda_i.
\end{equation*}
\end{lemma}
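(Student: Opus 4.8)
The plan is to prove the lemma by a \emph{weighted Gibbs inequality}, reducing the minimization of $H_g(\cdot\,|\,\nu)$ to the non-negativity of an ordinary relative entropy between two $\sigma$-finite measures of equal total mass. Let $\mu$ be any $\sigma$-finite measure obeying the constraints \eqref{constraints}. If $\mu\not\ll\nu$ then $H_g(\mu|\nu)=\infty$ and there is nothing to show, so I assume $\mu\ll\nu$; since $\nu\ll\mu^*$ by hypothesis this gives $\mu\ll\mu^*$ and $\mu^*\sim\nu$, and I write $p=\mathrm{d}\mu/\mathrm{d}\nu$, $p^*=\mathrm{d}\mu^*/\mathrm{d}\nu$, so $\mathrm{d}\mu/\mathrm{d}\mu^*=p/p^*$ a.e. I also use the standing positivity $0<g<\infty$ (for all measures in play, in particular $\mu$ and $\mu^*$).

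First I would evaluate the candidate value. Inserting the defining identity $g\log p^*=\lambda_0+\sum_{i\ge1}\lambda_i f_i$ into the definition of $H_g$ and using that $\mu^*$ satisfies \eqref{constraints},
$$H_g(\mu^*|\nu)=\int_{\mathcal X}g\log p^*\,\mathrm{d}\mu^*=\lambda_0 a_0+\sum_{i\ge1}\lambda_i a_i=\sum_{i\ge0}a_i\lambda_i.$$
The same computation, which uses only $\mu(\mathcal X)=a_0$ and $\int f_i\,\mathrm{d}\mu=a_i$, shows more generally that $\int_{\mathcal X}g\log p^*\,\mathrm{d}\mu=\sum_{i\ge0}a_i\lambda_i$ for \emph{every} admissible $\mu$.

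Next I would compare an arbitrary admissible $\mu$ with $\mu^*$. If $H_g(\mu|\nu)=\infty$ the desired bound is trivial, so assume it is finite; since $g\log p^*$ is then $\mu$-integrable, I may subtract to get
$$H_g(\mu|\nu)-H_g(\mu^*|\nu)=\int_{\mathcal X}g(\log p-\log p^*)\,\mathrm{d}\mu=\int_{\mathcal X}g\,\log\frac{p}{p^*}\,\mathrm{d}\mu.$$
Now comes the key observation: set $m:=\int_{\mathcal X}g\,\mathrm{d}\mu=b_0a_0+\sum_{i\ge1}b_ia_i$, which is finite and positive by hypothesis, and note that the constraints force $\int_{\mathcal X}g\,\mathrm{d}\mu^*=m$ as well. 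Hence the measures $\hat\mu,\hat\mu^*$ with densities $m^{-1}g$ with respect to $\mu,\mu^*$ are \emph{probability} measures (well defined because $\mu,\mu^*$ are $\sigma$-finite and $g<\infty$ a.e.), with $\hat\mu\ll\hat\mu^*$ and $\mathrm{d}\hat\mu/\mathrm{d}\hat\mu^*=p/p^*$, and the displayed integral equals $m\,H(\hat\mu|\hat\mu^*)$. By \eqref{def:H} the relative entropy $H(\hat\mu|\hat\mu^*)$ is non-negative, so $H_g(\mu|\nu)\ge H_g(\mu^*|\nu)=\sum_{i\ge0}a_i\lambda_i$; and by strict convexity of $x\mapsto x\log x$, equality holds iff $p/p^*$ is $\hat\mu^*$-a.e.\ constant, which by equality of the total masses forces $p/p^*\equiv1$, i.e.\ (using $g>0$ a.e.) $\mu=\mu^*$. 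This yields both the uniqueness of the minimizer and the value of the minimum.

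The conceptual crux is the equal-mass identity $\int g\,\mathrm{d}\mu=\int g\,\mathrm{d}\mu^*$: it is precisely what makes the Gibbs inequality applicable and sharp at $\mu^*$, and it holds only because $g$ is an affine combination of $1$ and the $f_i$ whose $\mu$- and $\mu^*$-integrals are pinned by \eqref{constraints}. The remaining work is bookkeeping around the (possibly infinite) series: one must justify $\int g\log p^*\,\mathrm{d}\mu=\sum_i\lambda_i a_i$ and $\int g\,\mathrm{d}\mu=\sum_i b_i a_i$ by Tonelli's theorem on the non-negative parts together with the absolute convergence $\sum_i|\lambda_i|a_i<\infty$ (part of the standing setup), and justify the subtraction in the third paragraph via $|g\log p^*|\le|\lambda_0|+\sum_i|\lambda_i|f_i\in L^1(\mu)$, the case $H_g(\mu|\nu)=\infty$ having been disposed of at the outset. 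Once this is in place the argument is exactly the classical maximum entropy principle \cite{covert2006elementsofinformationtheory} carried through with the weight $g$.
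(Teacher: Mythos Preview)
Your proposal is correct and follows essentially the same route as the paper: both arguments decompose $H_g(\mu|\nu)$ as $\int g\log(\mathrm{d}\mu/\mathrm{d}\mu^*)\,\mathrm{d}\mu+\int g\log(\mathrm{d}\mu^*/\mathrm{d}\nu)\,\mathrm{d}\mu$, observe that the second integral equals $\sum_i a_i\lambda_i$ for every admissible $\mu$, and recognize the first as $m\cdot H(g\,\mathrm{d}\mu/m\,|\,g\,\mathrm{d}\mu^*/m)\ge 0$ using the equal-mass identity $\int g\,\mathrm{d}\mu=\int g\,\mathrm{d}\mu^*=\sum_i a_ib_i$. Your write-up is in fact slightly more careful than the paper's about the bookkeeping (the case $\mu\not\ll\nu$, the finiteness needed to subtract, and the Tonelli justification for exchanging sum and integral), though note that the absolute convergence $\sum_i|\lambda_i|a_i<\infty$ you invoke is not explicitly part of the lemma's hypotheses and is tacitly assumed in both proofs.
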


\begin{proof}
Let $\mu$ be a positive $\sigma$-finite measure satisfying \eqref{constraints}. Note that $\int_{\mathcal{X}} g\mathrm{d}\mu=\int_{\mathcal{X}} g\mathrm{d}\mu^*=\sum_{i=0}^{\infty}a_ib_i<\infty$. Then we have
\begin{equation}\label{inequality}
\begin{split}
\int_{\mathcal{X}} g\left(\log\frac{\mathrm{d}\mu}{\mathrm{d}\nu}\right)\mathrm{d}\mu
=&\; \int_{\mathcal{X}} g\left(\log\frac{\mathrm{d}\mu}{\mathrm{d}\mu^*}\right)\mathrm{d}\mu+\int_{\mathcal{X}} g\left(\log\frac{\mathrm{d}\mu^*}{\mathrm{d}\nu}\right)\mathrm{d}\mu\\
=&\; \left(\int_{\mathcal{X}} g\mathrm{d}\mu\right) H\left(\frac{g\mathrm{d}\mu}{\int_{\mathcal{X}} g\mathrm{d}\mu}\bigg|\frac{g\mathrm{d}\mu^*}{\int_{\mathcal{X}} g\mathrm{d}\mu^*}\right)+\int_{\mathcal{X}} g\left(\log\frac{\mathrm{d}\mu^*}{\mathrm{d}\nu}\right)\mathrm{d}\mu\\
\ge&\;\int_{\mathcal{X}} g\left(\log\frac{\mathrm{d}\mu^*}{\mathrm{d}\nu}\right)\mathrm{d}\mu,
\end{split}
\end{equation}
where the last inequality follows from the nonnegativity of the relative entropy. It then follows from \eqref{equation for mu*} that
\begin{align*}
\int_{\mathcal{X}} g\left(\log\frac{\mathrm{d}\mu^*}{\mathrm{d}\nu}\right)\mathrm{d}\mu=&\int_{\mathcal{X}}\left(\lambda_0+\sum_{i=1}^{\infty}\lambda_i f_i\right)\mathrm{d}\mu=\sum_{i=0}^{\infty}\lambda_i a_i=\int_{\mathcal{X}}\left(\lambda_0+\sum_{i=1}^{\infty}\lambda_i f_i\right)\mathrm{d}\mu^*\\
=&\int_{\mathcal{X}} g\left(\log\frac{\mathrm{d}\mu^*}{\mathrm{d}\nu}\right)\mathrm{d}\mu^*.
\end{align*}
Hence we have proved that $H_g(\mu|\nu)\ge H_g(\mu^*|\nu)$. Here the equality holds if and only if $\mathrm{d}\mu/\mathrm{d}\mu^*=1$. This shows that except for a set of measure zero, $\mu^*$ is unique.
\end{proof}

Recall the definition of $\zeta(x)$ in \eqref{def:zeta}. We also need the following lemma.

\begin{lemma}\label{lemma:function properties}
For any $x\in V$, let $G_x(\lambda)=\log(\psi_x(e^{\lambda\tau}))$ and $F_x(\lambda)=\psi_x(\tau e^{\lambda\tau})/\psi_x( e^{\lambda\tau})$, where $\psi_x(\tau e^{\lambda\tau})=\int_{(0,\infty)}se^{\lambda s}\psi_x(\mathrm{d}s)$. Suppose that $G_x(\zeta(x))=\infty$ for any $x\in V$. Then $G_x\in C^2(-\infty,\zeta(x))$ is a strictly increasing function, $F_x=\mathrm{d}G_x/\mathrm{d}\lambda$ is an increasing function, and
\begin{equation}\label{m_x}
\lim_{\lambda\to -\infty}F_x(\lambda) = m_x:=\sup\left\{\lambda\ge0:\psi_x(0,\lambda)=0\right\},
\end{equation}
\begin{equation}\label{M_x}
\lim_{\lambda\to\zeta(x)}F_x(\lambda) = M_x:=\inf\left\{\lambda\ge0:\psi_x(\lambda,\infty)=0\right\}.
\end{equation}
Moreover, if $\psi_x$ is not a Dirac measure, then $F_x$ is strictly increasing.
\end{lemma}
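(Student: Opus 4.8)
The plan is to reduce the lemma to elementary properties of the moment generating function $\Theta_x(\lambda):=\psi_x(e^{\lambda\tau})=\int_{(0,\infty)}e^{\lambda s}\,\psi_x(\mathrm{d}s)$, which is finite precisely on $(-\infty,\zeta(x))$ by \eqref{def:zeta}. First I would prove regularity by differentiating under the integral sign: on any subinterval $[\lambda_1,\lambda_2]\subset(-\infty,\zeta(x))$ the functions $s\mapsto s^k e^{\mu s}$, $\mu\in[\lambda_1,\lambda_2]$, $k\ge0$, admit an integrable majorant (bound $s^k e^{\lambda_2 s}$ by $C_k e^{\lambda_2' s}$ on $\{s\ge1\}$ for a fixed $\lambda_2'\in(\lambda_2,\zeta(x))$, and bound uniformly on $\{s<1\}$), so $\Theta_x\in C^\infty(-\infty,\zeta(x))$ with $\Theta_x'(\lambda)=\psi_x(\tau e^{\lambda\tau})$ and $\Theta_x''(\lambda)=\psi_x(\tau^2 e^{\lambda\tau})$ finite; since $\Theta_x>0$, also $G_x=\log\Theta_x\in C^2(-\infty,\zeta(x))$, with $G_x'=\Theta_x'/\Theta_x=F_x$ and $G_x''=\Theta_x^{-2}\big(\Theta_x''\Theta_x-(\Theta_x')^2\big)$.

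Monotonicity and convexity are then immediate. Because $\psi_x\in\mathcal P(0,\infty)$ we have $\tau>0$ $\psi_x$-a.s., hence $\Theta_x'>0$, i.e.\ $F_x>0$ and $G_x$ is strictly increasing. Applying the Cauchy--Schwarz inequality in $L^2(\psi_x)$ to $s\,e^{\lambda s/2}$ and $e^{\lambda s/2}$ gives $(\Theta_x')^2\le\Theta_x''\Theta_x$, so $G_x''\ge0$ and $F_x=G_x'$ is increasing; equality in Cauchy--Schwarz would force $\tau$ to be $\psi_x$-a.s.\ constant, i.e.\ $\psi_x$ to be a Dirac mass, so if $\psi_x$ is not a Dirac measure then $G_x''>0$ on the whole interval and $F_x$ is strictly increasing.

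For the limits I would combine the convexity of $G_x$ with crude lower bounds on $\Theta_x$: for every $\epsilon>0$ one has $\psi_x((0,m_x+\epsilon))>0$ and $\psi_x((M_x-\epsilon,\infty))>0$ by the definitions of $m_x$ and $M_x$, so $\Theta_x(\lambda)\ge e^{\lambda(m_x+\epsilon)}\psi_x((0,m_x+\epsilon))$ for $\lambda<0$ and $\Theta_x(\lambda)\ge e^{\lambda(M_x-\epsilon)}\psi_x((M_x-\epsilon,\infty))$ for $\lambda>0$. Since $\psi_x((0,m_x))=0$ we have $\tau\ge m_x$ $\psi_x$-a.s.\ and hence $F_x\ge m_x$; as $F_x$ is increasing it decreases to some $\ell\ge m_x$ with $F_x\ge\ell$ throughout, and if $\ell>m_x$ then integrating $F_x\ge\ell$ (with $G_x(0)=0$) gives $\Theta_x(\lambda)=e^{G_x(\lambda)}\le e^{\ell\lambda}$ for $\lambda<0$, which against the lower bound with $m_x+\epsilon<\ell$ forces $\psi_x((0,m_x+\epsilon))=0$ as $\lambda\to-\infty$, a contradiction; hence $\ell=m_x$, i.e.\ \eqref{m_x}. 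The limit \eqref{M_x} is the mirror statement, and is where $G_x(\zeta(x))=\infty$ enters. If $M_x<\infty$, then $\Theta_x(\lambda)\le e^{\lambda M_x}$ for all $\lambda$ forces $\zeta(x)=\infty$, $F_x$ is increasing and $\le M_x$ so increases to some $L\le M_x$, and if $L<M_x$ the same argument (now with $M_x-\epsilon>L$ and $\lambda\to+\infty$) gives a contradiction, so $L=M_x$. If $M_x=\infty$ and $\lim_{\lambda\to\zeta(x)}F_x(\lambda)=L<\infty$, then $F_x\le L$ gives $G_x(\lambda)\le L\lambda$ for $\lambda>0$; for $\zeta(x)=\infty$ this contradicts $\Theta_x(\lambda)\ge e^{\lambda a}\psi_x((a,\infty))$ for $a>L$ (positive since $M_x=\infty$), while for $\zeta(x)<\infty$ it gives $G_x(\zeta(x))=\lim_{\lambda\uparrow\zeta(x)}G_x(\lambda)\le L\,\zeta(x)<\infty$, contradicting the hypothesis; so $\lim F_x=\infty=M_x$.

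I expect the last step to be the main obstacle: organizing the two endpoint limits so that the single hypothesis $G_x(\zeta(x))=\infty$ is invoked exactly where it is genuinely needed (the case $\zeta(x)<\infty$, which forces $M_x=\infty$), and being careful with the measure-theoretic endpoints used above. The regularity step and the Cauchy--Schwarz/convexity step are routine.
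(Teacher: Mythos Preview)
Your proposal is correct. The regularity and Cauchy--Schwarz parts coincide with the paper's proof essentially verbatim. For the endpoint limits \eqref{m_x} and \eqref{M_x}, however, you take a genuinely different route. The paper argues \emph{directly} on $F_x$: it splits the integrals defining $\psi_x(\tau e^{\lambda\tau})$ and $\psi_x(e^{\lambda\tau})$ at a threshold $N$ (or at $M_x-\epsilon$, $M_x-2\epsilon$) and produces explicit lower bounds on $F_x(\lambda)$ (or upper bounds on $1/F_x(\lambda)$) that converge to the claimed limit. Your approach is \emph{indirect}: you exploit $F_x=G_x'$ and monotonicity to integrate a hypothetical wrong limit into a growth/decay bound on $\Theta_x=e^{G_x}$, then contradict the trivial support bound $\Theta_x(\lambda)\ge e^{\lambda a}\psi_x(A)$ for a suitable half-interval $A$. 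Both identify the same place where the hypothesis $G_x(\zeta(x))=\infty$ is essential (the case $\zeta(x)<\infty$, which forces $M_x=\infty$). Your argument is slightly more conceptual and reuses the same contradiction at both endpoints; the paper's argument is more computational but yields quantitative one-sided bounds on $F_x$ that may be useful if one ever needs rates.
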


\begin{proof}
Without loss of generality, we can drop the dependence on $x$ in the proof. We first prove that $G\in C^1((-\infty,\zeta))$ and $\mathrm{d}G/\mathrm{d}\lambda=F$. Note that $f_{\epsilon}(s):=(e^{(\lambda+\epsilon)s}-e^{\lambda s})/\epsilon$ converge pointwise to $s e^{\lambda s}$ as $\epsilon\to 0$, $|f_{\epsilon}(s)|\le h(s):=e^{\lambda s}(e^{\eta s}-1)/\eta$ for every $\epsilon\in(-\eta,\eta)$, and $\langle\psi,|h|\rangle<\infty$ for $\eta>0$ small enough. By dominated convergence theorem, we immediately obtain the results. Moreover, since $F>0$, it is easy to see that $G$ is strictly increasing.

We next prove that $F\in C^1((-\infty,\zeta)$ is increasing. The proof of differentiability is similar to the above. Direct computations show that
\begin{equation*}
\frac{\mathrm{d}F}{\mathrm{d}\lambda}(\lambda)=\frac{\psi(\tau^2e^{\lambda\tau})\psi(e^{\lambda\tau})-(\psi(\tau e^{\lambda\tau}))^2}{(\psi( e^{\lambda\tau}))^2}\ge 0,
\end{equation*}
where the last inequality follows from Cauchy-Schwarz inequality. Moreover, the above equality holds if and only if $\psi$ is a Dirac measure.

Finally, we prove \eqref{m_x} and \eqref{M_x}. We only prove \eqref{M_x} and the proof of \eqref{m_x} is similar. Note that $F$ is an increasing function. Let $L=\lim_{\lambda\to\zeta}F(\lambda)$.

Case 1: $\zeta <\infty$. It is easy to see that $M=\infty$. For any $0\le\lambda\le\zeta$ and $N>1$, we have
\begin{equation}\label{psi e lambda tau}
\psi\left(e^{\lambda\tau}\right)=\int_{(0,N]} e^{\lambda s}\psi(\mathrm{d}s)+\int_{(N,\infty)} e^{\lambda s}\psi(\mathrm{d}s)\le e^{\lambda N}+\int_{(N,\infty)} e^{\lambda s}\psi(\mathrm{d}s).
\end{equation}
Note that $\lim_{\lambda\to \zeta}\psi(e^{\lambda\tau})=e^{G(\zeta)}=\infty$. Taking $\lambda\to\zeta$ on both sides of \eqref{psi e lambda tau}, we obtain
\begin{equation*}
\lim_{\lambda\to \zeta}\int_{(N,\infty)} e^{\lambda s}\psi(\mathrm{d}s)=\infty.
\end{equation*}
On the other hand, we have
\begin{equation}\label{F(lambda)}
F(\lambda)=\frac{\psi(\tau e^{\lambda \tau})}{\psi(e^{\lambda \tau})}
\ge  \frac{\int_{(N,\infty)}s e^{\lambda s}\psi(\mathrm{d}s)}{e^{\lambda N}+\int_{(N,\infty)} e^{\lambda s}\psi(\mathrm{d}s)}\ge  \frac{N\int_{(N,\infty)} e^{\lambda s}\psi(\mathrm{d}s)}{e^{\lambda N}+\int_{(N,\infty)} e^{\lambda s}\psi(\mathrm{d}s)}.
\end{equation}
Taking $\lambda\to \zeta$ on both sides of \eqref{F(lambda)}, we obtain $L\ge N$. Since $N>1$ is arbitrary, it is clear that $L=\infty=M$.

Case 2: $\zeta=\infty$. Note that
\begin{equation}\label{F(lambda)2}
F(\lambda)=\frac{\int_{(0,M]}s e^{\lambda s}\psi(\mathrm{d}s)}{\int_{(0,M]} e^{\lambda s}\psi(\mathrm{d}s)}
\le \frac{\int_{(0,M]}M e^{\lambda s}\psi(\mathrm{d}s)}{\int_{(0,M]} e^{\lambda s}\psi(\mathrm{d}s)}=M.
\end{equation}
Taking $\lambda\to\infty$ on both sides of \eqref{F(lambda)2}, we obtain $L \le M$.

If $M<\infty$, for any $0<\epsilon<M/2$ and $\lambda>0$, we have
\begin{equation}\label{F(lambda)3}
\begin{split}
\frac{1}{F(\lambda)}=&\;\frac{\int_{(0,M-2\epsilon]} e^{\lambda s}\psi(\mathrm{d}s)+\int_{(M-2\epsilon,M]} e^{\lambda s}\psi(\mathrm{d}s)}{\int_{(0,M]} s e^{\lambda s}\psi(\mathrm{d}s)}\\
\le &\; \frac{\int_{(0,M-2\epsilon]} e^{\lambda s}\psi(\mathrm{d}s)}{\int_{(M-\epsilon,M]} s e^{\lambda s}\psi(\mathrm{d}s)}+\frac{\int_{(M-2\epsilon,M]} e^{\lambda s}\psi(\mathrm{d}s)}{\int_{(M-2\epsilon,M]} s e^{\lambda s}\psi(\mathrm{d}s)}\\
\le &\;\frac{ e^{\lambda(M-2\epsilon)}}{ (M-\epsilon) e^{\lambda(M-\epsilon)}\psi((M-\epsilon,M])}+\frac{\int_{(M-2\epsilon,M]} e^{\lambda s}\psi(\mathrm{d}s)}{\int_{(M-2\epsilon,M]} (M-2\epsilon) e^{\lambda s}\psi(\mathrm{d}s)}\\
=&\;\frac{e^{-\lambda\epsilon}}{(M-\epsilon)\psi((M-\epsilon,M])}+\frac{1}{M-2\epsilon}.
\end{split}
\end{equation}
Taking $\lambda\to \infty$ on both sides of \eqref{F(lambda)3}, we obtain $L\ge M-2\epsilon$. Since $0<\epsilon<M/2$ is arbitrary, it is clear that $L\ge M$.

If $M=\infty$, for any $\epsilon>0$, $N>1$, and $\lambda>0$, we have
\begin{equation}\label{F(lambda)4}
\begin{split}
\frac{1}{F(\lambda)}=&\;\frac{\int_{(0,N]} e^{\lambda s}\psi(\mathrm{d}s)+\int_{(N,\infty)} e^{\lambda s}\psi(\mathrm{d}s)}{\int_{(0,\infty)} s e^{\lambda s}\psi(\mathrm{d}s)}\\
\le &\; \frac{\int_{(0,N]} e^{\lambda s}\psi(\mathrm{d}s)}{\int_{(N+\epsilon,\infty)} s e^{\lambda s}\psi(\mathrm{d}s)}+\frac{\int_{(N,\infty)} e^{\lambda s}\psi(\mathrm{d}s)}{\int_{(N,\infty)} s e^{\lambda s}\psi(\mathrm{d}s)}\\
\le &\;\frac{ e^{\lambda N}}{ (N+\epsilon) e^{\lambda(N+\epsilon)}\psi((N+\epsilon,\infty))}+\frac{\int_{(N,\infty)} e^{\lambda s}\psi(\mathrm{d}s)}{\int_{(N,\infty)} N e^{\lambda s}\psi(\mathrm{d}s)}\\
=&\;\frac{e^{-\lambda\epsilon}}{(N+\epsilon)\psi((N+\epsilon,\infty))}+\frac{1}{N}.
\end{split}
\end{equation}
Taking $\lambda\to \infty$ on both sides of \eqref{F(lambda)4}, we obtain $L\ge N$. Since $N>1$ is arbitrary, it is clear that $L=\infty=M$.
\end{proof}

By Lemma \ref{lemma:function properties}, we immediately obtain the following corollary.

\begin{corollary}\label{G_Q}
For any $Q\in L^1_+(E)$ satisfying $Q^+=Q^-$ and $V_+:=\{x\in V:Q_x>0\}\neq \emptyset$, let $G_Q(\lambda)=\sum_{x\in V}Q_xG_x(\lambda)$ and $F_Q(\lambda)=\sum_{x\in V}Q_xF_x(\lambda)$. Suppose that $G_Q(\zeta_Q)=\infty$. Then $G_Q\in C^2(-\infty,\zeta_Q)$ is a strictly increasing function, $F_Q=\mathrm{d}G_Q/\mathrm{d}\lambda$ is an increasing function, and
\begin{equation}\label{limit for F_Q}
\lim_{\lambda\to-\infty}F_Q(\lambda)=m_Q:=\sum_{x\in V}Q_xm_x,\qquad\lim_{\lambda\to\zeta_Q}F_Q(\lambda)=M_Q:=\sum_{x\in V}Q_xM_x,
\end{equation}
where $\zeta_Q=\sup\{\lambda\ge 0:G_Q(\lambda)<\infty\}$. Moreover, if $m_Q<M_Q$, then $F_Q$ is strictly increasing.
\end{corollary}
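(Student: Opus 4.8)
The plan is to read off every assertion of the corollary from Lemma~\ref{lemma:function properties} by summing the corresponding statement for the individual $G_x,F_x$ against the weights $Q_x$, only the indices $x\in V_+$ contributing. First note that the hypothesis $G_Q(\zeta_Q)=\infty$ forces $\zeta_Q>0$, since $G_Q(0)=\sum_xQ_xG_x(0)=0$. Next, if $\lambda<\zeta_Q$ then by definition of the supremum $\zeta_Q$ there is $\lambda'\in(\lambda,\zeta_Q)$ with $G_Q(\lambda')<\infty$; since each $G_x$ is nondecreasing so is $G_Q$, hence $Q_xG_x(\lambda)\le G_Q(\lambda)\le G_Q(\lambda')<\infty$, and as $Q_x>0$ and $G_x(\zeta(x))=\infty$ this forces $\lambda<\zeta(x)$. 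Thus $(-\infty,\zeta_Q)\subseteq(-\infty,\zeta(x))$ for every $x\in V_+$, so on $(-\infty,\zeta_Q)$ Lemma~\ref{lemma:function properties} gives $G_x\in C^2$, $G_x'=F_x>0$ and $G_x''=F_x'\ge0$; moreover $G_Q$ is real-valued there (for $\lambda\ge0$ this is the definition of $\zeta_Q$, and for $\lambda<0$ convexity of the $G_x$ with $G_x(0)=0$ gives $G_x(\lambda)\ge(\lambda/\mu)G_x(\mu)$ for any $\mu\in(0,\zeta_Q)$, hence $G_Q(\lambda)\ge(\lambda/\mu)G_Q(\mu)>-\infty$).

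Then I would establish the regularity and strict monotonicity of $G_Q$. On a compact subinterval $[\lambda_-,\lambda_+]\subset(-\infty,\zeta_Q)$, choosing $\lambda_{++}\in(\lambda_+,\zeta_Q)$, convexity gives the pointwise bound $F_x(\lambda)\le(G_x(\lambda_{++})-G_x(\lambda_-))/(\lambda_{++}-\lambda_+)$ for $\lambda\in[\lambda_-,\lambda_+]$, so $\sum_xQ_xF_x\le(G_Q(\lambda_{++})-G_Q(\lambda_-))/(\lambda_{++}-\lambda_+)<\infty$ uniformly on the interval, and a similar bound controls $\sum_xQ_xF_x'$ there. Since $F_x\ge0$ and $F_x'\ge0$, Tonelli's theorem lets me integrate the identities $G_x(\lambda_1)-G_x(\lambda_0)=\int_{\lambda_0}^{\lambda_1}F_x$ and $F_x(\lambda_1)-F_x(\lambda_0)=\int_{\lambda_0}^{\lambda_1}F_x'$ term by term; this yields $G_Q'=F_Q$ and $G_Q''=\sum_xQ_xF_x'\ge0$, with $F_Q$ continuous because a convergent series of continuous nondecreasing functions has no jump (apply dominated convergence to the increments $F_x(\lambda+\varepsilon)-F_x(\lambda-\varepsilon)$, dominated by a fixed summable bound). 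Hence $G_Q\in C^2(-\infty,\zeta_Q)$; since $V_+\ne\emptyset$ and each $F_x>0$ we get $F_Q=G_Q'>0$, so $G_Q$ is strictly increasing, and $F_Q=\sum_xQ_xF_x$ is nondecreasing as a sum of nondecreasing functions.

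For the two limits I would exploit monotonicity in $\lambda$. As $\lambda\downarrow-\infty$ the numbers $F_Q(\lambda)=\sum_xQ_xF_x(\lambda)$ decrease and are dominated by $F_Q(\lambda_0)<\infty$ for a fixed $\lambda_0$, so dominated convergence and \eqref{m_x} give $\lim_{\lambda\to-\infty}F_Q(\lambda)=\sum_xQ_xm_x=m_Q$. As $\lambda\uparrow\zeta_Q$ the numbers $F_Q(\lambda)$ increase, so monotone convergence gives $\lim_{\lambda\uparrow\zeta_Q}F_Q(\lambda)=\sum_xQ_x\bigl(\lim_{\lambda\uparrow\zeta_Q}F_x(\lambda)\bigr)$, and for each $x\in V_+$ this termwise limit equals $M_x$ by \eqref{M_x} (here $\zeta_Q\le\zeta(x)$ together with $G_x(\zeta(x))=\infty$ is used), so the limit is $M_Q$. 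Finally, if $m_Q<M_Q$ then $\sum_xQ_xm_x<\sum_xQ_xM_x$, so some $x_0\in V_+$ has $m_{x_0}<M_{x_0}$, i.e.\ $\psi_{x_0}$ is not a Dirac measure; by the last assertion of Lemma~\ref{lemma:function properties} $F_{x_0}$ is strictly increasing, hence so is $F_Q=Q_{x_0}F_{x_0}+\sum_{x\ne x_0}Q_xF_x$.

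The one genuinely delicate point is the interchange of the (possibly infinite) sum $\sum_xQ_x(\cdot)$ with an analytic operation: differentiation, for the $C^2$ statement, and passage to the endpoint $\zeta_Q$, for the identification $\lim F_Q=M_Q$. The first rests on the convexity/Cauchy-type bounds that make $\sum_xQ_xF_x$ and $\sum_xQ_xF_x'$ locally bounded (hence the series of derivatives converges locally uniformly), together with Tonelli's theorem; the second rests on the monotonicity of each $F_x$ in $\lambda$ (so monotone convergence applies) and on the standing hypothesis $G_x(\zeta(x))=\infty$, which is exactly what guarantees $(-\infty,\zeta_Q)\subseteq(-\infty,\zeta(x))$ so that the boundary limit \eqref{M_x} of Lemma~\ref{lemma:function properties} is available for every term.
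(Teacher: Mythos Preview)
There is a genuine gap in your argument for the limit $\lim_{\lambda\to\zeta_Q}F_Q(\lambda)=M_Q$. You write that monotone convergence gives $\lim_{\lambda\uparrow\zeta_Q}F_Q(\lambda)=\sum_xQ_x\bigl(\lim_{\lambda\uparrow\zeta_Q}F_x(\lambda)\bigr)$ and then claim each termwise limit equals $M_x$ by \eqref{M_x}. But \eqref{M_x} concerns the limit as $\lambda\to\zeta(x)$, not as $\lambda\to\zeta_Q$. You have only established $\zeta_Q\le\zeta(x)$, and when this inequality is strict (which certainly occurs whenever $\zeta_Q<\infty$ and $V_+$ contains states with different values of $\zeta(\cdot)$), the termwise limit is $F_x(\zeta_Q)$, which can be strictly smaller than $M_x$. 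Neither the inequality $\zeta_Q\le\zeta(x)$ nor the hypothesis $G_x(\zeta(x))=\infty$ bridges this gap.

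The paper fixes this precisely by invoking the assumption $G_Q(\zeta_Q)=\infty$, which your argument for the upper endpoint never uses. When $\zeta_Q=\infty$ your reasoning is fine (then necessarily $\zeta(x)=\infty$ for every $x\in V_+$, so \eqref{M_x} applies termwise). When $\zeta_Q<\infty$, monotone convergence gives only the one-sided bound $\lim_{\lambda\uparrow\zeta_Q}F_Q(\lambda)=\sum_xQ_xF_x(\zeta_Q)\le M_Q$. The missing ingredient is the convexity estimate $G_Q(\lambda)=\int_0^\lambda F_Q\le\lambda\,F_Q(\lambda)$ for $\lambda\in(0,\zeta_Q)$; letting $\lambda\uparrow\zeta_Q$ and using $G_Q(\zeta_Q)=\infty$ with $\zeta_Q<\infty$ forces $\lim_{\lambda\uparrow\zeta_Q}F_Q(\lambda)=\infty$, hence $M_Q=\infty$ as well. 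Your treatment of the regularity of $G_Q$, the limit at $-\infty$, and the strict monotonicity of $F_Q$ is otherwise correct and more detailed than the paper's.
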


\begin{proof}
We only prove the second equality of \eqref{limit for F_Q}. The proof of the first equality is similar.

Case 1: $\zeta_Q=\infty$. By the monotone convergence theorem, we immediately obtain the desired result.

Case 2: $\zeta_Q<\infty$. Note that $\zeta_Q\le \zeta(x)$ for any $x\in V_+$. By Lemma \ref{lemma:function properties}, we have $F_x(\zeta_Q)\le F_x(\zeta(x))=M_x$. It then follows from the monotone convergence theorem that
\begin{equation*}
\lim_{\lambda\to\zeta_Q}F_Q(\lambda)=\lim_{\lambda\to\zeta_Q}\sum_{x\in V_+}Q_xF_x(\lambda)\le \sum_{x\in V_+}Q_xM_x.
\end{equation*}
On the other hand, it is easy to see that
\begin{equation*}
G_Q(\zeta_Q)\le F_Q(\zeta_Q)\zeta_Q+G_Q(0).
\end{equation*}
Since $G_Q(\zeta_Q)=\infty$, we have $F_Q(\zeta_Q)=\infty$.
This completes the proof of this corollary.
\end{proof}

For any $Q\in L^1_+(E)$ satisfying $Q^+=Q^-$ and $V_+\neq \emptyset$, let $\mathcal{X}=V_+\times (0,\infty)$ be endowed with the product topology and let $\mathcal{F}$ be the associated Borel $\sigma$-field. Let $\nu(x,\mathrm{d}s)=s Q_x \psi_x(\mathrm{d}s)$ be a $\sigma$-finite measure on $(\mathcal{X},\mathcal{F})$. Here we take the sequence of functions $(f_x)_{x\in V_+}$ and the sequence of constants $(b_0,(b_x)_{x\in V_+})$ as
\begin{equation*}
f_x(y,s)=\delta_x(y)\frac{1}{s},\quad (y,s)\in \mathcal{X},\qquad b_0=0,\quad b_x=1.
\end{equation*}
Then the associated function $g$ is given by $g(y,s)=b_0+\sum_{x\in V_+}b_xf_x(y,s)=1/s$ and the associated generalized relative entropy is given by
\begin{equation*}
H_g(\mu|\nu)=\sum_{x\in V_+}\int_{(0,\infty)}\frac{1}{s}\left(\log\frac{\mu(x,\mathrm{d}s)}{s Q_x\psi_x(\mathrm{d}s)}\right)\mu(x,\mathrm{d}s).
\end{equation*}

\begin{lemma}\label{corollary:G*}
Let $a>0$ be a constant and let $Q\in L^1_+(E)$ be a flow satisfying $Q^+=Q^-$ and $V_+\neq \emptyset$. If $G_Q(\zeta_Q)=\infty$, then we have
\begin{equation*}
\inf_{\mu}H_{g}(\mu|\nu)=G_Q^*(a)=\left\{\begin{aligned}
&a\lambda^*-G_Q(\lambda^*),
&&\text {if  }m_Q<a<M_Q, \\
&-\sum_{x\in V}Q_x\log\psi_x(\{m_x\}),
&&\text {if  }a=m_Q, \\		
&-\sum_{x\in V}Q_x\log\psi_x(\{M_x\}),
&&\text {if  }a=M_Q, \\	
&\infty,  &&\text {otherwise},
\end{aligned}\right.
\end{equation*}
where $\mu$ in the infimum ranges over all positive $\sigma$-finite measures on $(\mathcal{X},\mathcal{F})$ satisfying
\begin{equation}\label{constraints2}
\mu(\mathcal{X})=\sum_{x\in V_+}\mu(x,(0,\infty))=a,\qquad \int_{\mathcal{X}} f_x \mathrm{d}\mu=\int_{(0,\infty)}\frac{1}{s}\mu(x,\mathrm{d}s)=Q_x,\quad x\in V_+,
\end{equation}
$\lambda^*$ is any solution of the equation $F_Q(\lambda)=a$, and $G_Q^*(a)=\sup_{\lambda\in \mathbb{R}}\{a\lambda-G_Q(\lambda)\}$, $a\in\mathbb{R}$ is the Fenchel-Legendre transform of $G_Q$.
\end{lemma}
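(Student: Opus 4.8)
The plan rests on three ingredients: the generalized maximum entropy principle (Lemma \ref{lemma:generalized maximum entropy principle}) for the interior regime $m_Q < a < M_Q$; a rigidity argument for the two extreme values $a=m_Q$ and $a=M_Q$; and the Fenchel--Legendre duality for $G_Q$, using the regularity and monotonicity of $G_Q$ and $F_Q$ from Lemma \ref{lemma:function properties} and Corollary \ref{G_Q}, to identify each answer with $G_Q^*(a)$. First I would pin down the range of $a$: if $\mu$ is admissible (satisfies \eqref{constraints2}) and $\mu\ll\nu$, then on each fibre $\{x\}\times(0,\infty)$ the measure $\mu(x,\cdot)\ll\psi_x$ is carried by $[m_x,M_x]$, so $m_xQ_x\le\mu(x,(0,\infty))\le M_xQ_x$, and summing over $x\in V_+$ gives $m_Q\le a\le M_Q$. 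Hence for $a\notin[m_Q,M_Q]$ no admissible $\mu$ is absolutely continuous w.r.t. $\nu$, so $\inf_\mu H_g(\mu|\nu)=\infty$; a parallel check on $\phi(\lambda):=a\lambda-G_Q(\lambda)$ (which is strictly monotone since $\phi'=a-F_Q$ has constant sign, $F_Q$ mapping onto $(m_Q,M_Q)$, and which runs to $+\infty$ at the relevant endpoint using $G_Q(\lambda)\le\lambda m_Q$ for $\lambda<0$, resp. $M_Q<\infty\Rightarrow\zeta_Q=\infty$ and $G_Q(\lambda)\le\lambda M_Q$ for $\lambda>0$) shows $G_Q^*(a)=\infty$ there as well.

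For $m_Q<a<M_Q$, Corollary \ref{G_Q} and the intermediate value theorem produce $\lambda^*\in(-\infty,\zeta_Q)$ with $F_Q(\lambda^*)=a$, and I would feed the tilted measure
\begin{equation*}
\mu^*(x,\mathrm{d}s) = e^{\lambda^* s - G_x(\lambda^*)}\, s\, Q_x\, \psi_x(\mathrm{d}s),\qquad x\in V_+,
\end{equation*}
into Lemma \ref{lemma:generalized maximum entropy principle}. The hypotheses are checked directly: $\mu^*$ and $\nu$ are mutually absolutely continuous (strictly positive density); \eqref{equation for mu*} holds with $\lambda_0=\lambda^*$ and $\lambda_x=-G_x(\lambda^*)$, since $g\log(\mathrm{d}\mu^*/\mathrm{d}\nu)(x,s)=\lambda^*-G_x(\lambda^*)/s$; the constraints \eqref{constraints2} hold with $a_0=a$ and $a_x=Q_x$, the first from $\psi_x(e^{\lambda^*\tau})=e^{G_x(\lambda^*)}$ and the second from $\sum_{x\in V_+}Q_xe^{-G_x(\lambda^*)}\psi_x(\tau e^{\lambda^*\tau})=\sum_xQ_xF_x(\lambda^*)=F_Q(\lambda^*)=a$; and $\sum_i a_ib_i=\sum_{x\in V_+}Q_x=\|Q\|\in(0,\infty)$. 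Lemma \ref{lemma:generalized maximum entropy principle} then gives $\inf_\mu H_g(\mu|\nu)=H_g(\mu^*|\nu)=\sum_i a_i\lambda_i=a\lambda^*-\sum_xQ_xG_x(\lambda^*)=a\lambda^*-G_Q(\lambda^*)$. Since $G_Q\in C^2(-\infty,\zeta_Q)$ with $G_Q'=F_Q$ continuous and strictly increasing and $G_Q\equiv\infty$ on $[\zeta_Q,\infty)$, the concave function $\phi(\lambda)=a\lambda-G_Q(\lambda)$ attains its maximum over $\mathbb{R}$ exactly at $\lambda^*$, so $G_Q^*(a)=a\lambda^*-G_Q(\lambda^*)$ and the two expressions agree.

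For $a=m_Q$ the inequality chain of the first step is forced to be an equality for every $x\in V_+$, hence $\int(s-m_x)s^{-1}\mu(x,\mathrm{d}s)=0$; the integrand being nonnegative on the support $[m_x,M_x]$, $\mu(x,\cdot)$ must be concentrated at $m_x$, and then $\int s^{-1}\mu(x,\mathrm{d}s)=Q_x$ forces $\mu(x,\cdot)=Q_xm_x\,\delta_{m_x}$ (if $m_x=0$ for some $x\in V_+$ no admissible $\mu\ll\nu$ exists, consistent with the formula being $\infty$). Thus $\mu^*:=\sum_{x\in V_+}Q_xm_x\,\delta_{(x,m_x)}$ is the only admissible candidate; when $\psi_x(\{m_x\})>0$ for all $x\in V_+$ it is $\ll\nu$ with $\mathrm{d}\mu^*/\mathrm{d}\nu(x,m_x)=1/\psi_x(\{m_x\})$, giving $H_g(\mu^*|\nu)=-\sum_xQ_x\log\psi_x(\{m_x\})$, while otherwise no admissible $\mu$ is $\ll\nu$; in all cases $\inf_\mu H_g(\mu|\nu)=-\sum_xQ_x\log\psi_x(\{m_x\})$. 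To match $G_Q^*(m_Q)$, note $\phi(\lambda)=m_Q\lambda-G_Q(\lambda)$ is strictly decreasing (as $F_Q>m_Q$), so $G_Q^*(m_Q)=\lim_{\lambda\to-\infty}\phi(\lambda)$, and by monotone convergence $\lim_{\lambda\to-\infty}(m_x\lambda-G_x(\lambda))=-\log\psi_x(\{m_x\})$ (from $e^{-\lambda m_x}\psi_x(e^{\lambda\tau})\to\psi_x(\{m_x\})$), whence $G_Q^*(m_Q)=-\sum_xQ_x\log\psi_x(\{m_x\})$. The case $a=M_Q$, which forces $\zeta_Q=\infty$, is entirely symmetric with $\lambda\to+\infty$.

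I expect the boundary cases to be the main obstacle. The interior regime is a clean application of Lemma \ref{lemma:generalized maximum entropy principle} and Corollary \ref{G_Q}, but at $a=m_Q$ and $a=M_Q$ one must run the rigidity argument carefully, isolate the degenerate subcases $m_x=0$ or $\psi_x(\{m_x\})=0$ (where both sides are $+\infty$), and justify the interchanges of limits and sums (dominated and monotone convergence in $s$ and in $\lambda$, and pushing $\sum_{x\in V_+}$ through $\lim_\lambda$) needed to evaluate $G_Q^*$ at the endpoints; tracking the finiteness of $\zeta_Q$ and $M_Q$ is a minor but unavoidable piece of bookkeeping.
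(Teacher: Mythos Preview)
Your proposal is correct and follows essentially the same route as the paper: both split into the three regimes $a\in(m_Q,M_Q)$, $a\in\{m_Q,M_Q\}$, $a\notin[m_Q,M_Q]$, apply Lemma~\ref{lemma:generalized maximum entropy principle} with the exponentially tilted $\mu^*$ in the interior, use the rigidity argument $\mu(x,\cdot)=Q_xm_x\delta_{m_x}$ at the boundary, and evaluate $G_Q^*$ at the endpoints by sending $\lambda\to\pm\infty$ with monotone/dominated convergence. Your treatment of the degenerate boundary subcases ($m_x=0$ or $\psi_x(\{m_x\})=0$) is slightly more explicit than the paper's, but the argument is the same.
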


\begin{proof}
We will prove this lemma in three different cases.

Case 1: $m_Q<a<M_Q$. Let $\lambda_0=\lambda^*$ and $\lambda_x=-G_x(\lambda^*)$. Let $\mu^*(x,\mathrm{d}s)=s Q_x e^{\lambda^*s+\lambda_x}\psi_x(\mathrm{d}s)$ be a $\sigma$-finite measure $\mu^*$ on $(X,\mathcal{F})$. Then \eqref{equation for mu*} holds and
\begin{equation*}
\frac{1}{s}\log\frac{\mu^*(x,\mathrm{d}s)}{s Q_x \psi_x(\mathrm{d}s)}=\lambda^*+\lambda_x\frac{1}{s},\quad  (x,s)\in \mathcal{X}.
\end{equation*}
Moreover, it is easy to check that $\mu^*$ satisfies \eqref{constraints2}. By Lemma \ref{lemma:generalized maximum entropy principle}, we have \begin{equation*}
\inf_{\mu}H_g(\mu|\nu)=a\lambda^*+\sum_{x\in V_+}\lambda_x Q_x=a\lambda^*-G_Q(\lambda^*).
\end{equation*}
On the other hand, it follows from Corollary \ref{G_Q} and the intermediate value theorem that there exists a solution $\lambda^*\in (-\infty,\zeta_Q)$ of the equation $F_Q(\lambda)=a$. Straightforward computations show that
\begin{equation*}
\sup_{\lambda\in \mathbb{R}}\left\{a\lambda-G_Q(\lambda)\right\}=a\lambda^*-G_Q(\lambda^*).
\end{equation*}

Case 2: $a=m_Q$ or $a=M_Q$. Here we only consider the case of $a=m_Q$. The proof in the case of $a=M_Q$ is similar. Let $\mu$ be a $\sigma$-finite measure satisfying \eqref{constraints2}. Then we have
\begin{equation*}
a=m_Q=\sum_{x\in V_+}m_x\int_{[m_x,\infty)}\frac{1}{s}\mu(x,\mathrm{d}s)\le \mu(X)=a.
\end{equation*}
The above equality hold if and only if $\mu(x,\mathrm{d}s)=Q_xm_x\delta_{m_x}(\mathrm{d}s)$ for any $x\in V_+$. Thus we have
\begin{equation*}
\inf_{\mu}H_g(\mu|\nu)=-\sum_{x\in V_+}Q_x\log\psi_x(\{m_x\}).
\end{equation*}
On the other hand, direct computations show that
\begin{align*}
\sup_{\lambda\in \mathbb{R}}\left\{a\lambda-G_Q(\lambda)\right\} &= \lim_{\lambda\to-\infty}\left\{a\lambda-G_Q(\lambda)\right\}\\
&= \lim_{\lambda\to-\infty}\sum_{x\in V_+}Q_x\log\frac{e^{\lambda m_x}}{e^{\lambda m_x}\psi_x(\{m_x\})+\int_{(m_x,\infty)}e^{\lambda s}\psi(\mathrm{d}s)}.
\end{align*}
By the dominated convergence theorem, it is easy to see that
\begin{equation*}
\lim_{\lambda\to-\infty}\int_{(m_x,\infty)}e^{\lambda(s-m_x)}\psi_x(\mathrm{d}s)=0
\end{equation*}
for any $x\in V_+$. This implies that
\begin{equation*}
\sup_{\lambda\in \mathbb{R}}\left\{a\lambda-G_Q(\lambda)\right\} = -\sum_{x\in V_+}Q_x\log\psi_x(\{m_x\}).
\end{equation*}

Case 3: $a<m_Q$ or $a>M_Q$. Here we only consider the case of $a<m_Q$. The proof in the case of $a>M_Q$ is similar. Note that there is no $\sigma$-finite measure $\mu$ satisfying \eqref{constraints2}. Then we have $\inf_{\mu}H_g(\mu|\nu)=\infty$. On the other hand, take a sequence of constants $a_x$ such that $0<a_x<m_x$ for any $ x\in V_+$ and $a=\sum_{x\in V_+}Q_xa_x$. Straightforward computations show that
\begin{align*}
\sup_{\lambda\in \mathbb{R}}\left\{a\lambda-G_Q(\lambda)\right\} &= \lim_{\lambda\to-\infty}\left\{a\lambda-G_Q(\lambda)\right\}\\
&= \lim_{\lambda\to-\infty}\sum_{x\in V_+}Q_x\log\frac{1}{\int_{[m_x,\infty)}e^{\lambda(s-a_x)}\psi(\mathrm{d}s)}=\infty,
\end{align*}
where the last equality follows from the dominated convergence theorem.
\end{proof}

The following lemma gives the properties of $G_Q^*$.

\begin{lemma}\label{lemma:function properties for G*}
Let $Q\in L^1_+(E)$ be a flow satisfying $Q^+=Q^-$ and $V_+\neq\emptyset$. If $m_Q<M_Q$ and $G_Q(\zeta_Q)=\infty$, then $G_Q^*\in C^2(m_Q,M_Q)$, $\lambda^* = \mathrm{d}G_Q^*/\mathrm{d}a$ is a strictly increasing function, and
\begin{equation}\label{limit for lambda*}
\lim_{a\to m_Q}\lambda^*(a)=-\infty,\qquad \lim_{a\to M_Q}\lambda^*(a)=\zeta_Q.
\end{equation}
where $\lambda^*(a)$ is any solution of the equation $F_Q(\lambda)=a$.
\end{lemma}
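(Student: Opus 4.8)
The plan is to recognize $\lambda^*$ as the inverse function of $F_Q$ on the interval $(m_Q,M_Q)$ and then differentiate the closed-form expression for $G_Q^*$ supplied by Lemma~\ref{corollary:G*}; everything else is a routine application of the inverse function theorem.

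First I would record the structure of $F_Q$ on $(-\infty,\zeta_Q)$. By Corollary~\ref{G_Q}, $G_Q\in C^2(-\infty,\zeta_Q)$ and $F_Q=\mathrm{d}G_Q/\mathrm{d}\lambda$ is increasing with $\lim_{\lambda\to-\infty}F_Q(\lambda)=m_Q$ and $\lim_{\lambda\to\zeta_Q}F_Q(\lambda)=M_Q$. Since $m_Q<M_Q$, there is some $x_0\in V_+$ with $m_{x_0}<M_{x_0}$, so $\psi_{x_0}$ is not a Dirac measure; by the Cauchy--Schwarz computation of $\mathrm{d}F_{x_0}/\mathrm{d}\lambda$ in the proof of Lemma~\ref{lemma:function properties}, the inequality there is strict, so $F_{x_0}'>0$ on $(-\infty,\zeta(x_0))\supseteq(-\infty,\zeta_Q)$, whence $F_Q'=\sum_{x\in V_+}Q_xF_x'\ge Q_{x_0}F_{x_0}'>0$ everywhere on $(-\infty,\zeta_Q)$. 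Thus $F_Q$ is a $C^1$ strictly increasing bijection from $(-\infty,\zeta_Q)$ onto $(m_Q,M_Q)$, and by the inverse function theorem its inverse $\lambda^*\colon(m_Q,M_Q)\to(-\infty,\zeta_Q)$ is $C^1$, strictly increasing, with $(\lambda^*)'(a)=1/F_Q'(\lambda^*(a))>0$; this $\lambda^*(a)$ is exactly the unique solution of $F_Q(\lambda)=a$.

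Next I would invoke Lemma~\ref{corollary:G*}, which (under the present hypotheses, in particular $G_Q(\zeta_Q)=\infty$) gives $G_Q^*(a)=a\lambda^*(a)-G_Q(\lambda^*(a))$ for $a\in(m_Q,M_Q)$. Since $\lambda^*\in C^1$ and $G_Q\in C^2$, the function $G_Q^*$ is $C^1$ on $(m_Q,M_Q)$, and differentiating (using $G_Q'=F_Q$),
\[
\frac{\mathrm{d}G_Q^*}{\mathrm{d}a}(a)=\lambda^*(a)+\big(a-F_Q(\lambda^*(a))\big)(\lambda^*)'(a)=\lambda^*(a),
\]
because $F_Q(\lambda^*(a))=a$. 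Hence $\mathrm{d}G_Q^*/\mathrm{d}a=\lambda^*$ is itself $C^1$, so $G_Q^*\in C^2(m_Q,M_Q)$, and $\lambda^*=\mathrm{d}G_Q^*/\mathrm{d}a$ is strictly increasing by the previous step. For the limits \eqref{limit for lambda*}: $\lambda^*$ is the inverse of the strictly increasing bijection $F_Q$ of $(-\infty,\zeta_Q)$ onto $(m_Q,M_Q)$, so as $a\downarrow m_Q$ the value $\lambda^*(a)$ decreases through the whole interval $(-\infty,\zeta_Q)$ and must tend to $-\infty$, and as $a\uparrow M_Q$ it increases to $\zeta_Q$.

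Nothing here is genuinely hard; the only point requiring care is the strict positivity $F_Q'>0$, which is what licenses the inverse function theorem and the strict monotonicity of $\lambda^*$. It is precisely here that the hypothesis $m_Q<M_Q$ (equivalently, that some $\psi_x$ with $x\in V_+$ is non-degenerate) is used, so one must appeal to the strictness case of Cauchy--Schwarz from the proof of Lemma~\ref{lemma:function properties} rather than to the mere weak monotonicity of $F_Q$ recorded in Corollary~\ref{G_Q}.
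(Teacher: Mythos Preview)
Your proof is correct and follows essentially the same route as the paper's: identify $\lambda^*$ as the inverse of the strictly increasing $C^1$ map $F_Q$, then differentiate the formula $G_Q^*(a)=a\lambda^*(a)-G_Q(\lambda^*(a))$ from Lemma~\ref{corollary:G*}. Your argument is in fact slightly more careful than the paper's, since you explicitly establish $F_Q'>0$ (via the strict Cauchy--Schwarz for some non-degenerate $\psi_{x_0}$) rather than merely citing strict monotonicity of $F_Q$ from Corollary~\ref{G_Q}; this is exactly what is needed to apply the inverse function theorem and conclude $\lambda^*\in C^1$, hence $G_Q^*\in C^2$.
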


\begin{proof}
It follows from Lemma \ref{G_Q} that $F_Q\in C^1(-\infty,\zeta_Q)$ is strictly increasing. Hence $\lambda^* = \mathrm{d}G_Q^*/\mathrm{d}a$ is the inverse function of $F_Q$. Moreover, $\lambda^*\in C^1(m_Q,M_Q)$ is strictly increasing and \eqref{limit for lambda*} follows from \eqref{limit for F_Q}. By Lemma \ref{corollary:G*}, it is easy to see that $G_Q^*\in C^2(m_Q,M_Q)$ and
\begin{equation*}
\frac{dG_Q^*}{da}(a)=\lambda^*(a)+a\frac{d\lambda^*}{da}(a)-\frac{d\lambda^*}{da}(a)F_Q(\lambda^*(a))=\lambda^*(a).
\end{equation*}
This completes the proof of this lemma.
\end{proof}

Finally, we also need the following lemma to ensure $G_Q(\zeta_Q)=\infty$.

\begin{lemma}\label{G_Q hold}
Let $Q\in L^1_+(E)$ be a flow satisfying $Q^+=Q^-$ and $V_+\neq\emptyset$. If Condition \ref{condition:ccomp4} is satisfied, then we have $\zeta_Q=\inf_{x\in V_+}\zeta(x)$ and $G_Q(\zeta_Q)=\infty$.
\end{lemma}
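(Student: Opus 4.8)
The plan is to exploit the scaling structure imposed by Condition \ref{condition:ccomp4}(a). First I would record that, writing $G(\lambda)=\log\psi(e^{\lambda\tau})$, the relation $\psi_x(A/q_x)=\psi(A)$ is equivalent to $\psi_x(B)=\psi(q_xB)$ for every Borel $B\subset(0,\infty)$ (set $B=A/q_x$), so that if $Z\sim\psi_x$ then $q_xZ\sim\psi$; consequently
\[
\psi_x(e^{\lambda\tau})=\int_{(0,\infty)}e^{\lambda s}\,\psi_x(\mathrm{d}s)=\int_{(0,\infty)}e^{(\lambda/q_x)u}\,\psi(\mathrm{d}u)=\psi\!\left(e^{(\lambda/q_x)\tau}\right),
\]
i.e. $G_x(\lambda)=G(\lambda/q_x)$ and $\zeta(x)=q_x\zeta$. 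I would also note that, since $\psi\in\mathcal P(0,\infty)$, the map $\lambda\mapsto\psi(e^{\lambda\tau})$ is nondecreasing on $[0,\infty)$ with value $1$ at $\lambda=0$, hence $G$ is nondecreasing and $G\ge 0$ on $[0,\infty)$; moreover $\psi(e^{\mu\tau})=\infty$ for every $\mu>\zeta$ directly from the definition of the supremum $\zeta$.

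Next, Condition \ref{condition:ccomp4}(c) shows that $\{x\in V_+:q_x\le \inf_{y\in V_+}q_y+1\}$ is finite, so $q_*:=\inf_{x\in V_+}q_x>0$ is attained at some $x_0\in V_+$, and therefore $\inf_{x\in V_+}\zeta(x)=q_*\zeta$. It then remains to identify $\zeta_Q$ with $q_*\zeta$. For $\zeta_Q\le q_*\zeta$: if $\lambda>q_*\zeta$ then $\lambda/q_{x_0}>\zeta$, so $G_{x_0}(\lambda)=G(\lambda/q_{x_0})=+\infty$, whence $G_Q(\lambda)\ge Q_{x_0}G_{x_0}(\lambda)=+\infty$ since $Q_{x_0}>0$. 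For the reverse inequality: if $0\le\lambda<q_*\zeta$ then $0\le\lambda/q_x\le\lambda/q_*<\zeta$ for every $x\in V_+$, so by monotonicity of $G$,
\[
G_Q(\lambda)=\sum_{x\in V_+}Q_x\,G(\lambda/q_x)\le G(\lambda/q_*)\sum_{x\in V_+}Q_x=G(\lambda/q_*)\,\|Q\|<\infty,
\]
using that $\sum_{x\in V_+}Q_x=\sum_x Q^+(x)=\|Q\|<\infty$ because $Q\in L^1_+(E)$, and $G(\lambda/q_*)<\infty$ because $\lambda/q_*<\zeta$. Hence $\zeta_Q\ge q_*\zeta$, and altogether $\zeta_Q=q_*\zeta=\inf_{x\in V_+}\zeta(x)$.

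Finally, to obtain $G_Q(\zeta_Q)=\infty$ I would evaluate at $x_0$: $G_Q(\zeta_Q)\ge Q_{x_0}G_{x_0}(\zeta_Q)=Q_{x_0}\,G(\zeta_Q/q_*)=Q_{x_0}\,G(\zeta)$, and Condition \ref{condition:ccomp4}(b) gives $G(\zeta)=\log\psi(e^{\zeta\tau})=\infty$, so $G_Q(\zeta_Q)=\infty$; in the degenerate case $\zeta=\infty$ (so $\zeta_Q=\infty$) the same estimate gives $G_Q(\lambda)\ge Q_{x_0}G(\lambda/q_*)\to\infty$ as $\lambda\to\infty$, since $\psi(e^{\mu\tau})\to\infty$ as $\mu\to\infty$. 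The only points needing care are getting the direction of the scaling relation right in the first step and checking that the minimiser $x_0$ of $q_x$ exists on $V_+$; everything else is a routine monotonicity estimate, so I do not anticipate a genuine obstacle.
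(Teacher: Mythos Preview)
Your proof is correct and follows essentially the same route as the paper's: both establish the scaling identity $G_x(\lambda)=G(\lambda/q_x)$ (hence $\zeta(x)=q_x\zeta$), use Condition~\ref{condition:ccomp4}(c) to pick a minimiser $x_0\in V_+$ of $q_x$, bound $G_Q(\lambda)\le \|Q\|\,G(\lambda/q_*)<\infty$ for $0\le\lambda<q_*\zeta$ by monotonicity of $G$, and then use $G_Q\ge Q_{x_0}G_{x_0}$ together with Condition~\ref{condition:ccomp4}(b) to force divergence at $\zeta_Q$. Your write-up is in fact slightly more careful than the paper's in that you explicitly note $G\ge 0$ on $[0,\infty)$ (needed to drop terms in the sum) and separately treat the case $\zeta=\infty$.
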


\begin{proof}
Let $\psi$, $\zeta$, and $q_x$ be as in Condition \ref{condition:ccomp4}. It is easy to see that
\begin{equation*}
\zeta(x)=q_x\zeta,\qquad G_x(\lambda)=G\left(\frac{\lambda}{q_x}\right),
\end{equation*}
where $G(\lambda)=\log\psi(e^{\lambda\tau})$. By item (c) in Condition \ref{condition:ccomp4}, it is easy to see that there exists $y\in V_+$ such that $q_y = \inf_{x\in V_+}q_x>0$. By Lemma \ref{lemma:function properties}, it is clear that $G$ is increasing. Then for any $\lambda_0<\inf_{x\in V_+}\zeta(x)=q_{y}\zeta$, we have
\begin{equation*}
G_Q(\lambda_0)=\sum_{x\in V_+}Q_xG\left(\frac{\lambda_0}{q_x}\right)\le \sum_{x\in V_+}Q_xG\left(\frac{\lambda_0}{q_y}\right)<\infty.
\end{equation*}
By the definition of $\zeta_Q$, it is easy to see that $\zeta_Q\ge \lambda_0$. Since $\lambda_0<\inf_{x\in V_+}\zeta(x)$ is arbitrary, we have $\zeta_Q\ge \inf_{x\in V_+}\zeta(x)$. On the other hand, we have
\begin{equation*}
\varliminf_{\lambda\to\inf_{x\in V_+}\zeta(x)}G_Q(\lambda)\ge Q_y\varliminf_{\lambda\to q_y\zeta}G_y(\lambda)=Q_y\varliminf_{\lambda\to q_y\zeta}G\left(\frac{\lambda}{q_y}\right)=\infty.
\end{equation*}
This implies the proof of this lemma.
\end{proof}

We are now in a position to prove Proposition \ref{proposition:rate function for empirical flow}.

\begin{proof}[Proof of Proposition \ref{proposition:rate function for empirical flow}]
The proof of the marginal LDP for the empirical measure $\pi_t$ is similar to that given in \cite[Proposition 1.2]{mariani2016large}. Here we only focus on the marginal LDP for the empirical flow $Q_t$. Note that $(\mu,Q)\mapsto Q$ is a continuous map from $\Lambda$ to $L^1_+(E)$. By the contraction principle \cite[Theorem 4.2.1]{dembo1998large}, the law of $Q_t\in L^1_+(E)$ satisfies an LDP with good rate function
\begin{equation*}
\hat{I}_2(Q)=\inf\{I(\mu,Q):(\mu,Q)\in \mathcal{D}\}.
\end{equation*}

We next prove $\hat{I}_2=I_2$ under Condition \ref{condition:ccomp4}. By Remark \ref{rmk:omega dense}, we obtain
\begin{equation*}
\hat{I}_2(Q)=\inf\{I(\mu,Q):(\mu,Q)\in \mathcal{E}(Q)\},
\end{equation*}
where $\mathcal{E}(Q)=\{(\mu,Q)\in \mathcal{D}:\mu(x,\{\infty\})=0 \text{ for any }x\in V_+\}$. Recall the definition of $I$ in \eqref{def:I}. For any $Q\in L^1_+(E)$ satisfying $Q^+=Q^-$, we have
\begin{align*}
\hat{I}_2(Q)=&\;\sum_{x\in V}Q_x H\left(\tilde{Q}_{x,\cdot}\big|p_{x,\cdot}\right)+\inf_{(\mu,Q)\in \mathcal{E}(Q)}\sum_{x\in V}\left[Q_x H\left(\tilde{\mu}_x|\psi_{x}\right)+\zeta(x)\mu(x,\{\infty\})\right]\\
=&\;\sum_{x\in V}Q_x H\left(\tilde{Q}_{x,\cdot}\big|p_{x,\cdot}\right)+\inf_{(\mu,Q)\in \mathcal{E}(Q)}\left(\sum_{x\in V_+}Q_x H\left(\tilde{\mu}_x|\psi_{x}\right)+\sum_{x\notin V_+}\zeta(x)\mu(x,\{\infty\})\right).
\end{align*}
If $V_+=\emptyset$, then we have
\begin{equation*}
\begin{split}
\inf_{(\mu,Q)\in \mathcal{E}(Q)}\left(\sum_{x\in V_+}Q_x H\left(\tilde{\mu}_x|\psi_{x}\right)+\sum_{x\notin V_+}\zeta(x)\mu(x,\{\infty\})\right)&=\inf_{\sum_{x\in V}\mu(x,\{\infty\})=1}\zeta(x)\mu(x,\{\infty\})\\
&=\inf_{x\in V}\zeta(x),
\end{split}
\end{equation*}
which implies \eqref{I_2}. If $V_+\neq \emptyset$, then we have
\begin{equation*}
\mathcal{E}(Q)=\bigcup_{0< a\le 1}\mathcal{E}(Q,a),
\end{equation*}
where $\mathcal{E}(Q, a) = \{(\mu,Q)\in \mathcal{E}(Q) : \sum_{x\in V_+}\mu(x, (0,\infty)) = a\text{  and  } \sum_{x\notin V_+}\mu(x,\{\infty\})=1-a\}$. It then follows from Lemmas \ref{corollary:G*} and \ref{G_Q hold} that
\begin{align*}
&\;\inf_{(\mu,Q)\in \mathcal{E}(Q)}\left(\sum_{x\in V_+}Q_x H\left(\tilde{\mu}_x|\psi_{x}\right)+\sum_{x\notin V_+}\zeta(x)\mu(x,\{\infty\})\right)\\
=&\;\inf_{0< a\le 1}\inf_{(\mu,Q)\in \mathcal{E}(Q,a)}\left(\sum_{x\in V_+}Q_x H\left(\tilde{\mu}_x|\psi_{x}\right)+\sum_{x\notin V_+}\zeta(x)\mu(x,\{\infty\})\right)\\
=&\;\inf_{0< a\le 1}\left[\inf_{(\mu,Q)\in \mathcal{E}(Q,a)}\left(\sum_{x\in V_+}\int_{(0,\infty)}\frac{1}{s}\left(\log\frac{\mu(x,\mathrm{d}s)}{s Q_x\psi_x(\mathrm{d}s)}\right)\mu(x,\mathrm{d}s)\right)+(1-a)\inf_{x\notin V_+}\zeta(x)\right]\\
=&\;\inf_{0<a\le 1}\left[G_Q^*(a)+(1-a)\inf_{x\notin V_+}\zeta(x)\right].
\end{align*}
We next calculate the last term of the above equation in five different cases.

Case 1: $m_Q>1$. Since $G_Q^*(a)=\infty$ for any $0<a\le 1$, we have
\begin{equation*}
\inf_{0<a\le 1}\left[G_Q^*(a)+(1-a)\inf_{x\notin V_+}\zeta(x)\right]=\infty.
\end{equation*}
On the other hand, similarly to the proof of Case 3 in Lemma \ref{corollary:G*}, we have
\begin{equation*}
\sup_{\lambda<\inf_{x\in V}\zeta(x)}\left\{\lambda-\sum_{x\in V}Q_x\log\psi_x\left(e^{\lambda\tau}\right)\right\} = \lim_{\lambda\to-\infty}\left\{\lambda-\sum_{x\in V}Q_x\log\psi_x\left(e^{\lambda\tau}\right)\right\}=\infty.
\end{equation*}

Case 2: $m_Q\le 1<M_Q$ and $\lambda^*(1)\le \inf_{x\notin V_+}\zeta(x)$. Straightforward computations show that
\begin{equation*}
\inf_{0<a\le 1}\left[G_Q^*(a)+(1-a)\inf_{x\notin V_+}\zeta(x)\right]=G_Q^*(1).
\end{equation*}
On the other hand, it follows from Lemma \ref{lemma:function properties for G*} that
\begin{equation*}
\lambda^*(1)<\lim_{a\to M_Q}\lambda^*(a)=\inf_{x\in V_+}\zeta(x).
\end{equation*}
This implies that $\lambda^*(1)\le \inf_{x\in V}\zeta(x)$. Direct computations show that
\begin{equation*}
\sup_{\lambda<\inf_{x\in V}\zeta(x)}\left\{\lambda-\sum_{x\in V}Q_x\log\psi_x\left(e^{\lambda\tau}\right)\right\} = \lambda^*(1)-\sum_{x\in V}Q_x\log\psi_x\left(e^{\lambda^*(1)\tau}\right) = G_Q^*(1).
\end{equation*}

Case 3: $m_Q\le 1<M_Q$ and $\lambda^*(1)> \inf_{x\notin V_+}\zeta(x)$. It follows from Lemma \ref{lemma:function properties for G*} that there exists $\eta\in (m_Q,1)$ such that $\lambda^*(\eta)=\inf_{x\notin V_+}\zeta(x)$. Straightforward computations show that
\begin{equation}\label{temp}
\begin{split}
\inf_{0<a\le 1}\left[G_Q^*(a)+(1-a)\inf_{x\notin V_+}\zeta(x)\right]=&\;G_Q^*(\eta)+(1-\eta)\inf_{x\notin V_+}\zeta(x)\\
=&\;\eta\lambda^*(\eta)-\sum_{x\in V}Q_x\log\psi_x\left(e^{\lambda^*(\eta)\tau}\right)+(1-\eta)\lambda^*(\eta)\\
=&\;\lambda^*(\eta)-\sum_{x\in V}Q_x\log\psi_x\left(e^{\lambda^*(\eta)\tau}\right)\\
=&\;\sup_{\lambda<\inf_{x\in V}\zeta(x)}\left\{\lambda-\sum_{x\in V}Q_x\log\psi_x\left(e^{\lambda\tau}\right)\right\}.
\end{split}
\end{equation}
Note that $\inf_{x\notin V_+}\zeta(x)<\lambda^*(1)\le \lim_{a\to M_Q}\lambda^*(a)=\inf_{x\in V_+}\zeta(x)$. Then the last equability in \eqref{temp} follows from the fact that $\inf_{x\in V}\zeta(x)=\inf_{x\notin V_+}\zeta(x)=\lambda^*(\eta)$.

Case 4: $M_Q\le 1$ and $\inf_{x\notin V_+}\zeta(x)=\infty$. Since $M_Q<\infty$, it is easy to see that $\inf_{x\in V_+}\zeta(x)=\infty$. Then we have $\inf_{x\in V}\zeta(x)=\infty$ and
\begin{equation*}
\inf_{0<a\le 1}\left[G_Q^*(a)+(1-a)\inf_{x\notin V_+}\zeta(x)\right] = G^*(1) = \sup_{\lambda<\inf_{x\in V}\zeta(x)}\left\{\lambda-\sum_{x\in V}Q_x\log\psi_x\left(e^{\lambda\tau}\right)\right\}.
\end{equation*}

Case 5: $M_Q\le 1$ and $\inf_{x\notin V_+}\zeta(x)<\infty$. Note that $G^*_Q(a)=\infty$ for any $a> M_Q$. Then we have
\begin{align*}
\inf_{0<a\le 1}\left[G_Q^*(a)+(1-a)\inf_{x\notin V_+}\zeta(x)\right] = \inf_{0<a\le M_Q}\left[G_Q^*(a)+(1-a)\inf_{x\notin V_+}\zeta(x)\right].
\end{align*}
By Lemma \ref{lemma:function properties for G*}, we have $\lambda^*(M_Q)=\infty$. Since $\inf_{x\in V_+}\zeta(x)=\infty$, there exists $\eta\in (m_Q,M_Q)$ such that $\lambda^*(\eta)=\inf_{x\notin V_+}\zeta(x)$. Similarly to the proof in Case 3, we have
\begin{align*}
\inf_{0<a\le M_Q}\left[G_Q^*(a)+(1-a)\inf_{x\notin V_+}\zeta(x)\right]=&\;G_Q^*(\eta)+(1-\eta)\inf_{x\notin V_+}\zeta(x)\\
=&\;\sup_{\lambda<\inf_{x\in V}\zeta(x)}\left\{\lambda-\sum_{x\in V}Q_x\log\psi_x\left(e^{\lambda\tau}\right)\right\}.
\end{align*}
This completes the proof of this proposition.
\end{proof}

\section*{Acknowledgements}
We are grateful to Dr. L.Youming  for stimulating discussions. C.\ J.\ acknowledges support from National Natural Science Foundation of China with grant No. U1930402 and grant No. 12131005.  D.-Q.\ J.\ acknowledges support from National Natural Science Foundation of China with grant No. 11871079 and grant No. 12090015.

\begin{appendices}

\section{Another compactness condition}\label{appendix: A}
In \cite{bertini2015flows}, the authors imposed an alternative compactness condition, which is different from Condition \ref{condition:ccomp2} proposed in the present paper, and proved the joint LDP for the empirical measure and empirical flow of continuous-time Markov chains when $L^1_+(E)$ is endowed with the strong topology. In fact, we can also prove the joint LDP for semi-Markov processes under a similar compactness condition. To see this, we introduce some notation. For any set $\hat E \subset E$, we define an $\hat{E}$-dependent function $R: V\to\mathbb R$ by
\begin{equation}\label{defR}
R(y)=  \sum _{z: (y,z)\in \hat E} p_{yz},\quad y\in V.
\end{equation}

\begin{condition}\label{condition:strong topology}
Suppose there exists a set $\hat E \subset E$ satisfying the following three requirements:
\begin{itemize}
\item[(a)] For any $y\in V$, there exists $z\in V$ such that $(y,z) \in \hat E$;
\item[(b)] The function $R : V \to (0, \infty)$ defined in \eqref{defR} vanishes at infinity;
\item[(c)] For any $x\in V$, there exist constants $a_0, \lambda >0$ such that for any $a <a_0$
one can find a set $W=W(x,a)\subset E$ satisfying the following properties:
\begin{itemize}
\item[(i)] The complement $E \setminus W$ is finite;			
\item[(ii)] If $(y,z)\in W$, then $ R(y)<a$;
\item[(iii)] For each path exiting from $x$, the number of edges in $\hat{E}\cap W$ is at least $\lambda$-times the total number of edges in $W$. In other words, for any path $x_0,x_1, \dots, x_k$  with $x_0=x$ and $(x_i,x_{i+1} ) \in E$, we have
\begin{equation}\label{broccolo}
	\sharp\Big \{ i: (x_{i-1} , x_{i})  \in \hat E \cap W,1\le i\le k  \Big\}\, \geq \, \lambda\, \sharp \Big\{ i: (x_{i-1} , x_{i})  \in W,1\le i\le k\Big\} \,.
\end{equation}	
Obviously, we have $\lambda<1$.		
\end{itemize}
\end{itemize}
\end{condition}

Note that both Conditions \ref{condition:ccomp2} and \ref{condition:strong topology} only depend on the embedded chain $X$ of the semi-Markov process. The following proposition shows that Condition \ref{condition:ccomp2} is weaker than Condition \ref{condition:strong topology}. Due to this reason, we impose the former rather than the latter in the main text.

\begin{proposition}\label{pro:condition}
Suppose that Assumption \ref{ass:irreducibility} and Condition \ref{condition:strong topology} are satisfied. Then Condition \ref{condition:ccomp2} holds.
\end{proposition}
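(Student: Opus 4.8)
The plan is to reduce Condition~\ref{condition:ccomp2} to the construction of a \emph{single} function $u\colon V\to[1,\infty)$ (taking $u_n\equiv u$, or the truncations $u_n=u\wedge n$ if one is forced to) with $Pu(x)<\infty$ for all $x$ and with the Donsker--Varadhan ratio $\hat u:=u/Pu$ diverging at infinity; indeed, items (b)--(d) are then immediate (with $c=1$, $C_x=u(x)$, $\hat u=u/Pu$), while item (a), $Pu(x)<\infty$, and item (e), that $\{x:\hat u(x)\le e^{\ell}\}$ is finite for every $\ell$, are precisely what the construction must deliver. The heuristic behind such a $u$, which is the one used by Bertini et al.\ in \cite{bertini2015flows} for their version of the condition, is that the edges of $\hat E$ are at once \emph{cheap far out} --- $R(y)=\sum_{z:(y,z)\in\hat E}p_{yz}\to0$ by Condition~\ref{condition:strong topology}(b), so the $\hat E$-part of $Pu(y)=\sum_z p_{yz}u(z)$ is controlled by $R(y)$ essentially for free --- and \emph{forced} --- by the $\lambda$-fraction bound \eqref{broccolo}, every walk issuing from a reference state $x_0$ must spend at least a $\lambda$-proportion of its $W$-steps on $\hat E\cap W$. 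Together these say that leaving the core is super-geometrically unlikely, and $u$ should be built to record exactly that.

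Concretely I would fix $x_0$, extract $a_0,\lambda$ from Condition~\ref{condition:strong topology}(c) and a small $a<a_0$ with $W=W(x_0,a)$ (so $E\setminus W$ is finite, $R(y)<a$ whenever $y$ emits a $W$-edge, and \eqref{broccolo} holds along all $x_0$-walks), and then define $u$ by an induction along an exhausting sequence $V_m\uparrow V$ of finite connected subsets of $V$, assigning to each newly added state a value built from the values already assigned to its in-neighbours and from edge factors tuned to $R$: a moderate, uniformly bounded amplification across each edge of $\hat E\cap W$ and a strong attenuation across each edge of $W\setminus\hat E$, with the finitely many edges of $E\setminus W$ absorbed into additive constants --- in the same spirit as the inductive graph construction in the proof of Lemma~\ref{proposition:graph}. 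One then checks that $Pu(y)/u(y)$ decomposes into an $\hat E\cap W$-contribution of size $O(R(y))$, a $W\setminus\hat E$-contribution that the attenuation drives to $0$, and a remainder supported on the finitely many vertices emitting non-$W$ edges, whence $Pu(y)<\infty$ for all $y$ and $Pu(y)/u(y)\to0$, i.e.\ $\hat u(y)\to\infty$. Irreducibility (Assumption~\ref{ass:irreducibility}) is used to guarantee that $(V,E)$ is connected, so that the $V_m$ exist and $x_0$ reaches every state.

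The main obstacle is carrying out this construction on a graph that, here, need be neither a tree nor locally finite (Assumption~\ref{ass:locally finite} is \emph{not} assumed): the demand $\hat u\to\infty$ forces $u$ to vary strongly between a vertex and its successors, and one must simultaneously prevent the inductive values from blowing up around a cycle and keep every series $Pu(y)=\sum_z p_{yz}u(z)$ convergent. The quantitative input that defeats this is the $\lambda$-fraction bound \eqref{broccolo} applied to the \emph{closed} walks through $x_0$: since any such cycle also spends at least a $\lambda$-proportion of its $W$-steps on $\hat E\cap W$, a suitable calibration of the amplification and attenuation factors makes every cycle contained in $W$ nonexpanding, so the induction stabilizes, while the exceptional cycles meeting $E\setminus W$ are handled by finiteness. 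If, owing to the lack of local finiteness, the series for $Pu(y)$ should still fail to converge for some $y$, one falls back on the truncations $u_n=u\wedge n$ together with a diagonal choice of the exhaustion $V_m$ (possible since $V$ is countable) ensuring $u_n/Pu_n\to\hat u$ pointwise, which recovers items (a)--(d) and completes the reduction.
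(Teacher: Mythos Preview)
Your intuition about the mechanism is right, but the proposal as written has a genuine gap, and the paper's construction differs from yours in two essential respects.

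First, the single-scale issue. You fix one $a<a_0$, one $W=W(x_0,a)$, and use uniformly bounded amplification $A$ on $\hat E\cap W$ and a fixed attenuation $\epsilon$ on $W\setminus\hat E$. For $y$ far out (all outgoing edges in $W$) this yields
\[
\frac{Pu(y)}{u(y)}\;\le\;A\,R(y)\;+\;\epsilon,
\]
and since $R(y)\to0$ the right-hand side tends to $\epsilon>0$, not to $0$. Consequently $\hat u=u/Pu$ stays bounded and you only get item~(e) of Condition~\ref{condition:ccomp3} (one level set finite), not item~(e) of Condition~\ref{condition:ccomp2} (all level sets finite). The cycle constraint you correctly identify, roughly $A^{\lambda}\epsilon^{1-\lambda}\ge1$, prevents you from pushing $\epsilon$ to $0$ while keeping $A$ bounded, so a single scale cannot work.

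Second, the paper avoids both this and your vertex-by-vertex induction by a different device: it builds a \emph{nested sequence} $W_n\subset W(x_0,1/n)$ with edge weights $F_n(y,z)=n^{1-\lambda}$ on $\hat E\cap W_n$, $F_n(y,z)=n^{-\lambda}$ on $\hat E^c\cap W_n$, and $F_n=F_{n-1}$ elsewhere, and then sets
\[
u_n(x)\;=\;\inf_{\text{paths }x_0\to x}\;\prod_i F_n(x_{i-1},x_i).
\]
The path-infimum formulation makes cycle consistency automatic: the $\lambda$-fraction bound \eqref{broccolo} gives every path-product $\ge1$, so the infimum is well-defined and $u_n\ge1$. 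The sequence $u_n$ is increasing, pointwise bounded, and its limit $u^*$ satisfies $Pu^*(x)/u^*(x)\le 2j^{-\lambda}$ for $x$ in the cofinite set $V_j$ (those $x$ all of whose outgoing edges lie in $W_j$), which forces $\hat u\to\infty$ and yields item~(e) of Condition~\ref{condition:ccomp2}. The multi-scale layering of the $W_n$ is exactly what your single-$W$ plan is missing, and the path-infimum definition replaces (and is cleaner than) the inductive vertex assignment you sketch.
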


\begin{proof}
Let the constants $a_0,\lambda>0$ be as in item (c) of Condition \ref{condition:strong topology}. For any $x_0\in V$ and $m\in \mathbb{N}$ satisfying $1/m<a_0$, we will next construct the sequence of functions $u_n$, $n\ge m$ such that Condition \ref{condition:ccomp2} holds. We first construct auxiliary functions $F_n:E\to (0,\infty)$ by induction. Let $W_n\subset E$ be a sequence of sets defined as
\begin{equation*}
W_m=W\left(x_0,\frac{1}{m}\right),\qquad W_{n}=W\left(x_0,\frac{1}{n}\right)\cap \left(W_{n-1}\cup \hat{E}\right),\quad n\ge m+1.
\end{equation*}
It is easy to see that $W_n$ also satisfies item (c) in Condition \ref{condition:strong topology} for each $n\geq m$. Set
\begin{equation*}\label{Fm}
F_m(y,z)= \left\{\begin{aligned}
&m^{1-\lambda}, && \text{if } (y,z) \in \hat E\cap W_m\,,\\
&m^{-\lambda}, && \text{if } (y,z) \in \hat E^c\cap W_m\,,\\
&1, && \text{if } (y,z) \in E \setminus W_m\,,\end{aligned}\right.	
\end{equation*}
and for $n\ge m+1$, set
\begin{equation*}\label{Fn}
F_{n}(y,z)= \left\{\begin{aligned}
&n^{1-\lambda}, && \text{if } (y,z) \in \hat E\cap W_{n}\,,\\
&n^{-\lambda}, && \text{if } (y,z) \in \hat E^c\cap W_{n}\,,\\
&F_{n-1}(y,z), && \text{if } (y,z) \in E \setminus W_{n}\,.\end{aligned}\right.
\end{equation*} 	
Obviously, $F_n(x,y)\le n^{1-\lambda}$ for any $(x,y)\in E$. For any $x\in V$, let $G_x$ be the collection of all paths in $(V,E)$ with initial state $x_0$ and terminal state $x$, i.e.
\begin{equation*}
G_x=\left\{(x_0,x_1,\cdots,x_k):k\in\mathbb{N},x_k=x,x_i\in V,\text{ and }(x_i,x_{i+1})\in E\right\}.
\end{equation*}
It then follows from Assumption \ref{ass:irreducibility} that $(V,E)$ is connected. This implies that $G_x\neq \emptyset$ for any $x\in V$. For each $n\geq m$, let $u_n:V\to(0,\infty)$ be a function defined by
\begin{equation*}
u_n(x)=\inf_{\mathbf{x}\in G_x}\prod_{i=1}^{k}F_n(x_{i-1},x_i),\quad  x\in V.
\end{equation*}
Now we verify Condition \ref{condition:ccomp2} for the sequence of functions $u_n$.

(a) Obviously, if $(y,z)\in E$, then we have $u_n(z)\le u_n(y)F_n(y,z)\le u_n(y) n^{1-\lambda}$. For any $x\in V$ and $n\ge m$, we obtain
\begin{equation*}
Pu_n(x)=\sum_{y:(x,y)\in E}p_{xy}u_n(y)\le \sum_{y:(x,y)\in E} p_{xy} u_n(x) n^{1-\lambda} = u_n(x) n^{1-\lambda} < \infty.
\end{equation*}

(b) Recall the definition of auxiliary functions $F_n$ given above. For any path $\mathbf{x}=(x_0,x_1,\cdots,x_k)\in G_x$, it follows from item (c)-(iii) in Condition \ref{condition:strong topology} that
\begin{equation}\label{prod F}
\begin{split}
\prod_{i=1}^{k}F_m(x_{i-1},x_i)&=m^{[(1-\lambda)\#\{i:(x_{i-1},x_{i})\in \hat{E}\cap W_m,1\le i\le k\}-\lambda\#\{i:(x_{i-1},x_{i})\in \hat{E}^c\cap W_m,1\le i\le k\}]}\\
&=m^{[\#\{i:(x_{i-1},x_{i})\in \hat{E}\cap W_m,1\le i\le k\}-\lambda\#\{i:(x_{i-1},x_{i})\in  W_m,1\le i\le k\}
	]}\ge 1.
\end{split}
\end{equation}
Similarly, for $n\ge m+1$, we have
\begin{equation}\label{prod F2}
\begin{split}
&\prod_{i=1}^{k}\frac{F_{n}(x_{i-1},x_i)}{F_{n-1}(x_{i-1},x_i)}=\prod_{i:(x_{i-1},x_{i})\in \hat{E}\cap W_{n}}\frac{n^{1-\lambda}}{F_{n-1}(x_{i-1},x_{i})}\prod_{i:(x_{i-1},x_{i})\in \hat{E}^c\cap W_{n}}\frac{n^{-\lambda}}{F_{n-1}(x_{i-1},x_{i})}\\
&\qquad\qquad\ge (\frac{n}{n-1})^{[(1-\lambda)\#\{i:(x_{i-1},x_{i})\in \hat{E}\cap W_{n},1\le i\le k\}-\lambda\#\{i:(x_{i-1},x_{i})\in \hat{E}^c\cap W_{n},1\le i\le k\}]} \ge 1.
\end{split}
\end{equation}
Indeed, $\hat{E}^c\cap W_{n}=\hat{E}^c\cap W(x_0,1/n)\cap W_{n-1}$, which implies the first inequality in \eqref{prod F2}. For any $x\in V$, optimizing over $G_x$ on both sides of \eqref{prod F}, we have $u_m(x)\ge 1$. Similarly, it follows from \eqref{prod F2} that $u_{n}(x)\ge u_{n-1}(x)$ for $n\ge m+1$. This implies that $u_n$ is an increasing sequence of functions and $u_n(x)\ge 1$ for any $x\in V$ and $n\ge m$.

(c) For any $y\in V$, it follows from item (a) in Condition \ref{condition:strong topology} that $R(y)>0$. For any $x\in V$, since $G_x\neq\emptyset$, there exists $\mathbf{x}=(x_0,x_1\cdots,x_k)\in G_x$. Hence, we can let $N= m\vee (1+\max_{0\le i\le k}\lfloor1/R(x_i)\rfloor)$. By item (c)-(ii) in Condition \ref{condition:strong topology}, it is easy to see that $(x_{i-1},x_{i})\notin W_N$ for any $0\le i\le k$. Then for any $n\ge N$, we have
\begin{equation*}
u_n(x)\le \prod_{i=1}^{k}F_n(x_{i-1},x_i)= \prod_{i=1}^{k}F_N(x_{i-1},x_i)\le N^{(1-\lambda)k}.
\end{equation*}
Obviously, $N$ and $k$ only depend on $x$.

(d) For any $x\in V$, let $\mathbf{x}$ and $N$ be defined as in (c). Since $R(x)>1/N$, it is easy to see that $(x,y)\notin W_N$ for any $(x,y)\in E$. Then for any $n\ge N$, we have
\begin{equation}\label{Pun}
Pu_n(x)\le \sum_{y:(x,y)\in E}p_{xy}u_n(x)F_n(x,y)= \sum_{y:(x,y)\in E}p_{xy}u_n(x)F_N(x,y)\le N^{1-\lambda}u_n(x).
\end{equation}
Note that $u_n$ is an increasing sequence of functions with an upper bound. Then we define $u^*(x)=\lim_{n\to \infty}u_n(x)$. It follows from \eqref{Pun} and the monotone convergence theorem that
\begin{equation*}
Pu^*(x)=\lim_{n\to \infty}Pu_n(x)\le N^{1-\lambda}u^*(x).
\end{equation*}
Thus $u_n/Pu_n$ converges pointwise to the function $\hat{u}=u^*/Pu^*:V\to (0,\infty)$.

(e) Let $V_n$ and $V'_n$ be two sequences of subsets of $V$ defined by
\begin{equation*}
V_n=\left\{x\in V: (x,y)\in E \text{ implies } (x,y)\in W_n \text{ for any }y\in V\right\},\quad V'_n=\left\{x\in V:R(x)< \frac{1}{n}\right\},\ n\ge m.
\end{equation*}
Obviously, we have $V_n\subset V'_n$. For any $ x\in V_{j}\subset V'_{j}=\cup_{i=j}^{\infty}(V'_{i}\setminus V'_{i+1})$, there exists $ i\ge j$ such that $x\in V'_{i}\setminus V'_{i+1}$. If $(x,y)\in E$, it is easy to see that $(x,y)\notin W_{n}$ for $n\ge i+1$. This implies that $F_{n}(x,y)=F_{i}(x,y)$ for $n\ge i$. Then we have
\begin{equation*}
u^*(y)=\lim_{n\to \infty} u_n(y) \le \lim_{n\to\infty} u_n(x) F_n(x,y) = u^*(x) F_{i}(x,y).
\end{equation*}
Note that for any $(x,y)\in E$, we have $(x,y)\in W_j$. Then we have
\begin{align*}
\frac{Pu^*(x)}{u^*(x)}\le &\sum_{y:(x,y)\in E}p_{xy}F_{i}(x,y)= \sum_{y:(x,y)\in \hat{E}}p_{xy}F_{i}(x,y)+\sum_{y:(x,y)\in \hat{E}^c}p_{xy}F_{i}(x,y)\\
&\qquad \le R(x)i^{1-\lambda}+j^{-\lambda}\le i^{-\lambda}+j^{-\lambda}\le 2j^{-\lambda}.
\end{align*}
This means that $\log\hat{u}(x)\ge \log(j^{\lambda}/2)$ for any $x\in V_j$. By item (c)-(i) in Condition \ref{condition:strong topology}, it is easy to see that
\begin{equation*}
V\setminus V_j= \{x\in V:\exists\ y \text{ such that }(x,y)\in E\setminus W_j\}
\end{equation*}
is a finite set. For any $\ell>0$, select $j$ such that $\log(j^{\gamma}/2)>\ell $. Then $\{x\in V:\log\hat{u}(x)\le \ell\}\subset V\setminus V_j$ is a finite set.
\end{proof}

\begin{corollary}\label{LDP:strong topology 2}
Suppose that Assumptions \ref{ass:irreducibility}-\ref{ass:locally finite} and Conditions \ref{condition:ccomp} and \ref{condition:strong topology} are satisfied. Then the results of Theorem \ref{LDP:strong topology} remain valid.
\end{corollary}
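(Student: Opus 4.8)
The plan is to reduce Corollary \ref{LDP:strong topology 2} to Theorem \ref{LDP:strong topology}, which already establishes the joint LDP for $(\mu_t,Q_t)$ with $L^1_+(E)$ endowed with the strong topology under Assumptions \ref{ass:irreducibility}--\ref{ass:locally finite} together with Conditions \ref{condition:ccomp} and \ref{condition:ccomp2}. Since Assumptions \ref{ass:irreducibility}--\ref{ass:locally finite} and Condition \ref{condition:ccomp} are assumed verbatim in the corollary, the only gap to fill is to produce Condition \ref{condition:ccomp2} from the remaining hypothesis, namely Condition \ref{condition:strong topology}.

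First I would invoke Proposition \ref{pro:condition}: under Assumption \ref{ass:irreducibility} and Condition \ref{condition:strong topology}, Condition \ref{condition:ccomp2} holds. This is exactly the implication proved in Appendix \ref{appendix: A}, where, starting from the distinguished edge set $\hat E$ and the constant $\lambda$ of Condition \ref{condition:strong topology}, one builds the auxiliary edge weights $F_n$ and the functions $u_n(x)=\inf_{\mathbf{x}\in G_x}\prod_i F_n(x_{i-1},x_i)$ and checks items (a)--(e) of Condition \ref{condition:ccomp2}. Having both Conditions \ref{condition:ccomp} (by assumption) and \ref{condition:ccomp2} (by Proposition \ref{pro:condition}) in hand, I would then apply Theorem \ref{LDP:strong topology} directly; its conclusion — the full joint LDP with good and convex rate function $I:\Lambda\to[0,\infty]$ — is precisely the assertion of the corollary.

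Since all of the analytic content has already been absorbed into Proposition \ref{pro:condition}, no genuine obstacle remains. The only point worth flagging is that the two compactness conditions need not be witnessed by the same sequence $u_n$: Condition \ref{condition:ccomp} supplies one sequence and Proposition \ref{pro:condition} supplies another for Condition \ref{condition:ccomp2}. This causes no difficulty, because Theorem \ref{LDP:strong topology} only requires the two conditions to hold, not to share a common witnessing sequence, so the invocation is legitimate as stated.
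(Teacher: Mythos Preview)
Your proposal is correct and follows essentially the same route as the paper: it invokes Proposition~\ref{pro:condition} (Assumption~\ref{ass:irreducibility} plus Condition~\ref{condition:strong topology} $\Rightarrow$ Condition~\ref{condition:ccomp2}) and then applies Theorem~\ref{LDP:strong topology} directly. Your additional remark that the two compactness conditions need not share a common witnessing sequence $u_n$ is a valid clarification, though the paper's one-line proof simply omits it.
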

\begin{proof}
The results follow directly from Theorem~\ref{LDP:strong topology} and Proposition \ref{pro:condition}.
\end{proof}

\end{appendices}

%%%%%%%%%% References %%%%%%%%%%
\setlength{\bibsep}{5pt}
\footnotesize\bibliographystyle{nature}
%\bibliography{myref}

\end{document}